\numberwithin{equation}{section}
\theoremstyle{definition}
\newtheorem{thm}{Theorem}[section]
\newtheorem{prop}[thm]{Proposition}
\newtheorem{rem}[thm]{Remark}
\newtheorem{lemma}[thm]{Lemma}
\newtheorem{example}[thm]{Example}
\newcommand{\ratio}{\mathfrak{R}}
\newcommand{\edge}{\mathcal{A}}
\newcommand{\p}{\mathcal{I}}
\newcommand{\h}{H}
\newcommand{\return}{\rho}
\newcommand{\indupot}{\tilde\phi}
\newcommand{\indumap}{\tilde f}
\newcommand{\edgesindu}{\tilde{\mathcal{A}}}
\newcommand{\inducoding}{\tilde\Sigma_B}
\newcommand{\indupote}{\tilde\phi}
\newcommand{\frat}{A}
\newcommand{\exponent}{\gamma(f)}
\newcommand{\edgepart}{\tilde{E}}
\newcommand{\inducingdomain}{\mathcal{D}}
\newcommand{\codingmap}{\tilde{\pi}}
\newcommand{\measure}{\tilde{\mu}}
\newcommand{\codingmeasure}{\tilde{\mu}'}
\newcommand{\shift}{\tilde{\sigma}}
\newcommand{\indupressure}{\tilde{p}}
\newcommand{\maps}{\mathcal{S}}
\newcommand{\conv}{\text{Conv}(\{\delta_{x_i}\}_{i\in\p})}
\newcommand{\indulimit}{\tilde{\Lambda}}
\newcommand{\LB}{\text{LB}(q)}
\newcommand{\dimension}{\delta}
\newcommand{\palpha}{p_\alpha}
\newcommand{\oricodingsp}{\Sigma}
\newcommand{\oricodingmap}{\pi}
\newcommand{\pindushift}{\tilde{P}}
\newcommand{\indufin}{\widetilde{Fin}}
\newcommand{\fin}{Fin}
\newcommand{\growth}{s(f)}
\newcommand{\indumultipote}{\tilde{\psi}}
\newcommand{\multipote}{\psi}
\newcommand{\nice}{\mathcal{G}}
\title[Thermodynamic formalism and multifractal analysis]{Thermodynamic formalism and multifractal analysis of Birkhoff averages for non-uniformly expanding R\'{e}nyi interval maps with countably many branches}
\author{Yuya Arima}
\date{\today}
\address{Graduate School of Mathematics, Nagoya University,
Furocho, Chikusaku, Nagoya, 464-8602, JAPAN} 
\email{yuya.arima.c0@math.nagoya-u.ac.jp}
\subjclass[2020]{28A78, 37D25, 37D35, 11K55}
\thanks{{\it Keywords}: Thermodynamic formalism, Birkhoff spectra, Multifractal analysis, Non-uniformly expanding interval maps, }
\begin{document}

\begin{abstract}
In this paper, we study the multifractal spectrum of Birkhoff averages for non-uniformly expanding R\'{e}nyi interval maps with countably many branches. Our main theorem substantially  strengthens conditional variational formulas established by Jaerisch and Takahasi \cite{MixedJT}. Furthermore, our results enable a detailed analysis of Khinchin exponents and arithmetic means of backward continued fraction expansions in terms of the Hausdorff dimension. We also give a positive answer to the conjecture of Jaerisch and Takahasi \cite{MixedJTkyouto}. 
In addition, we develop the thermodynamic formalism for non-uniformly expanding R\'{e}nyi interval maps with countably many branches.    
\end{abstract}

\maketitle

\section{Introduction}
Let $f:\Lambda\rightarrow \Lambda$ be a Borel measurable dynamical system on a subset $\Lambda$ of $[0,1]$ and let $\phi$ be a continuous potential on $\Lambda$.
The Birkhoff average of $\phi$ at $x\in \Lambda$ is defined by the time average
$
\lim_{n\to\infty}\frac{1}{n}\sum_{i=0}^{n-1}\phi(f^i(x))$
whenever the limit exists. 
Birkhoff averages provide a way to characterize the dynamical system $f$.
Let $\mu$ be a $f$-invariant ergodic Borel probability measure  on $\Lambda$ with $\int|\phi|d\mu<\infty$.
Birkhoff's ergodic theorem then implies that, for $\mu$-a.e. $x\in \Lambda$ the Birkhoff average of $\phi$ at $x$ converges to the space average $\int \phi d\mu$.
Thus, for $\alpha\neq \int \phi d\mu$ the set $B(\alpha)$ of points where the Birkhoff average of $\phi$ converges to $\alpha$ is negligible with respect to $\mu$. 
However, there is still a possibility that $B(\alpha)$  might be a large set from another point of view. This raises the following natural questions: What are the typical or exceptional Birkhoff averages? How large is the set $B(\alpha)$?
To answer these questions we define the Birkhoff spectrum $\alpha\mapsto b(\alpha)$, where $b(\alpha)$ denotes  the Hausdorff dimension with respect to the Euclidean metric on $\mathbb{R}$ of the set $B(\alpha)$ and study its properties. 
We refer the reader to  the books Pesin \cite{Pesinbook} and Barreira \cite{Barreirabook} for an introduction to the subject of dynamical systems and the dimension theory.
In the uniformly hyperbolic case, the Birkhoff spectrum for a H\"older continuous potential has been well studied  by Barreira and Saussol \cite{BarreiraSaussol}. 
For non-uniformly expanding interval maps with finitely many branches, the multifractal analysis has also been studied extensively by many authors (see, for example, \cite{GelfertRams}, \cite{kessebohmer2004multifractaltobeappdated}, \cite{Nakaishi}, \cite{PollicottWeiss}, \cite{JJTOPnonuniformly}, \cite{simultaneous}, and the references therein). Recently, the author obtained, for such maps, results analogous to those on the multifractal analysis for uniformly expanding Markov maps with finitely many branches.

Non-uniformly expanding R\'{e}nyi interval maps with countably many branches have attracted much attention and have been studied extensively. The main reason for our interest in this class of maps is that it includes the R\'{e}nyi map introduced by R\'{e}nyi \cite{Renyi}, which generates the backward continued fraction expansion. Therefore, by investigating this class, we can, as an important application, study backward continued fraction expansions (see, for example, \cite{aaronson}, \cite{aaronsonnakada}, \cite{Iommilyapunov} and \cite{MixedJT}).
For this class of maps and a continuous potential having certain regularity conditions, Jaerisch and Takahasi established  conditional variational formulas. 
It then follows from these results that the Birkhoff spectrum is  monotone on a certain domain. However, the following natural questions remain open: Is it continuous and strictly monotone on such a domain? For which $\alpha$ does $b(\alpha)$ attain its maximal? 
For $\alpha\in \mathbb{R}$ is there an Borel probability measure $\mu$ on $\Lambda$ such that the Hausdorff dimension of $\mu$ is $b(\alpha)$ and $\int \phi d\mu=\alpha$, and, if such a measure exists, is it unique? 
Our main theorem provide answers to these questions. 
Moreover, as an important application of our main theorem, we provide a detailed analysis of Khinchin exponents and arithmetic means of backward continued fraction expansions. We also give a positive answer to the conjecture of Jaerisch and Takahasi \cite{MixedJTkyouto} (see Section \ref{sec backward}).

Let $I:=[0,1]$. 
In this paper, for $A \subset I$, $\operatorname{Int}(A)$ and $\overline{A}$ denote its interior and closure in the Euclidean metric on $\mathbb{R}$.
A map $f:I\rightarrow I$ is said to be non-uniformly expanding R\'{e}nyi interval map with countably many branches if $f$ satisfies the following conditions:
\begin{itemize}
    \item[(NERI1)] There exists a family $\{\Delta_{i}\}_{i\in \mathbb{N}}$ of  subintervals of $I$ such that for each $i,j\in\mathbb{N}$ with $i\neq j$ we have $\text{int}(\Delta_i)\cap\text{int}(\Delta_j)=\emptyset$. Moreover, for all sequence $\{x_i\}_{i\in\mathbb{N}}$ with $x_i\in \overline{\Delta_i}$ we have $\lim_{i\to\infty}x_i=1$. 
    \item[(NERI2)] For all $i\in \mathbb{N}$ the map $f|_{\Delta_i}:\Delta_i\rightarrow f(\Delta_i)$ is a $C^2$ diffeomorphism and $(0,1)\subset f(\Delta_i)\subset [0,1]$. Furthermore, 
    there exists a open set $W_i$ such that $\overline{\Delta_i}\subset W_i$ and $f|_{\Delta_i}$ extends to a $C^2$ diffeomorphism $f_i$ from $W_i$ onto its images.
    \item[(NERI3)] There exists a non-empty finite set $\p\subset \mathbb{N}$ of parabolic indexes such that for each $i\in \p$, the map 
    $f_i$ has a unique fixed point $x_i\in \overline{\Delta_i}$ satisfying $|f_i'(x_i)|=1$ and $|f_i'(x)|>1$ for all $x\in W_i\setminus\{x_i\}$. Moreover, there exists $c>1$ such that for all $i\in\h:=\mathbb{N}\setminus\p$ and $x\in W_i$ we have $|f'_i(x)|>c$.
    \item[(NERI3)] $f$ satisfies the R\'{e}nyi condition, that is,
    $
    \sup_{i\in \mathbb{N}}\sup_{x\in W_i}{|f''_i(x)|}/{|f'_i(x)|^2}<\infty.
    $
\end{itemize}
Note that (NERI1) implies that $1$ is the unique accumulation point of the set of endpoints of $\{\Delta_i\}_{i\in\mathbb{N}}$.
For simplicity of notation, we assume that $\p:=\{1,\cdots,\#\p\}$ and we write 
\begin{align}\label{eq def edge}
\edge:=\mathbb{N}.    
\end{align}

Let $f$ be a non-uniformly expanding R\'{e}nyi interval map with countably  many branches.
For each $i\in \edge$ we denote by $T_i$  the inverse of $f_i$. 
For each $n\in\mathbb{N}$ and $\omega\in \edge^n$ we set
$T_\omega :=T_{\omega_1}\circ\cdots\circ T_{\omega_n}
\text{ and }
\bar\Delta_\omega:=T_\omega([0,1]).$
Then, by \cite[Proposition 3.1]{MixedJT} the Euclidean diameter  $|\bar \Delta_{\omega}|$ of the set $\bar\Delta_\omega$  converges to $0$, uniformly in all sequences, that is, 
\begin{align}\label{eq uniformly decay of sylinders}
\lim_{n\to\infty}\sup_{\omega\in \edge^n}|\bar\Delta_\omega|=0.   
\end{align} 
Therefore, since for each $n\in\mathbb{N}$ and $\omega\in \edge^n$ the set $\bar\Delta_\omega$ is compact, for each $\omega\in \edge^{\mathbb{N}}$ the set $\bigcap_{n\in\mathbb{N}}\bar\Delta_{\omega_1\cdots\omega_n}$ is singleton.
We define the coding map $\oricodingmap:\edge^{\mathbb{N}}\rightarrow I$ by 
\[
\{\oricodingmap(\omega)\}=\bigcap_{n\in\mathbb{N}}\bar\Delta_{\omega_1\cdots\omega_n}
\text{ and the limit set $\Lambda$ of $f$ by }
\Lambda:=\oricodingmap(\edge^{\mathbb{N}}).
\]
In this paper, for $J\subset [0,1]$ we always assume that $J$ is endowed with the relative topology from $[0,1]$.
We define $\delta:=\dim_H(\Lambda)$, where $\dim_H(\cdot)$ denotes the Hausdorff dimension with respect to the Euclidean metric on $\mathbb{R}$. 
As in \cite{Iommilyapunov}, for the multifractal analysis we request the following condition:
\begin{itemize}
    \item[(G)] There exist $C\geq 1$ and $\growth>1/\delta$ such that for all $i\in \edge$ and $x\in \bar\Delta_i$ we have $C^{-1}\leq {|f'_i(x)|}/{i^{\growth}}\leq C$
\end{itemize}
Since the open set condition holds (see (NERI1)), the requirement $\growth>1/\delta$ is natural.

Next, we explain conditions regarding a induced map of $f$.
For all $n\in\mathbb{N}$ and $\omega\in \edge^n$ we set 
$I_\omega:=\bar\Delta_\omega\cap\Lambda.$
Define
\[
\inducingdomain:=\bigcup_{i\in \h}I_i\cup
\bigcup_{i\in\p}\bigcup_{j\in \edge_{i}}I_{ij}, 
\text{ where } \edge_i:=\edge\setminus\{i\}.
\]
We define the return time function $\return:\inducingdomain\rightarrow \mathbb{N}\cup\{\infty\}$ by
\[
\return(x):=\inf \{n\in\mathbb{N}:f^n(x)\in \inducingdomain\}
\]
and the induced map $\indumap:\{\return<\infty\}\rightarrow I$ by
\[
 \indumap(x):=f^{\return(x)}(x).
\]
The following conditions allow us to analyze $f$ by using $\indumap$:
\begin{itemize}
    \item[(F)] There exist a constant $C\geq 1$ and a exponent $\exponent\leq1$ such that for all $n\in\mathbb{N}$ and $x\in \{\return=n\}$ we have 
    \[
    \frac{1}{C}\leq\frac{|\indumap'(x)|}{|f'(x)|n^{1+\gamma(f)}}\leq C 
    \]
\end{itemize}
Since $f$ satisfies the R\'{e}nyi condition, the requirement $\exponent\leq 1$ is natural.
    The induced map $\indumap$ is said to be admissible if $\indumap$ satisfies (F).

\begin{example}
The  R\'{e}nyi map $R:[0,1)\rightarrow[0,1)$ is given by 
\begin{align}\label{eq def renyi}
R(x):=\frac{1}{1-x}-\left[\frac{1}{1-x}\right],    
\end{align}
where $[\cdot]$ denotes the floor function. It is well-known that $R$ is non-uniformly expanding R\'{e}nyi interval map with countably many branches and satisfies (G) with $\growth=2$ and (F) with $\exponent=1$ (see \cite[Section 6]{MixedJT}).
\end{example}

Let $\phi:\Lambda\rightarrow\mathbb{R}$ be a continuous function.
We define the induced potential $\indupote:\Lambda\cap\{\return<\infty\}\rightarrow\mathbb{R}$ of $\phi$ by 
\begin{align}\label{eq def indu pot}
\indupote(x):=\sum_{i=0}^{\return(x)-1}\phi(f^i(x)).
\end{align}
In this paper, we always require the following condition:
\begin{itemize}
\item[(P)] We have $\inf\{\phi(x):x\in\Lambda\}>0$.
    \item[(H)] $\phi$ is acceptable and there exists $\beta>0$ such that $\indupote$ is locally H\"older with exponent $\beta$ (see Section \ref{sec preliminary}). 
\end{itemize}
Note that many of our results remain valid for a function $\psi:\Lambda\rightarrow \mathbb{R}$ satisfying $\inf\{\psi(x):x\in\Lambda\}>-\infty$ and (H) since we only need to replace $\psi$ by $\phi:=\psi-\inf\{\psi(x):x\in\Lambda\}+1$.
We denote by $\mathcal{R}$ the set of all continuous function $\phi$ on $\Lambda$ satisfying (P) and (H).
We also consider the following conditions:
\begin{itemize}
    \item[(H1)] We have 
    \begin{align}\label{eq def C phi}
    C(\phi):=\sup_{\ell\in\mathbb{N}}\sup_{i\in \p}\sup_{j\in\edge_i}\sup_{x\in\oricodingmap([i^\ell j])}\sum_{k=0}^{\ell-1}|\phi\circ f^k(x)-\phi(x_i)|<\infty.
    \end{align}
    \item[(R)] The limit 
    $\ratio=\lim_{x\to1}{\phi(x)}/{\log|f'(x)|}\in[0,\infty]$ exists.
        \item[(L)] There exist $\theta>0$ and $\eta,\xi\in\mathbb{R}$ such that for all $x\in \Lambda$ we have 
$    -\theta\log |f'(x)|+\eta\leq \phi(x)\leq -\theta\log |f'(x)|+\xi.$
\end{itemize}
Notice that if for $i\in\p$, $\phi|_{\Delta_i\cap\Lambda}$ is constant then $\phi$ satisfies (H1) and if $\phi$ satisfies (L) then we have $\ratio=\theta$.

The level set we consider is given by 
\[
\Lambda_\alpha:=\left\{x\in \Lambda:\lim_{n\to\infty}\frac{1}{n}\sum_{i=0}^{n-1}\phi(f^i(x))=\alpha\right\}\ \ (\alpha\in\mathbb{R}).
\]
We define the Birkhoff spectrum $b:\mathbb{R}\rightarrow [0,1]$ by 
$b(\alpha):=\dim_H(\Lambda_\alpha)$. We will use the following notations: For $i\in\p$ we set
\begin{align*}
&\alpha_i:=\phi(x_i),
\ \alpha_{\inf}:=\inf_{\mu\in M(f)}\left\{\int\phi d\mu\right\}\text{ and }
    \alpha_{\sup}:=\sup_{\mu\in M(f)}\left\{\int\phi d\mu\right\},
\end{align*}
where $M(f)$ denotes the set of all $f$-invariant Borel probability measures on $\Lambda$. Note that by \cite[Main theorem (a)]{MixedJT}, for $\alpha\in \mathbb{R}$, $\Lambda_\alpha\neq \emptyset$ if and only if $\alpha\in[\alpha_{\inf},\alpha_{\sup}]$.  
Since $f$ has countably many full-branched, $\Lambda$ is not compact. Hence, in general, $\alpha_{\sup}$ is not finite.  
For $\mu\in M(f)$ we define $\lambda(\mu):=\int \log |f'|d\mu$ and denote by $h(\mu)$ the measure-theoretic entropy defined as \cite{walters2000introduction}.
We set $\underline{\alpha}_\p:=\min_{i\in\p}\{\alpha_i\}$ and $\overline{\alpha}_\p:=\max_{i\in\p}\{\alpha_i\}$.
We define 
$\frat:=[\underline{\alpha}_\p,\overline{\alpha}_\p]$. 
We are now in a position to state our main theorem.
\begin{thm}\label{thm main}
Let $f$ be a non-uniformly expanding R\'{e}nyi interval map with countably  many branches having the admissible induced map $\indumap$ and let $\phi\in\mathcal{R}$ satisfy (R). We also assume that $f$ satisfies (G).
Then, for all $\alpha\in\frat$ we have $b(\alpha)=\delta$. Furthermore, we have the following: 
\begin{itemize}
    \item[(B1)] If $\ratio=0$ then there exist $a^*\in [\alpha_{\inf},\min\frat]$ and $b^*\in [\max\frat,\alpha_{\sup}]$ such that for all $\alpha\in (a^*,b^*)\setminus\frat$ we have $\growth^{-1}<b(\alpha)<\delta$ and there exists the unique measure $\mu\in M(f)$ such that $0<\lambda(\mu)<\infty$, $\int\phi d\mu=\alpha$ and $b(\alpha)=h(\mu)/\lambda(\mu)$. Moreover, $b$ is real-analytic on $(a^*,b^*)\setminus\frat$ and it is strictly increasing (resp. decreasing) on $(a^*,\min\frat)$ (resp. $(\max\frat,b^*)$), and for all $\alpha\in (\alpha_{\inf},a^*]\cup[b^*,\alpha_{\sup})$ we have $b(\alpha)=\growth^{-1}$.
    \item[(B2)] If $\ratio=\infty$ and $\phi$ satisfies (H1) then for all $\alpha\in(\min\frat,\infty]$ we have $b(\alpha)=\delta$ and for all $\alpha\in (\alpha_{\inf},\min\frat)$ we have $0<b(\alpha)<\delta$  and there exists the unique measure $\mu\in M(f)$ such that $0<\lambda(\mu)<\infty$, $\int\phi d\mu=\alpha$ and $b(\alpha)=h(\mu)/\lambda(\mu)$. Moreover, $b$ is real-analytic and strictly increasing on $(\alpha_{\inf},\min\frat)$.
    \item[(B3)] If $\phi$ satisfies (H1) and (L) then for all  $\alpha\in (\alpha_{\inf},\infty)\setminus\frat$ we have $0<b(\alpha)<\delta$  and there exists the unique measure $\mu\in M(f)$ such that $0<\lambda(\mu)<\infty$, $\int\phi d\mu=\alpha$ and $b(\alpha)=h(\mu)/\lambda(\mu)$.  Moreover, $b$ is real-analytic on $(\alpha_{\inf},\infty)\setminus\frat$ and it is strictly increasing (resp. decreasing) on $(\alpha_{\inf},\min\frat)$ (resp. $(\max\frat,\infty)$).
\end{itemize}
\end{thm}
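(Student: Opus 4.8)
\emph{Proof proposal.} The plan is to reduce the whole problem, via the inducing scheme, to the thermodynamic formalism of the uniformly expanding induced map $\indumap$, and then to extract every qualitative feature of $b$ from one Legendre‑type family of pressure equations on the induced system. \emph{Step 1 (inducing and transfer of data).} By (F) and the R\'{e}nyi condition, $\indumap$ is a topologically mixing, uniformly expanding Markov interval map with countably many branches and uniformly bounded distortion, coded by a countable Markov shift $\inducoding$ on which, by \eqref{eq def indu pot}, both $\indupote$ and $\log|\indumap'|$ are locally H\"older. I would first record the Abramov--Kac correspondence between $M(f)$ and the $\shift$‑invariant probability measures $\nu$ on $\inducoding$ with $\int\return\,d\nu<\infty$: under the lift one has $\int\phi\,d\mu=\int\indupote\,d\nu\big/\int\return\,d\nu$ and, decisively, $h(\mu)/\lambda(\mu)=h(\nu)\big/\int\log|\indumap'|\,d\nu$. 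Since the set of points whose $f$‑orbit never meets $\inducingdomain$ is countable and the inducing scheme preserves Hausdorff dimension, $\dim_H\Lambda_\alpha$ equals the dimension of the set of $\inducoding$‑points for which the \emph{ratio} ergodic average $\big(\sum_{k<n}\indupote\circ\indumap^k\big)\big/\big(\sum_{k<n}\return\circ\indumap^k\big)$ tends to $\alpha$; and, as in \cite{MixedJT}, $\delta$ is the first zero of $t\mapsto P_{\shift}(-t\log|\indumap'|)$.

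\emph{Step 2 (thermodynamic formalism of $\inducoding$ and the conditional variational principle).} Next I would develop for $\inducoding$ the Ruelle--Perron--Frobenius theory for locally H\"older potentials of finite Gurevich pressure --- existence and uniqueness of the Gibbs equilibrium state, real‑analyticity of the pressure in its parameters, the derivative formulas $\partial_{q}P_{\shift}=\int(\,\cdot\,)\,d\mu_{q}$, and strict convexity via positivity of the asymptotic variance --- and transfer these to $f$ (this is the thermodynamic formalism announced in the abstract). Saturating the limit set with equilibrium measures of $\inducoding$ and approximating them by periodic orbits for the lower bound, and using the standard covering upper bound, I expect the conditional variational principle
\[
b(\alpha)=\sup\left\{\frac{h(\mu)}{\lambda(\mu)}:\mu\in M(f),\ 0<\lambda(\mu)<\infty,\ \int\phi\,d\mu=\alpha\right\}=\inf_{q\in\mathbb{R}}T_\alpha(q),
\]
where $T_\alpha(q)$ is defined, on the set of $q$ for which it makes sense, by $P_{\shift}\big(q(\indupote-\alpha\return)-T_\alpha(q)\log|\indumap'|\big)=0$; this sharpens the conditional formulas of \cite{MixedJT}, and the supremum is attained exactly when the minimizing $q$ is interior.

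\emph{Step 3 (finiteness region, plateaus, and regularity).} The heart of the argument is to determine, for each $\alpha$, the set of $q$ for which $t\mapsto P_{\shift}\big(q(\indupote-\alpha\return)-t\log|\indumap'|\big)$ is finite and how $T_\alpha$ behaves at its boundary. Splitting the sum over the branches of $\indumap$ isolates two regimes. Over the large‑symbol branches (orbits returning through $\Delta_j$, $j\to\infty$), (G) and (R) give $\indupote\approx\ratio\log|\indumap'|\approx\ratio\,\growth\log j$ with $\return=O(1)$, so the series converges exactly for $t>q\ratio+\growth^{-1}$; this pins the plateau value $\growth^{-1}$ when $\ratio=0$, forces $q\le 0$ when $\ratio=\infty$, and forces $T_\alpha(q)\to\infty$ as $q\to+\infty$ under (L). Over the parabolic branches (words with a long string $i^{\ell}$, $i\in\p$), (F) gives $\return\approx\ell$ and $\log|\indumap'|\approx(1+\exponent)\log\ell$, while (H1), when assumed, pins $\indupote$ to $\ell\alpha_i+O(1)$; consequently convergence requires $q(\alpha_i-\alpha)\le 0$ for every $i\in\p$, so $q>0$ is admissible only for $\alpha\ge\overline{\alpha}_\p$ and $q<0$ only for $\alpha\le\underline{\alpha}_\p$. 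In particular, for $\alpha\in\frat$ other than a common value of the $\alpha_i$ only $q=0$ is admissible, whence $b(\alpha)=T_\alpha(0)=\delta$; with the trivial bound $b(\alpha)\le\delta$ and, at the endpoints, an explicit Cantor subset of dimension $\growth^{-1}$ built from large symbols (the same construction handling $\alpha=\infty$ in (B2)), this gives $b\equiv\delta$ on $\frat$. For $\alpha\notin\frat$, $T_\alpha$ is real‑analytic and strictly convex on the interior of its admissible $q$‑set (analytic implicit function theorem, the $t$‑derivative of the pressure being nonzero), with $T_\alpha(0)=\delta$ and $T_\alpha'(q)$ of the sign of $\beta(q)-\alpha$, where $\beta(q):=\int\phi\,d\mu_q$ is the $\phi$‑average of the lifted equilibrium measure; checking that $\beta(q)\to\overline{\alpha}_\p$ as $q\downarrow 0$ for $\alpha>\overline{\alpha}_\p$ (and symmetrically at $\underline{\alpha}_\p$), the minimum is attained at a unique interior $q_*$ with $\beta(q_*)=\alpha$ whenever $\alpha$ lies in the range of $\beta$, giving $b(\alpha)=T_\alpha(q_*)\in(\growth^{-1},\delta)$, a unique optimal measure, analyticity and strict monotonicity of $b$; outside that range --- possible only if $\ratio=0$ --- $T_\alpha$ is strictly monotone and $b(\alpha)=\lim_{|q|\to\infty}T_\alpha(q)=\growth^{-1}$, which identifies $a^*,b^*$ as the extremal $\phi$‑averages of the finite‑Lyapunov‑exponent equilibrium states and locates the outer plateaus exactly. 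Sorting the outcomes by $\ratio=0$, by $\ratio\in(0,\infty)$ under (L), and by $\ratio=\infty$ under (H1) produces (B1), (B3) and (B2) respectively.

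\emph{Main obstacle.} I expect the principal difficulty to be the interplay of non‑uniform expansion and non‑compactness. The critical equilibrium state (at $q=0$, $t=\delta$) has infinite integral of $\return$, so it does not descend to an $f$‑invariant probability measure; mass escapes to the parabolic fixed points as $q\to 0$ and to the endpoint $1$ as $|q|\to\infty$. Controlling these escapes quantitatively --- which is precisely where (F), (G), (H1), (R) and (L) are used --- is what is needed to establish existence and uniqueness (with finite Lyapunov exponent) of the optimal measure for $\alpha\notin\frat$, and to show that the boundary value of $T_\alpha$ equals \emph{exactly} $\growth^{-1}$, that the plateaus begin \emph{precisely} at $a^*$ and $b^*$, and that $b(\alpha)>0$ there; this is what carries the argument beyond the conditional formulas of \cite{MixedJT}.
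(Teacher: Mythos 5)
Your outline correctly identifies the high‑level architecture: an inducing scheme, a Legendre‑type duality between $b(\alpha)$ and a one‑parameter family of pressure equations on the induced shift, and a case split governed by how $\phi$ compares to $\log|f'|$ near the non‑compact end. The duality you write, namely
\[
b(\alpha)=\sup\Bigl\{\tfrac{h(\mu)}{\lambda(\mu)}\Bigr\}=\inf_{q}T_\alpha(q),
\]
is indeed equivalent (after a sign flip in $q$) to the relation used in the paper, which works with $p_\alpha(b,q)=p(b,q)+q\alpha$ and characterizes $b(\alpha)$ as the unique $b$ for which $p_\alpha(b,\cdot)$ has a zero with a vanishing $q$‑derivative. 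Your analysis of the finiteness region — large‑symbol branches contributing summability iff $t>q\ratio+\growth^{-1}$, parabolic branches forcing the sign of $q$ to agree with $\operatorname{sgn}(\alpha-\alpha_i)$ — is also correct and matches the bookkeeping in Lemmas \ref{lemma 0 delta*}, \ref{lemma infinite delta*} and \ref{lemma alpha delta*}.

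However, the proposal has a genuine gap exactly at the point you yourself flag as ``the principal difficulty,'' and you stop short of resolving it. Establishing that the infimum in $\inf_q T_\alpha(q)$ is attained at an \emph{interior} $q$ and that the corresponding Gibbs state has finite return time (so that it descends to an $f$‑invariant probability with positive, finite Lyapunov exponent) requires controlling the $q\downarrow0$ (and $q\uparrow0$) limits of the one‑parameter family $\mu_{b,q}$: specifically, one must show that the limit of their $\phi$‑averages is $\min\frat$ or $\max\frat$, and that no mass escapes to infinity. The paper does this by (i) a tightness criterion for sequences of expanding equilibrium measures (Lemma \ref{lemma tight}), (ii) upper semi‑continuity of entropy along such sequences using Takahasi's partitions $\mathscr{A}_\ell,\mathscr{B}_\ell$ and the Gibbs bound with condition (T1.1) (Lemma \ref{lemma}), (iii) a uniform‑integrability argument for $\phi$ and $\log|f'|$ along the sequence (Lemma \ref{lemma uniformly integrable}), culminating in the convergence Theorem \ref{thm convergence of the equilibrium state}, which feeds Propositions \ref{prop derivatibe of pressure weak conditions}, \ref{prop equilibrium state Lambda} and \ref{prop relationship 0}. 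None of this machinery appears in your outline; ``saturating with equilibrium states and approximating by periodic orbits'' does not by itself control escape of mass in a non‑compact, non‑uniformly hyperbolic setting. A related omission concerns the case $\ratio=\infty$: here the two‑parameter pressure is infinite for $q$ of the wrong sign (Lemma \ref{lemma infinite delta*}), the pressure is not real‑analytic up to $q=0$, and the key step (Proposition \ref{prop relationship infinity}) is a compact‑exhaustion argument — passing to finite subalphabets $F$, using the compactness of $M(f,F)$ and hypothesis (H1) to force $\measure_{b(\alpha),q_n,F}(\return)\to\infty$ and hence the limit to lie in $\conv$. Your Step 3 does not indicate how this would be handled.

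Two smaller points. First, you implicitly use the conditional variational principle with exact equality at level sets ($b(\alpha)=\tilde b(\alpha)$) and then pass to the Legendre dual; the paper obtains this refinement of Jaerisch--Takahasi's theorem via a continuity argument for $\tilde b$ on $(\alpha_{\inf},\alpha_{\underline{i}})$ and $(\alpha_{\overline{i}},\alpha_{\sup})$ (Lemma \ref{lemma conti tilde b} and Theorem \ref{thm stong conditional variational principle}), which is nontrivial and requires the hypothesis $\ratio<\infty$ on the right branch. Second, for $b(\alpha)=\delta$ on $\frat$ the paper gives a direct measure‑mixing argument (Proposition \ref{prop frat}) that avoids the pressure apparatus entirely; your Legendre‑duality route there would require first proving $T_\alpha(0)=\delta$ and that the admissible $q$‑set degenerates, which is doable but strictly harder than necessary. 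In summary: the architecture is right and equivalent to the paper's, but the main contribution — the tightness/convergence of equilibrium states and the finite‑subsystem compactness argument — is still missing.
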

 From the conditional variational formula established by Jaerisch and Takahasi \cite{MixedJT} (see Theorem \ref{thm coditional variational principle}), it is difficult to deduce the precise shape of the graph of $b$ (for example, its strict monotonicity and regularity). Moreover, it is also difficult to determine when $b$ is a constant function. However, our main theorem determine the precise shape of the graph of $b$ and provides conditions under which $b$ is a constant function. Furthermore, our main theorem answers all the natural questions mentioned above. 

The main difficulties we encounter are as follows:
First, we have to deal with the lack of uniform hyperbolicity due to the presence of parabolic fixed points. This makes it challenging to describe the thermodynamic formalism.
Recall that, even for maps with a countable Markov partition, suitable summability conditions enable us to obtain strong properties of the thermodynamic formalism (e.g., existence and uniqueness of the equilibrium measure, and real-analyticity of a pressure function). However, in general, for $\phi\in\mathcal{R}$ and $(b,q)\in \mathbb{R}^2$ with $p(b,q):=P(-q\phi-b\log|f'|)<\infty$, where $P(-q\phi-b\log|f'|)$ denotes the topological pressure for the potential $-q\phi-b\log|f'|$, an equilibrium measure $\mu$ for this potential with $\lambda(\mu)>0$ does not exist and the pressure function $(b,q)\mapsto p(b,q)$ is not real-analytic on $\text{Int}(\{(b,q)\in\mathbb{R}^2:p(b,q)<\infty\})$ in our setting. 
Therefore, many of the arguments in Iommi and Jordan \cite{IommiJordanBirkhoff} for uniformly expanding interval maps with a countable Markov partition do not work well.
To overcome this difficulty, we extend the thermodynamic formalism developed by Iommi \cite{Iommilyapunov} for non-uniformly expanding R\'{e}nyi interval maps with countably many branches and the geometric potential to the potential $-q\phi-b\log|f'|$ for $(b,q)\in\mathbb{R}^2$. In particular, we establish the existence and uniqueness of an equilibrium measure, as well as the real-analyticity of the pressure function (see Section \ref{sec thermodynamic}). 

Second, the symbolic model for our maps is the full-shift on an infinite alphabet.
As explained above, in our setting, obtaining results on the multifractal analysis from the thermodynamic formalism is much more difficult than in the uniformly hyperbolic setting. In a previous paper \cite{arimanonuniformly}, we established this result for non-uniformly expanding interval maps with a finitely many branches. 
However, to do this, in \cite{arimanonuniformly} we frequently relied on the compactness of $\Lambda$, which allows us to deduce the compactness of $M(f)$, the boundedness of continuous potentials and upper semi-continuity of the entropy map. 
Unfortunately, in our setting, $\Lambda$ is not compact and thus, we cannot directly rely on these properties.   
To overcome this difficulty, we provide a sufficient condition for a sequence of expanding equilibrium measures to be tight. Moreover, we give a sufficient condition for the limit of such a sequence to be an equilibrium measure (see Section \ref{sec convergence}).

\subsection{Application of the main theorem to backward continued fraction expansions}\label{sec backward}
An irrational number $x\in(0,1)$ has the following two expansions:
\begin{align}\label{eq def expansion}
    x=\cfrac{1}{a_1(x)+\cfrac{1}{a_2(x)+{\ddots}}} \text{ and } x=1-\cfrac{1}{b_1(x)-\cfrac{1}{b_2(x)-{\ddots}}},
\end{align}
where $a_i(x)\in\mathbb{N}$ and $b_i(x)\in \mathbb{N}$ with $b_i(x)\geq 2$. Moreover, for all $x\in (0,1)\setminus\mathbb{Q}$ each of these expansions is uniquely determined. 
The right-hand  expansion in \eqref{eq def expansion} is called the backward continued fraction expansion of $x\in (0,1)\setminus\mathbb{Q}$, while the left-hand expansion in \eqref{eq def expansion} is called the regular continued fraction expansion of $x\in (0,1)\setminus\mathbb{Q}$. Let $G:(0,1)\setminus\mathbb{Q}\rightarrow (0,1)\setminus\mathbb{Q}$ be the Gauss map defined by $G(x)=1/x-[1/x]$ and let $R$ be the R\'{e}nyi map defined by \eqref{eq def renyi}. For all $x\in (0,1)\setminus\mathbb{Q}$ and $n\in\mathbb{N}$ we have 
\[
a_n(x)=\left[\frac{1}{G^{n-1}(x)}\right] \text{ and } b_n(x)=\left[\frac{1}{1-R^{n-1}(x)}\right]+1.
\]
In particular, the Gauss map $G$ (resp. the R\'{e}nyi map $R$) acts as the shift map on the regular continued fraction expansion (resp. backward continued fraction expansion). Namely, for all $x\in (0,1)\setminus\mathbb{Q}$ and $n\in\mathbb{N}$ we have 
\begin{align}\label{eq important relation backward regular}
\text{$a_n(x)=a_1(G^{n-1}(x))$ and $b_n(x)=b_1(R^{n-1}(x))$.}    
\end{align}

It is well-known that for Lebesgue almost all $x\in (0,1)\setminus\mathbb{Q}$, $a_n(x)>n^{r}$ holds for infinitely many $n$ or finitely many $n$ according to whether $r\leq 1$ or $r>1$. These types of results concerning the various growth rates of $a_n$ as $n\to\infty$ in terms of the Lebesgue measure, summarized in Khinchin's book \cite{Khinchin} led to the question of quantifying the exceptional sets in terms of Hausdorff dimension. 
In particular, the Hausdorff dimension of the following level sets has been studied in great detail by Fan et al. \cite{Khinchinefan} and by Iommi and Jordan \cite{IommiJordanBirkhoff}: For  $\alpha\in \mathbb{R}\cup\{\infty\}$,
\begin{align*}
    &K_{\text{cf}}(\alpha):=\left\{x\in(0,1)\setminus\mathbb{Q}:\lim_{n\to\infty}\frac{1}{n}\sum_{k=1}^n\log a_k(x)=\alpha \right\}  \text{ and }
    \\& M_{r,\text{cf}}(\alpha):=\left\{x\in(0,1)\setminus\mathbb{Q}:\lim_{n\to\infty}\frac{1}{n}\sum_{k=1}^n(a_k(x))^r=\alpha \right\} \ (r>0).
\end{align*}
Define, for $\alpha\in \mathbb{R}\cup\{\infty\}$, $k_{\text{cf}}(\alpha):=\dim_H K_{\text{cf}}(\alpha)$ and $b_{r,\text{cf}}(\alpha):=\dim_H M_{r,\text{cf}}(\alpha)$ ($r>0$). For $\phi \in \{a_1^r: r>0\}\cup\{\log a_1\}$ we set $\alpha_{\phi}:=\int \phi d\mu_G$, where $\mu_G$ denotes the Gauss measure defined by $d\mu_G:=\frac{dx}{\log2(1+x)}$. Note that the Gauss measure is $G$-invariant and absolutely continuous with respect to the Lebesgue measure.

\begin{thm}{\cite{Khinchinefan}} The function $k_{\text{cf}}$ is real-analytic on $(0,\infty)$, it is strictly increasing on $(0, \alpha_{\log a_1})$ and it is strictly decreasing on $(\alpha_{\log a_1},\infty)$.
\end{thm}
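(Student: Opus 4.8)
The plan is to recognise $k_{\text{cf}}$ as the Birkhoff spectrum of the potential $\varphi:=\log a_1$ for the Gauss map $G$, which is a uniformly expanding Markov map with countable Markov partition $\{[1/(n+1),1/n]\}_{n\in\mathbb{N}}$, on whose $n$-th cylinder $\varphi\equiv\log n$ and $\log|G'|=-2\log(\cdot)\asymp 2\log n$, and then to run the thermodynamic/Legendre machinery for such maps (as in Iommi and Jordan \cite{IommiJordanBirkhoff}). First I would record that $\{\int\varphi\,d\mu:\mu\in M(G)\}=[0,\infty)$ (the minimum $0$ being realised by the Dirac mass at the fixed point of $G$ in the first cylinder $\{a_1\equiv1\}$) and the conditional variational principle: for $\alpha\in(0,\infty)$,
\[
k_{\text{cf}}(\alpha)=\sup\Bigl\{\tfrac{h(\mu)}{\lambda(\mu)}:\mu\in M(G),\ \textstyle\int\varphi\,d\mu=\alpha,\ 0<\lambda(\mu)<\infty\Bigr\},
\]
with the supremum attained. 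Since $\varphi\asymp\tfrac12\log|G'|$ along the partition, the tail estimate $\sum_n n^{q-2s}<\infty\iff s>(q+1)/2$ shows that $P_G\bigl(q(\varphi-\alpha)-s\log|G'|\bigr)$ is finite precisely for $s>(q+1)/2$, tends to $+\infty$ as $s\downarrow(q+1)/2$ and to $-\infty$ as $s\to\infty$; hence for each $(q,\alpha)$ there is a unique $s(q,\alpha)>(q+1)/2$ with $P_G\bigl(q(\varphi-\alpha)-s(q,\alpha)\log|G'|\bigr)=0$, carrying a Gibbs equilibrium state $\nu_q$ of finite, positive Lyapunov exponent (note $\nu_q$ does not depend on $\alpha$, as the additive constant $-q\alpha$ only shifts the pressure).

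Next I would use the multifractal formalism of \cite{IommiJordanBirkhoff} to get $k_{\text{cf}}(\alpha)=\min_{q\in\mathbb{R}}s(q,\alpha)$. Differentiating the defining equation in $q$ gives $\partial_q s(q,\alpha)=\int(\varphi-\alpha)\,d\nu_q/\lambda(\nu_q)$, so at an interior minimiser $q^*(\alpha)$ one has $\int\varphi\,d\nu_{q^*(\alpha)}=\alpha$ and then $s(q^*(\alpha),\alpha)=h(\nu_{q^*(\alpha)})/\lambda(\nu_{q^*(\alpha)})$, matching the variational formula. That the minimum is interior follows from convexity of $s(\cdot,\alpha)$ together with $s(q,\alpha)>(q+1)/2\to+\infty$ as $q\to+\infty$ and, as $q\to-\infty$, a first-cylinder domination estimate giving $s(q,\alpha)\to+\infty$ (equivalently $\nu_q$ converges to the Dirac mass at the fixed point and $\partial_q s\to-\alpha/\inf_{\mu\in M(G)}\lambda(\mu)<0$). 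By the real-analyticity of the pressure function on the interior of its domain of finiteness, $s(\cdot,\cdot)$ is real-analytic there; $\varphi$ is not cohomologous to a constant plus a multiple of $\log|G'|$ (immediate from the periodic orbits of $G$), so the asymptotic variance is positive, $\partial_q^2 s(q^*(\alpha),\alpha)>0$, and the implicit function theorem makes $q^*(\alpha)$ real-analytic in $\alpha$. Hence $k_{\text{cf}}(\alpha)=s(q^*(\alpha),\alpha)$ is real-analytic on $(0,\infty)$.

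For the monotonicity, the envelope identity gives $k_{\text{cf}}'(\alpha)=\partial_\alpha s(q^*(\alpha),\alpha)$, and differentiating $P_G(q(\varphi-\alpha)-s\log|G'|)=0$ in $\alpha$ at fixed $q$ gives $\partial_\alpha s=-q/\lambda(\nu_q)$; thus $k_{\text{cf}}'(\alpha)=-q^*(\alpha)/\lambda(\nu_{q^*(\alpha)})$, with sign $-\operatorname{sgn}q^*(\alpha)$. Since $q\mapsto\int\varphi\,d\nu_q$ is real-analytic and strictly increasing on $\mathbb{R}$ (positive variance again), with limits $0$ and $\infty$, it is a bijection onto $(0,\infty)$ whose inverse is $q^*$; in particular $q^*$ is strictly increasing and $q^*(\alpha)=0$ exactly when $\int\varphi\,d\nu_0=\int\varphi\,d\mu_G=\alpha_{\log a_1}$ (at $q=0$ the equilibrium state of $-s\log|G'|$, with $s=1$, is the Gauss measure). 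Consequently $q^*(\alpha)<0$ and $k_{\text{cf}}'(\alpha)>0$ for $\alpha\in(0,\alpha_{\log a_1})$, while $q^*(\alpha)>0$ and $k_{\text{cf}}'(\alpha)<0$ for $\alpha\in(\alpha_{\log a_1},\infty)$, which is the claim. The main obstacle is the non-compactness coming from the infinite alphabet: one must ensure that for every $\alpha\in(0,\infty)$ the infimum over $q$ is genuinely attained at an interior point with an equilibrium measure of finite positive Lyapunov exponent (no escape of mass, no affine piece of $s(\cdot,\alpha)$ at the boundary of its domain) and that the realised averages $\int\varphi\,d\nu_q$ exhaust $(0,\infty)$; both rest on the explicit finiteness threshold $s>(q+1)/2$ and the boundary asymptotics of $s(q,\alpha)$ as $q\to\pm\infty$.
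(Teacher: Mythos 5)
The paper does not prove this statement at all: it is quoted verbatim as a known result of Fan, Liao, Wang and Wu \cite{Khinchinefan}, included only as motivation for the backward-continued-fraction analogues that the author actually establishes. So there is no proof in the paper to compare against. That said, your sketch is a correct outline of the standard thermodynamic-formalism proof for uniformly expanding countable Markov maps, essentially the route of Iommi--Jordan \cite{IommiJordanBirkhoff} (and, in spirit, the route in \cite{Khinchinefan} itself): the tail estimate $\sum_n n^{q-2s}<\infty \iff s>(q+1)/2$, the implicit definition of $s(q,\alpha)$ as the root of the pressure, the Legendre identity $k_{\text{cf}}(\alpha)=\min_q s(q,\alpha)$, and the envelope formula $k_{\text{cf}}'(\alpha)=-q^*(\alpha)/\lambda(\nu_{q^*(\alpha)})$ are all correct and exactly what is needed.

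Two places you should flesh out. First, the claim that $\log a_1$ is not cohomologous to $c+t\log|G'|$ is true but not ``immediate''; one should exhibit three periodic orbits whose data $\bigl(\int\log a_1\,d\mu_p,\lambda(\mu_p)\bigr)$ are not collinear (e.g.\ the fixed points of the $1$-cylinder and $2$-cylinder and the period-two orbit $\overline{12}$). Second, the assertion that $q\mapsto\int\log a_1\,d\nu_q$ is a bijection onto $(0,\infty)$ is the real technical heart in the infinite-alphabet setting: the limit $0$ as $q\to-\infty$ requires showing $\nu_q$ converges without escape of mass to the Dirac mass at the golden-mean fixed point, and the limit $\infty$ as $q\to+\infty$ requires showing that the Gibbs tail exponent $q-2s(q,\alpha)$ approaches $-1$ (so that $\sum_n n^{q-2s}\log n$ blows up). You do flag these correctly as the main obstacles, but they are precisely what a complete proof must supply, and they occupy most of the work in \cite{IommiJordanBirkhoff} and in the present paper's Section \ref{sec convergence} for the harder non-uniformly expanding case.
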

\begin{thm}{\cite[Proposition 6.7]{IommiJordanBirkhoff}}
    If $r\geq 1$ then $b_{r,\text{cf}}$ is real-analytic and strictly increasing on $(1,\infty)$ and $\lim_{\alpha\to\infty}b_{r,\text{cf}}(\alpha)=b_{r,\text{cf}}(\infty)=1$. If $r<1$ then $b_{r,\text{cf}}$ is real-analytic and strictly increasing on $(1,\alpha_{a_1^r})$ and for all $\alpha\in [\alpha_{a_1^r},\infty]$ we have $b_{r,\text{cf}}(\alpha)=1$.
\end{thm}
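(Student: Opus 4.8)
The plan is to treat the statement as a multifractal analysis problem for the Gauss map $G$, which---unlike the R\'{e}nyi map---is uniformly expanding with countably many full branches satisfying the R\'{e}nyi condition, and to run the thermodynamic-formalism machinery for countable Markov shifts, following \cite{IommiJordanBirkhoff}. First I would code $G$ by the full shift $\sigma$ on $\mathbb{N}^{\mathbb{N}}$ via the regular continued fraction algorithm: a cylinder $[a_1\cdots a_n]$ is an interval whose Euclidean length is comparable, with a distortion constant uniform in $n$, to the product of $|(G^{-1})'|$ along the corresponding inverse branch, and on the cylinder $[n]$ one has $a_1\equiv n$ and $\log|G'|=2\log n+O(1)$. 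Since $a_1^r$ is locally constant and $\log|G'|$ is locally H\"{o}lder, the conditional variational principle for Birkhoff averages on countable Markov shifts applies and gives, whenever $M_{r,\text{cf}}(\alpha)\neq\emptyset$,
\[
b_{r,\text{cf}}(\alpha)\;=\;\sup\left\{\frac{h(\mu)}{\lambda(\mu)}\;:\;\mu\in M(G),\ \int a_1^r\,d\mu=\alpha,\ \lambda(\mu)<\infty\right\},
\]
with the natural limiting interpretation when the constraint set is empty (as for $\alpha=\infty$).

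Next I would analyse the two-parameter pressure $\mathcal{P}(q,s):=P_G\big(q\,a_1^r-s\log|G'|\big)$ on the countable Markov shift. The summability criterion for the pressure shows that $\sum_n n^{-2s}e^{qn^r}<\infty$, equivalently $\mathcal{P}(q,s)<\infty$, exactly on $\{q<0\}\cup(\{0\}\times(\tfrac12,\infty))$, and on the interior of this region $\mathcal{P}$ is real-analytic and strictly convex, strictly decreasing in $s$, strictly increasing in $q$, with a unique equilibrium state $\mu_{q,s}$ of finite Lyapunov exponent whose entropy and exponent are recovered from the partial derivatives of $\mathcal{P}$. For fixed $\alpha\in(1,\infty)$ and $q\le 0$, the map $s\mapsto P_G\big(q(a_1^r-\alpha)-s\log|G'|\big)$ is real-analytic, strictly decreasing, and crosses zero exactly once, at $s=s(q,\alpha)$; moreover $q\mapsto s(q,\alpha)$ is convex. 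The variational principle forces $h(\mu)\le s(q,\alpha)\lambda(\mu)$ for every invariant $\mu$ with $\int a_1^r\,d\mu=\alpha$, so $b_{r,\text{cf}}(\alpha)\le\inf_{q\le 0}s(q,\alpha)$, and together with the lower bound in the conditional variational principle this yields
\[
b_{r,\text{cf}}(\alpha)\;=\;\inf_{q\le 0}s(q,\alpha);
\]
when the infimum is attained at an interior point $q^*=q^*(\alpha)<0$ it is characterised by $\int(a_1^r-\alpha)\,d\mu^*=0$ for $\mu^*$ the equilibrium state of $q^*(a_1^r-\alpha)-s(q^*,\alpha)\log|G'|$, with $b_{r,\text{cf}}(\alpha)=h(\mu^*)/\lambda(\mu^*)$.

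Real-analyticity and strict monotonicity then follow from the implicit function theorem applied to the real-analytic system $P_G\big(q(a_1^r-\alpha)-s\log|G'|\big)=0$ and $\partial_q P_G\big(q(a_1^r-\alpha)-s\log|G'|\big)=0$ in the unknowns $(q,s)$: non-degeneracy of the Jacobian comes from strict convexity of the pressure, which in turn uses that $a_1^r$ is not cohomologous to an affine function of $\log|G'|$ (their growths $n^r$ and $2\log n$ along cylinders being incomparable), and differentiating the system gives $db_{r,\text{cf}}/d\alpha=-q^*(\alpha)/\lambda(\mu^*)>0$ wherever $q^*(\alpha)<0$. The sign change of $q^*$ is governed by the $q=0$ equilibrium state, the Gauss measure $\mu_G$, which satisfies $h(\mu_G)/\lambda(\mu_G)=\dim_H((0,1)\setminus\mathbb{Q})=1$ and $\int a_1^r\,d\mu_G=\alpha_{a_1^r}$, the last quantity being finite precisely when $r<1$. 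Hence, if $r\ge 1$ then $\alpha_{a_1^r}=\infty$, so $q^*(\alpha)<0$ for every finite $\alpha>1$: the function $b_{r,\text{cf}}$ is real-analytic and strictly increasing on $(1,\infty)$ with $b_{r,\text{cf}}(\alpha)<1$, letting $\alpha\to\infty$ forces $q^*(\alpha)\to 0^-$ and hence $s(q^*(\alpha),\alpha)\to s(0,\alpha)=1$, and $b_{r,\text{cf}}(\infty)=1$ is realised outright by $\mu_G$ itself since $\int a_1^r\,d\mu_G=\infty$ and $h(\mu_G)/\lambda(\mu_G)=1$. If $r<1$ then $q^*(\alpha)<0$ exactly for $\alpha\in(1,\alpha_{a_1^r})$, where $b_{r,\text{cf}}$ is real-analytic and strictly increasing; for $\alpha\in[\alpha_{a_1^r},\infty]$ the bound $b_{r,\text{cf}}\le 1$ is trivial, and the matching lower bound is obtained by constructing ergodic Bernoulli measures whose marginals coincide with the Gauss cylinder weights $\asymp n^{-2}$ except for a far-tail correction engineered so that $\int a_1^r\,d\mu=\alpha$ (respectively $=\infty$), which makes $h(\mu)/\lambda(\mu)$ arbitrarily close to $1$ while $\mu$-almost every point lies in $M_{r,\text{cf}}(\alpha)$.

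The hard part is twofold. First, the non-compactness forced by the countable alphabet: one must prove the conditional variational principle in this setting, the delicate half being the lower bound $b_{r,\text{cf}}(\alpha)\ge\sup\{h(\mu)/\lambda(\mu):\int a_1^r\,d\mu=\alpha\}$, which requires, for each admissible $\mu$, a dynamically defined Cantor subset of $M_{r,\text{cf}}(\alpha)$ of nearly the full dimension $h(\mu)/\lambda(\mu)$, built by approximating $\mu$ from finite subsystems and invoking the finiteness region of $\mathcal{P}$ to control distortion and the escape of orbits. Second, the endpoint behaviour: one has to track the loss of mass to infinity as $q^*(\alpha)\to 0^-$, identify the plateau value $1$, and---crucially in the range $r<1$ and at $\alpha=\infty$---prove it is a genuine lower bound, which cannot come from a single interior equilibrium state but needs the perturbed-Bernoulli construction above, with the two regimes $r\ge 1$ and $r<1$ treated separately.
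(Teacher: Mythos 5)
This statement is not proved in the paper: it is quoted verbatim from \cite[Proposition 6.7]{IommiJordanBirkhoff} purely as background, to motivate the analogous backward-continued-fraction results (Theorems 1.6 and 1.7) that the paper then derives from Theorem \ref{thm main}. So there is no internal argument against which your proposal can be compared.

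Taken on its own terms as a reconstruction of the cited Iommi--Jordan argument, your sketch captures the right strategy and the two genuinely hard points: the conditional variational principle on the non-compact countable full shift, and the endpoint/plateau lower bound that cannot come from an interior equilibrium state. The finiteness region $\{q<0\}\cup(\{0\}\times(\tfrac12,\infty))$ for $P_G(qa_1^r-s\log|G'|)$ is correct, as is the identification of $\alpha_{a_1^r}<\infty$ exactly for $r<1$ via $\sum n^{r-2}$, and the Legendre-type description $b_{r,\mathrm{cf}}(\alpha)=\inf_{q\le 0}s(q,\alpha)$ with real-analyticity and strict monotonicity from the implicit function theorem are the standard route. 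Two points should be made sharper. First, the claim that $b_{r,\mathrm{cf}}(\infty)=1$ is ``realised outright by $\mu_G$'' because $\int a_1^r\,d\mu_G=\infty$ is not automatic: you need to show $\mu_G(M_{r,\mathrm{cf}}(\infty))=1$, which requires a truncation-plus-Birkhoff argument for a non-negative unbounded observable with infinite integral, not merely the value of the integral. Second, the convexity statement ``$q\mapsto s(q,\alpha)$ is convex'' needs a justification (it follows from the joint convexity of $(q,s)\mapsto P_G(q(a_1^r-\alpha)-s\log|G'|)$ and the monotonicity in $s$, but as written it is only asserted), and the strict convexity needed for the Jacobian non-degeneracy should be tied to the asymptotic variance being nonzero, i.e.\ to $a_1^r-\alpha$ not being cohomologous to a constant multiple of $\log|G'|$ plus a constant, which is the precise statement one wants rather than the heuristic ``growths $n^r$ and $2\log n$ are incomparable.'' Neither of these is a fatal gap; both are exactly the places where the Iommi--Jordan paper, and the thermodynamic machinery developed in Sections 3--4 of the present paper for the R\'enyi-map analogue, spend their effort.
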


These theorems naturally lead to the question of determining the Hausdorff dimension of the following level sets:
\begin{align*}
    &K(\alpha):=\left\{x\in(0,1)\setminus\mathbb{Q}:\lim_{n\to\infty}\frac{1}{n}\sum_{k=1}^n\log b_k(x)=\alpha \right\}  \text{ and }
    \\& M_{r}(\alpha):=\left\{x\in(0,1)\setminus\mathbb{Q}:\lim_{n\to\infty}\frac{1}{n}\sum_{k=1}^n(b_k(x))^r=\alpha \right\} \ (r>0).
\end{align*}
Define, for $\alpha\in \mathbb{R}\cup\{\infty\}$, $k(\alpha):=\dim_H K(\alpha)$ and $b_{r}(\alpha):=\dim_H M_{r}(\alpha)$ ($r>0$). 

Jaerisch and Takahasi \cite[Proposition 1.2]{MixedJT} proved that for all $\alpha\in [2,\infty]$ we have $b_1(\alpha)=1$. This means that 
for arithmetic means of the backward continued fraction expansions, the multifractal analysis does not work well.
In contrast to this, the following theorem (Theorem \ref{thm khinchin}) states that the multifractal analysis is valid for Khinchin exponents of backward continued fraction expansions.

We define the partition $\{\Delta_i\}_{i\in \mathbb{N}}$ of $[0,1)$ by setting $\Delta_i:=[1-\frac{1}{i},1-\frac{1}{i+1})$.
Then, using the partition $\{\Delta_i\}_{i\in \mathbb{N}}$, we can show that $R$ is a non-uniformly expanding R\'{e}nyi interval map with countably many branches.
Moreover, $R$ satisfies (G) with $\growth=2$ and (F) with $\exponent=1$. Here, we note that for all $i\in\mathbb{N}$ and irrational number $x\in \Delta_i$ we have $b_1(x)=i+1$. In particular, for all $r>0$ we have $\lim_{x\to 1} (b_1(x))^r/\log |R'(x)|=\infty$ and the function $\log b_1$ satisfies (L).
Therefore, the combination of Theorem \ref{thm main} and \eqref{eq important relation backward regular} yields the following:
\begin{thm}
    The function $k$ is real-analytic and strictly decreasing on $(\log 2,\infty)$ and we have $k(\log 2)=1$. 
\end{thm}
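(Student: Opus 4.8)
The plan is to deduce Theorem 1.9 (the statement about $k$) directly from Theorem~\ref{thm main} applied to the R\'{e}nyi map $R$ with the potential $\phi := \log b_1$, after checking the hypotheses and translating the conclusion through the conjugacy \eqref{eq important relation backward regular}. First I would verify that $R$, equipped with the partition $\Delta_i = [1-\tfrac1i, 1-\tfrac1{i+1})$, is a non-uniformly expanding R\'enyi interval map with countably many branches, that it has an admissible induced map (so (F) holds with $\gamma(R)=1$), and that it satisfies (G) with $s(R)=2$; these facts are already asserted in the excerpt and in \cite{MixedJT}, so I would only cite them. Here $\p = \{1\}$ with parabolic fixed point at $0$ (the branch $\Delta_1 = [0,\tfrac12)$), since $R'(0)=1$. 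Next I would check that $\phi = \log b_1$ lies in $\mathcal{R}$ and satisfies the auxiliary hypotheses needed for case (B3): on $\Delta_i$ we have $b_1 \equiv i+1$, so $\phi = \log b_1$ is constant on each $\Delta_i \cap \Lambda$, whence (P) holds ($\phi \geq \log 2 > 0$), $\indupote$ is locally H\"older (so (H) holds), and (H1) holds automatically because $\phi|_{\Delta_1 \cap \Lambda}$ is constant (the remark after (L) in the excerpt gives exactly this). Also $\alpha_1 = \phi(x_1) = \phi(0) = \log b_1(0) = \log 2$, so $\underline{\alpha}_\p = \overline{\alpha}_\p = \log 2$ and $\frat = \{\log 2\}$ is a single point.

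Then I would check condition (L): for $x \in \Delta_i$, $\log|R'(x)| = -2\log(1-x) + O(1)$ uniformly, and on $\Delta_i$ one has $-\log(1-x) \in [\log i, \log(i+1))$, while $\phi(x) = \log(i+1)$; comparing, $\phi(x) = -\tfrac12 \log|R'(x)| + O(1)$ uniformly on $\Lambda$, so (L) holds with $\theta = \tfrac12$, and consequently $\ratio = \theta = \tfrac12 \in (0,\infty)$. Since $\ratio \notin \{0,\infty\}$ but (L) holds, case (B3) of Theorem~\ref{thm main} applies. It gives: for all $\alpha \in (\alpha_{\inf}, \infty) \setminus \{\log 2\}$, $0 < b(\alpha) < \delta$ with a unique equilibrium-type measure, $b$ is real-analytic on $(\alpha_{\inf},\infty)\setminus\{\log 2\}$, strictly increasing on $(\alpha_{\inf}, \log 2)$ and strictly decreasing on $(\log 2, \infty)$; and $b(\log 2) = \delta$. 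I would identify $\delta = \dim_H \Lambda$: since $R$ is defined on all of $[0,1)$ with full branches, $\Lambda = (0,1)\setminus\mathbb{Q}$ up to a countable set, so $\delta = 1$. Finally I would determine $\alpha_{\inf} = \inf_{\mu \in M(R)} \int \log b_1 \, d\mu$: since $\phi \geq \log 2$ with equality exactly on the fixed-point branch, the Dirac mass $\delta_0$ (the parabolic fixed point) gives $\int \phi \, d\delta_0 = \log 2$, and this is the infimum; moreover $\alpha_{\inf} = \alpha_1 = \log 2 = \min \frat = \max \frat$. Hence $(\alpha_{\inf}, \infty) \setminus \frat = (\log 2, \infty)$, and on this interval $b$ is real-analytic, strictly decreasing, with $b(\log 2) = 1$ at the endpoint.

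To finish I would translate from the level sets $\Lambda_\alpha$ of $\phi = \log b_1$ under $R$ to the sets $K(\alpha)$. By \eqref{eq important relation backward regular}, $b_n(x) = b_1(R^{n-1}(x))$, so $\frac1n\sum_{k=1}^n \log b_k(x) = \frac1n \sum_{k=0}^{n-1} \phi(R^k(x))$, and therefore $K(\alpha) = \Lambda_\alpha$ exactly (both taken inside $(0,1)\setminus\mathbb{Q}$, with $\Lambda$ differing from $(0,1)\setminus\mathbb{Q}$ only by a set that is at most countable, hence of Hausdorff dimension $0$ and irrelevant to the dimension of $K(\alpha)$ as long as $b(\alpha)>0$). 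Thus $k(\alpha) = b(\alpha)$ for $\alpha > \log 2$, giving real-analyticity and strict decrease, and $k(\log 2) = b(\log 2) = 1$ from the boundary value $b(\log 2) = \delta = 1$.

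The main obstacle I anticipate is not any single hard estimate but rather the bookkeeping of verifying each hypothesis of Theorem~\ref{thm main} cleanly — in particular pinning down $\alpha_{\inf}$ and confirming $\alpha_{\inf} = \log 2 = \min\frat$ so that the "bad" point $\log 2$ coincides with the left endpoint and the statement reduces to a clean "strictly decreasing on $(\log 2,\infty)$" — together with the boundary behaviour $b(\log 2) = \delta$, which requires knowing $\dim_H \Lambda = 1$ for the R\'enyi map (a standard fact, but one must note that $R$ has full branches so $\Lambda$ is cofinite in $(0,1)\setminus\mathbb{Q}$). A secondary point needing care is the uniform comparison $\phi \asymp -\tfrac12\log|R'| + O(1)$ establishing (L): this uses the R\'enyi/bounded-distortion condition (NERI3$'$) to control $\log|R'(x)|$ by $-2\log(1-x)$ uniformly across branches, and one should make sure the additive constants $\eta, \xi$ can indeed be chosen uniformly in $i$, which follows from (G) with $s(R)=2$ since $i^2 \asymp |R'(x)|$ and $(1-x)^{-1} \asymp i$ on $\Delta_i$.
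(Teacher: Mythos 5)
Your proposal is correct and follows exactly the route the paper intends: verify that $R$ with $\phi=\log b_1$ satisfies the hypotheses of Theorem~\ref{thm main}, observe that $\p=\{1\}$, $\alpha_1=\log 2$, $\alpha_{\inf}=\log 2$ (since $\phi\geq\log 2$ and $\delta_{x_1}$ attains it), $\frat=\{\log 2\}$, $\delta=\dim_H\Lambda=1$, and $\ratio=1/2$, then apply (B3) together with \eqref{eq important relation backward regular} and the fact that $\Lambda$ and $(0,1)\setminus\mathbb{Q}$ differ by a set of Hausdorff dimension $0$. One small bookkeeping slip: your own computation gives $\phi(x)=\tfrac12\log|R'(x)|+O(1)$ with a \emph{plus} sign (both quantities tend to $+\infty$ on $\Delta_i$ as $i\to\infty$), so the line ``$\phi(x)=-\tfrac12\log|R'(x)|+O(1)$'' contradicts the sentence before it; the sign in the paper's statement of (L) appears to be a typo, as is clear from the remark $\ratio=\theta$ and from the computation in the proof of Lemma~\ref{lemma alpha delta*}, and your conclusion $\ratio=\theta=1/2\in(0,\infty)$ is the intended and correct one.
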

\begin{thm}\label{thm khinchin}
    For all $r>0$ and $\alpha\in [2^r,\infty]$ we have $b_r(\alpha)=1$.
\end{thm}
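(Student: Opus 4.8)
The plan is to reduce this statement to case (B3) of the main theorem (Theorem \ref{thm main}) applied to the R\'enyi map $R$ with the potential $\phi := (b_1)^r$, and then identify the relevant parabolic value. First I would verify that $R$ is a non-uniformly expanding R\'enyi interval map with countably many branches with respect to the partition $\Delta_i = [1-\frac1i, 1-\frac1{i+1})$, which is stated in the excerpt, and that it satisfies (G) with $\growth = 2$ and (F) with $\exponent = 1$. Next I would check that $\phi = (b_1)^r$ lies in $\mathcal{R}$: positivity (P) is immediate since $b_1 \geq 2$, and (H) follows because on each cylinder $\Delta_i$ (more precisely on each irrational point of $\Delta_i$) we have $b_1 = i+1$, so $\phi$ is \emph{constant} on each branch; this constancy makes $\phi|_{\Delta_i \cap \Lambda}$ trivially locally H\"older and, together with the R\'enyi condition, yields that the induced potential $\indupote$ is locally H\"older with some exponent $\beta>0$. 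The constancy of $\phi$ on each parabolic branch also gives (H1) for free, as noted in the remark after the statement of the conditions. Condition (L) holds for $\log b_1$ — and I claim also for $(b_1)^r$ one must instead use the hypothesis (R): since $\lim_{x\to 1}(b_1(x))^r/\log|R'(x)| = \infty$ (because $b_1(x) = i+1 \sim 1-x)^{-1}$ while $\log|R'(x)| \sim \growth \log(1-x)^{-1} = 2\log i$), we have $\ratio = \infty$, so we are in case (B2), not (B3).

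Given $\ratio = \infty$ and that $\phi = (b_1)^r$ satisfies (H1), case (B2) of Theorem \ref{thm main} tells us that $b(\alpha) = \delta$ for all $\alpha \in (\min\frat, \infty]$, where $\delta = \dim_H(\Lambda)$. For the R\'enyi map the limit set $\Lambda$ is $[0,1)\setminus\mathbb{Q}$ up to a countable set, so $\delta = 1$. It remains to compute $\min\frat$. The set of parabolic indices for $R$ is $\p = \{1\}$ (the branch on $\Delta_1 = [0,\frac12)$ has the parabolic fixed point at $0$), so $\frat = [\underline\alpha_\p, \overline\alpha_\p] = \{\alpha_1\}$ where $\alpha_1 = \phi(x_1) = (b_1(x_1))^r$. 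Since $x_1 = 0 \in \overline{\Delta_1}$ and $b_1 \equiv 2$ on $\Delta_1$, we get $\alpha_1 = 2^r$, hence $\min\frat = 2^r$. Therefore (B2) gives $b_r(\alpha) = b(\alpha) = \delta = 1$ for all $\alpha \in (2^r, \infty]$. For the endpoint $\alpha = 2^r$ itself, note $2^r \in \frat$, and the first assertion of Theorem \ref{thm main} ("for all $\alpha\in\frat$ we have $b(\alpha)=\delta$") directly gives $b_r(2^r) = \delta = 1$. Combining, $b_r(\alpha) = 1$ for all $\alpha \in [2^r, \infty]$.

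Finally I would record the translation between the dynamical level sets $\Lambda_\alpha$ for $(R, (b_1)^r)$ and the sets $M_r(\alpha)$: by \eqref{eq important relation backward regular} we have $b_k(x) = b_1(R^{k-1}(x))$, so $\frac1n\sum_{k=1}^n (b_k(x))^r = \frac1n\sum_{i=0}^{n-1}\phi(R^i(x))$, and hence $M_r(\alpha)$ coincides with $\Lambda_\alpha$ up to the countable set of rationals, which is $\dim_H$-negligible. Thus $b_r(\alpha) = \dim_H M_r(\alpha) = \dim_H \Lambda_\alpha = b(\alpha)$. The only genuinely non-routine point in this argument is the verification that $(b_1)^r$ satisfies (H) — i.e., that the induced potential $\indupote$ is locally H\"older — but since $\phi$ is constant on each first-level branch, $\indupote(x) = \sum_{i=0}^{\return(x)-1}\phi(R^i(x))$ is determined by the symbolic itinerary up to the return time, and the R\'enyi/bounded-distortion estimates controlling $|\bar\Delta_\omega|$ give the required H\"older dependence; this is essentially the same verification already carried out for $R$ in \cite[Section 6]{MixedJT}. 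I expect no real obstacle beyond carefully matching our hypotheses to those of Theorem \ref{thm main}.
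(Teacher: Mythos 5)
Your proof is correct and follows the same route as the paper, which itself gives only a one-line justification ("the combination of Theorem \ref{thm main} and \eqref{eq important relation backward regular} yields the following") after listing exactly the facts you verify: $R$ is NERI with (G) ($\growth=2$) and (F) ($\exponent=1$), $\phi=(b_1)^r$ is constant on each branch so (H1) holds, $\ratio=\infty$ so case (B2) applies, $\p=\{1\}$ and $\alpha_1=2^r$, and $\delta=1$. Your opening line mentions (B3) before self-correcting to (B2); the final logic is sound, but you should delete the misdirection so the reader is not momentarily led to believe (L) is being invoked for $(b_1)^r$.
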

Moreover, by combining Theorem \ref{thm main} with Proposition \ref{lemma only frat alpha}, we obtain the following theorem, which answers the conjecture of Jaerisch and Takahasi \cite{MixedJTkyouto}.
\begin{thm}
    Let $\psi:\{2,3,\cdots\}\rightarrow \mathbb{R}$ be a monotone increasing function and let $\psi(+\infty):=\lim_{n\to\infty}\psi(n)$. We assume that the limit  $\lim_{n\to\infty}\psi(n)/\log n\in \mathbb{R}\cup\{\infty\}$ exists. Then, we have
    \[
    \dim_H\left(\left\{x\in (0,1)\setminus\mathbb{Q}:\lim_{n\to\infty}\frac{1}{n}\sum_{k=1}^n\psi(b_k(x))=\alpha\right\}\right)=1
    \]
    for all $\alpha\in [\psi(2),\psi(+\infty)]$ if and only if $\lim_{n\to\infty}\psi(n)/\log n=\infty$ or $\psi(2)=\psi(+\infty)$.
\end{thm}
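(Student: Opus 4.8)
The plan is to apply Theorem~\ref{thm main} and Proposition~\ref{lemma only frat alpha} to the R\'enyi map $R$ with the partition $\Delta_i=[1-\tfrac1i,1-\tfrac1{i+1})$ and to the potential $\phi:=\psi\circ b_1$ (so that $\phi\circ\oricodingmap$ is the cylinder function $\omega\mapsto\psi(\omega_1+1)$). Recall from the excerpt that, for this partition, $R$ is a non-uniformly expanding R\'enyi interval map with countably many branches satisfying (G) with $\growth=2$ and (F) with $\exponent=1$, that $\dimension=\dim_H\Lambda=1$, and that $b_1(x)=i+1$ for $x\in\Delta_i$; its unique parabolic index is $1$, with parabolic fixed point $x_1=0\in\Delta_1$, so $\p=\{1\}$. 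Since $\psi$ is monotone increasing, $\phi|_{\Delta_i\cap\Lambda}\equiv\psi(i+1)$; hence $\phi\ge\psi(2)$, $\alpha_1=\phi(x_1)=\psi(2)$, and $\frat=\{\psi(2)\}$. By \eqref{eq important relation backward regular}, $\tfrac1n\sum_{k=1}^n\psi(b_k(x))=\tfrac1n\sum_{i=0}^{n-1}\phi(R^i(x))$, so the level set in the statement coincides with $\Lambda_\alpha$ up to a countable set of rationals, and therefore has Hausdorff dimension $b(\alpha)$.

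Next I would verify the hypotheses of Theorem~\ref{thm main} for $(R,\phi)$. For (P), note $\inf\phi=\psi(2)>-\infty$; if $\psi(2)\le 0$ we replace $\phi$ by $\phi-\psi(2)+1$, which translates every Birkhoff average and both endpoints $\psi(2),\psi(+\infty)$ by the same constant, leaving the statement and the asserted dichotomy unchanged. For (H): since $\phi$ is constant on each branch of $R$, the induced potential $\indupote$ is constant on every $1$-cylinder of $\indumap$, hence locally H\"older of any exponent, and $\phi$ is acceptable; for the same reason $\phi|_{\Delta_1\cap\Lambda}$ is constant, so (H1) holds. For (R): since $|R'(x)|\in[i^2,(i+1)^2)$ on $\Delta_i$, the ratio $\phi(x)/\log|R'(x)|$ lies between $\psi(i+1)/(2\log(i+1))$ and $\psi(i+1)/(2\log i)$ for $x\in\Delta_i$; as $\log(i+1)/\log i\to1$ and $\lim_n\psi(n)/\log n$ exists in $\mathbb R\cup\{\infty\}$ by hypothesis, the limit $\ratio=\tfrac12\lim_n\psi(n)/\log n\in[0,\infty]$ exists. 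Finally, evaluating $\int\phi\,d\mu$ at the Dirac mass at $0$ (value $\psi(2)$) and at the invariant measures on the fixed points of $R|_{\Delta_{n-1}}$ for $n\ge2$ (which exist by the intermediate value theorem, with values $\psi(n)$), and using $\psi(2)\le\phi\le\psi(+\infty)$, identifies $\alpha_{\inf}=\psi(2)$ and $\alpha_{\sup}=\psi(+\infty)$.

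The equivalence then follows by a short case analysis. ``$\Leftarrow$'': if $\psi(2)=\psi(+\infty)$ then $\psi$ is constant, the Birkhoff average is identically $\psi(2)$, the sole level set is all irrationals and has dimension $1$, and $[\psi(2),\psi(+\infty)]$ is a single point; if $\lim_n\psi(n)/\log n=\infty$ then $\psi(+\infty)=\infty$ and $\ratio=\infty$ while (H1) holds, so Theorem~\ref{thm main} (namely $b\equiv\dimension$ on $\frat$ together with (B2)) gives $b(\alpha)=\dimension=1$ for all $\alpha\in[\min\frat,\infty]=[\psi(2),\psi(+\infty)]$. ``$\Rightarrow$'' (by contraposition): suppose $\psi$ is non-constant and $\lim_n\psi(n)/\log n<\infty$, so $\ratio<\infty$; then $\psi(2)=\alpha_{\inf}<\alpha_{\sup}=\psi(+\infty)$, so we may pick $\alpha_0\in(\psi(2),\psi(+\infty))\subseteq(\alpha_{\inf},\alpha_{\sup})\setminus\frat$, and Proposition~\ref{lemma only frat alpha} (applicable since $\ratio\ne\infty$) yields $b(\alpha_0)<\dimension=1$, so the displayed equality fails at $\alpha_0\in[\psi(2),\psi(+\infty)]$.

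The main obstacle is the regime $\ratio\in(0,\infty)$ in the ``$\Rightarrow$'' direction: there Theorem~\ref{thm main} alone says nothing, since (B1) needs $\ratio=0$, (B2) needs $\ratio=\infty$, and (B3) needs (L), which fails for $\phi=\psi\circ b_1$ as soon as $\psi$ is only logarithmic up to an unbounded error (for instance $\psi(n)=\lfloor\log n+\sqrt{\log n}\,\rfloor$). Supplying exactly this missing piece --- that $\ratio<\infty$ already forces $b<\dimension$ off $\frat$ --- is the role of Proposition~\ref{lemma only frat alpha}; everything else above is routine once one exploits that $\phi$ is locally constant on the Markov partition.
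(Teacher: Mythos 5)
Your proposal reconstructs what the paper leaves implicit (the paper gives no detailed proof of this theorem, only the remark that it follows "by combining Theorem~\ref{thm main} with Proposition~\ref{lemma only frat alpha}"), and the verification of (P), (H), (H1), (R), the identifications $\frat=\{\psi(2)\}$, $\alpha_{\inf}=\psi(2)$, $\alpha_{\sup}=\psi(+\infty)$, and the use of the locally constant structure of $\phi=\psi\circ b_1$ are all correct; the overall route matches the one the paper points to.

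There is one slip to fix in the $\Rightarrow$ direction. The parenthetical "(applicable since $\ratio\ne\infty$)" overstates the scope of Proposition~\ref{lemma only frat alpha}, whose hypothesis is $\ratio\in(0,\infty)$, not merely $\ratio<\infty$. When $\lim_n\psi(n)/\log n=0$ (e.g.\ $\psi(n)=\lfloor\log\log n\rfloor$ eventually, which is monotone increasing and nonconstant), one has $\ratio=0$ and the proposition simply does not apply. In that regime you must instead invoke Proposition~\ref{prop only frat}, or equivalently part (B1) of Theorem~\ref{thm main}: on $(a^*,b^*)\setminus\frat$ one has $\growth^{-1}<b(\alpha)<\dimension$, while on $(\alpha_{\inf},a^*]\cup[b^*,\alpha_{\sup})$ one has $b(\alpha)=\growth^{-1}=1/2<1=\dimension$, so $b(\alpha)<1$ throughout $(\alpha_{\inf},\alpha_{\sup})\setminus\frat$ regardless. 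Your closing paragraph shows you are aware that (B1) is the $\ratio=0$ piece, so this is a mis-citation rather than a conceptual gap; once the case split $\ratio=0$ versus $\ratio\in(0,\infty)$ is made explicit, the proof is complete.
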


\subsection{Outline of the paper}
The structure of the paper is as follows. In Section \ref{sec preliminary}, we introduce the tools that will be used in Sections \ref{sec thermodynamic} and \ref{sec multifractal}. Section \ref{sec thermodynamic} is devoted to developing the thermodynamic formalism for a non-uniformly expanding R\'{e}nyi interval map with countably many branches. In Section \ref{sec multifractal}, we perform the multifractal analysis and prove Theorem \ref{thm main}. 

\textbf{Notations.} Throughout we shall use the following notation:
For a index set $\mathcal{Q}$ and $\{a_{q}\}_{q\in\mathcal{Q}},\{b_{q}\}_{q\in\mathcal{Q}}\subset[0,\infty]$ we write $a_q\ll b_q$ if there exists a constant $C\geq 1$ such that for all $q\in \mathcal{Q}$ we have $a_q\leq Cb_q$. If we have $a_q\ll b_q$ and $b_q\ll a_q$ then we write $a_q\asymp b_q$.
For a probability space $(X,\mathcal{B})$, a probability measure $\mu$ on $(X,\mathcal{B})$ and a measurable function $\psi:X\rightarrow \mathbb{R}$ we set
$\mu(\psi):=\int \psi d\mu$.

\section{Preliminary}\label{sec preliminary}
In this section, we first describe the thermodynamic formalism on a general countable Markov shift. Let $E$ be a countable set and let $A:E\times E\rightarrow\{0,1\}$ be a incidence matrix. We define 
\[
\Sigma_A:=\{\omega\in E^{\mathbb{N}}:A_{\omega_i,\omega_{i+1}}=1,\ i\in\mathbb{N}\}.
\]
and the left-shift map $\sigma:\Sigma_A\rightarrow \Sigma_A$ by $\sigma(\omega_1\omega_2\cdots)=\omega_2\cdots$. 
We denote by $\Sigma_A^{n}$ $(n\in\mathbb{N})$ the set of all admissible
words of length $n$ with respect to $A$ and
by $\Sigma_A^{*}$ the set of all admissible words which have a finite length
(i.e. $\Sigma_A^*=\cup_{n\in\mathbb{N}}\Sigma_A^{n}$).
For convenience, we set $\Sigma_A^0 := \{\varnothing\}$, where $\varnothing$ denotes the empty word. 
For $\omega\in E^{n}$ ($n\in\mathbb{N}$) we define the cylinder set of $\omega$ by
$[\omega]:=\{\tau\in\Sigma_A:\tau_{i}=\omega_{i},1\leq i\leq n\}$.
We endow $\Sigma_A$
with the metric $d$ defined by $d(\omega,\omega')={e^{-k}}$ $\text{if}\ \omega_{i}=\omega_{i}'\ \text{for\ all}\ i=1,\cdots,k\ \text{and}\ \omega_{k}\neq\omega_{k}'$
and $d(\omega,\omega')=0$ otherwise.
$\Sigma_A$ is said to be finitely primitive if there exist $n\in\mathbb{N}$ and a finite set $\Omega\subset \Sigma_A^n$ such that for all $\tau,\tau'\in E$ there is $\omega=\omega(\tau,\tau')\in \Omega$ for which $\tau\omega\tau'\in \Sigma_A^*$.

We will recall results from the thermodynamic formalism for ($\Sigma_A,\sigma$). For details, we refer the reader to \cite[Section 2]{mauldin2003graph}  and \cite[Section 17, 18, 20]{Urbanskinoninvertible}.
Let $\psi$ be a function on $\Sigma_A$.
For $Z\subset \Sigma_A$ we set 
\[
\psi(Z):=\sup_{\tau\in Z}\psi(\tau).
\]
For all $n\in\mathbb{N}$ we define 
$S_n(\psi):=\sum_{k=0}^{n-1} \psi\circ\sigma^k$.
 For a continuous function $\psi$ on $\Sigma_A$ and $F\subset E$ the topological pressure of $\psi$ introduced by Mauldin and Urba\'nski \cite{mauldin2003graph} is given as
\[
P_F(\psi):=\lim_{n\to\infty}\frac{1}{n}\log\sum_{\omega\in \Sigma_A^n\cap F^n}\exp\left(
S_n(\psi)([\omega]\cap F^{\mathbb{N}})
\right).
\]
If $F=E$, we simply write $P(\psi)$ for $P_F(\psi)$. 

A continuous function $\phi:\Sigma_A\rightarrow\mathbb{R}$ is called acceptable if it is uniformly continuous and $\sup_{e\in E}\{\sup(\psi|_{[e]})-\inf (\psi|_{[e]})\}<\infty$. Moreover, $\psi$ is said to be locally H\"older with exponent $\beta>0$ if  
\begin{align}\label{def locally holder}
    \sup_{n\in\mathbb{N}}\sup_{\omega\in \Sigma_A^n}\sup\{|\psi(\tau)-\psi(\tau')|(d(\tau,\tau'))^{-\beta}:\tau,\tau'\in [\omega],\ \tau\neq\tau'\}<\infty.
\end{align}
Note that for all $\beta>0$ if a function $\psi$ on $\Sigma_A$ is locally H\"older with exponent $\beta>0$ then $\psi$ is acceptable. 

\begin{thm}\label{thm compact approximation}
    If $\psi:\Sigma_A\rightarrow \mathbb{R}$ is acceptable and $\Sigma_A$ is finitely primitive then we have $P(\psi)=\{P_F(\psi):F\subset E,\ \#F<\infty\}$.
\end{thm}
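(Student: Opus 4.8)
The right-hand side should be read as the supremum $\sup\{P_F(\psi):F\subset E,\ \#F<\infty\}$. The inequality $P(\psi)\ge\sup_F P_F(\psi)$ is immediate from monotonicity: for finite $F$ one has $\Sigma_A^n\cap F^n\subset\Sigma_A^n$ and $[\omega]\cap F^{\mathbb N}\subset[\omega]$, so the partition sum defining $P_F(\psi)$ is dominated term by term by the one defining $P(\psi)$, whence $P_F(\psi)\le P(\psi)$. The content is the reverse inequality, which is the countable-alphabet form of approximating pressure by finite subsystems, in the spirit of Mauldin and Urba\'nski (cf.\ \cite[Section 2]{mauldin2003graph}). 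Throughout write $Z_n(G):=\sum_{\omega\in\Sigma_A^n\cap G^n}\exp\bigl(S_n(\psi)([\omega]\cap G^{\mathbb N})\bigr)$ for $G\subset E$ and $Z_n:=Z_n(E)$, so $P_G(\psi)=\lim_n\tfrac1n\log Z_n(G)$.

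The plan rests on two preliminaries. First, acceptability of $\psi$ is equivalent to $V_1(\psi)<\infty$ and $V_n(\psi)\to0$, where $V_n(\psi):=\sup\{|\psi(x)-\psi(y)|:x_i=y_i,\ 1\le i\le n\}$; hence $R_n:=\sum_{i=1}^nV_i(\psi)=o(n)$, the oscillation of $S_n(\psi)$ on every cylinder of length $\ge n$ is at most $R_n$, and $\psi$ is bounded on each $1$-cylinder, so each $S_n(\psi)([\omega]\cap G^{\mathbb N})$ is finite and $Z_n(G)\in(0,+\infty]$, with $Z_n(G)<\infty$ for finite $G$. Second, a unique-factorisation estimate gives $Z_{a+b}(G)\le Z_a(G)Z_b(G)$, so by Fekete's lemma $P_G(\psi)=\inf_n\tfrac1n\log Z_n(G)=\lim_n\tfrac1n\log Z_n(G)$. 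I then fix an increasing exhaustion $F_1\subset F_2\subset\cdots$ of $E$ by finite sets, with $F_1$ containing a fixed symbol $e_0$ and every symbol appearing in the finite family $\Omega$ witnessing finite primitivity; writing $p$ for the common length of the words of $\Omega$, each $\Sigma_A\cap F_k^{\mathbb N}$ is then finitely primitive with the same $\Omega$, and the goal becomes $P_{F_k}(\psi)\to P(\psi)$.

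\emph{Step 1:} for each fixed $n$, $Z_n(F_k)\uparrow Z_n$ as $k\to\infty$. Since $[\omega]\cap F_k^{\mathbb N}$ increases in $k$, and appending to any finite admissible word a bounded admissible tail built from $e_0$ and the words of $\Omega$ shows $\bigcup_k([\omega]\cap F_k^{\mathbb N})$ is dense in $[\omega]$, continuity of $S_n(\psi)$ gives $S_n(\psi)([\omega]\cap F_k^{\mathbb N})\uparrow S_n(\psi)([\omega])$; monotone convergence for the series over $\omega$ then gives the claim (the case $Z_n=+\infty$ is harmless and forces $P(\psi)=+\infty$). \emph{Step 2} is a near-supermultiplicativity at scale $n$: for $j\in\mathbb N$ and $(\omega^{(1)},\dots,\omega^{(j)})\in(\Sigma_A^n\cap F_k^n)^j$, insert between consecutive blocks the connector $\gamma(\omega^{(i)}_n,\omega^{(i+1)}_1)\in\Omega$; this yields an admissible word of length $jn+(j-1)p$ with all symbols in $F_k$, and the assignment is injective in the $j$-tuple. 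Prolonging that word by a bounded admissible tail as in Step 1 and splitting $S_{jn+(j-1)p}(\psi)$ into its $j$ block-contributions and $j-1$ connector-contributions, each block contributes at least $S_n(\psi)([\omega^{(i)}]\cap F_k^{\mathbb N})-R_n$ (comparison on a length-$n$ cylinder) and each connector at least $-C$, where $C:=\sup_{\gamma\in\Omega}\sup_{[\gamma]}|S_p(\psi)|<\infty$ depends only on $\psi$ and $\Omega$. Summing over all $j$-tuples, the bound factors and gives $Z_{jn+(j-1)p}(F_k)\ge e^{-j(R_n+C)}Z_n(F_k)^{\,j}$; dividing by $jn+(j-1)p$ and letting $j\to\infty$ (along a subsequence of $\tfrac1m\log Z_m(F_k)$, which converges to $P_{F_k}(\psi)$) yields $P_{F_k}(\psi)\ge(n+p)^{-1}\bigl(\log Z_n(F_k)-R_n-C\bigr)$ for all $k$ and $n$.

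To conclude, let $k\to\infty$: by Step 1, and since $R_n$ and $C$ do not depend on $k$, this gives $\liminf_k P_{F_k}(\psi)\ge(n+p)^{-1}\bigl(\log Z_n-R_n-C\bigr)$ for every $n$; then let $n\to\infty$, using $R_n=o(n)$ and $\tfrac1n\log Z_n\to P(\psi)$, to get $\liminf_k P_{F_k}(\psi)\ge P(\psi)$. Together with $P_{F_k}(\psi)\le P(\psi)$ this gives $\lim_k P_{F_k}(\psi)=P(\psi)$, hence $\sup_F P_F(\psi)\ge P(\psi)$, completing the proof. I expect Step 2 to be the main obstacle: since $\psi$ is only acceptable, not locally H\"older, $S_n(\psi)$ need not be almost constant on $n$-cylinders but only of oscillation $o(n)$, so one cannot freely paste words of very different lengths; gluing $j$ unrestricted length-$n$ blocks and then sending $j\to\infty$ is precisely what keeps the accumulated error sublinear. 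The degenerate cases where some $Z_n$, or $P(\psi)$ itself, is $+\infty$ should also be checked, but the same chain of inequalities delivers the conclusion there too.
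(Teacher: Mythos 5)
The paper does not prove this statement; it is recalled as a known result with a pointer to \cite[Section 2]{mauldin2003graph} and \cite[Sections 17, 18, 20]{Urbanskinoninvertible}, so there is no in-paper proof to compare against. Your argument is a correct, self-contained reconstruction of the standard exhaustion-by-finite-subsystems proof in the Mauldin--Urba\'nski framework: the easy inequality by term-by-term domination, and the hard inequality by concatenating length-$n$ blocks from a finite sub-alphabet $F_k$ with connectors from the finite-primitivity family $\Omega$, then sending $j\to\infty$ (to reach $P_{F_k}$), $k\to\infty$ (monotone convergence of $Z_n(F_k)\uparrow Z_n$), and finally $n\to\infty$ (using $R_n=o(n)$, which is exactly where acceptability rather than local H\"older regularity is used). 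The key estimates all check out: submultiplicativity of $Z_n(G)$ gives existence of the limit defining $P_G(\psi)$; oscillation of $S_n(\psi)$ on an $n$-cylinder is bounded by $R_n$ since the variations $V_j(\psi)$ are nonincreasing; the connector cost $C$ is finite since $\Omega$ is finite and $\psi$ is bounded on each $1$-cylinder; and injectivity of the block-plus-connector encoding is clear because $n$ and $p$ are fixed. Two harmless slacknesses you should be aware of: you replace $e^{-jR_n-(j-1)C}$ by $e^{-j(R_n+C)}$, which is fine since $C\ge 0$; and the statement of the theorem as printed reads $P(\psi)=\{\dots\}$ where a supremum is meant, which you correctly interpret. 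The degenerate case $Z_n=\infty$ for some $n$ is also handled correctly by your chain of inequalities, which then forces $\sup_F P_F(\psi)=\infty$ and hence $P(\psi)=\infty$.
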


We denote by $M(\sigma)$ the set of $\sigma$-invariant Borel probability measures on $\Sigma_A$ 
\begin{thm}{\cite[Theorem 2.1.8]{mauldin2003graph}}\label{thm variational principle induce} Suppose that $\Sigma_A$ is finitely primitive. If $\psi:\Sigma_A\rightarrow \mathbb{R}$ is acceptable then  we have the variational principle, that is,
$
P(\psi)=\sup_{\mu}\left\{h(\mu)+
\mu\left(\psi\right) \right\},$
where the supremum is taken over the set of measures $\mu\in M(\sigma)$ satisfying $\mu(\psi)>-\infty$. 
\end{thm}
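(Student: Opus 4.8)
The plan is to prove the two inequalities separately, using the finite-alphabet approximation $P(\psi)=\sup\{P_F(\psi):F\subset E,\ \#F<\infty\}$ of Theorem~\ref{thm compact approximation} for the easy direction, and a Misiurewicz-type estimate for the other.

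\emph{Step 1: $P(\psi)\le\sup_{\mu}\{h(\mu)+\mu(\psi)\}$.} Fix a finite $F\subset E$. Then $\Sigma_A\cap F^{\mathbb N}$ is a closed $\sigma$-invariant subset of the compact space $F^{\mathbb N}$, hence a compact subshift of finite type, and $\psi$ restricted to it is continuous, hence bounded. A routine computation with $(n,\varepsilon)$-separated sets shows that $P_F(\psi)$ coincides with the topological pressure of this restriction, so the classical variational principle for compact systems (\cite{walters2000introduction}) gives $P_F(\psi)=\sup_{\nu}\{h(\nu)+\nu(\psi)\}$, the supremum over all $\nu\in M(\sigma)$ supported on $\Sigma_A\cap F^{\mathbb N}$; for each such $\nu$ the integral $\nu(\psi)$ is finite because $\psi$ is bounded on the compact support. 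Since every such $\nu$ lies in $M(\sigma)$ and satisfies $\nu(\psi)>-\infty$, we obtain $P_F(\psi)\le\sup_{\mu}\{h(\mu)+\mu(\psi)\}$, and taking the supremum over finite $F$ together with Theorem~\ref{thm compact approximation} (applicable since $\psi$ is acceptable and $\Sigma_A$ is finitely primitive) gives the claimed inequality.

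\emph{Step 2: $P(\psi)\ge h(\mu)+\mu(\psi)$ for every $\mu\in M(\sigma)$ with $\mu(\psi)>-\infty$.} Let $\alpha:=\{[e]:e\in E\}$, a one-sided generating partition, and set $Z_n:=\sum_{\omega\in\Sigma_A^n}\exp\big(S_n(\psi)([\omega])\big)$, so $P(\psi)=\lim_n\frac1n\log Z_n$ (the limit exists since $n\mapsto\log Z_n$ is subadditive, as $S_{n+m}(\psi)([\omega\tau])\le S_n(\psi)([\omega])+S_m(\psi)([\tau])$). Suppose first that $\mu(\psi)$ is finite. Applying Jensen's inequality to the (at most countably many) atoms $[\omega]$, $\omega\in\Sigma_A^n$, of $\alpha^{(n)}:=\bigvee_{i=0}^{n-1}\sigma^{-i}\alpha$, and using invariance together with $S_n(\psi)(\tau)\le S_n(\psi)([\omega])$ for all $\tau\in[\omega]$,
\[
H_\mu\big(\alpha^{(n)}\big)+n\,\mu(\psi)\ \le\ \sum_{\omega\in\Sigma_A^n}\mu([\omega])\Big(\log\tfrac{1}{\mu([\omega])}+S_n(\psi)([\omega])\Big)\ \le\ \log Z_n .
\]
Dividing by $n$ and letting $n\to\infty$ gives $h(\mu,\alpha)+\mu(\psi)\le P(\psi)$, where $h(\mu,\alpha):=\lim_n\frac1n H_\mu(\alpha^{(n)})\in[0,\infty]$. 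If $H_\mu(\alpha)<\infty$ the Kolmogorov--Sinai theorem (for the generator $\alpha$) yields $h(\mu,\alpha)=h(\mu)$ and we are done; if $H_\mu(\alpha)=\infty$ then $H_\mu(\alpha^{(n)})=\infty$ for all $n$ since $\alpha^{(n)}$ refines $\alpha$, so the display forces $\log Z_n=\infty$ for all $n$, hence $P(\psi)=\infty\ge h(\mu)+\mu(\psi)$. Finally, if $\mu(\psi)=+\infty$, then Jensen's inequality gives $\log Z_n\ge\log\sum_{\omega}\mu([\omega])e^{S_n(\psi)([\omega])}\ge\sum_{\omega}\mu([\omega])S_n(\psi)([\omega])\ge\int S_n(\psi)\,d\mu=+\infty$ for every $n$, so again $P(\psi)=\infty$ and the inequality is trivial.

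\emph{Main obstacle.} The genuine content is concentrated in Step~1: the non-compactness of $\Sigma_A$ and the unboundedness of $\psi$ preclude a direct compactness argument, and this is circumvented only through the finite-alphabet approximation of Theorem~\ref{thm compact approximation}, whose proof already absorbs the combinatorial difficulty (inserting the connecting words supplied by finite primitivity to control the contribution of large symbols). In Step~2 the points requiring care are: the possible divergence of $H_\mu(\alpha)$, which forces the case distinction above in place of a blanket appeal to Kolmogorov--Sinai; the validity of Jensen's inequality for series with infinitely many terms (handled by passing through finite truncations); and the fact that $\mu(\psi)>-\infty$ is exactly the hypothesis making $\int S_n(\psi)\,d\mu=n\,\mu(\psi)$ meaningful in $(-\infty,+\infty]$.
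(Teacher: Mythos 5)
The paper does not prove this statement; it is cited verbatim from Mauldin--Urba\'nski \cite[Theorem 2.1.8]{mauldin2003graph}, so there is no internal proof to compare against. Your proposal is a correct self-contained proof that follows the standard two-sided argument used in that reference: a compact-exhaustion reduction (via Theorem~\ref{thm compact approximation} and the classical compact variational principle) for the inequality $P(\psi)\le\sup_\mu\{h(\mu)+\mu(\psi)\}$, and a Misiurewicz/log-sum estimate on the cylinder partition $\alpha=\{[e]:e\in E\}$ for the reverse inequality.

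A few small remarks on points you flagged, all of which your argument does handle correctly. In Step~1, one should note that $\Sigma_A\cap F^{\mathbb N}$ is forward $\sigma$-invariant (and possibly empty, in which case $P_F(\psi)=-\infty$ trivially) and that entropy of a measure supported on a closed invariant subset agrees with its entropy in the ambient system, so the classical variational principle on the compact subshift indeed feeds into the ambient supremum. In Step~2, the displayed chain $H_\mu(\alpha^{(n)})+n\mu(\psi)\le\sum_\omega\mu([\omega])\bigl(\log\tfrac{1}{\mu([\omega])}+S_n(\psi)([\omega])\bigr)\le\log Z_n$ is best read via relative entropy: writing $q_\omega:=e^{S_n(\psi)([\omega])}/Z_n$, the middle quantity equals $\log Z_n - D(\mu\|q)\le\log Z_n$, which is unconditionally valid because the middle sum is bounded below by $n\mu(\psi)>-\infty$ and hence well-defined in $(-\infty,+\infty]$; this makes your case split on $H_\mu(\alpha)$ and on $\mu(\psi)=+\infty$ go through, and the refinement monotonicity $H_\mu(\alpha^{(n)})\ge H_\mu(\alpha)$ is exactly what propagates $\log Z_n=\infty$ to all $n$. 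Finally, the one-sided Kolmogorov--Sinai theorem for a countable generator with finite entropy (Walters) is the correct citation for $h(\mu,\alpha)=h(\mu)$ in the $H_\mu(\alpha)<\infty$ case.
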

\begin{prop}{\cite[Proposition 2.1.9]{mauldin2003graph}}\label{prop finite pressure}
 If $\psi:\Sigma_A\rightarrow \mathbb{R}$ is acceptable and $\Sigma_A$ is finitely primitive then $P(\psi)<\infty$
  if and only if 
$ \sum_{e\in E}\exp\left(\psi([e])\right)<\infty.$
\end{prop}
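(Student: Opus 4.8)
The plan is to prove the two implications separately, using the compact-exhaustion principle (Theorem \ref{thm compact approximation}) to reduce one direction to elementary estimates on finite subsystems, and the variational principle (Theorem \ref{thm variational principle induce}) for a mild preliminary reduction. First I would observe that, by replacing $\psi$ with $\psi - \psi([e_0])$ for a fixed symbol $e_0$ or simply by monotonicity of the exponential, neither $P(\psi)<\infty$ nor the convergence of $\sum_{e\in E}\exp(\psi([e]))$ is affected by adding a constant, so there is no loss in assuming $\psi([e])\le 0$ for all but finitely many $e$ is \emph{not} available in general — instead I would keep $\psi$ as is and work directly with the quantity $Z_1 := \sum_{e\in E}\exp(\psi([e]))$.

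For the implication ``$Z_1<\infty \Rightarrow P(\psi)<\infty$'': the key point is that, because $\psi$ is acceptable, $\operatorname{var}_1(\psi):=\sup_{e\in E}\{\sup(\psi|_{[e]})-\inf(\psi|_{[e]})\}=:V<\infty$, and more importantly the $n$-th partition function can be bounded by a product of first-level sums up to a multiplicative error controlled by acceptability plus finite primitivity. Concretely, for $\omega\in\Sigma_A^n$ one has
\[
S_n(\psi)([\omega]) \le \sum_{k=1}^{n} \sup(\psi|_{[\omega_k]}) \le \sum_{k=1}^{n}\bigl(\psi([\omega_k]) + V\bigr)
\]
— wait, this needs care since $\psi([\omega_k])$ already denotes the supremum; in fact $\sup(\psi|_{[\omega_k]})$ and $\psi([\omega_k])$ coincide by the definition $\psi(Z)=\sup_{\tau\in Z}\psi(\tau)$, so one gets directly $S_n(\psi)([\omega])\le \sum_{k=1}^n \psi([\omega_k])$. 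Hence
\[
\sum_{\omega\in\Sigma_A^n}\exp\bigl(S_n(\psi)([\omega])\bigr) \le \sum_{\omega\in E^n}\prod_{k=1}^n \exp\bigl(\psi([\omega_k])\bigr) = Z_1^{\,n},
\]
so $\frac1n\log(\cdots)\le \log Z_1$ for every $n$, giving $P(\psi)\le \log Z_1<\infty$. This direction does not even need finite primitivity.

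For the converse ``$P(\psi)<\infty \Rightarrow Z_1<\infty$'': here I would use finite primitivity. By definition there are $m\in\mathbb{N}$ and a finite set $\Omega\subset\Sigma_A^m$ such that for any two symbols $\tau,\tau'$ there is a connecting word $\omega(\tau,\tau')\in\Omega$. Fix any admissible symbol $a$; for each $e\in E$ choose a word $u_e\in\Omega$ with $a u_e e$ admissible, so that $a u_e e$ has length $m+2$ and lies in $\Sigma_A^{m+2}$. Bounding $S_{m+2}(\psi)$ on $[a u_e e]$ from below by $\psi$ evaluated along the orbit — using acceptability to control the first $m+1$ coordinates uniformly by a constant $-K$ and the last coordinate by $\inf(\psi|_{[e]}) \ge \psi([e]) - V$ — yields $S_{m+2}(\psi)([au_ee]) \ge \psi([e]) - K - V$ for a constant $K$ independent of $e$. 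Since distinct $e$'s give distinct words $au_ee$,
\[
\sum_{\omega\in\Sigma_A^{m+2}}\exp\bigl(S_{m+2}(\psi)([\omega])\bigr) \ \ge\ \sum_{e\in E} e^{-K-V}\exp\bigl(\psi([e])\bigr) \ =\ e^{-K-V}\,Z_1,
\]
and the left side is finite because $P(\psi)<\infty$ forces $\sum_{\omega\in\Sigma_A^n}\exp(S_n(\psi)([\omega]))<\infty$ for every $n$ (otherwise $\frac1n\log$ of it would be $+\infty$). Hence $Z_1<\infty$.

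The main obstacle is the bookkeeping in the converse direction: one must verify that the lower bound on the Birkhoff sum $S_{m+2}(\psi)$ along the prepended connecting words is genuinely uniform in $e$, which relies on the fact that only the \emph{last} coordinate ranges over the infinite alphabet while the first $m+1$ coordinates are drawn from a fixed finite set of words, so their contribution is bounded below by a single constant; acceptability is what converts ``$\sup$ over $[e]$'' into ``$\inf$ over $[e]$'' at the cost of the uniform constant $V$. Once this uniformity is in hand, the estimate is immediate. I would also remark that this is exactly \cite[Proposition 2.1.9]{mauldin2003graph}, so in the paper itself one may simply cite it; the sketch above is how I would reconstruct the argument.
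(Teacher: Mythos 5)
The paper itself does not prove this; it simply cites \cite[Proposition~2.1.9]{mauldin2003graph}. So I can only assess your reconstruction on its own merits.

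Your first implication ($Z_1<\infty\Rightarrow P(\psi)<\infty$) is correct and clean: since $\psi(Z)=\sup_{\tau\in Z}\psi(\tau)$, the chain $S_n(\psi)([\omega])\le\sum_{k=1}^n\psi([\omega_k])$ and hence $Z_n\le Z_1^n$ is exactly right, and you are right that finite primitivity is not needed here. Your lower bound $Z_{m+2}\ge e^{-K-V}Z_1$ via the words $au_ee$ is also correct: the first $m+1$ coordinates draw from the fixed finite set $\{a\}\cup\{$symbols in $\Omega\}$, so their contribution is bounded below by a single constant, and acceptability converts the final $\psi([e])$ (a supremum) into an infimum at cost $V$.

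The gap is in the converse. You justify ``$P(\psi)<\infty$ forces $Z_n<\infty$ for every $n$'' by the parenthetical ``otherwise $\frac1n\log$ of it would be $+\infty$.'' This does not hold up: $P(\psi)$ is defined as a \emph{limit}, so a single term $\frac1n\log Z_n=+\infty$ does not by itself make the limit $+\infty$. Worse, the assertion ``$Z_n<\infty$ for all $n$'' is in fact \emph{equivalent} to ``$Z_1<\infty$'' (one direction is your $Z_n\le Z_1^n$; the other is precisely what the proposition asserts via your primitivity bound), so invoking it at this point is circular. The standard way to close this is to argue the contrapositive and iterate the primitivity construction: if $Z_1=\infty$, then concatenating $k$ symbols of $E$ through $\Omega$-connectors (and a fixed prefix) produces admissible words of length $n_k:=k(m+1)+1$ with $Z_{n_k}\ge c^k Z_1^k=\infty$, whence $\frac{1}{n_k}\log Z_{n_k}=+\infty$ along a sequence $n_k\to\infty$ and therefore $P(\psi)=+\infty$. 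Alternatively, one shows from the definition of $P(\psi)$ as a limit that $Z_n<\infty$ for all \emph{large} $n$, and then applies your lower-bound construction, extended by repeated $\Omega$-connectors to reach such an $n$, rather than stopping at length $m+2$. Either fix is routine, but as written the step is not proved.
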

For $\psi:\Sigma_A\rightarrow \mathbb{R}$ with $P(\psi)<\infty$ a measure $\mu\in M(\sigma)$ is called a Gibbs measure for $\psi$ if there exists a constant $Q\geq 1$ such that for every $n\in\mathbb{N}$, $\omega\in \Sigma_A^n$ and $\tau\in[\omega]$ we have
\begin{align}\label{eq gibbs}
\frac{1}{Q}\leq \frac{\mu([\omega])}{\exp(S_n(\psi)(\tau)-P(\psi)n)}\leq Q    
\end{align}
\begin{thm}\cite[Theorem 2.2.4 and Corollary 2.7.5]{mauldin2003graph}\label{thm Gibbs induce}
    Suppose that $\psi:\Sigma_A\rightarrow \mathbb{R}$ is locally H\"older with exponent $\beta>0$ and satisfies $P(\psi)<\infty$. If $\Sigma_A$ is finitely primitive then there exists a unique Gibbs measure $\mu\in M(\sigma)$ for $\psi$. Moreover, $\mu$ is ergodic.
\end{thm}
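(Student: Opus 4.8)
The plan is to run the Ruelle--Perron--Frobenius machinery, adapting it to the infinite alphabet via the summability hypothesis and the finite primitivity of $\Sigma_A$. First I would normalize: replacing $\psi$ by $\psi-P(\psi)$ changes neither \eqref{def locally holder} nor the content of \eqref{eq gibbs}, so I may assume $P(\psi)=0$, which by Proposition \ref{prop finite pressure} means $\sum_{e\in E}e^{\psi([e])}<\infty$. The basic analytic input is the \emph{bounded distortion} estimate coming from \eqref{def locally holder}: there is a finite constant $C_\psi$ with $|S_n(\psi)(\tau)-S_n(\psi)(\tau')|\le C_\psi$ whenever $\tau,\tau'\in[\omega]$ with $\omega\in\Sigma_A^n$, together with the exponential improvement $|S_n(\psi)(\tau)-S_n(\psi)(\tau')|\ll d(\tau,\tau')^{\beta}$ when $\tau,\tau'$ agree past coordinate $n$. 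Introduce the transfer operator $(\mathcal L g)(\omega):=\sum_{a\in E:\,A_{a,\omega_1}=1}e^{\psi(a\omega)}g(a\omega)$. Using $\sum_e e^{\psi([e])}<\infty$ and the distortion estimate one checks that $\sup_n\|\mathcal L^n\mathbf 1\|_\infty<\infty$, that finite primitivity forces $\inf_n\inf\mathcal L^n\mathbf 1>0$ (any symbol is joined to the first symbol of a given cylinder through one of finitely many connecting words, each contributing a uniformly positive weight), and that the functions $\mathcal L^n\mathbf 1$ are locally H\"older with exponent $\beta$ and seminorm bounded uniformly in $n$.

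Next I would construct the conformal eigenmeasure $\nu$ and the eigenfunction $h$. The obstacle here is that $\Sigma_A$ is not compact, so the classical Schauder fixed point and Arzel\`a--Ascoli arguments do not apply verbatim. For $\nu$ I would exploit Theorem \ref{thm compact approximation}: on each finite primitive subsystem $\Sigma_A\cap F^{\mathbb N}$ ($\#F<\infty$) classical theory furnishes an eigenmeasure $\nu_F$ of the restricted dual transfer operator with eigenvalue $e^{P_F(\psi)}$ and with $\nu_F([e])\asymp e^{\psi([e])}$ uniformly in $F$; since $\sum_e e^{\psi([e])}<\infty$ the tails $\sum_{e\notin F_0}\nu_F([e])$ are uniformly small, so $\{\nu_F\}$ is tight, and a weak$^*$ limit point $\nu$ satisfies $\mathcal L^*\nu=\nu$ (the eigenvalues $e^{P_F(\psi)}$ tending to $e^{P(\psi)}=1$). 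For $h$ I would take a locally uniform limit point of the Ces\`aro averages $\frac1n\sum_{k=0}^{n-1}\mathcal L^k\mathbf 1$: the uniform bounds and uniform local H\"older seminorms give equicontinuity and uniform boundedness on each cylinder, a diagonal argument over the countably many $1$-cylinders produces the limit $h$, and dominated convergence (with dominating function $\ll\sum_a e^{\psi([a])}$) lets $\mathcal L$ pass through the limit, giving $\mathcal Lh=h$ with $0<\inf h\le\sup h<\infty$.

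Then I would set $\mu:=h\,\nu$, normalized to total mass $1$. A short computation using $\mathcal L^*\nu=\nu$ and $\mathcal Lh=h$ shows $\mu\in M(\sigma)$. For the Gibbs property I would iterate the conformality relation: for $\omega\in\Sigma_A^n$, $\sigma^n$ is injective on $[\omega]$ and $\nu(\sigma^n[\omega])=\int_{[\omega]}e^{-S_n(\psi)}\,d\nu$, so by bounded distortion $\nu(\sigma^n[\omega])\asymp e^{-S_n(\psi)(\tau)}\nu([\omega])$ uniformly, while $\nu(\sigma^n[\omega])=\nu(\{\tau:A_{\omega_n,\tau_1}=1\})$ is bounded below uniformly in $\omega_n$ (again by finite primitivity, since inserting a fixed connecting word exhibits a fixed cylinder of positive $\nu$-measure inside this set). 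Hence $\nu([\omega])\asymp e^{S_n(\psi)(\tau)}$ uniformly over $n,\omega\in\Sigma_A^n,\tau\in[\omega]$, and multiplying by the two-sidedly bounded $h$ and renormalizing yields \eqref{eq gibbs}, so $\mu$ is a Gibbs measure.

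Finally, ergodicity and uniqueness. For ergodicity, let $B$ be $\sigma$-invariant with $\mu(B)>0$. The Gibbs property makes $\mu$ quasi-Bernoulli, $\mu([\omega w\upsilon])\asymp\mu([\omega])\mu([\upsilon])$ for an appropriate bounded connecting word $w$ furnished by finite primitivity; combining the martingale convergence theorem (choose a cylinder $[\omega]$ with $\mu(B\cap[\omega])$ arbitrarily close to $\mu([\omega])$, using $\sigma^{-(|\omega|+|w|)}B=B$) with quasi-Bernoullicity propagates the lower bound to $\mu(B\cap[\upsilon])\gg\mu([\upsilon])$ for every cylinder $[\upsilon]$, whence $\mu(B)=1$. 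For uniqueness, if $\mu'$ is another Gibbs measure for $\psi$ then \eqref{eq gibbs} forces $\mu'([\omega])\asymp\mu([\omega])$ uniformly over all cylinders, so $\mu'\ll\mu$, and a $\sigma$-invariant measure absolutely continuous with respect to the ergodic measure $\mu$ must equal $\mu$. I expect the main obstacle to be precisely this non-compact step --- securing tightness of $\{\nu_F\}$ and the uniform two-sided bounds and equicontinuity of $\{\mathcal L^n\mathbf 1\}$ --- i.e. replacing the compactness used in the classical finite-alphabet argument with the summability $P(\psi)<\infty$ and the finite primitivity of $\Sigma_A$.
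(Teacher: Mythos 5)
The paper does not prove this statement: Theorem \ref{thm Gibbs induce} is quoted, together with its bibliographic reference, directly from Mauldin and Urba\'nski's monograph and is used as a black box, so there is no proof in the paper to compare your attempt against. Taken on its own, your Ruelle--Perron--Frobenius sketch is a plausible reconstruction of the standard argument and is consistent in outline with what the cited source does: two-sided bounds and equicontinuity for the iterates $\mathcal{L}^n\mathbf{1}$ furnished by summability and finite primitivity, a conformal eigenmeasure $\nu$, a bounded positive eigenfunction $h$, the Gibbs property from iterated conformality, ergodicity from quasi-Bernoullicity, and uniqueness from mutual absolute continuity of Gibbs measures combined with ergodicity.

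One step deserves caution. You build $\nu$ as a weak$^*$ limit of conformal measures $\nu_F$ over finite sub-alphabets $F$, invoking Theorem \ref{thm compact approximation} and tightness. The tightness hinges on the estimate $\nu_F([e])\asymp e^{\psi([e])}$ holding with constants \emph{uniform in $F$}. That uniformity is not free: the Gibbs constants for the subsystem $\Sigma_A\cap F^{\mathbb{N}}$ depend on its BIP data, and you only get uniformity after restricting to those $F$ that already contain the finite collection of connecting words witnessing finite primitivity of $\Sigma_A$ (so that the connecting set and its length are fixed independently of $F$). The cited source constructs $\nu$ directly on the infinite alphabet rather than by exhausting by finite subsystems, so if you consult it expect the details at this point to look rather different from yours. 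None of this affects the present paper, which is entitled to cite the theorem rather than reprove it.
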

For $\psi:\Sigma_A\rightarrow \mathbb{R}$ with $P(\psi)<\infty$ we say that $\mu\in M(\sigma)$ is an equilibrium measure for $\psi$ if we have $\mu(\psi)>-\infty$ and $P(\psi)=h(\mu)+\mu(\psi)$.
\begin{thm}{\cite[Theorem 2.2.9]{mauldin2003graph}}\label{thm equilibrium state induce}
   Suppose that $\psi:\Sigma_A\rightarrow \mathbb{R}$ is locally H\"older with exponent $\beta>0$ and satisfies $P(\psi)<\infty$ and $\Sigma_A$ is finitely primitive. Furthermore, assume that $\mu(\psi)>-\infty$, where $\mu$ denotes the unique Gibbs measure for $\psi$ obtained in Theorem \ref{thm Gibbs induce}.
   Then, $\mu$ is the unique equilibrium measure for $\psi$.
\end{thm}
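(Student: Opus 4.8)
The plan is to prove this in two parts: first that the Gibbs measure $\mu$ supplied by Theorem~\ref{thm Gibbs induce} is an equilibrium measure for $\psi$, and then that it is the only one. For the first part I would combine the Gibbs inequality \eqref{eq gibbs} with Birkhoff's ergodic theorem and the Shannon--McMillan--Breiman theorem. Note first that $P(\psi)<\infty$ together with Proposition~\ref{prop finite pressure} gives $\sum_{e\in E}e^{\psi([e])}<\infty$, so $\psi$ is bounded above; with the hypothesis $\mu(\psi)>-\infty$ this puts $\mu(\psi)$ in $\mathbb{R}$. Next I would check that the generating partition $\mathcal{P}:=\{[e]:e\in E\}$ has $H_\mu(\mathcal{P})<\infty$: by \eqref{eq gibbs} at $n=1$ and acceptability of $\psi$ one gets $-\log\mu([e])=-\psi([e])+P(\psi)+O(1)$ uniformly in $e$, so $H_\mu(\mathcal{P})=\mu(-\psi)+P(\psi)+O(1)<\infty$. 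Since $\mathcal{P}$ is a generator this gives $h(\mu)=h_\mu(\sigma,\mathcal{P})$, and the Shannon--McMillan--Breiman theorem yields $-\tfrac1n\log\mu([\omega_1\cdots\omega_n])\to h(\mu)$ for $\mu$-a.e.\ $\omega$; on the other hand \eqref{eq gibbs} gives $-\tfrac1n\log\mu([\omega_1\cdots\omega_n])=-\tfrac1nS_n(\psi)(\omega)+P(\psi)+O(1/n)$, and since $\mu$ is ergodic (Theorem~\ref{thm Gibbs induce}) and $\psi\in L^1(\mu)$, Birkhoff's theorem gives $\tfrac1nS_n(\psi)(\omega)\to\mu(\psi)$ a.e. Comparing the two limits yields $h(\mu)+\mu(\psi)=P(\psi)$, i.e.\ $\mu$ is an equilibrium measure.

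For uniqueness, let $\nu$ be any equilibrium measure, so $\nu(\psi)>-\infty$ and $h(\nu)+\nu(\psi)=P(\psi)$ (hence $h(\nu)<\infty$). I would invoke the transfer operator $L_\psi$ underlying Theorem~\ref{thm Gibbs induce}: there are a conformal measure $m$ with $L_\psi^*m=e^{P(\psi)}m$ and a strictly positive, bounded eigenfunction $h$ with $L_\psi h=e^{P(\psi)}h$ and $\mu=h\,m$. Then $g:=e^{\psi-P(\psi)}\,h/(h\circ\sigma)$ satisfies $\sum_{\sigma y=x}g(y)=1$ for all $x$, and $\log g=\psi-P(\psi)+\log h-\log h\circ\sigma$ is cohomologous to $\psi-P(\psi)$ via the bounded function $\log h$, so $\nu(\log g)=\nu(\psi)-P(\psi)$ for every $\sigma$-invariant $\nu$. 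The Gibbs/Ledrappier inequality for a normalized Jacobian then gives $h(\nu)+\nu(\log g)\le 0$, with equality precisely when the Jacobian of $\nu$ equals $1/g$, which forces $\nu$ to be a Gibbs measure for $\psi$ and hence $\nu=\mu$ by the uniqueness in Theorem~\ref{thm Gibbs induce}. (The inequality $h(\nu)+\nu(\psi)\le P(\psi)$ is of course already the variational principle, Theorem~\ref{thm variational principle induce}; the content is the equality case.) Since $\nu$ is an equilibrium measure, equality holds, so $\nu=\mu$.

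The routine part is the first half. The crux is the second half: establishing the existence and regularity of the eigenfunction $h$ (boundedness above and away from $0$) in the non-compact, countable-alphabet setting, and proving the Gibbs/Ledrappier inequality together with its rigidity in the equality case. Here one must handle integrability carefully (that $\log g$ and $\log h$ are $\nu$-integrable) and cope with the fact that for a general $\sigma$-invariant $\nu$ the one-cylinder partition $\mathcal{P}$ may have infinite static entropy, so that $h(\nu)$ cannot be identified with $h_\nu(\sigma,\mathcal{P})$ and one must work through the Jacobian/Rokhlin formula directly. This is exactly where finite primitivity, local H\"older continuity, and $P(\psi)<\infty$ all enter.
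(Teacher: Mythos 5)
This statement is not proved in the paper: it is Theorem~\ref{thm equilibrium state induce} quoted verbatim from \cite[Theorem 2.2.9]{mauldin2003graph} and used as a black box, so there is no ``paper's own proof'' to compare against. Judged on its own, your sketch has the right architecture. The first half is essentially complete: from $P(\psi)<\infty$ and Proposition~\ref{prop finite pressure} you get $\sup\psi<\infty$; the Gibbs estimate \eqref{eq gibbs} at $n=1$ plus acceptability gives $-\log\mu([e])=-\psi([e])+P(\psi)+O(1)$, so $H_\mu(\mathcal{P})$ is finite precisely because $\mu(\psi)>-\infty$ and $P(\psi)<\infty$; then Kolmogorov--Sinai, Shannon--McMillan--Breiman, and Birkhoff combine with the Gibbs property to give $h(\mu)+\mu(\psi)=P(\psi)$. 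That is a correct and self-contained proof that $\mu$ is an equilibrium state.

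The uniqueness half, however, is only a skeleton. You reduce it to (i) existence of a positive eigenfunction $h$ of $L_\psi$ with $\log h$ bounded, and (ii) the Ledrappier/Rokhlin inequality $h(\nu)+\nu(\log g)\le 0$ for the normalized $g$-function, together with its rigidity in the equality case. Both of those are nontrivial in the countable-alphabet, noncompact setting, and you explicitly defer them. In particular, since $H_\nu(\mathcal{P})$ may be infinite for an arbitrary equilibrium state $\nu$, one cannot simply run the argument through the static entropy of the one-cylinder generator; one has to go through the Jacobian formalism. You flag this correctly, but flagging it is not the same as doing it, and it is precisely where finite primitivity, the local H\"older condition, and $P(\psi)<\infty$ are used in an essential way. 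So the proposal is a plausible and well-aimed outline that matches the standard route to \cite[Theorem 2.2.9]{mauldin2003graph}, but the uniqueness half is not a complete proof as written.
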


Let $f$ be a non-uniformly expanding R\'{e}nyi interval map with countably  many branches having the admissible induced map $\indumap$. 
Next, we describe a coding space of the induced map $\indumap$.
 We define, $\text{for $n\geq2$}$,
\begin{align*}
\edgepart_{1}:=
\bigcup_{i\in \edge}\{i\h\}\cup\bigcup_{i\in \p}\bigcup_{j\in\edge_i}\{ji\edge_i\},\ \edgepart_n:=\bigcup_{i\in \p}\bigcup_{j\in \edge_{j}}\{ji^n\edge_i\}
\text{ and }
\edgesindu:=\bigcup_{n\in\mathbb{N}}\edgepart_n.
\end{align*}
For each $i\h\in E_1$ and $ji^n\edge_i\in \edgepart_n$ $(n\in\mathbb{N})$ we set
$    I_{iH}:=\bigcup_{j\in \h}I_{ij}$ and $I_{ji^n\edge_i}:=\bigcup_{k\in \edge_i}I_{ji^nk}$.
We notice that $\{\return<\infty\}=\bigcup_{\omega\in \edgesindu}I_\omega$. Moreover, for all $iH\in \edgepart_1$ and $ji^n\edge_i\in \edgepart_n$ ($n\in\mathbb{N}$) we have
\begin{align}\label{eq finitely primitive}
\indumap(I_{iH})=\bigcup_{j\in H}I_{j} \text{ and  }
\indumap(I_{ji^n\edge_i})=\bigcup_{k\in\edge_j}I_{ik}.
\end{align}
Therefore, 
if $\indumap(I_\omega)\cap I_{\omega'}$ ($\omega, \omega'\in\edgesindu$) has non-empty interior then $I_{\omega'}\subset \indumap(I_\omega)$.
This implies that
$\indumap:\{\return<\infty\}\rightarrow \inducingdomain$ is a Markov map with the countable Markov partition $\{I_\omega\}_{\omega\in \edgesindu}$.
We define the incidence matrix $B:\edgesindu\times \edgesindu\rightarrow\{0,1\}$
by $B_{\omega,\omega'}=1$ if $I_{\omega'}\subset \indumap(I_{\omega})$ and
$B_{\omega,\omega'}=0$ otherwise. 
Define the countable Markov shift $(\inducoding,\shift)$ by
\[
\inducoding:=\lbrace\omega\in \edgesindu^{\mathbb{N}}:B_{\omega_{n},\omega_{n+1}}=1,\ n\in\mathbb{N}\rbrace,
\]
where $\shift:\inducoding\rightarrow\inducoding$ denotes the left shift map. 
By \eqref{eq uniformly decay of sylinders}, 
for each $\omega\in \inducoding$ the set 
$\bigcap_{n\in\mathbb{N}}I_{\omega_1\cdots\omega_n}$
is a singleton. Thus, we can define the coding map $\codingmap:\inducoding\rightarrow \codingmap(\inducoding)$
by 
\[
\{\codingmap(\omega)\}=\bigcap_{n\in\mathbb{N}}I_{\omega_1\cdots\omega_n} \text{ and set }\indulimit:=\codingmap(\inducoding).
\]
Then, we have $\indumap(\indulimit)=\indulimit$. 
We denote by $M(\indumap)$ the set of $\indumap$-invariant Borel probability measures on $\indulimit$. For $A\subset\Lambda$ we denote by $\partial_{\Lambda}A$ the boundary of $A$ with respect to the topology on $\Lambda$.
\begin{rem}\label{rem measurable bijection}
    We notice that $\codingmap$ is continuous and one-to-one except on the preimage of the countable set $J_0:=\bigcup_{n=0}^\infty \indumap^{-n}(\bigcup_{\omega\in \edgesindu}\partial_{\Lambda} I_{\omega})$, where it is at most two-to-one. Furthermore, we have $\indumap\circ\codingmap=\codingmap\circ\shift$ on $\inducoding\setminus\codingmap^{-1}(J_0)$ and the restriction of $\codingmap$ to $\inducoding\setminus\codingmap^{-1}(J_0)$ has a continuous inverse.  Thus, $\codingmap$ induces a measurable bijection between $\inducoding\setminus\codingmap^{-1}(J_0)$ and $\indulimit\setminus J_0$. 
    Furthermore, by the same argument as in the proof of \cite[Lemma 3.5]{jaerisch2022multifractal}, for any $\measure\in M(\indumap)$ there exists $\codingmeasure\in M(\shift)$ such that $\measure=\codingmeasure\circ\codingmap^{-1}$ and $h(\measure)=h(\codingmeasure)$, and for $\codingmeasure\in M(\shift)$ we have $\codingmeasure\circ\codingmap^{-1}\in M(\indumap)$ and  $h(\codingmeasure\circ\codingmap^{-1})=h(\codingmeasure)$. 
\end{rem}

Let $\phi:\Lambda\rightarrow\mathbb{R}$ be a continuous function and let $\indupot$ be the induced potential defined by \eqref{eq def indu pot}. For some $\beta>0$ the induced potential $\indupot$ is said to be locally H\"older with exponent $\beta$ if  $\indupote\circ\codingmap$ is locally H\"older with exponent $\beta$.
By using the R\'{e}nyi condition (NERI3) for $f$, one can show that $\indumap$ satisfies the R\'{e}nyi condition, that is, 
\begin{align}\label{eq renyi induce}
\sup_{\omega\in \edgesindu}\sup_{x\in I_\omega}\frac{|\indumap''(x)|}{|\indumap'(x)|^2}<\infty.    
\end{align}
Therefore, since $\indumap$ is uniformly expanding, that is, there exists $c>1$ such that for all $x\in \indulimit$ we have $|\indumap'(x)|>c$, there exists $\beta>0$ such that $\log|\indumap'|$ is locally H\"older with exponent $\beta$.
In the following, for $\phi\in \mathcal{R}$ we assume that, if necessary by replacing the exponent $\beta$ with a smaller number, $\log|\indumap'|$ and $\indupote$ are locally H\"older with $\beta$.
Also, note that, since for each $\omega\in \edgesindu$, $\return\circ\codingmap$ is constant on $[\omega]$, the return time function $\return$ is  locally H\"older with exponent $\beta$. 
By \eqref{eq finitely primitive}, it is not difficult to verify that $\inducoding$ is finitely primitive. These conditions allow us to apply results from the thermodynamic formalism for  countable Markov shifts as introduced above.

Let $\phi\in \mathcal{R}$. Define the pressure function $p:\mathbb{R}^3\rightarrow \mathbb{R}$ by 
\begin{align*}
\indupressure(b,q,s):=\pindushift(-q\indupote\circ\codingmap-b\log|\indumap'\circ\codingmap|-s\return\circ\codingmap),
\end{align*}
where $\pindushift$ denotes the topological pressure with respect to $(\inducoding,\shift)$.
Let
$\indufin:=\{(b,q,s)\in \mathbb{R}^3:\indupressure(b,q,s)<\infty\}.$    
In the following, for $(b,q,s)\in \indufin$ we denote by $\codingmeasure_{b,q,s}$ the unique $\shift$-invariant Gibbs measure obtained in Theorem \ref{thm Gibbs induce}.
For $\codingmeasure\in M(\shift)$ we define $\lambda(\codingmeasure)=\codingmeasure( \log|\indumap'\circ\codingmap|)$
\begin{thm}{\cite[Theorem 2.6.12 and Proposition 2.6.13]{mauldin2003graph} (see also \cite[Theorem 20.1.12]{Urbanskinoninvertible})} \label{thm regularity of induced pressure}The function $(b,q,s)\mapsto\indupressure(b,q,s)$ is real-analytic on $\text{Int}(\indufin)$. Moreover, we have Ruell's formula, that is, for $(b,q,s)\in \text{Int}(\indufin)$,
$
\frac{\partial}{\partial b}\indupressure(b,q,s)=-\lambda(\codingmeasure_{b,q,s}),
$
$
\frac{\partial}{\partial q}\indupressure(b,q,s)=-\codingmeasure_{b,q,s}(\indupote\circ\codingmap),
$
$
\frac{\partial}{\partial s}\indupressure(b,q,s)=-\codingmeasure_{b,q,s}(\return\circ\codingmap).
$
\end{thm}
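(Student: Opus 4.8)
The final statement to prove is Theorem \ref{thm regularity of induced pressure}, which asserts real-analyticity of the induced pressure function $(b,q,s)\mapsto\indupressure(b,q,s)$ on $\text{Int}(\indufin)$ together with Ruelle's derivative formulas.

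My plan is to \emph{reduce the statement to results already quoted in the literature}, namely \cite[Theorem 2.6.12 and Proposition 2.6.13]{mauldin2003graph} and \cite[Theorem 20.1.12]{Urbanskinoninvertible}. Concretely, one introduces the three-parameter family of potentials
\[
\Psi_{b,q,s}:=-q\,\indupote\circ\codingmap-b\log|\indumap'\circ\codingmap|-s\,\return\circ\codingmap
\]
on the finitely primitive countable Markov shift $(\inducoding,\shift)$, and one verifies that this is an analytic family of locally H\"older potentials in the sense required by Mauldin--Urba\'nski's perturbation theory. The three ingredients I would check are: (i) each of $\indupote\circ\codingmap$, $\log|\indumap'\circ\codingmap|$ and $\return\circ\codingmap$ is locally H\"older with a common exponent $\beta>0$ — this was arranged in the preliminaries just above, where $\beta$ was shrunk if necessary so that all three functions are locally H\"older with that $\beta$; (ii) the map $(b,q,s)\mapsto\Psi_{b,q,s}$ is (complex-)analytic as a map into the Banach space of locally H\"older functions, which is immediate because it is \emph{affine} in $(b,q,s)$; (iii) finiteness of the pressure, which holds by definition on $\indufin$ and, by Theorem \ref{thm regularity of induced pressure}'s hypothesis, on its interior. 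Given these, the abstract real-analyticity theorem for countable Markov shifts applies on $\text{Int}(\indufin)$.

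For the derivative formulas I would invoke Ruelle's formula in the form recorded in \cite[Proposition 2.6.13]{mauldin2003graph}: differentiating the pressure of an analytic family of locally H\"older potentials $t\mapsto\Phi_t$ at a parameter where the pressure is finite yields $\frac{d}{dt}P(\Phi_t)=\int \frac{\partial\Phi_t}{\partial t}\,d\mu_{\Phi_t}$, where $\mu_{\Phi_t}$ is the (unique) equilibrium/Gibbs measure for $\Phi_t$ obtained in Theorem \ref{thm Gibbs induce}. Taking partial derivatives in $b$, $q$, $s$ respectively of $\Psi_{b,q,s}$ gives $\partial_b\Psi_{b,q,s}=-\log|\indumap'\circ\codingmap|$, $\partial_q\Psi_{b,q,s}=-\indupote\circ\codingmap$, and $\partial_s\Psi_{b,q,s}=-\return\circ\codingmap$, and substituting into Ruelle's formula produces exactly the three displayed identities with $\codingmeasure_{b,q,s}$ the Gibbs measure for $\Psi_{b,q,s}$. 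One should note in passing that on $\text{Int}(\indufin)$ the relevant integrals are automatically finite: finiteness of the pressure forces $\codingmeasure_{b,q,s}(\return\circ\codingmap)<\infty$ and $\codingmeasure_{b,q,s}(\log|\indumap'\circ\codingmap|)<\infty$ and $\codingmeasure_{b,q,s}(\indupote\circ\codingmap)<\infty$ by the standard argument that a nearby parameter in $\text{Int}(\indufin)$ still has finite pressure (comparing $\indupressure$ at $(b,q,s)$ and at $(b\pm\epsilon,q,s)$, etc.), so the Gibbs inequality \eqref{eq gibbs} yields the integrability needed to legitimately interchange differentiation and integration.

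The main obstacle is not conceptual but bookkeeping: one must be careful that the open set $\text{Int}(\indufin)$ is genuinely the right domain of analyticity, i.e. that at every $(b,q,s)\in\text{Int}(\indufin)$ there is a \emph{complex} polydisc neighbourhood on which the pressure of the complexified potential stays finite and on which Mauldin--Urba\'nski's spectral-gap machinery for the associated Perron--Frobenius operator applies. This requires the observation that finiteness of $\pindushift$ is stable under small perturbations of $(b,q,s)$: if $(b,q,s)\in\text{Int}(\indufin)$ then for small $\epsilon>0$ the points $(b\pm\epsilon,q\pm\epsilon,s\pm\epsilon)$ lie in $\indufin$, and by convexity/monotonicity of $(b,q,s)\mapsto\indupressure(b,q,s)$ together with Proposition \ref{prop finite pressure} (finiteness $\iff$ summability $\sum_{\omega}\exp(\Psi_{b,q,s}([\omega]))<\infty$) one controls $\sum_{\omega}|\exp(\Psi_{z}([\omega]))|$ for complex $z$ in a polydisc around $(b,q,s)$. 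Once this uniform summability bound is in hand, the cited abstract theorems give real-analyticity and the derivative formulas without further work, so the proof is essentially a verification that our geometric setting satisfies their hypotheses.
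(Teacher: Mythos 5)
Your proposal is correct and matches the paper's approach: the paper states this result with no proof of its own, simply citing Mauldin--Urba\'nski's Theorem 2.6.12 and Proposition 2.6.13 (and Urba\'nski's Theorem 20.1.12), and your verification of their hypotheses — local H\"olderness of the three summands with a common exponent $\beta$, affineness (hence analyticity) of $(b,q,s)\mapsto\Psi_{b,q,s}$, finite pressure on the open set $\text{Int}(\indufin)$, and integrability of the partial derivatives via the Gibbs bound at a slightly perturbed parameter — is exactly what is implicitly being invoked. The observation that integrability of $\return\circ\codingmap$, $\log|\indumap'\circ\codingmap|$ and $\indupote\circ\codingmap$ against $\codingmeasure_{b,q,s}$ follows from openness of $\indufin$ together with \eqref{eq gibbs} is the same mechanism the paper makes explicit later in Lemma \ref{lemma finite return time}.
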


Next, we describe the thermodynamic formalism on the dynamical system $(f,\Lambda)$.
Let $\phi:\Lambda\rightarrow \mathbb R$ be a continuous function.
We introduce the topological pressure of $\phi$ with respect to the dynamical system $(f,\Lambda)$ by 
\begin{align}\label{eq def non-induced pressure}
P_f(\phi):=\sup\left\{h(\mu)+\mu(\phi):\mu\in M(f),\ \mu(\phi)>-\infty\right\}.    
\end{align}
We say that $\mu\in M(f)$ is an equilibrium measure for $\phi$ if $\mu$ satisfies $\mu(\phi)>-\infty$ and
$
P_f(\phi)=h(\mu)+\mu(\phi).
$
We are interested in the pressure function $p:\mathbb{R}^2\rightarrow \mathbb{R}$ given by
\[
p(b,q):=P_f(-q\phi-b\log|f'|).
\]
Let 
\begin{align*}
&\fin:=\{(b,q)\in \mathbb{R}^2:p(b,q)<\infty\} \text{ and let }    \\&
\ 
\mathcal{N}:=\text{Int}\left(\left\{(b,q)\in \mathbb{R}^2:p(b,q)>\max_{i\in\p}\left\{-q\alpha_i\right\}\right\}\cap\fin\right).
\end{align*}
We define $\oricodingsp:=\edge^{\mathbb{N}}$ and denote by $\sigma$ the left shift map on $\oricodingsp$. 
Note that $\oricodingsp$ is a full-shift. In particular, $\oricodingsp$ is finitely primitive. 
Let $\oricodingmap:\oricodingsp\rightarrow \Lambda$ be the coding map defined as in the introduction.
A function $\psi:\Lambda\rightarrow \mathbb{R}$ is said to be acceptable if $\psi\circ\oricodingmap$ is acceptable. By the R\'{e}nyi condition (NERI3) for $f$ and the R\'{e}nyi condition \eqref{eq renyi induce} for $\indumap$, we can apply \cite[Proposition 8.2.1]{mauldin2003graph} to obtain the following:
\begin{prop}{\cite[Proposition 8.2.1]{mauldin2003graph}}\label{prop acceptable}
Let $f$ be a non-uniformly expanding R\'{e}nyi interval map with countably  many branches having the admissible induced map $\indumap$. Then, the geometric potential $\log|f'|$ is acceptable.     
\end{prop}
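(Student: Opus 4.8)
The plan is to invoke \cite[Proposition 8.2.1]{mauldin2003graph} directly, so the work consists in checking that the hypotheses of that statement are met by the geometric potential $\log|f'|$ pulled back to the full-shift $\oricodingsp=\edge^{\mathbb{N}}$. Recall that a function on $\oricodingsp$ is acceptable if it is uniformly continuous and has uniformly bounded oscillation over single cylinders, i.e. $\sup_{i\in\edge}\{\sup(\psi\circ\oricodingmap|_{[i]})-\inf(\psi\circ\oricodingmap|_{[i]})\}<\infty$. So two things must be verified: (1) a bounded-distortion-type estimate giving uniform control of the oscillation of $\log|f'|$ on each branch $\bar\Delta_i$, and (2) uniform continuity of $\log|f'|\circ\oricodingmap$ on $\oricodingsp$.

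First I would establish the bounded-distortion estimate. For $x,y\in\bar\Delta_i$, writing $z=f_i(x)$, $w=f_i(y)$ and using $T_i=f_i^{-1}$, one has $\log|f_i'(x)|-\log|f_i'(y)| = -\bigl(\log|T_i'(z)|-\log|T_i'(w)|\bigr)$, and by the mean value theorem this is bounded in absolute value by $\sup_{\xi}\bigl||T_i''(\xi)|/|T_i'(\xi)|\bigr|\cdot|z-w|$. Since $|z-w|\le 1$ and, by the change-of-variables identity $T_i''/T_i' = -(f_i''\circ T_i)/(f_i'\circ T_i)^2 \cdot T_i'$ together with $|T_i'|\le 1$ (as $|f_i'|>1$), the R\'enyi condition (NERI3) gives $\sup_{i}\sup_{\xi}|T_i''(\xi)|/|T_i'(\xi)|\le \sup_i\sup_{x\in W_i}|f_i''(x)|/|f_i'(x)|^2 <\infty$. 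Hence $\sup_{i\in\edge}\operatorname{osc}_{\bar\Delta_i}\log|f_i'| <\infty$, which is exactly the bounded-oscillation half of acceptability once transported by $\oricodingmap$ (note $\oricodingmap([i])\subset\bar\Delta_i\cap\Lambda$).

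Second, for uniform continuity of $\log|f'|\circ\oricodingmap$, I would use \eqref{eq uniformly decay of sylinders}: if $\omega,\omega'\in\oricodingsp$ agree on the first $n$ coordinates then $\oricodingmap(\omega),\oricodingmap(\omega')$ both lie in $\bar\Delta_{\omega_1\cdots\omega_n}$, whose diameter is at most $\sup_{\tau\in\edge^n}|\bar\Delta_\tau|\to 0$ as $n\to\infty$, uniformly. Combining this with the local Lipschitz (indeed $C^2$) regularity of each $f_i$ on a neighbourhood $W_i$ of $\bar\Delta_i$, together with the uniform distortion bound just proved to control $\log|f_i'|$ across branches, yields a modulus of continuity for $\log|f'|\circ\oricodingmap$ depending only on $n$, hence uniform continuity on $\oricodingsp$. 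The main obstacle — and the only place genuine care is required — is precisely this last point: one must ensure the modulus of continuity is uniform over \emph{all} branches $i$, not merely over a fixed finite set, and this is exactly what the R\'enyi condition \eqref{eq renyi induce} for $\indumap$ (or equivalently (NERI3) for $f$) is designed to supply, which is why the hypothesis that $\indumap$ be admissible appears in the statement. Once both points are in place, \cite[Proposition 8.2.1]{mauldin2003graph} applies verbatim and the proposition follows.
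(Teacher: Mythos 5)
Your approach is essentially the paper's: the paper just cites \cite[Proposition 8.2.1]{mauldin2003graph} after checking that the R\'enyi condition supplies the bounded-distortion hypothesis, and you reproduce the content of that proposition directly. The bounded-oscillation half is correct; a small algebra slip is that $T_i''/T_i' = -(f_i''\circ T_i)/(f_i'\circ T_i)^2$ with no extra factor of $T_i'$, but since you only use $|T_i'|\le 1$ to discard a factor, your bound $\sup_i\sup_\xi|T_i''(\xi)/T_i'(\xi)|\le\sup_i\sup_x|f_i''(x)|/|f_i'(x)|^2<\infty$ still stands.

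The uniform-continuity step is where the write-up is loose. Appealing to ``local Lipschitz regularity of each $f_i$'' does not work: the Lipschitz constant of $\log|f_i'|$ is $\sup|f_i''|/|f_i'|$, which is not uniformly bounded over $i$ — only $|f_i''|/|f_i'|^2$ is, which is the whole point of the R\'enyi condition. The correct step, which is implicit in your mention of the distortion bound but should be made explicit, is: if $\omega,\omega'$ agree on the first $n\ge 2$ coordinates, then both $\oricodingmap(\omega),\oricodingmap(\omega')\in\bar\Delta_{\omega_1\cdots\omega_n}$, and writing $K:=\sup_i\sup_\xi|T_i''/T_i'|$ one has
\[
\bigl|\log|f'(\oricodingmap(\omega))|-\log|f'(\oricodingmap(\omega'))|\bigr|
=\bigl|\log|T_{\omega_1}'(f(\oricodingmap(\omega)))|-\log|T_{\omega_1}'(f(\oricodingmap(\omega')))|\bigr|
\le K\,\bigl|f(\oricodingmap(\omega))-f(\oricodingmap(\omega'))\bigr|,
\]
and since both image points lie in $\bar\Delta_{\omega_2\cdots\omega_n}$, this is at most $K\sup_{\tau\in\edge^{n-1}}|\bar\Delta_\tau|\to 0$ by \eqref{eq uniformly decay of sylinders}. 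Finally, your closing speculation that the admissibility of $\indumap$ (and \eqref{eq renyi induce}) is what makes the modulus uniform is not accurate: the estimate above uses only (NERI3) for $f$ and \eqref{eq uniformly decay of sylinders}; the admissibility hypothesis (F) is a standing assumption of the section needed elsewhere, not for this proposition.
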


Let $\phi\in \mathcal{R}$. Then, by the above proposition, for all $(b,q)\in \mathbb{R}^2$ the function $-q\phi-b\log|f'|$ is acceptable. 

\begin{rem}\label{rem measurable bijection oricoding}
By the same reason in Remark \ref{rem measurable bijection}, for any $\mu\in M(f)$ there exists $\mu'\in M(\sigma)$ such that $\mu=\mu'\circ\oricodingmap^{-1}$ and $h(\mu)=h(\mu')$, and for $\mu'\in M(\sigma)$ we have $\mu'\circ\oricodingmap^{-1}\in M(f)$ and  $h(\mu'\circ\oricodingmap^{-1})=h(\mu')$.    
\end{rem}
 
By the variational principle (Theorem \ref{thm variational principle induce}) and Remark \ref{rem measurable bijection oricoding}, for all $(b,q)\in \mathbb{R}^2$ we obtain
    $p(b,q)=\sup_{\mu'}\{h(\mu')+\mu'((-q\phi-b\log|f'|)\circ\oricodingmap)\}
    =P_\sigma((-q\phi-b\log|f'|)\circ\oricodingmap)),\nonumber$
where the supremum is taken over the set of measures $\mu'\in M(\shift)$ satisfying $\mu'((-q\phi-b\log|f'|)\circ\oricodingmap)>-\infty$ and $P_\sigma$ denotes the topological pressure with respect to the countable Markov shift $(\oricodingsp,\sigma)$. 
Thus, by Proposition \ref{prop finite pressure}, 
\begin{align}\label{eq finite pressure}
\text{$(b,q)\in\fin$ if and only if }
\sum_{i\in\edge}\exp\left((-q\phi-b\log|f'|)\circ\oricodingmap([i])\right)<\infty.    
\end{align}
For simplicity, we will denote $P_f$ and $P_\sigma$ both by $P$.

For $\tilde \nu\in M(\indumap)$ with $\tilde\nu(\return)<\infty$ we define
\begin{align}\label{eq def lift}
\nu:=\frac{1}{\tilde\nu( \return )}\sum_{n=0}^{\infty}\sum_{k=n+1}^\infty\tilde\nu|_{\{\return=k\}}\circ f^{-n}.    
\end{align}
Since $\indumap$ is a first return map of $f$, it is well-known that for $\tilde\nu\in M(\indumap)$ with $\tilde\nu( \return )<\infty$ we have $\nu\in M(f)$ (for example see \cite[Proposition 1.4.3]{viana}). Also, if $\tilde\nu\in M(\indumap)$ with $\tilde\nu( \return )<\infty$ is ergodic then we have Abramov-Kac's formula:
\begin{align}\label{eq Abramov-Kac's formula}
    \tilde\nu( \return ) h(\nu)=h(\tilde \nu) \text{ and }\tilde\nu( \return )\nu(\psi)=\tilde\nu(\tilde\psi)
\end{align}
for a continuous function $\psi$ on $\Lambda$  with $\nu(|\psi|)<\infty$, where $\tilde \psi$ is the induced potential of $\psi$ defined by \eqref{eq def indu pot}.  Conversely, for a ergodic measure $\nu\in M(f)$ with $\nu(\indulimit)>0$ and $\tilde \nu:=\nu|_{\indulimit}/\nu(\indulimit)$  we have 
\begin{align}\label{eq classical Abramov-Kac's formula}
    \tilde\nu( \return ) h(\nu)=h(\tilde \nu) \text{ and }\tilde\nu( \return )\nu(\psi)=\tilde\nu( \tilde\psi)
\end{align}
for a continuous function $\psi$ on $\Lambda$ with $\nu(|\psi|)<\infty$, where $\tilde \psi$ is the induced potential of $\psi$ defined by \eqref{eq def indu pot}. 
Define, for a finite set $\mathcal{L}$ and $\{\nu_{\ell}\}_{\ell\in \mathcal{L}}\subset M(f)$,
$
\text{Conv}(\{\nu_{\ell}\}_{\ell\in \mathcal{L}}):
=
\{\sum_{\ell\in\mathcal{L}}p_\ell\nu_{\ell}:\{p_\ell\}_{\ell\in\mathcal{L}}\subset[0,1],\ \sum_{\ell\in\mathcal{L}}p_\ell=1
\}.$
\begin{lemma}\label{lemma equivalent condition not liftable}
    Let $\nu\in M(f)$. Then $\nu(\indulimit)=0$ if and only if $\nu\in \conv$, where $\delta_{x_i}$ ($i\in\p$) denotes the Dirac measure at $x_i$. 
\end{lemma}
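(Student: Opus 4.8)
The plan is to establish the two implications separately; the forward implication $\nu(\indulimit)=0\Rightarrow\nu\in\conv$ is the substantive one.

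\emph{Sufficiency.} Each $x_i$ with $i\in\p$ is a fixed point of $f$, so $\delta_{x_i}\in M(f)$ and $\conv\subseteq M(f)$. From $\Lambda=\bigcup_{i\in\edge}I_i$, the inclusion $I_i\subseteq\inducingdomain$ for $i\in\h$, and the decomposition $I_i\cap\inducingdomain=\bigcup_{j\in\edge_{i}}I_{ij}$ for $i\in\p$, one reads off $\Lambda\setminus\inducingdomain=\bigcup_{i\in\p}I_{ii}$. Since $x_i=\oricodingmap(i^{\infty})\in I_{ii}$ and $\indulimit=\codingmap(\inducoding)\subseteq\inducingdomain$, we obtain $\delta_{x_i}(\indulimit)=0$, hence $\nu(\indulimit)=0$ for every $\nu\in\conv$.

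\emph{Necessity.} Fix $\nu\in M(f)$ with $\nu(\indulimit)=0$. Passing to the ergodic decomposition of $\nu$ (carried out on $(\oricodingsp,\sigma)$ and transported to $\Lambda$ via Remark~\ref{rem measurable bijection oricoding}), it suffices to show that every ergodic $\mu\in M(f)$ with $\mu(\indulimit)=0$ equals $\delta_{x_i}$ for some $i\in\p$. The core observation is that a point $x\in\inducingdomain$ whose forward orbit returns to $\inducingdomain$ infinitely often, and which avoids the countable set $J_0$ of Remark~\ref{rem measurable bijection}, must lie in $\indulimit$: its successive return times are all finite, so it carries a well-defined first-return itinerary $(\omega^{(k)})_{k\ge0}\in\edgesindu^{\mathbb{N}}$ with $\indumap^{k}(x)\in I_{\omega^{(k)}}$, and the Markov property \eqref{eq finitely primitive} forces $(\omega^{(k)})_k\in\inducoding$ with $x=\codingmap((\omega^{(k)})_k)$. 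Applying Poincar\'e recurrence to $(f,\mu)$ and the set $\inducingdomain$ then yields $\mu(\inducingdomain)\le\mu(\indulimit)+\mu(J_0)=\mu(J_0)$.

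If $\mu$ is non-atomic, then $\mu(J_0)=0$, so $\mu(\inducingdomain)=0$ and hence $\mu(\bigcup_{i\in\p}I_{ii})=1$. Using $f^{-1}(I_{i^{k}})=\bigcup_{l\in\edge}I_{li^{k}}$, the disjointness of cylinder interiors, and the $\mu$-nullity of the (countable) set of cylinder endpoints, an induction on $m$ gives $\mu(\bigcup_{i\in\p}I_{i^{m}})=1$ for every $m$; since $\#\p<\infty$ and \eqref{eq uniformly decay of sylinders} forces $\bigcap_m I_{i^{m}}=\{x_i\}$, we conclude $\mu(\{x_i:i\in\p\})=1$, contradicting non-atomicity. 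Hence $\mu$ is atomic, so $\mu=\frac1p\sum_{k=0}^{p-1}\delta_{f^k(z)}$ for some $f$-periodic $z$. Now a periodic point has a periodic $\edge$-coding, so the itinerary reconstruction above applies to it without the $J_0$-caveat; consequently, if some point of the orbit of $z$ lay in $\inducingdomain$ it would lie in $\indulimit$ and force $\mu(\indulimit)>0$, so the orbit of $z$ avoids $\inducingdomain$ entirely. But if the periodic coding of $z$ were non-constant it would contain a symbol in $\h$ or two consecutive distinct symbols in $\p$, and then a suitable shift would be a coding of an orbit point lying in $\inducingdomain$; and a constant coding $j^{\infty}$ with $j\in\h$ would give $z=\oricodingmap(j^{\infty})\in I_j\subseteq\inducingdomain$; both contradict that the orbit of $z$ avoids $\inducingdomain$. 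Thus the coding of $z$ is $i^{\infty}$ with $i\in\p$, i.e. $z=x_i$ and $\mu=\delta_{x_i}$. Therefore every ergodic component of $\nu$ is one of the $\delta_{x_i}$, and $\nu=\sum_{i\in\p}\nu(\{x_i\})\delta_{x_i}\in\conv$.

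\emph{Main obstacle.} The one genuinely delicate step is the itinerary reconstruction: since $\codingmap$ is only essentially injective, a Poincar\'e-recurrent point of $\inducingdomain$ can fail to lie in $\indulimit$ exactly when one of its $\indumap$-iterates hits a cylinder boundary, which is precisely why the non-atomic case (where the exceptional set $J_0$ is automatically $\mu$-null) and the atomic case (where the coding is periodic, hence the itinerary unambiguous) must be treated separately. The remaining steps are the routine cylinder bookkeeping outlined above.
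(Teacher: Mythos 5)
The paper explicitly omits this proof as ``straightforward,'' so there is nothing to compare against directly; your plan (easy direction by a direct computation showing $x_i\notin\indulimit$, hard direction by ergodic decomposition, Poincar\'e recurrence on $\inducingdomain$, and itinerary reconstruction, with the non-atomic and atomic cases split) is the natural one and is sound in outline. Two places deserve tightening. First, the identity $\Lambda\setminus\inducingdomain=\bigcup_{i\in\p}I_{ii}$ only holds up to cylinder boundaries, since the closed sets $I_{ij}$ may overlap at endpoints; for the sufficiency direction it is cleaner to observe that $x_i$ is a fixed point lying outside $\inducingdomain$ (or else $|\indumap'(x_i)|=|f'(x_i)|=1$ would contradict uniform expansion of $\indumap$ on $\indulimit$), so $\return(x_i)=\infty$ and $x_i\notin\{\return<\infty\}\supset\indulimit$. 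Second, the atomic-case step asserting that ``a periodic point has a periodic $\edge$-coding, so the itinerary reconstruction applies without the $J_0$-caveat'' is hand-waved: periodic points on cylinder boundaries can still carry two codings, and $J_0$ is precisely where the first-return itinerary is ambiguous. What actually justifies the step is this: pick any (eventually periodic) $\edge$-coding $\tau$ of the orbit point $w$ that witnesses $w\in\inducingdomain$; since the orbit is periodic and $\{x_i\}_{i\in\p}\not\subset\inducingdomain$, no tail of $\tau$ is of the form $i^\infty$, so the greedy parsing of $\tau$ into $\edgesindu$-blocks never stalls; the overlap structure built into \eqref{eq finitely primitive} then forces consecutive parsed blocks to satisfy the incidence relation $B$, and $w$ lies in each induced cylinder over the parsed prefix, hence $w\in\indulimit$. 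With these points made explicit the argument is complete.
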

The proof of Lemma \ref{lemma equivalent condition not liftable} is straightforward and is therefore omitted.

\section{thermodynamic formalism}\label{sec thermodynamic}
We denote by $\maps$ the set of non-uniformly expanding R\'{e}nyi interval maps with countably many branches having the admissible induced map $\indumap$.   
In this section, we assume throughout that $f\in \maps$ and $\phi\in \mathcal{R}$.
Note that the conditions (R), (G), (H1) and (L) are not assumed here.
Recall that $\alpha_i:=\phi(x_i)$ ($i\in\p$).
For $q\in \mathbb{R}$ we set 
\[
\LB:= \max_{i\in\p}\{-q\alpha_i\}.
\]

\begin{lemma}\label{lem finite pressure on a special parameter}
    For all $(b,q)\in\fin$ and $s\in(\LB,\infty)$ we have $(b,q,s)\in \indufin$. 
   In particular, for all $(b,q)\in\mathcal{N}$ we have $(b,q,p(b,q))\in \indufin$. 
\end{lemma}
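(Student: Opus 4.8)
The plan is to translate the statement into the symbolic picture and then use the R\'enyi / distortion estimates together with condition (F) to compare the induced pressure $\indupressure(b,q,s)$ with the original pressure $p(b,q)$. First I would use Proposition \ref{prop finite pressure} applied to $(\inducoding,\shift)$: since $\inducoding$ is finitely primitive and $-q\indupote\circ\codingmap-b\log|\indumap'\circ\codingmap|-s\return\circ\codingmap$ is locally H\"older (hence acceptable), we have $(b,q,s)\in\indufin$ if and only if
\[
\Xi(b,q,s):=\sum_{\omega\in\edgesindu}\exp\Bigl(\bigl(-q\indupote\circ\codingmap-b\log|\indumap'\circ\codingmap|-s\return\circ\codingmap\bigr)([\omega])\Bigr)<\infty.
\]
So the whole statement reduces to showing this sum is finite when $(b,q)\in\fin$ and $s>\LB$. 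I would split $\edgesindu=\bigcup_{n}\edgepart_n$ and estimate the contribution of each block separately.

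The block $\edgepart_1$ is the ``hyperbolic-looking'' part: on the cylinders $iH$ and $jie_i$ the return time is bounded ($\return\le 2$) and $\indupote$, $\log|\indumap'|$ are comparable (up to bounded distortion coming from the R\'enyi condition \eqref{eq renyi induce}) to $S_r\phi$ and $\log|f'|$ along the corresponding words of $\Sigma$ of length $1$ or $2$; hence the $\edgepart_1$-contribution is dominated, up to a multiplicative constant depending on $b,q$, by $\sum_{i\in\edge}\exp((-q\phi-b\log|f'|)\circ\oricodingmap([i]))$ plus a finite sum of squares of such terms, which converges because $(b,q)\in\fin$ by \eqref{eq finite pressure}. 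For the blocks $\edgepart_n$ with $n\ge 2$, a word $ji^n e_i$ corresponds in $\Sigma$ to the path $j i^n k$ ($k\in\edge_i$), the return time is $\return=n+1$, and by (F) together with the R\'enyi bound we get $|\indumap'|\asymp |f'|\,(n+1)^{1+\exponent}$ on the relevant cylinder; moreover $\indupote$ on $\oricodingmap([i^n])$ is within $O(1)$ of $n\,\alpha_i=n\,\phi(x_i)$ because the R\'enyi condition forces $\sum_{k<n}|\phi\circ f^k(x)-\phi(x_i)|$ to be uniformly bounded near the parabolic fixed point (this is where acceptability of $\phi$ and the locally H\"older property of $\indupote$ enter; note one does \emph{not} need the stronger (H1) here, only a uniform bound for fixed branch $i$). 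Putting these together, the $\edgepart_n$-contribution is bounded, up to constants, by
\[
(n+1)^{-b(1+\exponent)}\,e^{-q n\alpha_i}\,e^{-s(n+1)}\sum_{j\in\edge_i}\sum_{k\in\edge_i}\exp\bigl((-q\phi-b\log|f'|)\circ\oricodingmap([jk])\bigr),
\]
and summing the inner double sum is again controlled by $(b,q)\in\fin$ (it is at most $\bigl(\sum_i e^{(-q\phi-b\log|f'|)\circ\oricodingmap([i])}\bigr)^2<\infty$ after absorbing the bounded-distortion comparison between $S_2\phi$ on $[jk]$ and $\phi\circ\oricodingmap([j])+\phi\circ\oricodingmap([k])$).

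It remains to sum over $n$. For each $i\in\p$ the series $\sum_{n\ge 2}(n+1)^{-b(1+\exponent)}e^{-qn\alpha_i}e^{-s(n+1)}$ converges precisely when $s+q\alpha_i>0$, i.e. when $s>-q\alpha_i$; and this holds for every $i\in\p$ exactly because $s>\LB=\max_{i\in\p}\{-q\alpha_i\}$. Since $\p$ is finite, summing the finitely many geometric-type series and the $\edgepart_1$-term gives $\Xi(b,q,s)<\infty$, hence $(b,q,s)\in\indufin$. The ``in particular'' clause is then immediate: if $(b,q)\in\mathcal{N}$ then by definition $p(b,q)>\max_{i\in\p}\{-q\alpha_i\}=\LB$ and $(b,q)\in\fin$, so $s:=p(b,q)$ satisfies the hypotheses and $(b,q,p(b,q))\in\indufin$.

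\textbf{Main obstacle.} The delicate point is the comparison on the parabolic blocks: controlling $\indupote$ on $\oricodingmap([i^n j])$ by $n\alpha_i+O(1)$ uniformly in $n$ (for each fixed $i\in\p$) and simultaneously getting the factor $(n+1)^{1+\exponent}$ for $|\indumap'|$ from (F) while keeping the geometric comparison with $|f'|$ clean — all of this must be done with distortion constants that do not blow up with $n$, which is exactly what the R\'enyi condition \eqref{eq renyi induce} for $\indumap$ buys us. Once the per-block estimate has the shape $C_i\,(n+1)^{-b(1+\exponent)}e^{-(s+q\alpha_i)(n+1)}\cdot(\text{convergent tail sum})$, the summability over $n$ and over the finite set $\p$ is routine.
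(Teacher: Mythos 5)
Your overall strategy matches the paper's: reduce via Proposition~\ref{prop finite pressure} to finiteness of the symbol-sum, split $\edgesindu$ by the blocks $\edgepart_n$, use (F) for $\log|\indumap'|$, control $\indupote$ near the parabolic fixed point by comparing to $n\alpha_i$, and invoke $s>\LB$ for the geometric decay in $n$. The final shape of the bound and the conclusion are correct.

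However, there is a genuine flaw in the justification of the key estimate. You assert that $\indupote$ on $\oricodingmap([i^n j])$ is within $O(1)$ of $n\alpha_i$, i.e.\ that $\sum_{k<n}|\phi\circ f^k(x)-\phi(x_i)|$ is uniformly bounded, and you attribute this to the R\'enyi condition together with acceptability of $\phi$ and local H\"olderness of $\indupote$. This is not correct: that uniform bound is precisely the content of condition (H1), which Section~\ref{sec thermodynamic} explicitly does \emph{not} assume. The R\'enyi condition controls $f''/f'{}^2$, which gives distortion estimates for $\log|f'|$ along an orbit trapped near the parabolic point, but it says nothing about the summability of $|\phi-\alpha_i|$ for a general $\phi\in\mathcal{R}$. (Indeed, the paper only imposes (H1) for parts (B2) and (B3) of the main theorem, precisely because it does not follow from the standing hypotheses.) Your parenthetical ``one does not need (H1), only a uniform bound for fixed branch $i$'' is claiming the same thing under another name. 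The fix is to drop the $O(1)$ claim and argue as the paper does: continuity of $\phi$ at $x_i$ gives, for any $\epsilon>0$, some $N$ with $|q\phi(x)-q\alpha_i|<\epsilon$ for $x\in I_{i^m}$ with $m\ge N$; this yields only $\indupote=n\alpha_i+o(n)$, specifically an error of at most $\epsilon n + O_N(1)$, but the strict inequality $s>\LB$ provides exactly the slack to absorb the $\epsilon n$ term and still obtain geometric decay $e^{(\LB+\epsilon-s)n}$. With that replacement your estimate on each $\edgepart_n$ goes through and the rest of your argument (including the ``in particular'' step) is fine.
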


\begin{proof}
Let $(b,q)\in\fin$ and let $s\in( \LB,\infty)$.
We take a small $\epsilon>0$ with $ \LB+\epsilon<s$. 
Since $\phi$ is continuous on $\Lambda$, there exists $N\geq 2$ such that for all $i\in\p$, $n\geq N$ and $x\in I_{i^n}$ we have $|q\phi(x)-q\alpha_i|<\epsilon$. Thus, by (F) and \eqref{eq finite pressure}, we obtain
\begin{align*}
&
\sum_{n=N}^{\infty}\sum_{i\in \p}\sum_{j\in\edge_i}
e^{\left(-q\indupote-b\log|\indumap'|- s \return\right)\circ\codingmap([ji^n\edge_i])}
\asymp
\sum_{n=N}^{\infty}\sum_{i\in \p}\sum_{j\in\edge_i}
{e^{(-q\phi-b\log |f'|)\circ\codingmap([ji^n\edge_i])}}
\\&\frac{\exp
\left(
\left(
\sum_{k=1}^{n-N+1}(-q\phi- s )\circ\codingmap\circ\shift^k
\right)
([ji^n\edge_i])
\right)}{
n^{b(1+\exponent)}}
e^{\left(\sum_{k=n-N+2}^{n-1}(-q\phi- s )\circ\codingmap\circ\shift^k\right)([ji^n\edge_i])}
\\&\leq 
\sum_{n=N}^{\infty}\frac{
e^{( \LB+\epsilon- s )n
}}{n^{b(1+\exponent)}}
\sum_{i\in\edge}
e^{(-q\phi-b\log|f'|)\circ\oricodingmap([i])}
<\infty.
\end{align*}
Therefore, by Proposition \ref{prop finite pressure}, the proof is complete.
\end{proof}

Define, for $(b,q)\in \mathbb{R}^2$,
\[
\indumultipote_{b,q}:=(-q\indupote-b\log|\indumap'|-p(b,q)\return)\circ\codingmap.
\]
For $(b,q)\in \fin$ with $p(b,q)>\LB$ we write $\codingmeasure_{b,q}:=\codingmeasure_{b,q,\indupressure(b,q)}$.

\begin{lemma}\label{lemma finite return time}
    For all $(b,q)\in\fin$ with $p(b,q)>\LB$ we have $\codingmeasure_{b,q}(| \return\circ\codingmap|^2)<\infty$. 
    Moreover, if $(b,q)\in \mathcal{N}$ we have
    $\codingmeasure_{b,q}( |\indupote\circ\codingmap|^{2})<\infty$ and $\codingmeasure_{b,q}(|\log|\indumap'||^2)<\infty$. In particular, $\codingmeasure_{b,q}$ is the unique equilibrium measure for $\indumultipote_{b,q}$.
\end{lemma}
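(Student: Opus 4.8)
The plan is to exploit the Gibbs property of $\codingmeasure_{b,q}$ together with the exponential-type tail estimates that come from the condition $p(b,q)>\LB$, exactly in the spirit of the computation in Lemma \ref{lem finite pressure on a special parameter}. First, recall that by Lemma \ref{lem finite pressure on a special parameter} we have $(b,q,p(b,q))\in\indufin$ when $(b,q)\in\mathcal{N}$, and more generally $(b,q,s)\in\indufin$ for every $s>\LB$ when $(b,q)\in\fin$; in particular the Gibbs measure $\codingmeasure_{b,q}=\codingmeasure_{b,q,p(b,q)}$ is well defined. Using \eqref{eq gibbs}, for each word $\omega\in\edgesindu$ the mass $\codingmeasure_{b,q}([\omega])$ is comparable to $\exp\bigl(S_{1}(\indumultipote_{b,q})([\omega])\bigr)$ up to the Gibbs constant $Q$ and up to the (finite) value $\indupressure(b,q,p(b,q))$. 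Since $\return\circ\codingmap$ is constant on each cylinder $[\omega]$ (equal to $n$ on the cylinders indexed by $\edgepart_n$), the quantity $\codingmeasure_{b,q}(|\return\circ\codingmap|^2)$ is, up to a constant, $\sum_{n\in\mathbb{N}} n^2\sum_{\omega\in\edgepart_n}\codingmeasure_{b,q}([\omega])$, which by the Gibbs bound is $\ll\sum_{n\in\mathbb{N}} n^2\sum_{\omega\in\edgepart_n}e^{S_1(\indumultipote_{b,q})([\omega])}$.

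Now I would bound the inner sum over $\edgepart_n$ exactly as in the proof of Lemma \ref{lem finite pressure on a special parameter}: fixing a small $\epsilon>0$ with $\LB+\epsilon<p(b,q)$, the continuity of $\phi$ gives $|q\phi(x)-q\alpha_i|<\epsilon$ on $I_{i^n}$ for $n\geq N$, and then (F) together with \eqref{eq finite pressure} yields
\[
\sum_{\omega\in\edgepart_n}e^{S_1(\indumultipote_{b,q})([\omega])}\ \ll\ \frac{e^{(\LB+\epsilon-p(b,q))n}}{n^{b(1+\exponent)}}\sum_{i\in\edge}e^{(-q\phi-b\log|f'|)\circ\oricodingmap([i])},
\]
where the last sum is finite because $(b,q)\in\fin$. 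Multiplying by $n^2$ and summing over $n$ still converges, since the geometric factor $e^{(\LB+\epsilon-p(b,q))n}$ dominates any polynomial; this proves $\codingmeasure_{b,q}(|\return\circ\codingmap|^2)<\infty$ (the finitely many terms with $n<N$ are harmless). For the second assertion, when $(b,q)\in\mathcal{N}$ I would first control $\indupote\circ\codingmap$ and $\log|\indumap'\circ\codingmap|$ by $\return\circ\codingmap$ up to lower-order corrections: condition (F) gives $\log|\indumap'|\asymp\log|f'|+(1+\exponent)\log\return$ on $\{\return=n\}$, and since $\log|f'|$ restricted to a branch of $\indumap$ is controlled branch-by-branch (the factor $\sum_i e^{\cdots([i])}<\infty$ in \eqref{eq finite pressure} forces $b\log|f'\circ\oricodingmap([i])|$ to grow), while $\phi>0$ and $\indupote=\sum_{k=0}^{\return-1}\phi\circ f^k$ is squeezed between $\return\cdot\inf\phi$ and a quantity comparable to $\return$ plus the boundary contributions near the parabolic points. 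The upshot is that on the cylinder indexed by $ji^n\edge_i$ one has $|\indupote\circ\codingmap|+|\log|\indumap'\circ\codingmap||\ll n+\bigl|\log|f'|\circ\oricodingmap([ji^n\edge_i])\bigr|$, and squaring and summing against $\codingmeasure_{b,q}$ reduces to the already-established bound for $\return^2$ plus a sum of the form $\sum_\omega (\log|f'\circ\oricodingmap([\omega])|)^2 e^{S_1(\indumultipote_{b,q})([\omega])}$; the extra logarithmic factors are again absorbed by the geometric gain $e^{(\LB+\epsilon-p(b,q))n}$ and by the convergence of $\sum_i (\text{poly in }\log|f'|([i]))\,e^{(-q\phi-b\log|f'|)([i])}$, which follows from \eqref{eq finite pressure} on the slightly larger domain $\mathcal{N}\subset\operatorname{Int}(\fin)$.

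Finally, once $\codingmeasure_{b,q}(|\indumultipote_{b,q}|)<\infty$ — which is immediate from the three $L^2$ (hence $L^1$) bounds just obtained — Theorem \ref{thm equilibrium state induce} applies: $\indumultipote_{b,q}$ is locally H\"older with exponent $\beta$ and has finite pressure (namely $\indupressure(b,q,p(b,q))<\infty$ by Lemma \ref{lem finite pressure on a special parameter}), $\inducoding$ is finitely primitive, and $\codingmeasure_{b,q}$ is its unique Gibbs measure with $\codingmeasure_{b,q}(\indumultipote_{b,q})>-\infty$; hence $\codingmeasure_{b,q}$ is the unique equilibrium measure for $\indumultipote_{b,q}$.

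The main obstacle I expect is the second part: getting clean two-sided comparisons of $\indupote\circ\codingmap$ and $\log|\indumap'\circ\codingmap|$ with $\return\circ\codingmap$ and the branch-wise geometric data, uniformly over all cylinders in $\edgesindu$, so that the $L^2$ sums genuinely reduce to the geometric-times-polynomial sums that are visibly summable. The first part (the $\return^2$ bound) is essentially a rerun of Lemma \ref{lem finite pressure on a special parameter} with an extra polynomial factor and should be routine; the delicate point is that for $\indupote$ and $\log|\indumap'|$ one must use membership in the \emph{open} set $\mathcal{N}$ rather than merely $\fin$, since one needs a little room to trade the polynomial-in-$\log|f'|$ factors against convergence of the branch sum.
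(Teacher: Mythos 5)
Your proposal follows essentially the same approach as the paper: exploit the Gibbs property of $\codingmeasure_{b,q}$ and absorb the quadratic factors into an $\epsilon$-perturbation of the parameters — perturbing $s=p(b,q)$ down for the $\return^2$ estimate (possible because $p(b,q)>\LB$), and perturbing $q$ and $b$ separately for the $\indupote^2$ and $(\log|\indumap'|)^2$ estimates (possible because $\mathcal{N}$ is open) — which the paper packages via $\lim_{x\to\infty}x^2e^{-\epsilon x}=0$ and the auxiliary quantities $S_{b_0,q_0,s_0}$. One small slip in your write-up: on the cylinder $[ji^n\edge_i]$ the two estimates must be kept separate, $|\indupote\circ\codingmap|\ll n+\phi\circ\oricodingmap([j])$ (absorbed by shifting $q\mapsto q-\epsilon$) and $|\log|\indumap'\circ\codingmap||\ll n+|\log|f'|\circ\oricodingmap([j])|$ (absorbed by shifting $b\mapsto b-\epsilon$), rather than folding both into a single $\log|f'|$ term, since $\phi$ and $\log|f'|$ are not comparable branch-by-branch without (R) or (L), neither of which is assumed in this lemma.
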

\begin{proof}
For $(b_0,q_0,s_0)\in \mathbb{R}^3$ we set
\[
S_{b_0,q_0,s_0}:= \sum_{n=1}^{\infty}
   \sum_{i\in \p}
   {e^{
   \left(
   \sum_{k=0}^{n-2}(-q_0\phi-s_0)\circ f^{k}
   \right)\circ\oricodingmap([i^n])}}{n^{-b_0(1+\exponent)}}.
\]
    Let $(b,q)\in\fin$ satisfy $p(b,q)>\LB$.
     We take a small $\epsilon>0$ such that $\LB+\epsilon<p(b,q)$. 
By using the limit $\lim_{x\to\infty}x^2 e^{-\epsilon x}=0$ and \eqref{eq gibbs}, we obtain
\begin{align*}
&   \codingmeasure_{b,q}( |\indupote\circ\codingmap|^{2})
   \leq
   \sum_{\omega\in \edgesindu}
   |\indupote|^2\circ\codingmap([\omega])
   \codingmeasure_{b,q}([\omega])
   \asymp\sum_{\omega\in \edgesindu}
  |\indupote|^2\circ\codingmap([\omega])
  e^{\indumultipote_{b,q}([\omega])-\indupressure(b,q,p(b,q))}
   \\&
   \ll \sum_{n=2}^\infty 
   \sum_{\omega\in\edgepart_n}
   \left|
   \sum_{k=1}^{n-1}
   \phi\circ f^k
   +\phi
   \right|^2\circ\codingmap([\omega])
   \exp\left((-q\phi-b\log|f'|)\circ\codingmap([\omega])\right)
   \\&\frac{e^{
   \left(
   \sum_{k=1}^{n-1}(-q\phi-p(b,q))\circ f^{k}
   \right)\circ\codingmap([\omega])
   }}{n^{b(1+\exponent)}}
   \ll S_{b,q-\epsilon,p(b,q)}
   \sum_{i\in \edge}
   e^{(-(q-\epsilon)\phi-b\log|f'|)\circ\oricodingmap([i])}.
\end{align*}
By using the same calculation, we obtain
\begin{align*}
    &\codingmeasure_{b,q}(|\log|\indumap'||^2)\ll
    S_{b-\epsilon,q,p(b,q)}
   \sum_{i\in \edge}
   e^{(-q\phi-(b-\epsilon)\log|f'|)\circ\oricodingmap([i])}
   \\&
   \codingmeasure_{b,q}(|\return\circ\codingmap|^2)\ll
   S_{b,q,p(b,q)-\epsilon}
   \sum_{i\in \edge}
   e^{(-q\phi-b\log|f'|)\circ\oricodingmap([i])}.
\end{align*}
By the same argument in the proof of Lemma \ref{lem finite pressure on a special parameter}, one can show that 
$S_{b-\epsilon,q,p(b,q)}<\infty$, $S_{b,q-\epsilon,p(b,q)}<\infty$ and $S_{b,q,p(b,q)-\epsilon}<\infty$. 
On the other hand, by \eqref{eq finite pressure}, we have $\sum_{i\in \edge}
   e^{(-q\phi-b\log|f'|)\circ\oricodingmap([i])}<\infty$.
Therefore,    
   $\codingmeasure_{b,q}(|\return\circ\codingmap|^2)<\infty$.
Furthermore, if $(b,q)\in \mathcal{N}$ then, by taking $\epsilon>0$ smaller if necessary, we may assume that $B((b,q),\epsilon)\subset \mathcal{N}$, where $B((b,q),\epsilon)$ denotes the open ball centered at $(b,q)$ with radius $\epsilon$.
Hence, \eqref{eq finite pressure} yields that $\sum_{i\in \edge}
   e^{(-(q-\epsilon)\phi-(b-\epsilon)\log|f'|)\circ\oricodingmap([i])}<\infty$ and thus,     $\codingmeasure_{b,q}( |\indupote\circ\codingmap|^{2})<\infty$ and $\codingmeasure_{b,q}(|\log|\indumap'||^2)<\infty$.
\end{proof}

For $(b,q)\in \fin$ with $p(b,q)>\LB$ we define the measures $\measure_{b,q}:=\codingmeasure_{b,q}\circ\codingmap^{-1}$ on $\indulimit$ and 
$
\mu_{b,q}:=(\measure_{b,q}(\return ))^{-1}\sum_{n=0}^{\infty}\sum_{k=n+1}^\infty\measure_{b,q}|_{\{\return=k\}}\circ f^{-n}$ on $\Lambda$.    
Then, by Lemma \ref{lemma finite return time}, Remark \ref{rem measurable bijection} and \eqref{eq Abramov-Kac's formula}, for all $(b,q)\in\mathcal{N}$ we obtain
\begin{align}\label{eq induced pressure is less than zero}
&\indupressure(b,q,p(b,q))=h(\codingmeasure_{b,q})+\codingmeasure_{b,q}(\indumultipote_{b,q} )
    \\&
    \nonumber
    =\measure_{b,q}(\return)
    \left(
    h(\mu_{b,q})+\mu_{b,q}(-q\phi-b\log|f'|)-p(b,q)
    \right) \leq 0.
\end{align}
The proofs of the following two theorems follow from similar arguments as in the proofs of \cite[Theorem 3.3 and Theorem 3.4]{arimanonuniformly}. For the convenience of the reader we include  proofs for these theorems in Appendix.

\begin{thm}\label{thm uniquness and existence of the equilibrium state}
     For $(b,q)\in \mathcal{N}$ we have that $\indupressure(b,q,p(b,q))=0$. Furthermore,  for $(b,q)\in \mathcal{N}$,  $\mu_{b,q}$ is the unique equilibrium measure for $-q\phi-b\log |f'|$.
\end{thm}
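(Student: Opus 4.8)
The plan is to prove the two assertions --- that $\indupressure(b,q,p(b,q))=0$ and that $\mu_{b,q}$ is the unique equilibrium measure for $-q\phi-b\log|f'|$ --- essentially in one package, using the Abramov--Kac correspondence between $f$-invariant measures charging $\indulimit$ and $\shift$-invariant measures on $\inducoding$. First I would record the half of the equality already in hand: by \eqref{eq induced pressure is less than zero}, for $(b,q)\in\mathcal{N}$ we have $\indupressure(b,q,p(b,q))\le 0$, and the same computation shows that this induced pressure equals $\measure_{b,q}(\return)\bigl(h(\mu_{b,q})+\mu_{b,q}(-q\phi-b\log|f'|)-p(b,q)\bigr)$, with $\measure_{b,q}(\return)<\infty$ by Lemma \ref{lemma finite return time}. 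So it remains to rule out strict negativity, i.e. to show $h(\mu_{b,q})+\mu_{b,q}(-q\phi-b\log|f'|)\ge p(b,q)$; combined with the variational principle $p(b,q)=\sup\{h(\mu)+\mu(-q\phi-b\log|f'|)\}$ this will force equality throughout and simultaneously identify $\mu_{b,q}$ as an equilibrium measure.

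For that lower bound the natural route is approximation from inside by finitely generated subsystems. Using Theorem \ref{thm compact approximation} (the $-q\phi-b\log|f'|$ being acceptable by Proposition \ref{prop acceptable}), write $p(b,q)=\sup\{P_F(-q\phi-b\log|f'|):F\subset\edge,\ \#F<\infty\}$; each $P_F$ is the pressure of a uniformly hyperbolic finite-alphabet subsystem and is attained by an equilibrium (indeed Gibbs) measure $\mu_F$, which charges $\indulimit$ and whose induced measure $\measure_F$ has $\measure_F(\return)<\infty$. Pushing $\mu_F$ through \eqref{eq Abramov-Kac's formula} gives $\indupressure(b,q,P_F(\cdot))\ge 0$, hence, since $P_F(\cdot)\uparrow p(b,q)$ and $s\mapsto\indupressure(b,q,s)$ is continuous and strictly decreasing on $(\LB,\infty)$ (finite there by Lemma \ref{lem finite pressure on a special parameter}, monotone and analytic by Theorem \ref{thm regularity of induced pressure}), we get $\indupressure(b,q,p(b,q))\ge 0$. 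Together with \eqref{eq induced pressure is less than zero} this yields $\indupressure(b,q,p(b,q))=0$, and then \eqref{eq induced pressure is less than zero} read backwards gives $h(\mu_{b,q})+\mu_{b,q}(-q\phi-b\log|f'|)=p(b,q)$ (note $\mu_{b,q}(-q\phi-b\log|f'|)$ is finite by Lemma \ref{lemma finite return time} and Abramov--Kac, so $\mu_{b,q}$ genuinely qualifies), so $\mu_{b,q}$ is an equilibrium measure.

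For uniqueness, suppose $\nu\in M(f)$ is any equilibrium measure for $-q\phi-b\log|f'|$. I would first argue $\nu(\indulimit)>0$: if not, Lemma \ref{lemma equivalent condition not liftable} puts $\nu\in\conv$, so $\nu$ is a convex combination of the $\delta_{x_i}$, whence $h(\nu)=0$ and $\nu(-q\phi-b\log|f'|)=\sum p_i(-q\alpha_i)\le\LB$; but $(b,q)\in\mathcal{N}$ means $p(b,q)>\LB$, contradicting that $\nu$ attains $p(b,q)$. Hence $\nu$ lifts: set $\tilde\nu:=\nu|_{\indulimit}/\nu(\indulimit)\in M(\indumap)$, lift further to $\codingmeasure\in M(\shift)$ via Remark \ref{rem measurable bijection} preserving entropy, and apply \eqref{eq classical Abramov-Kac's formula}. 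One computes $h(\codingmeasure)+\codingmeasure(\indumultipote_{b,q})=\tilde\nu(\return)\bigl(h(\nu)+\nu(-q\phi-b\log|f'|)-p(b,q)\bigr)=0=\indupressure(b,q,p(b,q))$ (the last step needs $\codingmeasure(\indumultipote_{b,q})>-\infty$, which follows since $\nu$ has finite integral against $-q\phi-b\log|f'|$ and finite return time). So $\codingmeasure$ is an equilibrium measure for the locally H\"older potential $\indumultipote_{b,q}$, which by Theorem \ref{thm equilibrium state induce} has a \emph{unique} such measure, namely $\codingmeasure_{b,q}$; hence $\codingmeasure=\codingmeasure_{b,q}$, and projecting and lifting back gives $\tilde\nu=\measure_{b,q}$ and $\nu=\mu_{b,q}$. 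The main obstacle I anticipate is the careful bookkeeping around integrability: one must make sure at every invocation of Abramov--Kac that the relevant Birkhoff integrals ($\nu(\phi)$, $\nu(\log|f'|)$, return times) are finite so that the formulas and the ``$>-\infty$'' clauses in the variational and equilibrium-state theorems actually apply --- this is exactly where Lemma \ref{lemma finite return time} (for $\mu_{b,q}$) and the defining property of an equilibrium measure (for the arbitrary $\nu$) do the work, but it needs to be spelled out rather than waved through.
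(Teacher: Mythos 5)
Your proof of the \emph{uniqueness} half matches the paper's exactly: rule out $\nu\in\conv$ via $p(b,q)>\LB$, lift to $M(\shift)$ via Remark \ref{rem measurable bijection} and \eqref{eq classical Abramov-Kac's formula}, compare with $\indupressure(b,q,p(b,q))=0$, and invoke Theorem \ref{thm equilibrium state induce}. No issues there.

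For the \emph{first} half you take a genuinely different route: the paper does not use compact approximation at all. It invokes \cite[Theorem 2.1.8]{mauldin2003graph} to produce an ergodic $\mu\in M(f)$ with $h(\mu)+\mu(-q\phi-b\log|f'|)>p(b,q)-\epsilon$, observes that such a $\mu$ cannot lie in $\conv$ (again because $p(b,q)>\LB$), lifts it by Abramov--Kac to conclude $\indupressure(b,q,p(b,q)-\epsilon)>0$, and then uses continuity of $s\mapsto\indupressure(b,q,s)$ on $(\LB,\infty)$ (Lemma \ref{lem finite pressure on a special parameter}) together with \eqref{eq induced pressure is less than zero} to squeeze $\indupressure(b,q,p(b,q))=0$. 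This sidesteps any structure theory for finite subsystems.

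Your compact-approximation route can be made to work, but as written it contains a genuine error: a finite subsystem $\Lambda_F$ with $\p\subset F$ is \emph{not} uniformly hyperbolic --- it retains the parabolic fixed points $x_i$, $i\in\p$, at which $|f'|=1$. So you cannot appeal to classical uniformly hyperbolic/finite-alphabet Gibbs theory to produce $\mu_F$; a priori the equilibrium state for $-q\phi-b\log|f'|$ on $\Lambda_F$ could be a Dirac mass at a parabolic point and then $\mu_F(\indulimit)=0$, breaking the Abramov--Kac step entirely. The correct input is exactly the finitely-branched parabolic thermodynamic formalism of \cite{arimanonuniformly} (what the paper records later as Remark \ref{rem compact thermodynamic}), and it delivers an equilibrium measure charging $\indulimit_F$ only when $(b,q)\in\mathcal{N}_F$, i.e.\ $P_F(b,q)>\LB$. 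Since $P_F(b,q)\uparrow p(b,q)>\LB$, this does hold for all sufficiently large $F$, which is what makes your argument salvageable --- but you must restrict to such $F$ and replace ``uniformly hyperbolic'' by the parabolic finite-branch machinery. Once that is repaired, the two proofs buy roughly the same thing; the paper's is shorter because it manufactures the near-optimal measure abstractly in one stroke rather than via a sequence of finite models.
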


\begin{thm}\label{thm regularity of non induced pressure}
The pressure function $(b,q)\mapsto p(b,q)$ is real-analytic on $\mathcal{N}$ and for $(b,q)\in \mathcal{N}$ we have
    \begin{align}\label{eq ruell's formula noninduced}
    \frac{\partial}{\partial b}p(b,q)=-\lambda(\mu_{b,q}) \text{ and } \frac{\partial}{\partial q}p(b,q)=-\mu_{b,q}( \phi ). 
    \end{align} 
    Moreover, for $(b,q)\in \mathcal{N}$ we have $\frac{\partial^2}{\partial q^2}p(b,q)=0$ if and only if $\alpha_{\inf}=\alpha_{\sup}$.  
\end{thm}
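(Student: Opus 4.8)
The plan is to treat $p(b,q)$ as the implicitly defined solution of $\indupressure(b,q,p(b,q))=0$ (Theorem~\ref{thm uniquness and existence of the equilibrium state}) and to transfer the regularity of the three–variable induced pressure (Theorem~\ref{thm regularity of induced pressure}) via the implicit function theorem. For the real–analyticity I would first check that $(b,q,p(b,q))\in\text{Int}(\indufin)$ for every $(b,q)\in\mathcal{N}$: the set $\{(b,q,s):(b,q)\in\text{Int}(\fin),\ s>\LB\}$ is open (since $q\mapsto\LB$ is a maximum of finitely many affine functions, hence continuous) and, by Lemma~\ref{lem finite pressure on a special parameter}, is contained in $\indufin$; since $\mathcal{N}\subset\text{Int}(\fin)$ and $p(b,q)>\LB$ on $\mathcal{N}$, the point $(b,q,p(b,q))$ lies in it. On $\text{Int}(\indufin)$ the function $\indupressure$ is real–analytic and $\partial_s\indupressure(b,q,s)=-\codingmeasure_{b,q,s}(\return\circ\codingmap)\le-1<0$ because $\return\ge1$, so $s\mapsto\indupressure(b,q,s)$ is strictly decreasing on the interior of its (interval) finiteness domain and $p(b,q)$ is its unique zero there. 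Applying the implicit function theorem to $\indupressure(b,q,s)=0$ around each point of $\mathcal{N}$ yields a real–analytic local solution, which by this uniqueness must coincide with $p$; hence $p$ is real–analytic on $\mathcal{N}$.

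To obtain Ruelle's formula, I would differentiate the identity $\indupressure(b,q,p(b,q))\equiv0$ in $b$ and in $q$ and substitute the formulas of Theorem~\ref{thm regularity of induced pressure}, getting $\partial_b p=-\lambda(\codingmeasure_{b,q})/\codingmeasure_{b,q}(\return\circ\codingmap)$ and $\partial_q p=-\codingmeasure_{b,q}(\indupote\circ\codingmap)/\codingmeasure_{b,q}(\return\circ\codingmap)$. Pushing $\codingmeasure_{b,q}$ forward to $\measure_{b,q}$ by $\codingmap$ and applying the Abramov--Kac relations \eqref{eq Abramov-Kac's formula} to the ergodic pair $(\measure_{b,q},\mu_{b,q})$ — all integrals being finite by Lemma~\ref{lemma finite return time} — and recalling that $\log|\indumap'|$ and $\indupote$ are the induced potentials of $\log|f'|$ and of $\phi$, one rewrites the two right–hand sides as $-\lambda(\mu_{b,q})$ and $-\mu_{b,q}(\phi)$, which is \eqref{eq ruell's formula noninduced}.

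For the last assertion the ``if'' direction is immediate: if $\alpha_{\inf}=\alpha_{\sup}=:\alpha$ then $\mu(\phi)=\alpha$ for every $\mu\in M(f)$, so $p(b,q)=P(-b\log|f'|)-q\alpha$ is affine in $q$ and $\partial_q^2 p\equiv0$. For the ``only if'' direction, a second differentiation of $\indupressure(b,q,p(b,q))\equiv0$ in $q$, combined with Step~2 and with the standard identification of the Hessian of $\indupressure$ on $\text{Int}(\indufin)$ with the asymptotic covariance matrix of $(\log|\indumap'\circ\codingmap|,\indupote\circ\codingmap,\return\circ\codingmap)$ (legitimate since $\indupote\circ\codingmap,\return\circ\codingmap\in L^2(\codingmeasure_{b,q})$ by Lemma~\ref{lemma finite return time}), gives
\[
\frac{\partial^2 p}{\partial q^2}(b,q)=\frac{1}{\codingmeasure_{b,q}(\return\circ\codingmap)}\,\sigma^2_{\codingmeasure_{b,q}}\!\bigl(\indupote\circ\codingmap-\mu_{b,q}(\phi)\,\return\circ\codingmap\bigr),
\]
where $\sigma^2_{\codingmeasure_{b,q}}$ is the asymptotic variance. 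Thus $\partial_q^2 p(b,q)=0$ forces the locally H\"older function $g:=\indupote\circ\codingmap-\alpha_0\,\return\circ\codingmap$, with $\alpha_0:=\mu_{b,q}(\phi)$, to have vanishing asymptotic variance; by the classical dichotomy on the finitely primitive shift $\inducoding$ it is then cohomologous to a constant, and since $\codingmeasure_{b,q}(g)=0$ it is cohomologous to $0$. Integrating this relation against $\nu'\in M(\shift)$ and using Abramov--Kac, every ergodic $\nu\in M(f)$ with $\nu(\indulimit)>0$ satisfies $\nu(\phi)=\alpha_0$.

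The main obstacle is the remaining case: the parabolic fixed points $x_i$ ($i\in\p$) never enter the inducing domain $\inducingdomain$, so $\delta_{x_i}$ is not liftable (Lemma~\ref{lemma equivalent condition not liftable}) and $\alpha_i=\phi(x_i)$ is invisible to the induced formalism, yet $\delta_{x_i}\in M(f)$ is ergodic and I must still prove $\alpha_i=\alpha_0$ in order to reach $\alpha_{\inf}=\alpha_{\sup}$. To handle this I would fix $j,k\in\h$ with $\overline{\Delta_j},\overline{\Delta_k}$ bounded away from $1$ and consider the $f$–periodic points $p_m:=\oricodingmap((ji^mk)^\infty)$: each $p_m$ is an $\indumap$–periodic point, its entire $f$–orbit lies in the fixed compact set $\bar\Delta_i\cup\bar\Delta_j\cup\bar\Delta_k\subset[0,1)$ on which $\phi$ is bounded, and the orbit measures $\mu_{p_m}$ converge weakly to $\delta_{x_i}$ (the fraction of the orbit lying outside a given neighbourhood of $x_i$ is $O(1/m)$). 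Since $g$ is a coboundary, its $\shift$–Birkhoff sum along the corresponding periodic orbit vanishes and telescopes to give $\mu_{p_m}(\phi)=\alpha_0$; passing to the limit, using the boundedness and uniform continuity of $\phi$ on the compact part of $\Lambda$ swept out by these orbits, yields $\phi(x_i)=\alpha_0$. Hence $\nu(\phi)=\alpha_0$ for every ergodic $\nu\in M(f)$, so $\alpha_{\inf}=\alpha_{\sup}$. Apart from this compactness–free approximation of $\delta_{x_i}$, everything reduces to the implicit function theorem, the Abramov--Kac formula, and the standard relation between vanishing asymptotic variance, coboundaries, and triviality on periodic orbits for locally H\"older potentials on finitely primitive countable Markov shifts.
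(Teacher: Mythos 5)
Your proof is correct and follows the paper's overall strategy closely: implicit function theorem applied to $\indupressure(b,q,p(b,q))=0$ using Lemma~\ref{lem finite pressure on a special parameter} and Theorem~\ref{thm regularity of induced pressure}, Abramov--Kac to transfer Ruelle's formula, then identifying $\partial_q^2 p$ with a normalized asymptotic variance and invoking the cohomology dichotomy of \cite[Lemma 4.8.8]{mauldin2003graph}. The one place where you take a genuinely different route is the final, and hardest, step: deducing $\phi(x_i)=\mu_{b,q}(\phi)$ for the parabolic indices $i\in\p$, which are invisible to the inducing scheme. The paper asserts this as ``not difficult to see'' from the induced coboundary relation, and then proceeds by constructing a Borel measurable transfer function $u$ on all of $\Lambda$ (extending $\tilde u$ step by step over the non-returning sets $P_{i,k}$ and setting $u\equiv 0$ on $\{x_i\}_{i\in\p}$) so that $\phi=u-u\circ f+\mu_{b,q}(\phi)$ off a countable non-periodic set $Z$, and then integrates against an arbitrary $\mu\in M(f)$. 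Your route replaces that global construction by a two-case argument: for ergodic $\nu$ with $\nu(\indulimit)>0$ you get $\nu(\phi)=\mu_{b,q}(\phi)$ directly from the coboundary and Abramov--Kac, and for $\delta_{x_i}$ you approximate via the $\indumap$-periodic points $p_m=\oricodingmap((ji^mk)^\infty)$, whose orbit measures converge weakly to $\delta_{x_i}$ while $\mu_{p_m}(\phi)=\mu_{b,q}(\phi)$ exactly by telescoping. This is arguably cleaner: it makes explicit exactly where $\alpha_i=\mu_{b,q}(\phi)$ comes from, keeps all potentials bounded on a fixed compact set so no integrability issues arise, and avoids having to certify that the recursively defined $u$ on the parabolic tails has the right measurability/integration properties. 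By contrast, the paper's global $u$ gives the slightly stronger pointwise coboundary identity on $\Lambda\setminus Z$ (not just an integral identity per ergodic measure), though for the conclusion $\alpha_{\inf}=\alpha_{\sup}$ the ergodic-measure version you prove suffices via the ergodic decomposition.
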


\subsection{Convergence of equilibrium measures}\label{sec convergence}

For $(b,q)\in \mathbb{R}^2$ we define $\multipote_{b,q}:=(-q\phi-b\log|f'|)\circ\oricodingmap$ and 
\[
C_{b,q}:=
\sum_{\tilde\omega\in \edgesindu}
   e^{\indumultipote_{b,q}([\tilde \omega])}
\]
Since $\inducoding$ is finitely primitive, for every bounded set $C\subset\mathbb{R}^2$ we have, for all $(b,q)\in C$,
\begin{align}\label{eq comparablity pressure and sum}
    e^{\indupressure(b,q,p(b,q))}\asymp C_{b,q}.
\end{align}

\begin{rem}\label{rem culuculation gibbs}
    Let $(b,q)$ be in $\fin$ with $p(b,q)>\LB$. 
    Then, since $\codingmeasure_{b,q}$ is a Gibbs measure, for every countable set $K\subset \Lambda$ and $\tilde K\subset\indulimit$ we have  $\mu_{b,q}(K)=0$ and $\measure_{b,q}(\tilde K)=0$. 
In particular, for measurable sets $A,B\subset \Lambda$ such that $A\triangle B:=A\setminus B\cup B\setminus A$ is countable,  
we have $\mu_{b,q}(A)=\mu_{b,q}(B)$. Also, since $\codingmap$ is one-to-one except on the countable set $J_0$, for all $n\in \mathbb{N}$ and $Z\subset  \inducoding^n$ we have $\measure_{b,q}(\bigcup_{\tilde\omega\in Z}\codingmap([\tilde \omega]))=\sum_{\tilde \omega\in Z}\measure_{b,q}(\codingmap([\tilde \omega]))$. On the other hand, by the definition of $\mu_{b,q}$, for all measurable set $\tilde B\subset \indulimit$ we have $\mu_{b,q}(\tilde B)=\measure_{b,q}(\return)^{-1}\measure_{b,q}(\tilde B)$ (see \cite[Corollary 1.4.4]{viana}). 
\end{rem}

Define 
$
O_{\p}:=\bigcup_{k=0}^{\infty}\bigcup_{i\in\p} f^{-k}(x_i).
$ 
 Note that $O_{\p}$ is a countable set.
\begin{lemma}\label{lemma tight}
Let $\{(b_n,q_n)\}_{n\in\mathbb{N}}\subset\mathbb{R}^2$ be a bounded sequence satisfying the following conditions:
\begin{itemize}
    \item[(T1)] For all $\epsilon >0$ there exists $F\subset \h$ such that $\edge\setminus F$ is finite and we have 
    \[\sum_{m\in F}e^{\multipote_{b_n,q_n}([m])}<\epsilon.\]
\item[(T2)] We have $\sup_{n\in\mathbb{N}}|\indupressure(b_n,q_n,p(b_n,q_n))|<\infty$.
    \item[(T3)] For all $n\in\mathbb{N}$ we have $p(b_n,q_n)>\text{LB}(q_n)$.
\end{itemize}
Then, $\{\mu_{b_n,q_n}\}_{n\in\mathbb{N}}$ is tight.
     
\end{lemma}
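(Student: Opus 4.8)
The plan is to show tightness by producing, for each $\epsilon>0$, a compact set $\mathcal{K}_\epsilon\subset\Lambda$ with $\mu_{b_n,q_n}(\Lambda\setminus\mathcal{K}_\epsilon)<\epsilon$ for all $n$. Since the only way mass can escape to infinity in $M(f)$ is by concentrating near the parabolic fixed points (equivalently, near the accumulation point $1$ of the endpoints, by (NERI1)), the natural candidates are the sets obtained by removing from $\Lambda$ a small neighbourhood of each $x_i$ ($i\in\p$) and removing the cylinders $I_j$ for all but finitely many $j\in\edge$. Concretely, by Lemma \ref{lemma equivalent condition not liftable} the non-liftable measures are exactly the convex combinations of the $\delta_{x_i}$, so I expect compactness of the tail estimate to reduce to: (i) controlling the $\mu_{b_n,q_n}$-mass of long strings $i^\ell$ with $i\in\p$ (proximity to parabolic points), and (ii) controlling the $\mu_{b_n,q_n}$-mass carried by high-index branches.

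First I would pass to the induced system. By the definition of $\mu_{b,q}$ via \eqref{eq def lift} and Remark \ref{rem culuculation gibbs}, for any measurable $\tilde B\subset\indulimit$ we have $\mu_{b,q}(\tilde B)=\measure_{b,q}(\return)^{-1}\measure_{b,q}(\tilde B)$, and more generally $\mu_{b_n,q_n}$ of a set described by the first-return dynamics is a sum over $n\in\mathbb N$ of push-forwards $\measure_{b_n,q_n}|_{\{\return=k\}}\circ f^{-n}$. Using (T2) together with \eqref{eq comparablity pressure and sum}, the normalising constants $\measure_{b_n,q_n}(\return)$ and $C_{b_n,q_n}$ are bounded above and below uniformly in $n$; combined with the Gibbs property \eqref{eq gibbs} (with a uniform constant $Q$ over the bounded parameter set, by Theorem \ref{thm Gibbs induce} applied on the compact closure — or, more carefully, by the explicit comparison $\measure_{b_n,q_n}(\codingmap([\tilde\omega]))\asymp e^{\indumultipote_{b_n,q_n}([\tilde\omega])}$), this reduces every mass estimate to a sum of the quantities $e^{\indumultipote_{b_n,q_n}([\tilde\omega])}$ over the relevant family of finite words $\tilde\omega\in\edgesindu^*$.

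Next I would split the "bad" set into two pieces and estimate each. For the high-index piece: given $\epsilon>0$, choose $F\subset\h$ as in (T1); then the $\mu_{b_n,q_n}$-mass of the union of cylinders $I_\omega$ whose symbolic return-block begins with an index in $F$ is controlled, after the reduction above and using (F) to replace $\indumultipote$ by $\multipote$ plus a summable polynomial-in-$n$ correction (exactly the computation in Lemma \ref{lem finite pressure on a special parameter} and Lemma \ref{lemma finite return time}), by $\sum_{m\in F}e^{\multipote_{b_n,q_n}([m])}$ times a uniformly bounded factor, hence $<C\epsilon$. For the parabolic piece: for fixed $i\in\p$ and $N$ large, the set $\bigcup_{\ell\geq N}\oricodingmap([i^\ell])$ of points that spend their first $N$ iterates near $x_i$ has $\mu_{b_n,q_n}$-mass bounded by a geometric tail $\sum_{\ell\geq N}e^{(\LB(q_n)+\epsilon'-p(b_n,q_n))\ell}$ (again via (F) and the Gibbs property), and by (T3) the ratio $p(b_n,q_n)-\LB(q_n)$ is positive; the issue is that it need not be bounded \emph{away from} $0$ uniformly in $n$, so the geometric tail could decay arbitrarily slowly. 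This is the main obstacle. I would handle it by noting that the push-forward structure in \eqref{eq def lift} "spreads out" the mass: a point whose return block is $i^\ell k$ contributes to the bad set at the orbit-level for all $n<\ell$, so the offending $\mu_{b_n,q_n}$-mass near $x_i$ is, up to the uniform constants, $\measure_{b_n,q_n}(\return)^{-1}\sum_{\ell\geq N}\ell\cdot e^{(\LB(q_n)-p(b_n,q_n))\ell}\cdot(\text{bounded})$; but the full sum $\sum_{\ell}\ell\,e^{(\LB(q_n)-p(b_n,q_n))\ell}$ is comparable to $\measure_{b_n,q_n}(\return)$ itself by Abramov--Kac \eqref{eq Abramov-Kac's formula} and \eqref{eq comparablity pressure and sum}, so the tail fraction is $o(1)$ as $N\to\infty$ \emph{uniformly in $n$} — precisely because dividing by $\measure_{b_n,q_n}(\return)$ turns a slowly-decaying tail into a small fraction of a slowly-growing total. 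Assembling: $\mathcal{K}_\epsilon := \Lambda\setminus\bigl(\bigcup_{i\in\p}\bigcup_{\ell\geq N}\oricodingmap([i^\ell])\cup\bigcup_{\text{blocks starting in }F}I_\omega\bigr)$ is a closed subset of $\Lambda$ on which every point has bounded first-return index and bounded iterate-count near each parabolic point; by \eqref{eq uniformly decay of sylinders} and (NERI1) it is compact, and $\sup_n\mu_{b_n,q_n}(\Lambda\setminus\mathcal{K}_\epsilon)<\epsilon$, which is tightness.
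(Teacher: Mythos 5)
Your outline has the right intuition for the high-index part, but there are two genuine gaps that make the proposed argument fail.

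First, and most fundamentally, the set $\mathcal{K}_\epsilon := \Lambda\setminus\bigl(\bigcup_{i\in\p}\bigcup_{\ell\geq N}\oricodingmap([i^\ell])\cup\bigcup_{\text{blocks starting in }F}I_\omega\bigr)$ is \emph{not} compact. It only constrains the first return block of the symbolic expansion; a point of $\mathcal{K}_\epsilon$ can still have $\omega_2,\omega_3,\ldots$ arbitrarily large. Since $\Lambda$ is not closed in $[0,1]$ (already $1\notin\Lambda$, and the same phenomenon occurs at backward images of $1$ inside each cylinder), a sequence in $\mathcal{K}_\epsilon$ with later symbols diverging can accumulate at a point that lies outside $\Lambda$. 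In other words, ``closed in $\Lambda$'' plus ``bounded first-return index'' does not give compactness here. To produce a compact subset of $\Lambda$ you must constrain the symbol at \emph{every} position $l\in\mathbb{N}$: this is exactly what the paper does, taking $\mathcal{K}_\epsilon = \oricodingmap\bigl(\oricodingsp\cap\prod_{l\geq 1}(\edge\setminus F_l)\bigr)$ with $\edge\setminus F_l$ finite for each $l$, and then estimating $\mu_{b_n,q_n}(\oricodingmap(\{\omega:\omega_l=m\}))\ll C_l\, e^{\multipote_{b_n,q_n}([m])}$ with constants $C_l$ uniform in $n$ (thanks to (T2) and \eqref{eq comparablity pressure and sum}) but growing in $l$, which is compensated by choosing $F_l$ via (T1) so that the $l$-th tail is $<\epsilon/2^l$.

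Second, the ``parabolic piece'' is both unnecessary and incorrectly estimated. It is unnecessary because the parabolic fixed points $x_i=\oricodingmap(i^\infty)$ lie in $\Lambda$: mass concentrating near $x_i$ does not escape to infinity, so the compact set may (and in the paper's proof does) freely include arbitrarily long parabolic runs $i^\ell$. Note that $\p\subset\edge\setminus F_l$ for every $l$ since $F_l\subset\h$. The estimate you propose for this piece is also wrong: writing $c_n:=p(b_n,q_n)-\LB(q_n)>0$, the claimed tail fraction $\sum_{\ell\geq N}\ell\,e^{-c_n\ell}\,/\,\sum_{\ell\geq 1}\ell\,e^{-c_n\ell}$ behaves like $(c_nN+1)e^{-c_nN}$, which does \emph{not} tend to $0$ as $N\to\infty$ uniformly in $n$ when $c_n\to 0$ (and $c_n\to 0$ does occur in the intended applications, e.g.\ $(b_n,q_n)\to(\delta,0)$ where $\indupressure(\delta,0,0)=0$). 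The hypotheses (T2) and (T3) by themselves do not give uniform integrability of the return time, so the ``dividing by $\measure_{b_n,q_n}(\return)$ saves you'' heuristic breaks down. Fortunately the correct proof never needs this estimate.
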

\begin{proof}
Let $m\in \h$ and let $\{(b_n,q_n)\}_{n\in\mathbb{N}}\subset\mathbb{R}^2$ be a bounded sequence satisfying the conditions (T1), (T2) and (T3).
For each $n\in\mathbb{N}$ we set 
\begin{align*}
    &C_{1,n}:=e^{-\indupressure(b_n,q_n,p(b_n,q_n))}C_{b_n,q_n},
    \ 
    C_{2,n}:=e^{-2\indupressure(b_n,q_n,p(b_n,q_n))}C_{b_n,q_n}^2
    \text{ and }
    \\& C_{l,n}:=(l-2)\sum_{L=1}^{l-1}e^{-(L+1)\indupressure(b_n,q_n,p(b_n,q_n))}C_{b_n,q_n}^{L+1} \text{ for }l\geq 3.
\end{align*}
By (T2) and \eqref{eq comparablity pressure and sum}, for all $l\in\mathbb{N}$ we have $C_l:=\sup_{n\in\mathbb{N}}C_{l,n}<\infty$.

We have 
\begin{align*}
&\oricodingmap([m])\subset
\codingmap([m\h])\cup\bigcup_{i\in\p}\bigcup_{k=1}^{\infty}\codingmap\left([mi^k\edge_i]\right)
\cup O_{\p}
\\&\bigcup_{\omega\in \edge}\oricodingmap([\omega m])\subset
\bigcup_{\omega\in \edge}\codingmap([\omega\h m\h])
\cup
\bigcup_{\omega\in \edge}\bigcup_{i\in\p}\bigcup_{k\in\mathbb{N}}\codingmap([\omega\h mi^k\edge_i])\cup O_{\p}. 
\end{align*}
Moreover, for all $q\in \mathbb{N}$ and $F\subset\edge$ we have 
\[
\bigcup_{j\in F}
\bigcup_{\tau\in \oricodingsp^{q}_j}\oricodingmap([j\tau m])
\subset 
\bigcup_{L=1}^{q+1}\bigcup_{\tilde \omega\in \inducoding^{L}}\left(\codingmap([\tilde\omega m\h])\cup\bigcup_{i\in\p}\bigcup_{k\in\mathbb{N}}\codingmap([\tilde \omega mi^k\edge_i])\right)\cup O_{\p},
\]
where $\oricodingsp^q_j:=\oricodingsp^q$ if $j\in\h$ and $\oricodingsp^q_j:=\{\omega\in \oricodingsp^q:\omega_1\neq j\}$ otherwise. Thus, by \eqref{eq gibbs} and Remark \ref{rem culuculation gibbs}, for all $n\in\mathbb{N}$ we obtain
\begin{align}\label{eq proof tight 1}
    \mu_{b_n,q_n}(\oricodingmap([m]))\leq \measure_{b_n,q_n}(\oricodingmap([m]))\ll C_{1} e^{\multipote_{b_n,q_n}([m])},
\end{align}
\begin{align}\label{eq proof tight 2}
   &\mu_{b_n,q_n}(\oricodingmap(\{\omega\in \oricodingsp:\omega_2=m\}))
   \ll C_{2}
   e^{\multipote_{b_n,q_n}([m])}.
\end{align}
and, for all $q\in\mathbb{N}$  and $F\subset\edge$,
\begin{align}\label{eq proof tight basic}
    \sum_{j\in F}\sum_{\tau\in \oricodingsp_j^q} \measure_{q_n,b_n}(\oricodingmap(j\tau m))
    \ll
       e^{\multipote_{b_n,q_n}([m])}
    \sum_{L=1}^{q+1}e^{-(L+1)\indupressure(b_n,q_n,p(b_n,q_n))}
    C_{b_n,q_n}^{L+1}
\end{align}
Let $l\geq 3$. 
  We have
     \begin{align}\label{eq proof tight decomposition}
         &\mu_{b_n,q_n}(\oricodingmap(\{\omega\in \oricodingsp:\omega_l=m\}))=\sum_{\omega\in \oricodingsp^{l-1}}\mu_{b_n,q_n}(\oricodingmap([\omega m]))
         \\&=\sum_{i\in\h}
         \sum_{\tau\in \oricodingsp^{l-2}}\mu_{b_n,q_n}(\oricodingmap([i\tau m]))+
         \sum_{k=1}^{l-1}
         \sum_{i\in\p}
         \sum_{
         \tau\in \oricodingsp^{l-1-k}_i}
         \mu_{b_n,q_n}(\oricodingmap([i^k\tau m])) \nonumber.
     \end{align}
     We note here that, by the definition of $\mu_{b_n,q_n}$ ($n\in\mathbb{N}$), for all $n\in\mathbb{N}$ and $\omega\in \oricodingsp^*$ we have
     \[
     \mu_{b_n,q_n}(\oricodingmap([\omega]))=
     \frac{1}{\measure_{b_n,q_n}(\return)}
     \sum_{q=0}^{\infty}\sum_{p=q+1}^{\infty}
     \measure_{b_n,q_n}
     \left(
     \left(\bigcup_{a\in \oricodingsp^q}\oricodingmap([a \omega])
     \right)
     \cap
     \left(
     \bigcup_{\tilde a\in E_p}\codingmap([\tilde a])
     \right)
     \right).
     \]
Since $m\in \h$, for all $i\in\edge$, $\tau\in \oricodingsp^{l-2}_i$,  $p\geq l-1$ and $\tilde a\in E_p$ we have $\oricodingmap([i\tau m])\cap \codingmap([\tilde a])\subset O_{\p}$, 
and for all $q\in\mathbb{N}$, $p\geq q+1$, $a\in \oricodingsp^q$ and $\tilde a\in E_p$ we have $\oricodingmap([ai\tau m])\cap \codingmap([\tilde a])\subset O_{\p}$. Therefore, for all $n\in\mathbb{N}$, $i\in\edge$ and $\tau\in \oricodingsp^{l-2}_i$ we obtain
   $
   \mu_{b_n,q_n}(\oricodingmap([i\tau m]))
   \leq (l-2)\measure_{b_n,q_n}(\oricodingmap([i\tau m])).
   $
Combining this with \eqref{eq proof tight basic}, we obtain
\begin{align}\label{eq proof tight l-H}
    &\sum_{i\in\h}\sum_{\tau\in \oricodingsp^{l-2}}\mu_{b_n,q_n}(\oricodingmap([i\tau m]))
    \\&\ll (l-2)       e^{\multipote_{b_n,q_n}([m])}\sum_{L=1}^{l-1}e^{-(L+1)\indupressure(b_n,q_n,p(b_n,q_n))}
    C_{b_n,q_n}^{L+1} \nonumber \text{ and }
\end{align}
\begin{align}\label{eq proof tight l-I}
    &\sum_{i\in \p}\sum_{\tau\in \oricodingsp^{l-2}_i}\mu_{b_n,q_n}(\oricodingmap([i\tau m]))
    \\&\ll (l-2)       e^{\multipote_{b_n,q_n}([m])}\sum_{L=1}^{l-1}e^{-(L+1)\indupressure(b_n,q_n,p(b_n,q_n))}
    C_{b_n,q_n}^{L+1} \nonumber.
\end{align}
Also, since for all $2\leq k\leq l-1$, $i\in\p$, $\tau\in \oricodingsp_i^{l-1-k}$, $p\geq 1$ and $\tilde a\in E_p$ we have $\oricodingmap([i^k\tau m])\cap\codingmap([\tilde a])\subset O_{\p}$, for all $n\in\mathbb{N}$, $2\leq k\leq l-1$, $i\in\p$ and  $\tau\in \oricodingsp_i^{l-1-k}$ we obtain
 $   \mu_{b_n,q_n}(\pi([i^k\tau m]))
    \leq \sum_{q=1}^{\infty}\sum_{a\in \edge_i}\measure_{b_n,q_n}(\oricodingmap([ai^{q-1+k}\tau m])).
$
Therefore, since for $2\leq k\leq l-1$ we have
\begin{align*}
    &\bigcup_{i\in\p}\bigcup_{\tau\in \oricodingsp^{l-1-k}_i}\bigcup_{q=1}^{\infty}\bigcup_{a\in \edge_i}\oricodingmap([ai^{q-1+k}\tau m])
    \\&\subset \bigcup_{L=1}^{l-1-k+2}\bigcup_{\tilde \omega\in \inducoding^L}\left(\codingmap([\tilde\omega m\h])\cup\bigcup_{e\in\p}\bigcup_{\ell=1}^\infty\codingmap([\tilde \omega me^\ell\edge_e])\right)\cup O_{\p}
\end{align*}
and $\codingmeasure_{b_n,q_n}$ is the Gibbs measure for $\indumultipote_{b_n,q_n}$ we have 
\begin{align*}
         &\sum_{k=2}^{l-1}
         \sum_{i\in\p}
         \sum_{
         \tau\in \oricodingsp^{l-1-k}_i}
         \mu_{b_n,q_n}(\oricodingmap([i^k\tau m])) 
         \\&\nonumber\leq 
         \sum_{k=2}^{l-1}
         \sum_{i\in\p}
         \sum_{
         \tau\in \oricodingsp^{l-1-k}_i}
         \sum_{q=1}^{\infty}\sum_{a\in \edge_i}
         \measure_{b_n,q_n}(\oricodingmap([ai^{q-1+k}\tau m]))
         \\&\nonumber\leq 
         \sum_{k=2}^{l-1}
         \sum_{L=1}^{l-1-k+2}\sum_{\tilde \omega\in \inducoding^L}
         \left(
         \measure_{b_n,q_n}(\codingmap([\tilde \omega m\h]))+\sum_{e\in\p}\sum_{\ell=1}^{\infty}\measure_{b_n,q_n}(\codingmap([\tilde \omega me^{\ell}\edge_e]))
         \right)
         \\&\nonumber
         \ll
         e^{\multipote_{b_n,q_n}([m])}
         \sum_{k=2}^{l-1}
         \sum_{L=1}^{l-1-k+2}e^{-(L+1)\indupressure(b_n,q_n,p(b_n,q_n))}C_{b_n,q_n}^{L+1}.
\end{align*}
Combining this with \eqref{eq proof tight decomposition}, \eqref{eq proof tight l-H} and \eqref{eq proof tight l-I}, for all $n\in\mathbb{N}$ we obtain 
\begin{align}\label{eq proof tight l 3}
    \mu_{b_n,q_n}(\oricodingmap(\{\omega\in \oricodingsp:\omega_l=m\}))
    \ll C_{l}e^{\multipote_{b_n,q_n}([m])}.
\end{align}

Let $\epsilon>0$. By (T1), \eqref{eq proof tight 1}, \eqref{eq proof tight 2} and \eqref{eq proof tight l 3}, for all $l\in\mathbb{N}$ there exists $F_l\subset\h$ such that $\edge\setminus F_l$ is finite and for all $n\in\mathbb{N}$ and we have 
$
\sum_{m\in F}\mu_{b_n,q_n}(\oricodingmap(\{\omega\in \oricodingsp:\omega_l=m\}))\leq {\epsilon}/{2^l}.
$
Therefore, for all $n\in\mathbb{N}$ we obtain 
\[
\mu_{b_n,q_n}\left(\oricodingmap\left(\oricodingsp\cap\prod_{l=1}^{\infty}\edge\setminus F_l\right)\right)\geq 1-\sum_{l=1}^\infty\sum_{m\in F}\mu_{b_n,q_n}(\oricodingmap(\{\omega\in \oricodingsp:\omega_l=m\}))\geq 1-\epsilon.
\]
Thus, we are done.
\end{proof}

We recall here the definition of the measure-theoretic entropy $h(\mu')$ for $\mu'\in M(\sigma)$ (see \cite[Chapter 4]{walters2000introduction} for details) and the result in \cite{takahasi}. Let $\mathscr{C}=\{C_1,C_2,\cdots\}$ be a countable partition of $\oricodingsp$ into Borel sets and let $\mu'\in M(\sigma)$. The entropy of $\mathscr{C}$ with respect to $\mu'$ is defined by 
\[
H(\mu',\mathscr{C}):=-\sum_{k}\mu'(C_k)\log \mu'(C_k)
\]
with the convention $0\log 0=0$. If $H(\mu',\mathscr{C})<\infty$ then we define 
\[
h(\mu',\mathscr{C})=\lim_{n\to\infty}\frac{1}{n} H\left(\mu',\bigvee_{i=0}^{n-1}\sigma^{-i}\mathscr{C}\right),
\]
where $\bigvee$ denotes the join of the partitions $\sigma^{-i}\mathscr{C}$. The entropy with respect to $\mu'$ is defined by $h(\mu'):=\sup_{\mathscr{C}} h(\mu',\mathscr{C})$, where the supremum is taken over all countable partitions $\mathscr{C}$ with $H(\mu',\mathscr{C})<\infty$. For each $\ell\in \mathbb{N}$ we define two partitions of $\oricodingsp$: 
\[
\mathscr{A}_\ell=\left\{[1],\cdots,[\ell], \bigcup_{k=\ell+1}^\infty[k]\right\} \text{ and }
\mathscr{B}_\ell:=\left\{\bigcup_{k=1}^{\ell}[k],[\ell+1],[\ell+2],\cdots\right\}.
\]
By \cite[Lemma 2.1]{takahasi}, $h(\mu')=\infty$ if and only if $\lim_{\ell\to\infty}h(\mu',\mathscr{A}_\ell)=\infty$.
Since $\mathscr{A}_\ell\vee\mathscr{B}_\ell$ ($\ell\in\mathbb{N}$) is a generator, if $h(\mu')<\infty$ then for all $\ell\in\mathbb{N}$ we have  
\begin{align}\label{eq generator}
    h(\mu')=h(\mu',\mathscr{A}_\ell\vee \mathscr{B}_\ell).
\end{align}
We have the following:
\begin{lemma}{\cite[Lemma 2.2]{takahasi}}\label{lemma entropy}
    For all $\mu'\in M(\sigma)$ with $h(\mu')<\infty$ we have $\lim_{\ell\to\infty}h(\mu',\mathscr{A}_\ell)=h(\mu')$.
\end{lemma}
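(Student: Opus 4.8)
The plan is to establish the two inequalities $\liminf_{\ell\to\infty}h(\mu',\mathscr{A}_\ell)\geq h(\mu')$ and $\limsup_{\ell\to\infty}h(\mu',\mathscr{A}_\ell)\leq h(\mu')$ separately. The second inequality is immediate from the definition of $h(\mu')$ as a supremum over countable partitions of finite entropy, since each $\mathscr{A}_\ell$ has finite entropy (the space being a full shift, $H(\mu',\mathscr{A}_\ell)\leq\log(\ell+1)<\infty$), so $h(\mu',\mathscr{A}_\ell)\leq h(\mu')$ for every $\ell$. The substance is the first inequality, and I would use the hypothesis $h(\mu')<\infty$ together with the generator identity \eqref{eq generator}, which gives $h(\mu')=h(\mu',\mathscr{A}_\ell\vee\mathscr{B}_\ell)$ for every $\ell$.

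First I would fix $\ell$ and compare $h(\mu',\mathscr{A}_\ell\vee\mathscr{B}_\ell)$ with $h(\mu',\mathscr{A}_\ell)$. Using the standard subadditivity/monotonicity properties of static entropy one has, for every $n$,
\[
H\!\left(\mu',\bigvee_{i=0}^{n-1}\sigma^{-i}(\mathscr{A}_\ell\vee\mathscr{B}_\ell)\right)
\leq
H\!\left(\mu',\bigvee_{i=0}^{n-1}\sigma^{-i}\mathscr{A}_\ell\right)
+
H\!\left(\mu',\bigvee_{i=0}^{n-1}\sigma^{-i}\mathscr{B}_\ell\;\Big|\;\bigvee_{i=0}^{n-1}\sigma^{-i}\mathscr{A}_\ell\right),
\]
and the conditional term is bounded by $\sum_{i=0}^{n-1}H(\mu',\sigma^{-i}\mathscr{B}_\ell\mid\mathscr{A}_\ell)=nH(\mu',\mathscr{B}_\ell\mid\mathscr{A}_\ell)$ by $\sigma$-invariance. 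Dividing by $n$ and letting $n\to\infty$ yields
\[
h(\mu')=h(\mu',\mathscr{A}_\ell\vee\mathscr{B}_\ell)\leq h(\mu',\mathscr{A}_\ell)+H(\mu',\mathscr{B}_\ell\mid\mathscr{A}_\ell).
\]
So it suffices to show $H(\mu',\mathscr{B}_\ell\mid\mathscr{A}_\ell)\to 0$ as $\ell\to\infty$. I expect this to be the main obstacle, and the key point is that the two partitions $\mathscr{A}_\ell$ and $\mathscr{B}_\ell$ overlap substantially: the atom $\bigcup_{k=1}^{\ell}[k]$ of $\mathscr{B}_\ell$ refines into the atoms $[1],\dots,[\ell]$ of $\mathscr{A}_\ell$, while on the atom $\bigcup_{k=\ell+1}^{\infty}[k]$ of $\mathscr{A}_\ell$ the partition $\mathscr{B}_\ell$ restricts to $\{[\ell+1],[\ell+2],\dots\}$. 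Hence $H(\mu',\mathscr{B}_\ell\mid\mathscr{A}_\ell)=\mu'\big(\bigcup_{k\geq\ell+1}[k]\big)\cdot H\big(\mu'(\,\cdot\mid\bigcup_{k\geq\ell+1}[k]),\{[k]\}_{k\geq\ell+1}\big)$, which is the tail contribution $-\sum_{k\geq\ell+1}\mu'([k])\log\frac{\mu'([k])}{\mu'(\bigcup_{j\geq\ell+1}[j])}$. This last expression is bounded above by $-\sum_{k\geq\ell+1}\mu'([k])\log\mu'([k]) + \mu'\big(\bigcup_{j\geq\ell+1}[j]\big)\big|\log\mu'\big(\bigcup_{j\geq\ell+1}[j]\big)\big|$, i.e.\ a tail of the convergent series defining $H(\mu',\{[k]\}_{k\in\mathbb{N}})$ plus a term going to $0$; I would need to know that $H(\mu',\{[k]\}_{k\in\mathbb{N}})<\infty$, which follows because $h(\mu')<\infty$ forces $H(\mu',\mathscr{A}_\ell)<\infty$ uniformly in a way that controls $-\sum_k\mu'([k])\log\mu'([k])$ — more directly, if that series diverged then $\lim_\ell H(\mu',\mathscr{A}_\ell)=\infty$ would already force $h(\mu')=\infty$ by monotonicity under finite iterates, contradicting the hypothesis. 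Once $H(\mu',\{[k]\}_{k\in\mathbb{N}})<\infty$ is in hand, the tail sum tends to $0$ as $\ell\to\infty$, giving $H(\mu',\mathscr{B}_\ell\mid\mathscr{A}_\ell)\to 0$ and therefore $\liminf_\ell h(\mu',\mathscr{A}_\ell)\geq h(\mu')$, which combined with the trivial upper bound completes the proof.

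Since this is precisely \cite[Lemma 2.2]{takahasi}, an alternative and cleaner route is simply to cite that reference; but the argument above is the one I would reconstruct, the only delicate point being the finiteness of the single-coordinate partition entropy and the uniform tail estimate, both of which are consequences of the standing assumption $h(\mu')<\infty$ on a full shift.
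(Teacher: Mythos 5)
The paper does not prove this lemma; it is imported verbatim from \cite[Lemma 2.2]{takahasi}, so there is no in-paper proof to compare against. Reviewing your argument on its own terms: the upper bound $\limsup_\ell h(\mu',\mathscr{A}_\ell)\leq h(\mu')$, the subadditivity estimate $h(\mu',\mathscr{A}_\ell\vee\mathscr{B}_\ell)\leq h(\mu',\mathscr{A}_\ell)+H(\mu',\mathscr{B}_\ell\mid\mathscr{A}_\ell)$, and the reduction to showing $H(\mu',\mathscr{B}_\ell\mid\mathscr{A}_\ell)\to 0$ are all fine; the gap is in the step where you try to deduce $H\bigl(\mu',\{[k]\}_{k\in\mathbb{N}}\bigr)<\infty$ from $h(\mu')<\infty$, and this is precisely the load-bearing step. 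Your claim that a divergent $-\sum_k\mu'([k])\log\mu'([k])$ would force $h(\mu')=\infty$ ``by monotonicity under finite iterates'' has the inequality in the wrong direction: the static entropy $H(\mu',\mathscr{A}_\ell)$ is an \emph{upper} bound for the dynamical entropy $h(\mu',\mathscr{A}_\ell)$, so its divergence as $\ell\to\infty$ imposes no constraint whatsoever on $h(\mu')$. Concretely, fix a probability vector $(p_k)_{k\in\mathbb{N}}$ with $-\sum_k p_k\log p_k=\infty$ and set $\mu':=\sum_k p_k\,\delta_{(k,k,k,\dots)}$, a $\sigma$-invariant measure supported on fixed points. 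Then $h(\mu')=0<\infty$, yet $H\bigl(\mu',\{[k]\}_k\bigr)=\infty$ and $H(\mu',\mathscr{B}_\ell\mid\mathscr{A}_\ell)=\infty$ for every $\ell$, so your key inequality becomes vacuous and the argument cannot close. (The lemma is still true for this $\mu'$: one checks directly that $h(\mu',\mathscr{A}_\ell)=0$ for all $\ell$. It is the method, not the statement, that breaks.)

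The robust route, and presumably the one in \cite{takahasi}, conditions on the $\sigma$-algebra generated by $\mathscr{A}_\ell$ under the dynamics rather than on the single partition $\mathscr{A}_\ell$. For any countable partition $\mathscr{D}$ with $H(\mu',\mathscr{D})<\infty$ one has $h(\mu',\mathscr{D})\leq h(\mu',\mathscr{A}_\ell)+H\bigl(\mu',\mathscr{D}\mid\bigvee_{i\geq 0}\sigma^{-i}\mathscr{A}_\ell\bigr)$, and since the $\sigma$-algebras $\bigvee_{i\geq 0}\sigma^{-i}\mathscr{A}_\ell$ increase to the full Borel $\sigma$-algebra as $\ell\to\infty$, the conditional entropy term tends to $0$ by monotone (martingale) continuity of conditional entropy, which requires only $H(\mu',\mathscr{D})<\infty$ and nothing about the entropy of the one-cylinder partition. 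Taking the supremum over such $\mathscr{D}$ gives $h(\mu')\leq\lim_\ell h(\mu',\mathscr{A}_\ell)$. This is the standard Kolmogorov--Sinai approximation for an increasing sequence of finite partitions whose iterates jointly generate (cf.\ \cite{walters2000introduction}), and it sidesteps the finiteness of $H\bigl(\mu',\{[k]\}_k\bigr)$ entirely.
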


For a bounded sequence $\{(b_n,q_n)\}_{n\in\mathbb{N}}\subset\mathbb{R}^2$ we consider the following conditions:
\begin{itemize}
    \item[(T1.1)] For all $\epsilon>0$  there exists $F\subset \h$ such that $\edge\setminus F$ is finite and for all $n\in\mathbb{N}$, 
    \begin{align*}
    \sum_{m\in F}e^{\multipote_{b_n,q_n}([m])}|\multipote_{b_n,q_n}|([m])<\epsilon.
    \end{align*}
    \item[(T1.2)] For all $\epsilon>0$  there exists $F\subset \h$ such that $\edge\setminus F$ is finite and for all $n\in\mathbb{N}$,
    \begin{align*}
    \sum_{m\in F}e^{\multipote_{b_n,q_n}([m])}\phi\circ\pi([m])<\epsilon.
    \end{align*}
    \item[(T1.3)]For all $\epsilon>0$  there exists $F\subset \h$ such that $\edge\setminus F$ is finite and for all $n\in\mathbb{N}$,
    \begin{align*}
    \sum_{m\in F}e^{\multipote_{b_n,q_n}([m])}\log|f'|\circ\pi([m])<\epsilon.
    \end{align*}
\end{itemize}
Note that since $\lim_{m\to\infty}\log|f'|\circ\oricodingmap([m])=\infty$ if a bounded sequence $\{(b_n,q_n)\}_{n\in\mathbb{N}}$ satisfies (T1.3) then $\{(b_n,q_n)\}_{n\in\mathbb{N}}$ also satisfies (T1) and if $\{(b_n,q_n)\}_{n\in\mathbb{N}}$ satisfies (T1.2) and (T1.3) then $\{(b_n,q_n)\}_{n\in\mathbb{N}}$ satisfies (T1) and (T1.1).

Let $(b_{\infty},q_{\infty})\in \mathbb{R}^2$ and let $\{(b_n,q_n)\}_{n\in\mathbb{N}}\subset \mathbb{R}^2$ be a bounded sequence satisfying (T1), (T2) and (T3) with $\lim_{n\to\infty}(b_n,q_n)=(b_\infty,q_\infty)$.
Then,
by Prohorov's theorem and Lemma \ref{lemma tight},
there exist a subsequence $\{(b_{n_k},q_{n_k})\}_{k\in\mathbb{N}}$ of $\{(b_n,q_n)\}_{n\in\mathbb{N}}$ and $\mu^*_{b_\infty,q_\infty}\in M(f)$ such that $\lim_{k\to\infty}\mu_{b_{n_k},q_{n_k}}=\mu^*_{b_\infty,q_\infty}$ in the weak* topology.
For simplicity of notation, for $k\in\mathbb{N}$ we write $\mu_k:=\mu_{b_{n_k},q_{n_k}}$, 
$\multipote_k:=\multipote_{b_{n_k},q_{n_k}}$
$\indupressure_k:=\indupressure(b_{n_k},q_{n_k},p(b_{n_k},q_{n_k}))$, 
$\indumultipote_k:=\indumultipote_{b_{n_k},q_{n_k}}$, 
$\measure_k:=\measure_{b_{n_k},q_{n_k}}$ and $\mu_\infty=\mu_{b_\infty,q_\infty}^*$.
In the following two lemmas, we keep the notations introduced in this paragraph. Recall that $\p:=\{1,\cdots,\#\p\}$ and $\edge:=\mathbb{N}$ (see \eqref{eq def edge}). 

\begin{lemma}\label{lemma}
We assume that a bounded sequence $\{(b_n,q_n)\}_{n\in\mathbb{N}}$ satisfies (T1), (T2), (T3), (T1.1) and for all $n\in\mathbb{N}$ we have $h(\mu_{b_n,q_n})<\infty$ and $h(\mu_{\infty}^*)<\infty$.
Then, we have    $\limsup_{k\to\infty}h(\mu_k)\leq h(\mu^*_{\infty})$.
\end{lemma}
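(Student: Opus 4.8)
The plan is to compare entropies through the finite partitions $\mathscr{A}_\ell$. By Lemma \ref{lemma entropy} it suffices to bound $\limsup_{k\to\infty} h(\mu_k,\mathscr{A}_\ell)$ and then let $\ell\to\infty$; however this requires care because $\mathscr{A}_\ell$ is not a generator and $h(\mu_k)=h(\mu_k,\mathscr{A}_\ell\vee\mathscr{B}_\ell)$ by \eqref{eq generator}. So the first step is to reduce to the generator $\mathscr{A}_\ell\vee\mathscr{B}_\ell$: write $h(\mu_k)=h(\mu_k,\mathscr{A}_\ell\vee\mathscr{B}_\ell)\le h(\mu_k,\mathscr{A}_\ell)+h(\mu_k,\mathscr{B}_\ell)$, and bound the $\mathscr{B}_\ell$-term. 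Since $\mathscr{B}_\ell$ refines only the "tail" coordinates $[\ell+1],[\ell+2],\dots$, for $\mu_k$ (which arises as the lift of the Gibbs measure $\measure_k$) one has $h(\mu_k,\mathscr{B}_\ell)\le H(\mu_k,\mathscr{B}_\ell)$, and using the Gibbs property \eqref{eq gibbs} together with Remark \ref{rem culuculation gibbs} one estimates $\mu_k([m])$ for $m\ge\ell+1$ by a constant times $e^{\multipote_k([m])}$ (via the computations already carried out in the proof of Lemma \ref{lemma tight}, in particular \eqref{eq proof tight 1}). Then $H(\mu_k,\mathscr{B}_\ell)\ll \sum_{m\ge \ell+1} e^{\multipote_k([m])}\,|\multipote_k|([m]) + (\text{entropy-of-normalization terms})$, which by (T1.1) can be made $<\epsilon$ uniformly in $k$ once $\ell$ is large — here (T1.1) is exactly the hypothesis designed to control this. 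This gives, for every $\epsilon>0$ and all large $\ell$, $\limsup_{k}h(\mu_k)\le \limsup_k h(\mu_k,\mathscr{A}_\ell)+\epsilon$.

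The second, and main, step is the upper semicontinuity of $\mu\mapsto h(\mu,\mathscr{A}_\ell)$ along the sequence $\mu_k\to\mu_\infty$ in the weak$^*$ topology, for each fixed $\ell$. The partition $\mathscr{A}_\ell$ is finite, so its atoms are the cylinders $[1],\dots,[\ell]$ together with the tail set $\bigcup_{k>\ell}[k]$; the images under $\oricodingmap$ of the boundaries of these sets form a countable set (a subset of $\bigcup_\omega \partial_\Lambda \oricodingmap([\omega])$ and its preimages), and by Remark \ref{rem culuculation gibbs} each $\mu_k$ gives this countable set measure zero. The same should be arranged for $\mu_\infty$ — one must check that $\mu_\infty$ of the relevant countable boundary set is $0$; since $\mu_\infty$ is a weak$^*$ limit this needs a small argument, e.g. via the Portmanteau theorem applied to small neighborhoods of the (finitely many, for fixed $\ell$) boundary endpoints, or by first establishing that $\mu_\infty$ has no atoms at the parabolic points $x_i$ using (T1)–(T3). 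Granting this, $\mu_k(A)\to\mu_\infty(A)$ for every atom $A$ of $\bigvee_{i=0}^{n-1}\sigma^{-i}\mathscr{A}_\ell$ (each such atom being, up to a $\mu$-null set, a finite union of cylinders pulled back to $\Lambda$, with boundary of measure zero for all the measures involved). Since $t\mapsto -t\log t$ is continuous and bounded on $[0,1]$ and the refined partition $\bigvee_{i=0}^{n-1}\sigma^{-i}\mathscr{A}_\ell$ is finite, we get $H\!\left(\mu_k,\bigvee_{i=0}^{n-1}\sigma^{-i}\mathscr{A}_\ell\right)\to H\!\left(\mu_\infty,\bigvee_{i=0}^{n-1}\sigma^{-i}\mathscr{A}_\ell\right)$ as $k\to\infty$, for each fixed $n$. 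Dividing by $n$ and using subadditivity of the entropies of refined partitions (so the limit in $n$ is an infimum), we obtain $\limsup_k h(\mu_k,\mathscr{A}_\ell)\le \tfrac1n H\!\left(\mu_\infty,\bigvee_{i=0}^{n-1}\sigma^{-i}\mathscr{A}_\ell\right)$ for every $n$, hence $\limsup_k h(\mu_k,\mathscr{A}_\ell)\le h(\mu_\infty,\mathscr{A}_\ell)$.

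Combining the two steps: for every $\epsilon>0$ and all sufficiently large $\ell$,
\[
\limsup_{k\to\infty} h(\mu_k) \;\le\; \limsup_{k\to\infty} h(\mu_k,\mathscr{A}_\ell) + \epsilon \;\le\; h(\mu_\infty,\mathscr{A}_\ell) + \epsilon \;\le\; h(\mu_\infty) + \epsilon,
\]
where the last inequality is just $h(\mu_\infty,\mathscr{A}_\ell)\le h(\mu_\infty)$ by definition of entropy as a supremum over partitions. Letting $\epsilon\to 0$ gives $\limsup_{k\to\infty} h(\mu_k)\le h(\mu_\infty)=h(\mu_\infty^*)$, which is the claim for the chosen subsequence; but since the whole argument applies verbatim to the originally fixed convergent sequence $\mu_k\to\mu_\infty$ (no further passage to a subsequence is needed once the limit is fixed), we are done.

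I expect the main obstacle to be the second step — specifically, making the weak$^*$ continuity $\mu_k(A)\to\mu_\infty(A)$ rigorous on the refined-partition atoms. The subtlety is that the cylinder sets $\oricodingmap([\omega])$ in $\Lambda$ are not open or closed in general, so one cannot directly apply the Portmanteau theorem; one must exploit that their topological boundaries in $\Lambda$ are countable and that $\mu_\infty$ assigns them zero mass, and the latter is not automatic for a weak$^*$ limit and must be extracted from the tightness-type hypotheses (T1)–(T3) plus the absence of mass escaping to the parabolic fixed points. The control of the normalization factor $\measure_k(\rho)^{-1}$ appearing in $\mu_k$ (bounding it away from $0$ and $\infty$ uniformly in $k$), which is implicit in transferring \eqref{eq gibbs}-estimates from $\measure_k$ to $\mu_k$, should follow from (T2), \eqref{eq comparablity pressure and sum} and \eqref{eq induced pressure is less than zero}, but it is worth spelling out.
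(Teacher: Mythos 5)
Your overall plan is the same as the paper's: decompose $h(\mu'_k)=h(\mu'_k,\mathscr{A}_\ell\vee\mathscr{B}_\ell)\le h(\mu'_k,\mathscr{A}_\ell)+h(\mu'_k,\mathscr{B}_\ell)$, control the $\mathscr{A}_\ell$-term by upper semi-continuity of $\nu'\mapsto h(\nu',\mathscr{A}_\ell)$ together with Lemma~\ref{lemma entropy}, and show that the $\mathscr{B}_\ell$-term can be made uniformly small via Gibbs estimates and (T1.1). Two remarks on where your proposal and the paper part ways. First, on the upper semi-continuity step: the paper works on the symbolic space $\oricodingsp$ with the lifted measures $\mu'_k$, where the cylinders $[i]$ are \emph{clopen} in the metric $d$, so every atom of $\bigvee_{i=0}^{n-1}\sigma^{-i}\mathscr{A}_\ell$ has empty boundary and the Portmanteau worry you raise evaporates; the paper then simply invokes the standard upper semi-continuity of $\nu'\mapsto h(\nu',\mathscr{A}_\ell)$ for a fixed finite partition (citing Walters' Theorem~8.2 and \cite[Lemma~2.6]{takahasi}). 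You instead push everything to $\Lambda$, where the images $\oricodingmap([\omega])$ are not clopen, which creates the boundary issue you flag. The cleaner move is to stay on $\oricodingsp$; the only residual subtlety (whether $\mu_k\to\mu_\infty^*$ on $\Lambda$ implies $\mu'_k\to\mu'_\infty$ on $M(\sigma)$, controlled by the countable set $J_0$) is precisely your concern repackaged, and the paper does not dwell on it either. Second, on the $\mathscr{B}_\ell$ step: your sketch $H(\mu_k,\mathscr{B}_\ell)\ll\sum_{m\ge\ell+1}e^{\multipote_k([m])}|\multipote_k|([m])+(\text{normalization})$ compresses a much more delicate computation. Because $\oricodingmap([j])$ decomposes into $\codingmap([j\h])$ and the family $\codingmap([ji^s\edge_i])$ (cf.~\eqref{eq entropy refiment}), one must control the contribution of the parabolic blocks $ji^s\edge_i$ separately; this requires the shift relation \eqref{eq entropy shift}, the bound on $e^{\indumultipote_k([ji^s\edge_i])}$ via $C(k,i,s)$ in \eqref{eq entropy C}, the linear-in-$s$ bound \eqref{eq entropy s}, and crucially the lower estimate \eqref{eq entropy return} for $\measure_k(\return)$ which makes the normalization controllable. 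Your proposal correctly identifies (T1.1) as the hypothesis designed for this, but the reduction from the $\mathscr{B}_\ell$-entropy to a sum governed by (T1.1) is the technically heavy part of the proof, not a routine unpacking of the Gibbs property.
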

\begin{proof}
Let $\epsilon>0$.
    By Remark \ref{rem measurable bijection oricoding}, for all $k\in\mathbb{N}\cup\{\infty\}$ there exists $\mu'_{k}\in M(\sigma)$ such that $\mu_k=\mu'_k\circ \oricodingmap^{-1}$ and $h(\mu_k)=h(\mu_k')$. Then, by Lemma \ref{lemma entropy}, there exists $L\in \mathbb{N}$ such that for all $\ell\geq L$ we have 
    \begin{align}\label{eq entropy appro}
h(\mu'_\infty,\mathscr{A}_\ell)<h(\mu_\infty^*)+\epsilon.        
    \end{align}
 Notice that for all $\ell\in \mathbb{N}$, $\mathscr{A}_\ell$ is a finite partition. 
    {Thus, by using same arguments in the proof of \cite[Theorem 8.2]{walters2000introduction}, for all $\ell \in\mathbb{N}$ the bounded map $\nu'\in M(\sigma) \mapsto h(\nu',\mathscr{A}_\ell)$ is upper semi-continuous (see also the proof of \cite[Lemma 2.6]{takahasi}).} Hence, by \eqref{eq generator}, \cite[Theorem 4.12]{walters2000introduction} and \eqref{eq entropy appro} for all $\ell\geq L$ we obtain 
    \begin{align*}   
    &\limsup_{k\to\infty}h(\mu_k)
    =
\limsup_{k\to\infty}h(\mu'_k,\mathscr{A}_\ell\vee\mathscr{B}_\ell)
\leq
\limsup_{k\to\infty}h(\mu'_k,\mathscr{A}_\ell)+\limsup_{k\to\infty}h(\mu'_k,\mathscr{B}_\ell)
\\&
\leq h(\mu'_\infty,\mathscr{A}_\ell)+\limsup_{k\to\infty}h(\mu'_k,\mathscr{B}_\ell)
\leq h(\mu_\infty)+\epsilon +\limsup_{k\to\infty}h(\mu'_k,\mathscr{B}_\ell).
    \end{align*}
Therefore, if we can show that there exists $\tilde L> \#\p$ such that for all $k\in\mathbb{N}$ we have 
\begin{align}\label{eq entropy uniform}
    h(\mu_k',\mathscr{B}_{\tilde{L}})<\epsilon
\end{align}
then, letting $\epsilon\to0$, the proof is complete. Thus, we shall show that \eqref{eq entropy uniform}. By \cite[Theorem 4.12]{walters2000introduction} and Remark \ref{rem culuculation gibbs}, for all $\ell\in\mathbb{N}$ with $\ell> \#\p$ and $k\in\mathbb{N}$ we have 
\begin{align}\label{eq entropy first step}
    &h(\mu'_k,\mathscr{B}_\ell)\leq H(\mu'_k,\mathscr{B}_\ell)
=-\mu_k\left(\bigcup_{j=1}^{\ell}\oricodingmap([j])\right)\log\mu_k\left(\bigcup_{j=1}^{\ell}\oricodingmap([j])\right)
    \\&-\measure_k(\return)^{-1}\sum_{j=\ell+1}^\infty \measure_
    k(\oricodingmap([j]))\log\measure_k(\oricodingmap([j]))
    +
    \frac{\log \measure_k(\return)}{\measure_k(\return)}
    \sum_{j=\ell+1}^\infty \measure_k(\oricodingmap([j])).\nonumber
\end{align}
{Note that for each $j\in\mathbb{N}$ with $j> \#\p$ we have 
\begin{align}\label{eq entropy refiment}
\pi([j])\triangle\left(\codingmap([j\h])\cup\bigcup_{i\in \p}\bigcup_{q\in\mathbb{N}}\codingmap([ji^q\edge_{i}])\right)\subset O_{\p}.
\end{align}
By Remark \ref{rem culuculation gibbs} and the calculation in the proof of \cite[Theorem 4.3]{walters2000introduction}, for each $k\in\mathbb{N}$ and $j\in\mathbb{N}$ with $j> \#\p$ we obtain
\begin{align}\label{eq entropy refiment increase}
    \measure_
    k(\oricodingmap([j]))\log\measure_k(\oricodingmap([j]))
   & \leq 
\measure_k(\codingmap([j\h]))\log\measure_k(\codingmap([j\h]))
\\&+\nonumber
    \sum_{i\in\p}\sum_{s\in\mathbb{N}}\measure_
k(\codingmap([ji^s\edge_i]))\log\measure_k(\codingmap([ji^s\edge_i])).
\end{align}
}
Since $\codingmeasure_{b_{n_k},q_{n_k}}$ ($k\in\mathbb{N}$) is the Gibbs measure for $\indumultipote_{b_{n_k},q_{n_k}}$, there exists $C\geq 1$ such that for all $k\in\mathbb{N}$ and $\tilde\tau\in\edgesindu$ we have 
$
C^{-1}\leq {\measure_k(\codingmap([\tilde\tau]))}/{\exp(\indumultipote_{k}([\tilde \tau])-\indupressure_k)}\leq C.
$
Hence, for all $k\in\mathbb{N}$ and  $\ell\in\mathbb{N}$ with $\ell>\#\p$ we obtain 
\begin{align*}
   & -\sum_{j=\ell+1}^{\infty}\left(\measure_k(\codingmap([j\h]))\log\measure_k(\codingmap([j\h]))
+
    \sum_{i\in\p}\sum_{s\in\mathbb{N}}\measure_
k(\codingmap([ji^s\edge_i]))\log\measure_k(\codingmap([ji^s\edge_i]))\right)
\\&\leq (\log C+|\indupressure_k|)
\sum_{j=\ell+1}^{\infty}\measure_k(\oricodingmap([j]))
\\&+Ce^{-\indupressure_k}\sum_{j=\ell+1}^{\infty}\left(e^{\indumultipote_k([j\h])}|\indumultipote_k|([j\h])+
\sum_{i\in\p}\sum_{s\in\mathbb{N}}e^{\indumultipote_k([ji^s\edge_i])}|\indumultipote_k|([ji^s\edge_i])
\right).
\end{align*}
Combining this with \eqref{eq entropy refiment increase} and (T2), for all  $k\in\mathbb{N}$ and  $\ell\in\mathbb{N}$ with $\ell>\#\p$ we obtain
\begin{align}\label{eq entropy secound}
   &-\measure_k(\return)^{-1}\sum_{j=\ell+1}^\infty \measure_
    k(\oricodingmap([j]))\log\measure_k(\oricodingmap([j]))
    \ll \sum_{j=\ell+1}^{\infty}\measure_k(\oricodingmap([j]))
   \\&\nonumber + \sum_{j=\ell+1}^{\infty}e^{\multipote_k([j])}|\multipote_k|([j])+\frac{1}{\measure_k(\return)}
   \sum_{j=\ell+1}^\infty
\sum_{i\in\p}\sum_{s\in\mathbb{N}}{e^{\indumultipote_k([ji^s\edge_i])}}
|\indumultipote_k|([ji^s\edge_i]).
\end{align}
Since $\return\geq1$, we have $\sup_{k\in\mathbb{N}}\log \measure_k(\return)/\measure_k(\return)<\infty$. Thus, by \eqref{eq proof tight 1} and (T1.1), there exists $L'> \#\p$ such that for all $k\in\mathbb{N}$ and $\ell\geq L'$ we have 
\begin{align*}
&\mu_k\left(\bigcup_{j=1}^{\ell}\oricodingmap([j])\right)\log\mu_k\left(\bigcup_{j=1}^{\ell}\oricodingmap([j])\right)<\epsilon,
\ 
\sum_{j=\ell+1}^{\infty}e^{\multipote_k([j])}|\multipote_k|([j])<\epsilon 
\\& \text{ and }\max\left\{1,\sup_{k\in\mathbb{N}}\frac{\log \measure_k(\return)}{\measure_k(\return)}\right\}
\sum_{j=\ell+1}^{\infty}\measure_k(\oricodingmap([j]))<\epsilon.
\end{align*}
Combining this with \eqref{eq entropy first step} and \eqref{eq entropy secound}, for all $k\in\mathbb{N}$ and $\ell\geq L'$  we obtain 
\begin{align*}
  h(\mu_k',\mathscr{B}_{\ell})\ll 4\epsilon+\frac{1}{\measure_k(\return)}
   \sum_{j=\ell+1}^\infty
\sum_{i\in\p}\sum_{q\in\mathbb{N}}{e^{\indumultipote_k([ji^q\edge_i])}}|\indumultipote_k|([ji^q\edge_i])).  
\end{align*}
Hence, if there exists $\tilde L\geq L'$ such that for all $k\in\mathbb{N}$ we have 
\begin{align}\label{eq entropy important}
\frac{1}{\measure_k(\return)}
\sum_{j=\tilde L+1}^\infty
\sum_{i\in\p}\sum_{s\in\mathbb{N}}{e^{\indumultipote_k([ji^s\edge_i])}}|\indumultipote_k|([ji^s\edge_i]))<\epsilon    
\end{align}
then we obtain \eqref{eq entropy uniform} and the proof is complete. Notice that for all $j\in\h$, $i\in\p$ and $s\in\mathbb{N}$ we have 
\begin{align}\label{eq entropy shift}
f(\codingmap([ji^s\edge_i]))=\bigcup_{\tau\in\edge_i}\oricodingmap([i^s \tau])\setminus O_{\p}.   
\end{align}
Therefore, for all $k\in\mathbb{N}$, $j\in\h$, $i\in\p$ and $s\geq 2$ we have
\begin{align}\label{eq entropy C}
    &{e^{\indumultipote_k([ji^s\edge_i])}}
    \leq e^{C(k,i,s)}e^{\multipote_k([j])},
\end{align}
where 
\[
C(k,i,s):=
\sup_{x\in \bigcup_{\tau\in\edge_i}\oricodingmap([i^s \tau])
}\left\{\sum_{p=1}^{s-1}(-q_{n_k}\indupote-b_{n_k}\log|\indumap'|-p(b_{n_k},q_{n_k}))\circ f^p(x)\right\}.
\]
Moreover, since $\sup_{k\in\mathbb{N}}|p(b_{n_k},q_{n_k})|<\infty$ by (T1), $\{(b_n,q_n)\}_{n\in\mathbb{N}}$ is bounded and $\max_{i\in\p}\sup_{x\in \oricodingmap([i])}(\phi+\log|f'|)(x)<\infty$ we have 
\[
\sup\left\{\sum_{p=1}^{s-1}|-q_{n_k}\indupote-b_{n_k}\log|\indumap'|-p(b_{n_k},q_{n_k})|\circ f^p(x):x\in \bigcup_{\tau\in\edge_i}\oricodingmap([i^s \tau])\right\}\ll s,
\]
which yields that for all $k\in\mathbb{N}$, $j\in\h$, $i\in\p$ and $s\geq 1$, 
\begin{align}\label{eq entropy s}
    &|\indumultipote_k|([ji^s\edge_i])) \ll s+|\multipote_k|([j])
\end{align}
By \eqref{eq entropy C} and \eqref{eq entropy s}, for all $\ell\geq L'$ and $k\in\mathbb{N}$ we obtain 
\begin{align}\label{eq entropy total}
&\sum_{j=\ell+1}^\infty
\sum_{i\in\p}\sum_{s\in\mathbb{N}}{e^{\indumultipote_k([ji^s\edge_i])}}|\indumultipote_k|([ji^s\edge_i]))
\\&\nonumber\ll\sum_{j=\ell+1}^{\infty}e^{\multipote_k([j])}\sum_{i\in\p}\sum_{s\in\mathbb{N}} se^{C(k,i,s)}+\sum_{j=\ell+1}^{\infty}e^{\multipote_k([j])}|\multipote_k|([j])\sum_{i\in\p}\sum_{s\in\mathbb{N}} e^{C(k,i,s)}.    
\end{align}
On the other hand, by (T2), \eqref{eq entropy shift} and \eqref{eq gibbs}, for all $k\in\mathbb{N}$ we obtain
\begin{align}\label{eq entropy return}
    &\measure_k(\return)\geq \sum_{j\in \h}\sum_{i\in\p}\sum_{s\in\mathbb{N}}s\measure_k(\codingmap([ji^s\edge_i]))\gg \sum_{j\in \h} 
    \sum_{i\in\p}\sum_{s\in\mathbb{N}} se^{\indumultipote_k([ji^s\edge_i])}
    \\&\nonumber\geq \sum_{j\in \h}e^{\inf_{\tau\in[j]}\multipote_k(\tau)}
    \sum_{i\in\p}\sum_{s\in\mathbb{N}} se^{C(k,i,s)}.
\end{align}
By \eqref{eq entropy total}, this implies that for all $\ell\geq L'$ and $k\in\mathbb{N}$ we have
\begin{align*}
   \frac{1}{\measure_k(\return)} \sum_{j=\ell+1}^\infty
\sum_{i\in\p}\sum_{s\in\mathbb{N}}{e^{\indumultipote_k([ji^s\edge_i])}}|\indumultipote_k|([ji^s\edge_i]))\ll\sum_{j=\ell+1}^{\infty}e^{\multipote_k([j])}+\sum_{j=\ell+1}^{\infty}e^{\multipote_k([j])}|\multipote_k|([j]).
\end{align*}
By (T1) and (T1.1), we obtain \eqref{eq entropy important} and the proof is complete.
\end{proof}

\begin{lemma}\label{lemma uniformly integrable}
Let $\{(b_n,q_n)\}_{n\in\mathbb{N}}$ be a bounded sequence satisfying (T1), (T2), (T3).
If the sequence $\{(b_n,q_n)\}_{n\in\mathbb{N}}$ satisfies (T1.2)
then $\lim_{k\to\infty}\mu_{k}(\phi)=\mu_{\infty}^*(\phi)<\infty$, and if the sequence $\{(b_n,q_n)\}_{n\in\mathbb{N}}$ satisfies (T1.3)
then $\lim_{k\to\infty}\lambda(\mu_k)=\lambda(\mu_{\infty}^*)<\infty$.
\end{lemma}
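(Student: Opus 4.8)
The plan is to prove that the probability measures $\mu_k$ are uniformly integrable with respect to $\phi$ (and, under (T1.3), with respect to $\log|f'|$) and then to pass to the limit along $\mu_k\to\mu_\infty^*$ using bounded continuous truncations. I will argue only for $\phi$ and (T1.2); the statement for $\lambda(\mu_k)=\mu_k(\log|f'|)$ is obtained verbatim after replacing $\phi$ by the geometric potential $\log|f'|$, which is acceptable by Proposition \ref{prop acceptable} and satisfies $\log|f'|\ge 0$ on $\Lambda$, and replacing (T1.2) by (T1.3).

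\emph{Reduction to uniform integrability.} First I would record a cylinderwise bound. For each fixed $m\in\edge$, acceptability of $\phi\circ\oricodingmap$ together with (P) forces $\phi\circ\oricodingmap([m])<\infty$, and likewise $\log|f'|\circ\oricodingmap([m])<\infty$ by Proposition \ref{prop acceptable} and $|f'|\ge 1$; since $\{(b_n,q_n)\}$ is bounded, with $Q:=\sup_n|q_n|$ and $B:=\sup_n|b_n|$ one has $\multipote_k([m])\le Q\,\phi\circ\oricodingmap([m])+B\,\log|f'|\circ\oricodingmap([m])$, so $e^{\multipote_k([m])}$ is bounded uniformly in $k$. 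Next, using Remark \ref{rem culuculation gibbs} to identify $I_m$ with $\oricodingmap([m])$ up to a $\mu_k$-null countable set, the Gibbs comparison \eqref{eq proof tight 1} gives a constant $C_1$ with $\mu_k(I_m)\le C_1 e^{\multipote_k([m])}$ for all $k,m$, and since $\sup_{I_m}\phi=\phi\circ\oricodingmap([m])$,
\[
\int_{I_m}\phi\,d\mu_k\ \le\ C_1\,\phi\circ\oricodingmap([m])\,e^{\multipote_k([m])}\qquad(k,m\in\mathbb{N}).
\]
Given $\epsilon>0$, (T1.2) provides $F\subset\h$ with $\edge\setminus F$ finite and $\sum_{m\in F}\phi\circ\oricodingmap([m])e^{\multipote_k([m])}<\epsilon$ for every $k$; by the previous paragraph $N_0:=\max_{m\in\edge\setminus F}\phi\circ\oricodingmap([m])<\infty$, and for $N>N_0$ one has $\{\phi\ge N\}\subset\bigcup_{m\in F}I_m$ (a point of $I_m$ with $\phi\ge N>N_0$ forces $m\in F$, since $\phi\circ\oricodingmap([m])=\sup_{I_m}\phi$). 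Summing the displayed inequality over $m\in F$ then yields $\sup_{k}\int_{\{\phi\ge N\}}\phi\,d\mu_k\le C_1\epsilon$ for $N>N_0(\epsilon)$; in particular, taking $\epsilon=1$, also $\sup_k\mu_k(\phi)<\infty$.

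\emph{Passing to the limit.} For $\epsilon>0$ and $N>N_0(\epsilon)$ as above, $\phi\wedge N\in C_b(\Lambda)$, so weak* convergence gives $\mu_k(\phi\wedge N)\to\mu_\infty^*(\phi\wedge N)$, and from $0\le\phi-\phi\wedge N\le\phi\,\mathbf 1_{\{\phi\ge N\}}$ we get $|\mu_k(\phi)-\mu_k(\phi\wedge N)|\le C_1\epsilon$ for all $k$. Hence $\mu_\infty^*(\phi\wedge N)\le\liminf_k\mu_k(\phi)$ and $\limsup_k\mu_k(\phi)\le\mu_\infty^*(\phi\wedge N)+C_1\epsilon$. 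Monotone convergence $\mu_\infty^*(\phi\wedge N)\uparrow\mu_\infty^*(\phi)$, together with $\mu_\infty^*(\phi)\le\liminf_k\mu_k(\phi)\le\sup_k\mu_k(\phi)<\infty$, shows $\mu_\infty^*(\phi)<\infty$; letting $N\to\infty$ and then $\epsilon\to0$ gives $\lim_k\mu_k(\phi)=\mu_\infty^*(\phi)$. The case of $\lambda(\mu_k)$ is identical with $\log|f'|$ and (T1.3) in place of $\phi$ and (T1.2).

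\emph{Main obstacle.} The delicate point is the uniform integrability step: one must convert hypothesis (T1.2)/(T1.3), stated in terms of the symbolic Gibbs weights $e^{\multipote_k([m])}$, into a bound on the $\mu_k$-mass of $\phi$ (resp.\ $\log|f'|$) carried by far-out cylinders that is uniform in $k$. This rests on the Gibbs comparison \eqref{eq proof tight 1} from the proof of Lemma \ref{lemma tight} and, for the finitely many exceptional symbols in $\edge\setminus F$, on the boundedness of $\{(b_n,q_n)\}$. Once uniform integrability is in hand, the limit exchange is the routine truncation argument for an unbounded continuous observable under weak* convergence of probability measures on the metric space $\Lambda$.
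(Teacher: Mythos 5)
Your proof is correct and follows the same overall architecture as the paper's proof (establish a uniform-integrability bound from the hypothesis (T1.2)/(T1.3) via the Gibbs property, then pass to the weak*\ limit by a truncation argument). There are two presentational differences worth noting. First, where the paper re-derives the cylinderwise bound $\int_{\oricodingmap([j])}\phi\,d\mu_k\ll \phi\circ\oricodingmap([j])e^{\multipote_k([j])}$ from scratch through \eqref{eq integral converge}, \eqref{eq entropy C} and \eqref{eq entropy return}, you simply invoke \eqref{eq proof tight 1} from the proof of Lemma \ref{lemma tight}, which already packages that estimate; this is a legitimate and shorter route to the same inequality. Second, you truncate with $\phi\wedge N$, which lies in $C_b(\Lambda)$, so the weak*\ convergence step is immediate and the tail is controlled by the uniform integrability of $\phi$ on the level set $\{\phi\geq N\}$. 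The paper instead truncates with $\phi\cdot\mathbf{1}_{\bigcup_{j\leq\ell}\oricodingmap([j])}$, which is bounded but not continuous, so the convergence of $\mu_k$ against it implicitly requires that the boundary of $\bigcup_{j\le\ell}I_j$ is $\mu_\infty^*$-null; your version avoids this subtlety entirely and is the cleaner of the two. One small caveat: for the $\lambda(\mu_k)$ half you assert that replacing $\phi$ by $\log|f'|$ works verbatim, citing only acceptability and nonnegativity; to make $\log|f'|\wedge N$ a legitimate test function for weak*\ convergence on $\Lambda$ one also needs $\log|f'|$ to be continuous on $\Lambda$ (equivalently, to work on the symbolic side via Remark \ref{rem measurable bijection oricoding}), but this is an implicit assumption the paper shares and does not affect the substance of the argument.
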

\begin{proof}
We first show the first half. Assume that the sequence $\{(b_n,q_n)\}_{n\in\mathbb{N}}$ satisfies (T1.2). 
    By (T2), \eqref{eq entropy refiment} and \eqref{eq gibbs}, for all $\ell\in\mathbb{N}$ with $\ell>\#\p$ and $k\in\mathbb{N}$ we have 
  \begin{align}\label{eq integral converge}
    &\sum_{j=\ell}^\infty\int _{\oricodingmap([j])} \phi d\mu_k
    \leq 
    \sum_{j=\ell}^{\infty}\phi \circ\oricodingmap([j])
    \left(\frac{\measure_k([j\h])}{\measure_k(\return)}
    +
    \sum_{i\in \p}\sum_{s\in\mathbb{N}}\frac{\measure_k([ji^s\edge_i])}{\measure_k(\return)}
    \right)
    \\&\nonumber\ll\sum_{j=\ell}^{\infty}\phi \circ\oricodingmap([j])
    \left(e^{\multipote_k([j])}
    +
    \sum_{i\in \p}\sum_{s\in\mathbb{N}}\frac{e^{\indumultipote_k([ji^s\edge_i])}}{\measure_k(\return)}
    \right)
  \end{align}
  By \eqref{eq entropy C} and \eqref{eq entropy return}, for all $\ell\in\mathbb{N}$ with $\ell>\#\p$, $j\geq \ell$ and $k\in\mathbb{N}$ we have that
  $
  \sum_{i\in \p}\sum_{s\in\mathbb{N}}{e^{\indumultipote_k([ji^s\edge_i])}}/{\measure_k(\return)}\ll e^{\multipote_k([j])}. 
  $
  Therefore, by \eqref{eq integral converge} and (T1.2), there exists $L>\#\p$ for all $\ell \geq L$ and $k\in\mathbb{N}$ we have
  \begin{align}\label{eq convergence uniform}
      \sum_{j=\ell}^{\infty}\int_{\oricodingmap([j])}\phi d\mu_k<\epsilon.
  \end{align}
  Since $\mu_k$ converges to $\mu_\infty$ as $k\to\infty$ and for all $\ell\in\mathbb{N}$ the function $\phi\cdot1_{\bigcup_{j=1}^\ell\oricodingmap([j])}$ is bounded, where $1_{\bigcup_{j=1}^\ell\oricodingmap([j])}$ denotes the characteristic function with respect to $\bigcup_{j=1}^\ell\oricodingmap([j])$, for all $\ell\in\mathbb{N}$ we obtain
  \begin{align}\label{eq convergence integral weak}
\lim_{k\to\infty}\mu_k\left(\phi\cdot1_{\bigcup_{j=1}^\ell\oricodingmap([j])}\right)=\mu_\infty\left(\phi\cdot1_{\bigcup_{j=1}^\ell\oricodingmap([j])}\right).
  \end{align}
  Combining this with \eqref{eq convergence uniform} we obtain 
  $\liminf_{k\to\infty}\mu_k(\phi)<\infty$. Moreover, since $\phi$ is bounded from below, we obtain $\mu_\infty(\phi)\leq \liminf_{k\to\infty}\mu_k(\phi)<\infty$ and thus, there exists $L'\geq L$ for all $\ell \geq L'$ we have
      $\sum_{j=\ell}^{\infty}\int_{\oricodingmap([j])}\phi d\mu_\infty<\epsilon.$
  Hence, 
  by \eqref{eq convergence uniform} and \eqref{eq convergence integral weak}, the proof of the first part is complete.
A similar argument shows the second part.
  \end{proof}

\begin{thm}\label{thm convergence of the equilibrium state}
    Let $(b_\infty,q_\infty)\in \mathbb{R}^2$. Assume that there exists a bounded sequence $\{(b_n,q_n)\}_{n\in\mathbb{N}}\subset \mathbb{R}^2$ satisfying (T1.2), (T1.3) , (T2) and (T3) with $\lim_{n\to\infty}(b_n,q_n)=(b_\infty,q_\infty)$. Then, the limit measure $\mu_{\infty}$ obtained as above is a equilibrium measure for $-q_\infty\phi-b_\infty\log|f'|$.
\end{thm}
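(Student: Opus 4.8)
The plan is to realize $\mu_\infty$ as an equilibrium measure for $-q_\infty\phi-b_\infty\log|f'|$ by passing to the limit in the identities $h(\mu_{b_{n_k},q_{n_k}})+\mu_{b_{n_k},q_{n_k}}(-q_{n_k}\phi-b_{n_k}\log|f'|)=p(b_{n_k},q_{n_k})$, using the upper semicontinuity of the entropy from Lemma \ref{lemma}, the convergence of the $\phi$- and $\log|f'|$-integrals from Lemma \ref{lemma uniformly integrable}, and a lower semicontinuity property of $(b,q)\mapsto p(b,q)$. As a first step I would note that, since $\log|f'|\circ\oricodingmap([m])\to\infty$ as $m\to\infty$, condition (T1.3) implies (T1), and (T1.2) together with (T1.3) implies (T1.1); hence $\{(b_n,q_n)\}_n$ satisfies (T1), (T1.1), (T1.2), (T1.3), (T2) and (T3). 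In particular Lemma \ref{lemma tight} applies and, exactly as in the paragraph preceding the statement, Prohorov's theorem produces a subsequence $\{(b_{n_k},q_{n_k})\}_k$ and $\mu_\infty=\mu^*_{b_\infty,q_\infty}\in M(f)$ with $\mu_k:=\mu_{b_{n_k},q_{n_k}}\to\mu_\infty$ in the weak* topology.

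Next I would collect the needed finiteness facts. From (T1.3) and the growth of $\log|f'|\circ\oricodingmap([m])$ one obtains $(b_n,q_n)\in\fin$ and a uniform bound $\sup_n\sum_{i\in\edge}e^{\multipote_{b_n,q_n}([i])}<\infty$; since each summand depends continuously on $(b,q)$, Fatou's lemma and \eqref{eq finite pressure} then give $(b_\infty,q_\infty)\in\fin$, so $p(b_\infty,q_\infty)<\infty$. By Lemma \ref{lemma uniformly integrable} (applicable since (T1.2) and (T1.3) hold) we have $\mu_k(\phi)\to\mu_\infty(\phi)<\infty$ and $\lambda(\mu_k)\to\lambda(\mu_\infty)<\infty$, so $\mu_\infty(-q_\infty\phi-b_\infty\log|f'|)=-q_\infty\mu_\infty(\phi)-b_\infty\lambda(\mu_\infty)$ is a finite real number; in particular it exceeds $-\infty$, and then the variational principle forces $h(\mu_\infty)<\infty$. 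Finally, by Theorem \ref{thm uniquness and existence of the equilibrium state} (after checking that the hypotheses place $(b_{n_k},q_{n_k})$ in $\mathcal{N}$, or else by combining \eqref{eq induced pressure is less than zero} taken with equality, the Abramov--Kac formula \eqref{eq Abramov-Kac's formula} and Lemma \ref{lemma finite return time}) each $\mu_k$ is the equilibrium measure for $-q_{n_k}\phi-b_{n_k}\log|f'|$, so that $h(\mu_k)+\mu_k(-q_{n_k}\phi-b_{n_k}\log|f'|)=p(b_{n_k},q_{n_k})$ and $h(\mu_k)<\infty$ for each $k$.

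With these facts Lemma \ref{lemma} applies and gives $\limsup_k h(\mu_k)\le h(\mu_\infty)$; since $\mu_k(-q_{n_k}\phi-b_{n_k}\log|f'|)=-q_{n_k}\mu_k(\phi)-b_{n_k}\lambda(\mu_k)$ converges to $\mu_\infty(-q_\infty\phi-b_\infty\log|f'|)$, it follows that
\begin{align*}
h(\mu_\infty)+\mu_\infty(-q_\infty\phi-b_\infty\log|f'|) &\ge \limsup_{k\to\infty}\bigl(h(\mu_k)+\mu_k(-q_{n_k}\phi-b_{n_k}\log|f'|)\bigr) \\
&= \limsup_{k\to\infty}p(b_{n_k},q_{n_k}).
\end{align*}
It then remains to prove $\limsup_k p(b_{n_k},q_{n_k})\ge p(b_\infty,q_\infty)$, for which I would approximate the pressure from inside: by Theorem \ref{thm compact approximation}, $p(b_\infty,q_\infty)=\sup\{P_F(\multipote_{b_\infty,q_\infty}):F\subset\edge,\ \#F<\infty\}$, and for a finite $F$ and $\epsilon>0$ the variational principle (Theorem \ref{thm variational principle induce}, applied to the full subshift on $F^{\mathbb N}$) provides $\nu\in M(\sigma)$ supported on $F^{\mathbb N}$ with $h(\nu)+\nu(\multipote_{b_\infty,q_\infty})\ge P_F(\multipote_{b_\infty,q_\infty})-\epsilon$; since $\nu$ charges only finitely many symbols, $\nu(\phi\circ\oricodingmap)$ and $\nu(\log|f'|\circ\oricodingmap)$ are finite, hence $p(b_{n_k},q_{n_k})\ge h(\nu)+\nu(\multipote_{b_{n_k},q_{n_k}})$ for all $k$ while the right-hand side converges to $h(\nu)+\nu(\multipote_{b_\infty,q_\infty})\ge P_F(\multipote_{b_\infty,q_\infty})-\epsilon$. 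Taking the supremum over $F$ and letting $\epsilon\to0$ gives $\liminf_k p(b_{n_k},q_{n_k})\ge p(b_\infty,q_\infty)$, hence the claim. Combining this with the reverse inequality from the variational principle (Theorem \ref{thm variational principle induce} via Remark \ref{rem measurable bijection oricoding}), legitimate since $\mu_\infty(-q_\infty\phi-b_\infty\log|f'|)>-\infty$, one concludes that $\mu_\infty$ is an equilibrium measure for $-q_\infty\phi-b_\infty\log|f'|$.

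I expect the main obstacle to be the step asserting that each $\mu_k$ is genuinely an equilibrium measure for $-q_{n_k}\phi-b_{n_k}\log|f'|$, which relies on the delicate second-moment estimates of Lemma \ref{lemma finite return time} and on the identity $\indupressure(b_{n_k},q_{n_k},p(b_{n_k},q_{n_k}))=0$, together with the lower semicontinuity of $p$ established above; the non-compactness of $\Lambda$, which is the conceptual source of difficulty here, is already absorbed into the tightness furnished by Lemma \ref{lemma tight}.
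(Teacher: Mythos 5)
Your proof is correct and follows the same overall strategy as the paper: tightness of the equilibrium measures, upper semicontinuity of the entropy via Lemma \ref{lemma}, convergence of the relevant integrals via Lemma \ref{lemma uniformly integrable}, and passage to the limit in the identity $h(\mu_k)+\mu_k(-q_{n_k}\phi-b_{n_k}\log|f'|)=p(b_{n_k},q_{n_k})$. The one place where you deviate, and are in fact more careful than the paper, is the step $\liminf_k p(b_{n_k},q_{n_k})\geq p(b_\infty,q_\infty)$. The paper simply asserts $\lim_n p(b_n,q_n)=p(b_\infty,q_\infty)$ once $(b_\infty,q_\infty)\in\fin$ has been established, implicitly relying on continuity of the convex pressure function, which a priori requires $(b_\infty,q_\infty)$ to lie in the interior of $\fin$; you instead derive the needed lower semicontinuity from the compact-approximation Theorem \ref{thm compact approximation} together with a test measure supported on finitely many symbols, which is both explicit and does not rest on interiority. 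You also correctly flag that invoking $h(\mu_k)+\mu_k(\cdot)=p(b_{n_k},q_{n_k})$ relies on $(b_{n_k},q_{n_k})\in\mathcal{N}$ rather than merely on (T3); the paper glosses over this, but in each of its subsequent applications the approximating sequence is constructed inside $\mathcal{N}$, so the omission is harmless. In sum, your proposal reproduces the paper's argument and sharpens one sub-step.
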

\begin{proof}
    Note that (T1.2) and (T1.3) implies (T1) and (T1.1). Combining (T1) and \eqref{eq finite pressure}, we obtain $(b_\infty,q_\infty)\in\fin$ and thus, $\lim_{n\to\infty}p(b_{n},q_n)=p(b_\infty,q_\infty)$. 
    By Lemma \ref{lemma uniformly integrable}, for all $k\in\mathbb{N}$ we have $\lambda(\mu_k)<\infty$ and $\lambda(\mu_\infty)<\infty$. Hence, for all $k\in\mathbb{N}$ we have $h(\mu_k)<\infty$ and $h(\mu_\infty)<\infty$. Therefore, by Lemma \ref{lemma entropy} and Lemma \ref{lemma uniformly integrable}, we obtain $\mu_\infty(-q_\infty\phi-b_\infty\log|f'|)>-\infty$  and 
        $p(b_\infty,q_\infty)
        = \limsup_{n\to\infty}p(b_{n},q_n)
        = \limsup_{k\to\infty}(h(\mu_k)+\mu_k(-q_{n_k}\phi-b_{n_k}\log |f'|))
        \leq h(\mu_\infty)+\mu_\infty(-q_\infty\phi-b_\infty\log|f'|).
        $
\end{proof}

\section{Multifractal analysis}\label{sec multifractal}
In this section, we prove Theorem \ref{thm main}.
Throughout this section,  we assume that $f\in \maps$ satisfying the condition (G) and $\phi\in \mathcal{R}$. We also assume (R).
Recall that 
$\dimension:=\dim_H(\Lambda).$
Since each branch $f_i$ ($i\in\edge$) of $f$ is in $C^2$ and $f$ satisfies the R\'{e}nyi condition (NERI3), we can apply \cite[Theorem 4.6]{mauldin2000parabolictobeappdated} to obtain the following theorem:
\begin{thm}\label{thm bowen formula}{\cite[Theorem 4.6]{mauldin2000parabolictobeappdated}}
    We have
    \begin{align*}
    &\delta=\sup\left\{\frac{h(\nu)}{\lambda(\nu)}:\nu\in M(f),\ 0<\lambda(\nu),\ \text{$\nu$ is supported on a compact set}\right\}
    \\&=\inf\{t\in\mathbb{R}: P(-t\log|f'|)\leq 0\}.    
    \end{align*}
    \end{thm}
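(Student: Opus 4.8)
The statement is a Bowen-type dimension formula; the plan is to prove it for the uniformly expanding induced map $\indumap$, where the thermodynamic formalism recalled in Section~\ref{sec preliminary} applies directly, and then transfer it to $f$.

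First I would reduce $\dimension=\dim_H(\Lambda)$ to $\dim_H(\indulimit)$. Up to a countable set, $\Lambda\setminus\inducingdomain$ is contained in $\bigcup_{i\in\p}I_{ii}\subset\bigcup_{i\in\p}I_i$, and $I_{ii}\subset I_i$; hence any $x\in\Lambda$ whose forward orbit meets $\inducingdomain$ only finitely often has coding eventually constant equal to some parabolic symbol $i\in\p$, so $x=T_{\omega_1\cdots\omega_m}(x_i)$ lies in the countable set $O_{\p}$ (possibly together with a further countable boundary-related set). Thus $\Lambda\setminus\indulimit$ is countable and $\dim_H(\Lambda)=\dim_H(\indulimit)$.

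Next I would establish the Bowen formula for $\indumap$. The map $\indumap$ with its countable Markov partition $\{I_{\omega}\}_{\omega\in\edgesindu}$ is $C^2$, uniformly expanding, finitely primitive, with bounded distortion \eqref{eq renyi induce}, so $\log|\indumap'\circ\codingmap|$ is locally H\"older. By (G) and an estimate as in the proof of Lemma~\ref{lem finite pressure on a special parameter}, Proposition~\ref{prop finite pressure} shows that $t\mapsto\pindushift(-t\log|\indumap'\circ\codingmap|)$ is finite for $t>1/\growth$, and since $\log|\indumap'\circ\codingmap|>\log c>0$ it is there continuous and strictly decreasing; set $\tilde\delta:=\inf\{t:\pindushift(-t\log|\indumap'\circ\codingmap|)\le 0\}$. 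For $t>\tilde\delta$, a bounded-distortion covering of $\indulimit$ by $n$-th generation cylinders has $t$-dimensional sum with exponential rate $\pindushift(-t\log|\indumap'\circ\codingmap|)<0$, so $\mathcal H^{t}(\indulimit)=0$ and $\dim_H(\indulimit)\le\tilde\delta$. For the reverse inequality, for every finite primitive $F\subset\edgesindu$ the subsystem limit set is the compact attractor of the finitely many contractions $\{T_\omega:\omega\in F\}$, and its dimension is the finite-alphabet Bowen root $\tilde\delta_F$ (mass distribution principle applied to the equilibrium state of $-\tilde\delta_F\log|\indumap'\circ\codingmap|$); by Theorem~\ref{thm compact approximation} the subsystem pressures increase to $\pindushift(-t\log|\indumap'\circ\codingmap|)$ as $F\uparrow\edgesindu$, so $\tilde\delta_F\uparrow\tilde\delta$ and $\dim_H(\indulimit)\ge\tilde\delta$. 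Hence $\dimension=\dim_H(\indulimit)=\tilde\delta$, and then (G) forces $\tilde\delta>1/\growth$, so $\pindushift(-\tilde\delta\log|\indumap'\circ\codingmap|)=0$.

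Finally I would transfer both characterizations to $f$. For the variational formula, the variational principle (Theorem~\ref{thm variational principle induce}) at the potential $-\tilde\delta\log|\indumap'\circ\codingmap|$ gives $0=\sup\{h(\tilde\mu')-\tilde\delta\lambda(\tilde\mu')\}$ over $\tilde\mu'\in M(\shift)$ with $\lambda(\tilde\mu')<\infty$; using Remark~\ref{rem measurable bijection}, the lifting \eqref{eq def lift} and Abramov--Kac \eqref{eq Abramov-Kac's formula} (so that $h(\tilde\nu)/\lambda(\tilde\nu)=h(\nu)/\lambda(\nu)$), this becomes $\dimension=\sup\{h(\nu)/\lambda(\nu):\nu\in M(f),\ 0<\lambda(\nu)<\infty,\ \nu(\indulimit)>0\}$. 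The finite-subsystem equilibrium states above lift to $f$-invariant measures carried by the corresponding compact attractors and realize $\tilde\delta_F\uparrow\tilde\delta$, so the supremum may be restricted to compactly supported measures, while conversely any $\nu\in M(f)$ with $\lambda(\nu)>0$ and compact support has $\nu(\indulimit)>0$ (otherwise $\nu\in\conv$ by Lemma~\ref{lemma equivalent condition not liftable}, forcing $\lambda(\nu)=0$) and hence $h(\nu)/\lambda(\nu)\le\dimension$. For the pressure formula, Abramov--Kac gives $h(\nu)-t\lambda(\nu)=(h(\tilde\nu)-t\lambda(\tilde\nu))/\tilde\nu(\return)$ for liftable $\nu$, while $h(\nu)-t\lambda(\nu)=0$ for $\nu\in\conv$; combining this with $\return\ge 1$ and the finite-subsystem equilibrium states (which have finite return time) to witness positivity when $\pindushift(-t\log|\indumap'\circ\codingmap|)>0$, one obtains $P(-t\log|f'|)\le 0\iff\pindushift(-t\log|\indumap'\circ\codingmap|)\le 0$, whence $\inf\{t:P(-t\log|f'|)\le 0\}=\tilde\delta=\dimension$.

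The hard part will be the parabolic, non-compact interface: showing $\Lambda\setminus\indulimit$ is dimensionally negligible, and --- since $\Lambda$ is not compact, so that $M(f)$ is not compact and $\log|f'|$ is unbounded --- carrying out the lower bound above and both variational identities by exhausting $\edgesindu$ with finite primitive subsystems (Theorem~\ref{thm compact approximation}) together with Abramov--Kac, rather than via a soft compactness argument.
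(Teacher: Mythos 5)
The paper does not prove this statement: it is invoked verbatim as \cite[Theorem 4.6]{mauldin2000parabolictobeappdated} (Bowen's formula for parabolic conformal iterated function systems), and the sentence preceding the theorem only verifies that the hypotheses of that cited theorem are met. Your proposal instead rebuilds the theorem from the uniformly expanding induced system, Abramov--Kac, and compact exhaustion. That is a legitimate alternative route, and it is the same circle of ideas the paper uses downstream (Lemma~\ref{lemma indu and nonindu limit}, Theorem~\ref{thm maximal measure and liftable problem}, Proposition~\ref{prop equilibrium state Lambda}); what the paper's citation buys is avoiding any reduction work, since Mauldin--Urba\'nski already treat the parabolic alphabet directly, while your route makes the inducing machinery do that work explicitly.

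There is, however, a genuine gap in your first step. You claim ``$\Lambda\setminus\indulimit$ is countable.'' That is false: $\indulimit\subset\inducingdomain$, so $\Lambda\setminus\indulimit$ contains $\Lambda\cap\bigcup_{i\in\p}I_{ii}$, which is an uncountable set of full Hausdorff dimension. What your argument actually shows is that the set of $x\in\Lambda$ whose forward orbit meets $\inducingdomain$ only finitely often is contained in $O_\p$ and hence countable; but $\Lambda\setminus\indulimit$ also contains every point whose orbit does enter $\indulimit$ after a positive number of steps. The correct deduction (and what the cited \cite[Lemma 4.3]{arimanonuniformly} behind the paper's Lemma~\ref{lemma indu and nonindu limit} amounts to) is: up to the countable set $O_\p$, $\Lambda$ is covered by the countably many sets $T_\omega(\indulimit)$, $\omega\in\edge^*$, each of which is a bi-Lipschitz image of a subset of $\indulimit$ because each inverse branch $T_\omega$ is a $C^2$ diffeomorphism with derivative bounded between $0$ and $1$ on a neighborhood of its domain; countable stability of Hausdorff dimension then gives $\dim_H(\Lambda)\leq\dim_H(\indulimit)$, and the reverse inequality is trivial since $\indulimit\subset\Lambda$. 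The remaining steps of your proposal (Bowen formula for $\indumap$ via pressure of $-t\log|\indumap'\circ\codingmap|$ using (G), finite primitive subsystems to get the lower bound and to produce compactly supported measures in $M(f)$, Abramov--Kac together with Lemma~\ref{lemma equivalent condition not liftable} for the two variational identities, and the observation that $\tilde\nu(\return)=1/\nu(\indulimit)<\infty$ for ergodic $\nu$ with $\nu(\indulimit)>0$) are sound once this reduction is repaired.
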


Note that, by (G), for all $b\in \mathbb{R}$ we have
\begin{align}\label{eq finite pressure geometric pote}
    \sum_{i\in \edge} e^{(-b\log|f'\circ\oricodingmap|)([i])}\asymp\sum_{i=1}^{\infty}{i^{-b\cdot\growth}}.
\end{align}
Combining this with Theorem \ref{thm bowen formula}, we obtain
\begin{align}\label{eq bowen}
    P(-\delta\log|f'|)=0 
\end{align}
The following lemma follows from the same argument as in the proof of \cite[Lemma 4.3]{arimanonuniformly} 
\begin{lemma}\label{lemma indu and nonindu limit}
   We have $\delta=\dim_H(\indulimit)$. 
\end{lemma}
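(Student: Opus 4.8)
The plan is to establish the two inequalities separately. The inclusion $\indulimit\subset\Lambda$ is immediate from the construction (every $I_\omega$ with $\omega\in\edgesindu$ lies in $\inducingdomain\subset\Lambda$), so monotonicity of Hausdorff dimension gives $\dim_H(\indulimit)\le\dimension$ for free, and the whole content is the reverse bound $\dimension\le\dim_H(\indulimit)$. For this I would exploit the variational characterisation of $\dimension$ from Theorem \ref{thm bowen formula} and transport near-optimal measures onto $\indulimit$ by inducing.

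Concretely, fix $\epsilon>0$ and, using Theorem \ref{thm bowen formula}, pick $\nu\in M(f)$ supported on a compact set with $0<\lambda(\nu)$ and $h(\nu)/\lambda(\nu)>\dimension-\epsilon$. First I would reduce to $\nu$ ergodic: $h$ and $\lambda$ are affine on $M(f)$, the ergodic components of $\nu$ are again supported on the same compact set (hence have $\lambda<\infty$ and, by Theorem \ref{thm bowen formula}, $h<\infty$), and any component with vanishing Lyapunov exponent is, since $\log|f'|\ge 0$ on $\Lambda$, supported on $\{x_i:i\in\p\}$ and so contributes $0$ to both $\int h\,dP$ and $\int\lambda\,dP$ in the ergodic decomposition; averaging then forces some ergodic component to have ratio $>\dimension-\epsilon$, so we may assume $\nu$ ergodic. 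Because $\lambda(\nu)>0$, $\nu$ is not a single Dirac mass $\delta_{x_i}$ (those have Lyapunov exponent $\log|f'(x_i)|=0$), and being ergodic it is not a non-trivial element of $\conv$ either; hence Lemma \ref{lemma equivalent condition not liftable} yields $\nu(\indulimit)>0$.

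Now induce: put $\measure:=\nu|_{\indulimit}/\nu(\indulimit)\in M(\indumap)$, which is ergodic, and observe that by Kac's lemma $\measure(\return)=\nu(\inducingdomain)^{-1}<\infty$. The Abramov--Kac relations \eqref{eq classical Abramov-Kac's formula}, applied with $\psi=\log|f'|$ (which is $\nu$-integrable since $\nu$ has compact support, and whose induced potential is $\log|\indumap'|$ by the chain rule), give $h(\measure)=\measure(\return)h(\nu)$ and $\measure(\log|\indumap'|)=\measure(\return)\lambda(\nu)$, so that $0<\measure(\log|\indumap'|)<\infty$, $h(\measure)<\infty$, and $h(\measure)/\measure(\log|\indumap'|)=h(\nu)/\lambda(\nu)>\dimension-\epsilon$. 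Finally, $\indulimit$ is the limit set of the countable conformal Markov map $\indumap$, whose inverse branches form a finitely primitive conformal iterated function system satisfying the open set condition (see (NERI1)) and bounded distortion (the R\'enyi condition \eqref{eq renyi induce}, which also yields $|I_{\omega_1\cdots\omega_n}|\asymp|(\indumap^n)'|^{-1}$); hence the pushforward to $\indulimit$ of the symbolic measure associated with $\measure$ has exact local dimension $h(\measure)/\measure(\log|\indumap'|)$ --- Shannon--McMillan--Breiman for the entropy, Birkhoff for the (finite) Lyapunov exponent, bounded distortion to pass from cylinders to derivatives, and the open set condition to pass from cylinders to balls --- and by the mass distribution principle $\dim_H(\indulimit)\ge h(\measure)/\measure(\log|\indumap'|)>\dimension-\epsilon$. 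This last step is the Mauldin--Urba\'nski lower bound, see \cite{mauldin2003graph}. Letting $\epsilon\downarrow 0$ completes the proof.

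The affinity bookkeeping and the dimension lower bound for conformal countable Markov systems are routine. The point requiring genuine care is \emph{liftability}: one must know that the measures realising $\dimension$ in Theorem \ref{thm bowen formula} actually charge $\indulimit$ and induce to $\indumap$-invariant measures of finite Lyapunov exponent. This is precisely where the hypothesis $\lambda(\nu)>0$ enters (to rule out the parabolic atoms via Lemma \ref{lemma equivalent condition not liftable}) together with the compactness of the support of $\nu$ (to force $\measure(\return)<\infty$ through Kac's lemma), and it is the only place where the non-compactness of $\Lambda$ could otherwise cause trouble.
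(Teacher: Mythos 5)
Your proof is correct, and it takes the approach one would expect here: the paper itself does not prove the lemma in situ but defers to \cite[Lemma 4.3]{arimanonuniformly}, and the argument you give — using Theorem \ref{thm bowen formula} to produce near-optimal ergodic $\nu$, ruling out $\nu\in\conv$ via Lemma \ref{lemma equivalent condition not liftable} so that $\nu$ lifts to $\indumap$ with $\measure(\return)<\infty$, then transferring $h/\lambda$ through the Abramov--Kac identities and invoking the Mauldin--Urba\'nski volume lemma for the uniformly expanding CIFS $\indumap$ — is precisely that standard inducing argument. Your explicit isolation of the liftability issue (and why $\lambda(\nu)>0$ together with ergodicity forces $\nu(\indulimit)>0$) is exactly the point that needs care, since $\Lambda\setminus\indulimit$ contains bi-Lipschitz copies of $\Lambda$ itself and so has full dimension, making the naive covering approach useless.
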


\begin{thm}\label{thm maximal measure and liftable problem}
We have $\tilde P(-\delta\log |\indumap'\circ\codingmap|)=0$ and  
$\codingmeasure_{\delta,0}(\log|\indumap'\circ\codingmap|)<\infty$. In particular, $\codingmeasure_{\delta,0}$ is the unique equilibrium measure for the potential $\log |\indumap'\circ\codingmap|$.
Moreover,
$\codingmeasure_{\delta,0}(\return\circ\codingmap)=\infty$.  
\end{thm}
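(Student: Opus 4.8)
The plan is to prove the displayed identity $\tilde P(-\delta\log|\indumap'\circ\codingmap|)=0$ first, and then read off the other three assertions. Recall that $p(\delta,0)=P_f(-\delta\log|f'|)=0$ by \eqref{eq bowen}, so $\indumultipote_{\delta,0}=-\delta\log|\indumap'\circ\codingmap|$, and that $\log|\indumap'\circ\codingmap|$ is locally H\"older, hence acceptable. For the bound $\tilde P(-\delta\log|\indumap'\circ\codingmap|)\le0$ I would use Theorem \ref{thm compact approximation}: it is enough to show $\tilde P_F(-\delta\log|\indumap'\circ\codingmap|)\le0$ for every finite $F\subset\edgesindu$. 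Fix such an $F$ and let $\nu_F\in M(\shift)$ be an ergodic equilibrium measure for $-\delta\log|\indumap'\circ\codingmap|$ on the compact subsystem $\inducoding\cap F^{\mathbb N}$. Its image $\hat\nu_F:=\nu_F\circ\codingmap^{-1}\in M(\indumap)$ is carried by the compact set $\codingmap(\inducoding\cap F^{\mathbb N})$, on which $\return$ is bounded; hence $\hat\nu_F(\return)<\infty$, the lift $\mu_F\in M(f)$ of \eqref{eq def lift} is defined, and $\mathrm{supp}(\mu_F)$ is a compact subset of $[0,1)$. Since $\log|\indumap'|$ is the induced potential of $\log|f'|$ and $\indumap$ is uniformly expanding, \eqref{eq Abramov-Kac's formula} gives $0<\lambda(\mu_F)<\infty$ and $\tilde P_F(-\delta\log|\indumap'\circ\codingmap|)=\hat\nu_F(\return)\bigl(h(\mu_F)-\delta\lambda(\mu_F)\bigr)$. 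By Theorem \ref{thm bowen formula}, $h(\mu_F)\le\delta\lambda(\mu_F)$, so this is $\le0$; taking the supremum over $F$ proves the bound.

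For the reverse inequality I would first note that finiteness of $\tilde P(-\delta\log|\indumap'\circ\codingmap|)$ (just obtained) and Proposition \ref{prop finite pressure} give $\sum_{\tilde\omega\in\edgesindu}e^{-\delta\log|\indumap'\circ\codingmap|([\tilde\omega])}<\infty$; using the explicit list of states (the $i\h$, $i\in\edge$, and the $ji^n\edge_i$, $i\in\p$, $j\in\edge_i$, $n\ge1$) together with (F) and (G) one gets, for $t>0$, $\sum_{\tilde\omega\in\edgesindu}e^{-t\log|\indumap'\circ\codingmap|([\tilde\omega])}\asymp\bigl(\sum_{i\ge1}i^{-t\growth}\bigr)\bigl(1+\sum_{n\ge1}n^{-t(1+\exponent)}\bigr)$, which is finite precisely when $t>t_\infty:=\max\{\growth^{-1},(1+\exponent)^{-1}\}$. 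Hence $\delta>t_\infty$, so (Proposition \ref{prop finite pressure} together with convexity in the parameter, from Theorem \ref{thm variational principle induce}) the map $t\mapsto\tilde P(-t\log|\indumap'\circ\codingmap|)$ is finite and continuous on $(t_\infty,\infty)$. Now take any $t\in(t_\infty,\delta)$. By Theorem \ref{thm bowen formula}, $P_f(-t\log|f'|)>0$, so \eqref{eq def non-induced pressure} provides an ergodic $\mu\in M(f)$ with $\lambda(\mu)<\infty$ and $h(\mu)-t\lambda(\mu)>0$. Every measure in $\conv$ has zero entropy and zero Lyapunov exponent, so Lemma \ref{lemma equivalent condition not liftable} forces $\mu(\indulimit)>0$; setting $\tilde\mu:=\mu|_{\indulimit}/\mu(\indulimit)\in M(\indumap)$ (with $\tilde\mu(\return)=\mu(\indulimit)^{-1}<\infty$), the Abramov--Kac formula \eqref{eq classical Abramov-Kac's formula} yields $h(\tilde\mu)-t\,\tilde\mu(\log|\indumap'|)=\tilde\mu(\return)\bigl(h(\mu)-t\lambda(\mu)\bigr)>0$ with $\tilde\mu(\log|\indumap'|)<\infty$, whence $\tilde P(-t\log|\indumap'\circ\codingmap|)>0$ by Theorem \ref{thm variational principle induce}. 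Letting $t\uparrow\delta$ and using continuity, $\tilde P(-\delta\log|\indumap'\circ\codingmap|)\ge0$, and therefore $\tilde P(-\delta\log|\indumap'\circ\codingmap|)=0$.

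It remains to treat $\codingmeasure_{\delta,0}$. Since $-\delta\log|\indumap'\circ\codingmap|$ is locally H\"older with finite pressure $0$, Theorem \ref{thm Gibbs induce} says $\codingmeasure_{\delta,0}$ is its unique, ergodic Gibbs measure, and by \eqref{eq gibbs} (with $\tilde P=0$) and bounded distortion, $\codingmeasure_{\delta,0}([\tilde\omega])\asymp e^{-\delta\log|\indumap'\circ\codingmap|([\tilde\omega])}$ for $\tilde\omega\in\edgesindu$. Choosing $\epsilon\in(0,\delta-t_\infty)$ and using $\log x\ll_\epsilon x^{\epsilon}$,
\[
\codingmeasure_{\delta,0}(\log|\indumap'\circ\codingmap|)\asymp\sum_{\tilde\omega\in\edgesindu}\log|\indumap'\circ\codingmap|([\tilde\omega])\,e^{-\delta\log|\indumap'\circ\codingmap|([\tilde\omega])}\ll\sum_{\tilde\omega\in\edgesindu}e^{-(\delta-\epsilon)\log|\indumap'\circ\codingmap|([\tilde\omega])}<\infty,
\]
the last sum being finite because $\delta-\epsilon>t_\infty$; by Theorem \ref{thm equilibrium state induce}, $\codingmeasure_{\delta,0}$ is then the unique equilibrium measure for $-\delta\log|\indumap'\circ\codingmap|$. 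Finally, on a state $\tilde\omega=ji^n\edge_i\in\edgepart_n$ one has $\return([\tilde\omega])=n$ and, by (F) and (G), $e^{-\delta\log|\indumap'\circ\codingmap|([\tilde\omega])}\asymp j^{-\delta\growth}n^{-\delta(1+\exponent)}$, so the Gibbs estimate gives
\[
\codingmeasure_{\delta,0}(\return\circ\codingmap)\asymp\sum_{\tilde\omega\in\edgesindu}\return([\tilde\omega])\,e^{-\delta\log|\indumap'\circ\codingmap|([\tilde\omega])}\gg\Bigl(\sum_{j\ge1}j^{-\delta\growth}\Bigr)\sum_{n\ge1}n^{\,1-\delta(1+\exponent)}=\infty,
\]
since $\exponent\le1$ and $\delta=\dim_H(\Lambda)\le1$ force $1-\delta(1+\exponent)\ge-1$.

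The heart of the argument, and the main obstacle, is the identity $\tilde P(-\delta\log|\indumap'\circ\codingmap|)=0$. For the upper bound one must approximate the non-compact induced system from inside by compactly supported $f$-invariant measures and feed them into Bowen's formula for $f$; for the lower bound one must transport positivity of $P_f(-t\log|f'|)$ down to the induced system, which is possible only because the extremal $f$-measures are not supported on the parabolic fixed points (so they lift, by Lemma \ref{lemma equivalent condition not liftable}) and because $\delta$ lies strictly above the abscissa of convergence $t_\infty$, which is what makes the induced pressure continuous at $\delta$. Once the identity is in hand, assertions (b)--(d) are routine Gibbs-measure estimates, the divergence of $\codingmeasure_{\delta,0}(\return\circ\codingmap)$ being a direct consequence of $\exponent\le1$ and $\delta\le1$.
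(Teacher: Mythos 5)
Your proof is correct, and the key identity $\tilde P(-\delta\log|\indumap'\circ\codingmap|)=0$ is reached by a genuinely different route than the paper's. The paper simply computes $\sum_{\tilde\omega\in\edgesindu}e^{-b\log|\indumap'\circ\codingmap|([\tilde\omega])}\asymp\bigl(\sum_n n^{-b(1+\exponent)}\bigr)\bigl(\sum_j j^{-b\growth}\bigr)$ via (F) and (G), deduces $\lim_{b\to\tilde s_\infty}\tilde P(-b\log|\indumap'\circ\codingmap|)=\infty$ with $\tilde s_\infty=\max\{(1+\exponent)^{-1},s_\infty\}$, and then invokes Lemma~\ref{lemma indu and nonindu limit} ($\delta=\dim_H(\indulimit)$) together with Bowen's formula for the induced, uniformly expanding conformal Markov system (Theorem 4.2.13 of Mauldin--Urba\'nski) to conclude that the induced pressure vanishes exactly at the dimension; the blow-up at $\tilde s_\infty$ is what makes that Bowen formula give an exact zero. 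You instead avoid the induced-system Bowen formula entirely and reconstruct the identity from the Bowen formula for $f$ (Theorem~\ref{thm bowen formula}) by a two-sided argument: the upper bound via compact approximation (Theorem~\ref{thm compact approximation}), lifting ergodic equilibrium states on $\inducoding\cap F^{\mathbb N}$ through \eqref{eq def lift} and Abramov--Kac to compactly supported $\mu_F\in M(f)$ with $h(\mu_F)\le\delta\lambda(\mu_F)$; the lower bound via $P(-t\log|f'|)>0$ for $t<\delta$, using Lemma~\ref{lemma equivalent condition not liftable} to ensure the witnessing ergodic measure lifts, Abramov--Kac to transfer positivity, and continuity of $t\mapsto\tilde P(-t\log|\indumap'\circ\codingmap|)$ on $(\tilde s_\infty,\infty)$ together with $\delta>\tilde s_\infty$ (which you extract from finiteness of the series at $\delta$) to pass to the limit. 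Your argument is longer but more self-contained, replacing the external GDMS dimension formula by elementary lifting/variational considerations that are already deployed elsewhere in the paper (for instance in the proof of Proposition~\ref{prop equilibrium state Lambda}). The remaining assertions — integrability of $\log|\indumap'\circ\codingmap|$, uniqueness of the equilibrium state, and divergence of $\codingmeasure_{\delta,0}(\return\circ\codingmap)$ — you treat exactly as the paper does, via the Gibbs estimate, $\delta>\tilde s_\infty$, and the divergence of $\sum_n n^{1-\delta(1+\exponent)}$ from $\exponent\le1$, $\delta\le1$.
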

\begin{proof}
    By (F) and (G), 
    for all $b\in \mathbb{R}$ we have 
\begin{align*}
&\sum_{\omega\in \edgesindu}e^{(-b\log|\indumap'\circ\codingmap|)([\omega])}
=
\sum_{n=1}^{\infty}\sum_{i\in \p}\sum_{j\in\edge_i}e^{(-b\log|\indumap'\circ\codingmap|)([ji^n\edge_i])}
+\sum_{j\in \edge}e^{(-b\log|\indumap'\circ\codingmap|)([jH])}
\\&
\asymp
\sum_{n=1}^{\infty}\sum_{i\in \p}\sum_{j\in\edge_j}\frac{|f'\circ\oricodingmap|([j])}{n^{b(1+\exponent)}}+\sum_{j=1}^{\infty}\frac{1}{j^{b\cdot\growth}}
\asymp\sum_{n=1}^{\infty}\frac{1}{n^{b(1+\exponent)}}\sum_{j=1}^{\infty}\frac{1}{j^{b\cdot\growth}}.
\end{align*}
 Therefore, since $\edgesindu$ is finitely primitive, we obtain
 $\lim_{b\to \tilde s_\infty}\tilde P(-b\log |\indumap'\circ\codingmap|)=\infty,$
 where $\tilde s_\infty:=\max\{(1+\exponent)^{-1},s_\infty\}.$
 By Bowen's formula \cite[Theorem 4.2.13]{mauldin2000parabolictobeappdated} and Lemma \ref{lemma indu and nonindu limit}, we obtain $\tilde P(-\delta\log |\indumap'\circ\codingmap|)=0$. Therefore, since $\lim_{b\to \tilde s_\infty}\tilde P(-b\log |\indumap'\circ\codingmap|)=\infty,$ we obtain $\codingmeasure_{\delta,0}(\log|\indumap'\circ\codingmap|)<\infty$.
On the other hand, by (F), (G) and  \eqref{eq gibbs}, we have 
\begin{align*}
    \codingmeasure_{\delta,0}(\return\circ\codingmap)=\sum_{n\in\mathbb{N}}\sum_{\omega\in \edgepart_n}n\codingmeasure_{\delta,0}([\omega])\asymp
    \sum_{n\in\mathbb{N}}\sum_{\omega\in \edgepart_n}ne^{(-\delta\log|\indumap'\circ\codingmap|)([\omega])}
    \asymp\sum_{n=1}^{\infty}\frac{n}{n^{\delta(1+\exponent)}}.
\end{align*}
Since $\delta(1+\exponent)-1\leq \exponent\leq 1$, we obtain the last statement and the proof is complete.
\end{proof}


For $(b,q)\in\mathbb{R}^2$ we define the set of equilibrium measures for $-q\phi-b\log |f'|$ by 
\[
M_{b,q}:=\{\nu\in M(f):\nu(-q\phi-b\log |f'|)>-\infty,\ p(b,q)=h(\nu)+\nu(-q\phi-b\log |f'|)\}.
\]
If $(\delta,0)=(b,q)$ then we simply write $M_{\delta}:=M_{\delta,0}.$

\begin{prop}\label{prop equilibrium state Lambda}
    We have $M_{\delta}=\conv$. 
\end{prop}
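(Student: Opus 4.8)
The plan is to prove both inclusions $\conv\subset M_\delta$ and $M_\delta\subset\conv$. The inclusion $\conv\subset M_\delta$ is the easy direction: each Dirac measure $\delta_{x_i}$ ($i\in\p$) satisfies $h(\delta_{x_i})=0$ and $\lambda(\delta_{x_i})=\log|f'(x_i)|=0$ by (NERI3), so $h(\delta_{x_i})-\delta\lambda(\delta_{x_i})=0=P(-\delta\log|f'|)$ by \eqref{eq bowen}; since the map $\nu\mapsto h(\nu)-\delta\lambda(\nu)$ is affine on $M(f)$ and equals $0$ on each vertex $\delta_{x_i}$, it equals $0$ on the whole simplex $\conv$, so every element of $\conv$ is an equilibrium measure (note each such $\nu$ has $\nu(\log|f'|)=0<\infty$, so the defining condition is satisfied).

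For the harder inclusion $M_\delta\subset\conv$, let $\nu\in M_\delta$. First I would dispose of the ergodic case and then reduce to it. By the ergodic decomposition together with the affinity of $h$ and $\lambda$, it suffices to show that every \emph{ergodic} $\nu\in M_\delta$ lies in $\{\delta_{x_i}\}_{i\in\p}$; indeed if the ergodic components were a.e.\ concentrated among these Dirac measures then $\nu$ itself would be in $\conv$, and conversely if a positive-measure set of ergodic components were \emph{not} in $\conv$, then (using $h-\delta\lambda\le 0$ on all of $M(f)$ by \eqref{eq bowen} and Theorem~\ref{thm bowen formula}, with equality forced $\nu$-a.e.) each such component would itself be an ergodic element of $M_\delta$, and the ergodic case would give a contradiction. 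So assume $\nu\in M_\delta$ is ergodic. If $\nu(\indulimit)=0$ then by Lemma~\ref{lemma equivalent condition not liftable} we have $\nu\in\conv$, and being ergodic it must be one of the $\delta_{x_i}$, which is fine. So suppose $\nu(\indulimit)>0$; I must derive a contradiction. Set $\tilde\nu:=\nu|_{\indulimit}/\nu(\indulimit)\in M(\indumap)$, which is ergodic. Then Abramov--Kac \eqref{eq classical Abramov-Kac's formula} gives $\tilde\nu(\return)h(\nu)=h(\tilde\nu)$ and $\tilde\nu(\return)\lambda(\nu)=\tilde\nu(\log|\indumap'|)=:\lambda(\tilde\nu)$ (the induced potential of $\log|f'|$ being $\log|\indumap'|$). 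In particular $\tilde\nu(\return)<\infty$, and since $\nu$ is an equilibrium measure for $-\delta\log|f'|$ with pressure $0$, we get $0=h(\nu)-\delta\lambda(\nu)=\tilde\nu(\return)^{-1}\bigl(h(\tilde\nu)-\delta\lambda(\tilde\nu)\bigr)$, whence $\tilde\nu$ is an equilibrium measure for $-\delta\log|\indumap'\circ\codingmap|$ on the induced system, with $\tilde P(-\delta\log|\indumap'\circ\codingmap|)=0$ by Theorem~\ref{thm maximal measure and liftable problem}.

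Now invoke the uniqueness part of Theorem~\ref{thm maximal measure and liftable problem}: $\codingmeasure_{\delta,0}$ is the \emph{unique} equilibrium measure for $\log|\indumap'\circ\codingmap|$ on $(\inducoding,\shift)$. Lifting $\tilde\nu$ to an invariant measure $\tilde\nu'\in M(\shift)$ with $h(\tilde\nu')=h(\tilde\nu)$ via Remark~\ref{rem measurable bijection}, and noting $\tilde\nu'(\log|\indumap'\circ\codingmap|)=\lambda(\tilde\nu)<\infty$, we conclude $\tilde\nu'=\codingmeasure_{\delta,0}$, hence $\tilde\nu=\codingmeasure_{\delta,0}\circ\codingmap^{-1}=\measure_{\delta,0}$. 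But then $\tilde\nu(\return)=\codingmeasure_{\delta,0}(\return\circ\codingmap)=\infty$ by the last assertion of Theorem~\ref{thm maximal measure and liftable problem}, contradicting $\tilde\nu(\return)<\infty$. This contradiction shows no ergodic $\nu\in M_\delta$ can have $\nu(\indulimit)>0$, completing the proof.

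\textbf{Main obstacle.} The delicate point is the reduction from a general (non-ergodic) $\nu\in M_\delta$ to the ergodic case: one must argue, via the ergodic decomposition and the affinity of both $h$ and $\lambda$ together with the universal inequality $h(\cdot)-\delta\lambda(\cdot)\le 0$, that almost every ergodic component of an equilibrium measure is again an equilibrium measure (so that the ergodic analysis applies componentwise), and that integrability $\nu(\log|f'|)<\infty$ — which need not hold in this noncompact setting — does not obstruct this; one handles the latter by observing that $\nu\in M_\delta$ already presupposes $\nu(\log|f'|)<\infty$, and that the components inherit finite integrals on a set of full measure. A secondary technical care is needed in applying Abramov--Kac \eqref{eq classical Abramov-Kac's formula}, which requires $\nu$ ergodic and $\nu(|\log|f'||)<\infty$; both hold here.
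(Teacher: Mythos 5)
Your proof is correct and follows essentially the same route as the paper: the ergodic $\nu\in M_\delta\setminus\conv$ satisfies $\nu(\indulimit)>0$ by Lemma~\ref{lemma equivalent condition not liftable}, inducing and Abramov--Kac promote $\tilde\nu$ to the unique equilibrium measure $\codingmeasure_{\delta,0}$ for $-\delta\log|\indumap'\circ\codingmap|$, and the infinite expected return time of $\codingmeasure_{\delta,0}$ (Theorem~\ref{thm maximal measure and liftable problem}) contradicts Kac's lemma, after which one concludes with the ergodic decomposition. The only stylistic difference is that you spell out the ergodic-decomposition reduction and the easy inclusion $\conv\subset M_\delta$ in more detail than the paper, which handles both in a single line.
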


\begin{proof}
By Theorem \ref{thm bowen formula}, we have $\bigcup_{i\in\p}\{\delta_{x_i}\}\subset M_{\delta}$.
Let $\nu\in M_{\delta}$ be an ergodic measure such that $\nu\notin \conv$. Then, by Lemma \ref{lemma equivalent condition not liftable}, we have $\nu(\indulimit)>0$. 
Let $\tilde\nu:=\nu|_{\indulimit}/\nu(\indulimit)$.
By Remark \ref{rem measurable bijection}, there exists $\tilde \nu'\in M(\shift)$ such that $\tilde \nu=\tilde \nu'\circ\codingmap^{-1}$ and $h(\tilde \nu)=h(\tilde \nu')$.
Thus, by Theorem \ref{thm variational principle induce}, Theorem \ref{thm maximal measure and liftable problem}, \eqref{eq classical Abramov-Kac's formula} and \eqref{eq bowen}, we obtain
$
\lambda(\tilde\nu')=\tilde \nu(\return)\lambda(\nu)<\infty\text{ and }
0=\tilde P(-\delta\log|\indumap'\circ\codingmap|)\geq {\tilde \nu(\return)}(h(\nu)-\delta\lambda(\nu))=0,
$
Therefore, $\tilde P(-\delta\log|\indumap'\circ\codingmap|)=h(\tilde \nu')-\delta\lambda(\tilde \nu')$ and $\tilde\nu'$ is an equilibrium measure for $\log |\indumap'\circ\codingmap|$. By the uniqueness of the equilibrium measure for $-\delta\log |\indumap'\circ\codingmap|$ (see Theorem \ref{thm equilibrium state induce}), we obtain 
$\tilde \nu'=\codingmeasure_{\delta}.$
By Theorem \ref{thm maximal measure and liftable problem} and \eqref{eq classical Abramov-Kac's formula}, we have $\infty=\codingmeasure_{\delta}(\return\circ\codingmap)=\tilde\nu'(\return\circ\codingmap)=1/\nu(\indulimit)<\infty$. This is a contradiction. Therefore, the set of ergodic measures in $M_\delta$ is $\bigcup_{i\in\p}\{\delta_{x_i}\}$. By the ergodic decomposition theorem (see \cite[Theorem 5.1.3]{viana}), $M_\delta=\conv$.
\end{proof}

By Theorem \ref{thm bowen formula}, for all $b\in (s_\infty,\delta)$ we have $p(b,0)>0$.
Moreover, by \eqref{eq finite pressure geometric pote} and \eqref{eq finite pressure}, for all $b\in (s_\infty,\delta)$ we have $(b,0)\in \fin$.
For $b\in(s_\infty,\delta)$ we set $\codingmeasure_{b}:=\codingmeasure_{b,0}$, $\measure_b=\measure_{b,0}$ and $\mu_{b}:=\mu_{b,0}$.

{
\begin{lemma}\label{lemma equilibrium measure near Delta}
We assume that $\ratio<\infty$. Then, there exists $\{b_n\}_{n\in\mathbb{N}}\subset (s_\infty,\delta)$ such that $\lim_{n\to\infty}b_n=\delta$ and $\lim_{n\to\infty}\mu_{b_n}(\phi)\in \frat$.    
\end{lemma}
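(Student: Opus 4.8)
The plan is to realise $\delta$ as a limit of expanding exponents, to extract a weak* convergent subsequence of the associated lifted equilibrium measures using the convergence results of Section~\ref{sec convergence}, and then to identify the limit measure by means of Proposition~\ref{prop equilibrium state Lambda}.

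First I would fix any $\delta'\in(s_\infty,\delta)$ and choose a sequence $\{b_n\}_{n\in\mathbb{N}}\subset(\delta',\delta)$ with $b_n\to\delta$ (for instance $b_n=\max\{\delta',\delta-1/n\}$). By Theorem~\ref{thm bowen formula} we have $p(b_n,0)>0$; moreover, by \eqref{eq finite pressure} and \eqref{eq finite pressure geometric pote}, $p$ is finite on a neighbourhood of $(b_n,0)$, hence, being convex, continuous there, and since $q\mapsto\text{LB}(q)$ vanishes at $q=0$ we get $(b_n,0)\in\mathcal{N}$, so that the measures $\mu_{b_n}=\mu_{b_n,0}$ are defined. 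Next I verify that the bounded sequence $\{(b_n,0)\}_{n\in\mathbb{N}}$ satisfies the hypotheses of Theorem~\ref{thm convergence of the equilibrium state}. Condition (T3) holds because $\text{LB}(0)=0<p(b_n,0)$. Condition (T2) is immediate from Theorem~\ref{thm uniquness and existence of the equilibrium state}, which gives $\indupressure(b_n,0,p(b_n,0))=0$ for every $n$. For (T1.2) and (T1.3), note that $\multipote_{b_n,0}=(-b_n\log|f'|)\circ\oricodingmap$, so (G) yields $e^{\multipote_{b_n,0}([m])}\asymp m^{-b_n\growth}$ with constants independent of $n$, and $\log|f'|\circ\oricodingmap([m])\asymp\log m$, while (R) together with $\ratio<\infty$ gives $\phi\circ\oricodingmap([m])\ll\log m$ for all sufficiently large $m$. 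Hence, for any cofinite $F\subset\h$ contained in a tail $\{m:m>M\}$, both sums occurring in (T1.2) and (T1.3) are $\ll\sum_{m>M}m^{-\delta'\growth}\log m$, which tends to $0$ as $M\to\infty$ since $\delta'\growth>1$; the bound is uniform in $n$ precisely because $b_n\geq\delta'$.

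Applying Theorem~\ref{thm convergence of the equilibrium state}, there exist a subsequence $\{b_{n_k}\}_{k\in\mathbb{N}}$ and a measure $\mu_\infty\in M(f)$ with $\mu_{b_{n_k}}\to\mu_\infty$ in the weak* topology such that $\mu_\infty$ is an equilibrium measure for $-\delta\log|f'|$; since $p(\delta,0)=P(-\delta\log|f'|)=0$ by \eqref{eq bowen}, this means $\mu_\infty\in M_\delta$. Moreover, Lemma~\ref{lemma uniformly integrable} gives $\lim_{k\to\infty}\mu_{b_{n_k}}(\phi)=\mu_\infty(\phi)<\infty$. Finally, Proposition~\ref{prop equilibrium state Lambda} identifies $M_\delta$ with $\conv$, so $\mu_\infty=\sum_{i\in\p}p_i\delta_{x_i}$ with $p_i\geq0$ and $\sum_{i\in\p}p_i=1$; therefore $\mu_\infty(\phi)=\sum_{i\in\p}p_i\alpha_i\in[\underline{\alpha}_\p,\overline{\alpha}_\p]=\frat$. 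Relabelling $\{b_{n_k}\}_{k\in\mathbb{N}}$ as $\{b_n\}_{n\in\mathbb{N}}$ produces the required sequence.

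I expect the main obstacle to be only the bookkeeping in verifying (T1.2) and (T1.3); the one point requiring care there is to keep the $b_n$ inside a fixed interval $(\delta',\delta)\subset(s_\infty,\delta)$, which is what makes the tail estimates uniform in $n$. The conceptual heart — the step carrying the real content — is the combination of Theorem~\ref{thm convergence of the equilibrium state}, which pins the weak* limit down to an equilibrium state for the geometric potential $-\delta\log|f'|$, with Proposition~\ref{prop equilibrium state Lambda}, which forces any such state to be a convex combination of the parabolic point masses; the $\phi$-integral of $\mu_\infty$ then automatically lies in $\frat$.
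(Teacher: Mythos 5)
Your proof is correct and follows essentially the same route as the paper: construct the expanding equilibrium measures $\mu_{b_n}$ for $b_n\uparrow\delta$, verify (T1.2), (T1.3), (T2), (T3) using (G), (F) and $\ratio<\infty$, invoke Theorem~\ref{thm convergence of the equilibrium state} and Lemma~\ref{lemma uniformly integrable} to pass to a weak* limit that is an equilibrium measure for $-\delta\log|f'|$, and then use Proposition~\ref{prop equilibrium state Lambda} to place $\mu_\infty(\phi)$ in $\frat$. The only genuine (and minor) divergence is in verifying (T2): the paper picks $b_n$ increasing and appeals to Theorem~\ref{thm maximal measure and liftable problem} together with $\eqref{eq bowen}$ to argue $\indupressure(b_n,0,p(b_n,0))\to\indupressure(\delta,0,0)=0$, whereas you first check $(b_n,0)\in\mathcal{N}$ and then invoke Theorem~\ref{thm uniquness and existence of the equilibrium state} to get $\indupressure(b_n,0,p(b_n,0))=0$ outright for each $n$; your version is a bit cleaner since it avoids a continuity-of-$\indupressure$ argument near the boundary point $(\delta,0,0)$. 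One small bookkeeping imprecision: the finiteness of $p$ on a full neighbourhood of $(b_n,0)$ (including $q<0$), needed for $(b_n,0)\in\mathcal{N}$, uses the hypothesis $\ratio<\infty$ via (R), not just $\eqref{eq finite pressure}$ and $\eqref{eq finite pressure geometric pote}$ as your citation suggests; the substance is present a sentence later, but the citation there should also mention (R).
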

}

\begin{proof}
Let $\{(b_n,0)\}_{n\in\mathbb{N}}\subset (s_\infty,\delta)\times\{0\}$ be a sequence such that for all $n\in\mathbb{N}$ we have $b_n\leq b_{n+1}$ and $\lim_{n\to\infty}b_n=\delta$. We first show that $\{(b_n,0)\}_{n\in\mathbb{N}}$ satisfies the conditions (T1,2), (T1.3), (T2) and (T3) in Section \ref{sec thermodynamic}. Since $s_\infty<b_n<\delta$, Theorem \ref{thm bowen formula} yields that for all $n\in\mathbb{N}$ we have $p(b_n,0)>0=\text{LB}(0)$. Moreover, by Theorem \ref{thm maximal measure and liftable problem} and \eqref{eq bowen}, we have $\lim_{n\to\infty}\indupressure(b_n,0,p(b_n,0))=\indupressure(\delta,0,0)=0$. Hence, $\{(b_n,0)\}_{n\in\mathbb{N}}$ satisfies (T2) and (T3). On the other hand, by using (G) and our assumption $\ratio<\infty$, for all $n\in\mathbb{N}$ we have  
\begin{align}\label{eq equilibrium measure near delta}
    &\sum_{i=1}^\infty e^{(-b_n\log|f'|)\circ\oricodingmap([i])}\log|f'|\circ\oricodingmap([i])
    \ll\sum_{i=1}^{\infty}\frac{\log i}{i^{b_1\growth}} \text{ and }
    \\&\nonumber\sum_{i=1}^{\infty}e^{(-b_n\log|f'|)\circ\oricodingmap([i])}\phi\circ\oricodingmap([i])\ll \sum_{i=1}^{\infty}\frac{\phi\circ\oricodingmap([i])}{i^{b_1\growth}}\ll\sum_{i=1}^{\infty}\frac{\log i}{i^{b_1\growth}}.
\end{align}
Therefore, $\{(b_n,0)\}_{n\in\mathbb{N}}$ satisfies (T1,2) and (T1,3). Hence, by Lemma \ref{lemma tight}, Lemma \ref{lemma uniformly integrable} and Theorem \ref{thm convergence of the equilibrium state}, there exist a subsequence $\{b_{n_k}\}_{k\in\mathbb{N}}$ of $\{b_n\}_{n\in\mathbb{N}}$ and $\mu_\infty^*\in M(f)$ such that $\mu_\infty^*$ is an equilibrium measure for $-\delta\log |f'|$ and $\lim_{n\to\infty}\mu_{b_n}(\phi)=\mu_{\infty}^*(\phi)$. By Proposition \ref{prop equilibrium state Lambda}, we are done.
\end{proof}

For a convex function $(x_1,\cdots,x_n)\in\mathbb{R}^n\mapsto V(x_1,\cdots,x_n)\in\mathbb{R}$ $(n\in\mathbb{N})$,  $\boldsymbol{\hat x}=(\hat x_1,\cdots,\hat x_n)\in\mathbb{R}^n$ and $1\leq k\leq n$ we denote by $V^+_{x_k}(\boldsymbol{\hat x})$ the right-hand derivative of $V$ with respect to the variable $x_k$ at $\boldsymbol{\hat x}$ and by $V^-_{x_k}(\boldsymbol{\hat x})$ the left-hand derivative of $V$ with respect to the variable $x_k$ at $\boldsymbol{\hat x}$.

\begin{prop}\label{prop derivatibe of pressure weak conditions}
We assume that $\ratio<\infty$.
If there exists $q_0<0$ such that $\ratio|q_0|<\delta-s_\infty$ and   for all $q\in [q_0,0)$ we have $p(\delta,q)>\LB$ then we have $p^-_q(\delta,0)=\inf_{\nu\in M_\delta}\{-\nu(\phi)\}$. Also, if there exists $q_0>0$ such that $\ratio|q_0|<\delta-s_\infty$ and  for all $q\in (0,q_0]$ we have $p(\delta,q)>\LB$ then we have $p^+_q(\delta,0)=\sup_{\nu\in M_\delta}\{-\nu(\phi)\}$. 
\end{prop}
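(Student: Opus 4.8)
The plan is to treat the right-derivative statement $p_q^+(\delta,0)=\sup_{\nu\in M_\delta}\{-\nu(\phi)\}$; the left-derivative statement is entirely symmetric. First I would record the elementary one-sided inequality coming from the variational principle: for every $\nu\in M_\delta$ and every $q>0$ we have $p(\delta,q)\geq h(\nu)+\nu(-q\phi-\delta\log|f'|)=p(\delta,0)-q\nu(\phi)$, since $\nu$ is an equilibrium measure for $-\delta\log|f'|$ (Theorem \ref{thm bowen formula} gives $p(\delta,0)=0$ and Proposition \ref{prop equilibrium state Lambda} identifies $M_\delta$). Dividing by $q>0$ and letting $q\downarrow0$ yields $p_q^+(\delta,0)\geq\sup_{\nu\in M_\delta}\{-\nu(\phi)\}$; here I must check that the supremum is actually attained or at least approached along measures with finite integral, which for $\phi\in\mathcal R$ and the Dirac measures $\delta_{x_i}$ in $M_\delta=\conv$ reduces to $\sup_{\nu\in M_\delta}\{-\nu(\phi)\}=-\min_{i\in\p}\alpha_i=-\underline\alpha_\p=-\min\frat$, a finite number.

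For the reverse inequality I would use the convergence machinery of Section \ref{sec convergence}. Pick a sequence $q_n\downarrow0$ inside $(0,q_0]$; by hypothesis $p(\delta,q_n)>\text{LB}(q_n)$ for each $n$, so $\mu_{\delta,q_n}$ is well-defined, and I claim the bounded sequence $\{(\delta,q_n)\}_n$ satisfies (T1.2), (T1.3), (T2), (T3). Conditions (T3) is immediate; (T2) follows because $\indupressure(\delta,q_n,p(\delta,q_n))\leq0$ by \eqref{eq induced pressure is less than zero} (or \eqref{eq comparablity pressure and sum}) and is bounded below since $p$ and $q_n$ are bounded and the relevant sums converge; (T1.2) and (T1.3) follow from the estimate, valid for $q\in[0,q_0]$ and $b$ near $\delta$, that $e^{\multipote_{b,q}([m])}\asymp m^{-b\growth}e^{-q\phi\circ\oricodingmap([m])}$ together with the assumption $\ratio<\infty$, which via (G) forces $\phi\circ\oricodingmap([m])\ll\log m$ (when $\ratio<\infty$) so that $\sum_m m^{-b\growth}e^{-q\phi([m])}\phi([m])$ and $\sum_m m^{-b\growth}e^{-q\phi([m])}\log|f'|([m])$ converge uniformly in $q\in[0,q_0]$; the role of the condition $\ratio|q_0|<\delta-s_\infty$ is precisely to guarantee the effective exponent $b\growth+q\ratio\growth$ stays above $s_\infty\growth+1$, i.e. that $(\delta,q)\in\fin$ and the tails are controlled uniformly. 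By Theorem \ref{thm convergence of the equilibrium state}, along a subsequence $\mu_{\delta,q_{n_k}}\to\mu_\infty$ weak$^*$ with $\mu_\infty$ an equilibrium measure for $-\delta\log|f'|$, hence $\mu_\infty\in M_\delta$, and moreover $\lim_k\mu_{\delta,q_{n_k}}(\phi)=\mu_\infty(\phi)$.

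To finish, I would invoke the real-analyticity and Ruelle formula of Theorem \ref{thm regularity of non induced pressure}: on the region where $(\delta,q)\in\mathcal N$ (which by the pressure gap hypothesis $p(\delta,q)>\text{LB}(q)$ and $(\delta,q)\in\fin$ holds for $q\in(0,q_0]$ after possibly shrinking $q_0$), we have $\partial_q p(\delta,q)=-\mu_{\delta,q}(\phi)$. Since $q\mapsto p(\delta,q)$ is convex, $p_q^+(\delta,0)=\lim_{q\downarrow0}\partial_q p(\delta,q)=\lim_{q\downarrow0}(-\mu_{\delta,q}(\phi))$, and by the previous paragraph this limit equals $-\mu_\infty(\phi)\leq\sup_{\nu\in M_\delta}\{-\nu(\phi)\}$. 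Combined with the first paragraph this gives equality. The main obstacle I expect is the verification of the tightness/uniform-integrability conditions (T1.2)–(T1.3) with constants uniform in $q\in[0,q_0]$: one must be careful that the parabolic contributions (the $[ji^s\edge_i]$ cylinders) do not spoil uniformity, but this is exactly the content of the estimates \eqref{eq entropy C}–\eqref{eq entropy return} in the proof of Lemma \ref{lemma}, applied here with $b=\delta$ fixed and $q$ ranging over a compact interval, so the argument parallels Lemma \ref{lemma equilibrium measure near Delta}; a secondary subtlety is ensuring $\mu_\infty$ lies in $M_\delta$ rather than merely being a weak limit, which is handled by Theorem \ref{thm convergence of the equilibrium state} plus Proposition \ref{prop equilibrium state Lambda}.
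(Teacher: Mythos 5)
Your proof is correct and follows essentially the same route as the paper: the easy one-sided inequality from the variational principle, verification of (T1.2), (T1.3), (T2), (T3) for $\{(\delta,q_n)\}$ using $\ratio<\infty$ together with (G) and the hypothesis $\ratio|q_0|<\delta-s_\infty$, passage to a weak-$*$ limit in $M_\delta$ via Theorem \ref{thm convergence of the equilibrium state} and Proposition \ref{prop equilibrium state Lambda}, and Ruelle's formula combined with convexity to identify the one-sided derivative with $-\mu_\infty^*(\phi)$. The only differences are cosmetic: you treat the right-derivative half while the paper writes out the symmetric left-derivative half, and your justification of (T2) is a bit loose (the paper observes $\indupressure(\delta,q_n,p(\delta,q_n))\to\indupressure(\delta,0,0)=0$ via Theorem \ref{thm maximal measure and liftable problem} and \eqref{eq bowen}; in fact, since $(\delta,q_n)\in\mathcal{N}$, Theorem \ref{thm uniquness and existence of the equilibrium state} already gives $\indupressure(\delta,q_n,p(\delta,q_n))=0$ identically).
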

\begin{proof}
    We first show the first half. We assume that there exists $q_0<0$ such that $\ratio|q_0|<\delta-s_\infty$ and for all $q\in [q_0,0)$ we have $p(\delta,q)>\LB$. 
    Let $\{q_n\}_{n\in\mathbb{N}}$ be a sequence of $(q_0,0)$ such that  $\lim_{n\to\infty} q_n= 0$.
    We will show that $\{(\delta,q_n)\}_{n\in\mathbb{N}}$ satisfies (T1,2), (T1,3), (T2) and (T3) in Section \ref{sec thermodynamic}. 
    We take a small $0<\epsilon<1$ with $|q_0|(\ratio+\epsilon)+\epsilon<\delta-s_\infty$. Then, there exists $N\geq 1$ such that for all $i\geq N$ and $x\in \oricodingmap([i])$ we have $\phi(x)<(\ratio+\epsilon)\log |f'(x)|$. 
    Thus, for all $n\in\mathbb{N}$ we obtain
    \begin{align*}
        \sum_{i=N}^\infty e^{\multipote_{\delta,q_n}([i])}\phi\circ\oricodingmap([i])\leq (\ratio+\epsilon)   \sum_{i=N}^\infty e^{\multipote_{\delta,q_n}([i])}\log|f'|\circ\oricodingmap([i]).
    \end{align*}
    Moreover, by (G), we obtain 
    \begin{align*}
        &\sum_{i=N}^\infty e^{\multipote_{\delta,q_n}([i])}\log|f'|\circ\oricodingmap([i])
        \ll \sum_{i=N}^{\infty}\frac{1}{i^{\growth(\delta-|q_0|(\ratio+\epsilon)-\epsilon)}}.
            \end{align*}
Since $\delta-|q_0|(\ratio+\epsilon)-\epsilon>s_\infty=\growth^{-1}$, these inequalities implies that $\{(\delta,q_n)\}_{n\in\mathbb{N}}$ satisfies (T1,2) and (T1,3). Moreover, by Theorem \ref{thm maximal measure and liftable problem} and \eqref{eq bowen}, we have that $\lim_{n\to\infty}\indupressure(\delta,q_n,p(\delta,q_n))=\indupressure(\delta,0,0)=0$. Combining this with our assumption, we can see that $\{(\delta,q_n)\}_{n\in\mathbb{N}}$ satisfies (T2) and (T3). Hence, by Lemma \ref{lemma tight}, Lemma \ref{lemma uniformly integrable} and Theorem \ref{thm convergence of the equilibrium state}, there exist a subsequence $\{q_{n_k}\}_{k\in\mathbb{N}}$ of $\{q_n\}_{n\in\mathbb{N}}$ and $\mu_\infty^*\in M(f)$ such that $\mu_\infty^*$ is an equilibrium measure for $-\delta\log |f'|$ and $\lim_{n\to\infty}\mu_{\delta,q_{n_k}}(\phi)=\mu_{\infty}^*(\phi)$. Hence, by Theorem \ref{thm regularity of non induced pressure} and the convexity of $q\mapsto p(\delta,q)$ in a small neighborhood of $0$, we obtain
\begin{align}\label{eq proof derivative weak}
    p^-_q(\delta,0)=\lim_{k\to\infty}\frac{\partial}{\partial q}p(\delta,q_{n_k})=-\lim_{k\to\infty}\mu_{\delta,q_{n_k}}(\phi)=-\mu_{\infty}^*(\phi)\geq \inf_{\nu\in M_\delta}\{-\nu(\phi)\}.
\end{align}
On the other hand, the inequality $p^-_q(\delta,0)\leq\inf_{\nu\in M_\delta}\{-\nu(\phi)\}$ follows from the variational principle for the topological pressure (see, for example, \cite[p.812]{Urbanskinoninvertible}). 
Therefore, we obtain $p^-_q(\delta,0)=\inf_{\nu\in M_\delta}\{-\nu(\phi)\}$. By a similar argument, one can show the second half.
\end{proof}

\subsection{Conditional variational principle}
Since $\phi$ and $\log |f'|$ are acceptable, $\phi$ and $\log |f'|$ have mild distortion, that is, 
\[
\sup_{i\in\edge}\sup_{x,y\in I_i}\{\psi(x)-\psi(y)\}<\infty \text{ and }\sup_{\omega\in\edge^n}\sup_{x,y\in I_\omega}\{S_n\psi(x)-S_n\psi(y)\}=o(n),
\]
where $\psi\in \{\phi,\log|f'|\}$ and $S_n\psi:=\sum_{k=0}^{n-1}\psi\circ f^k$. This distortion property and Theorem \ref{thm bowen formula} enable us to apply the main theorem of \cite{MixedJT}. For $\nu\in M(f)$ we define  $\dim_H(\nu)=h(\nu)/\lambda(\nu)$ if $\lambda(\nu)>0$ and $\dim_H(\nu)=0$ if $\lambda(\nu)=0$.
\begin{thm}{\cite[Main Theorem]{MixedJT}}\label{thm coditional variational principle}
    For all $\alpha\in [\alpha_{\inf},\alpha_{\sup}]$ we have 
    \[
    b(\alpha)=
    \lim_{\epsilon\to0}\sup\left\{\dim_H(\nu):\nu\in M(f),\ \lambda(\nu)<\infty,\  |\nu(\phi)-\alpha|<\epsilon\right\}.
    \]
\end{thm}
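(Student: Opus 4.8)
The plan is to obtain Theorem~\ref{thm coditional variational principle} from \cite[Main Theorem]{MixedJT} by checking that, in our normalization, $f$ and $\phi$ satisfy the regularity hypotheses under which that result was proved; the substantive content of the identity is theirs, so the work here is essentially a hypothesis check. (Were \cite{MixedJT} not available, one would instead prove the formula directly, deriving ``$\leq$'' from a covering argument controlled by the topological pressure and ``$\geq$'' by constructing, for each invariant $\nu$ with $|\nu(\phi)-\alpha|$ small and $\lambda(\nu)<\infty$, a measure concentrated near $\Lambda_\alpha$ of dimension close to $\dim_H(\nu)$; but I would take the short route.) The two ingredients their theorem needs are: (i) that $\phi$ and $\log|f'|$ have mild distortion, i.e.\ bounded distortion on $1$-cylinders and sub-linear distortion of Birkhoff sums along $n$-cylinders; and (ii) the Bowen/variational characterization of $\dimension=\dim_H(\Lambda)$.

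For (i) I would first record that $\log|f'|$ is acceptable by Proposition~\ref{prop acceptable} and that $\phi$ is acceptable since $\phi\in\mathcal{R}$ (condition (H)). Bounded distortion on $1$-cylinders, $\sup_{i\in\edge}\sup_{x,y\in I_i}\{\psi(x)-\psi(y)\}<\infty$ for $\psi\in\{\phi,\log|f'|\}$, then follows at once from the defining property $\sup_{i\in\edge}\{\sup(\psi\circ\oricodingmap|_{[i]})-\inf(\psi\circ\oricodingmap|_{[i]})\}<\infty$ of acceptability, pushed to $\Lambda$ through $\oricodingmap$. For the sub-linear estimate $\sup_{\omega\in\edge^n}\sup_{x,y\in I_\omega}\{S_n\psi(x)-S_n\psi(y)\}=o(n)$ I would use uniform continuity of $\psi\circ\oricodingmap$ on the full-shift $\oricodingsp$: given $\eta>0$, choose $k$ so that codings agreeing on the first $k$ symbols give $\psi$-values differing by less than $\eta$; for $x,y\in I_\omega$ with $\omega\in\edge^n$ the codings agree on the first $n$ symbols, so $f^j x$ and $f^j y$ agree on the first $n-j$ symbols, whence the summands with $0\leq j\leq n-k$ each contribute at most $\eta$ while the remaining $k$ summands are bounded by the $1$-cylinder constant; summing and dividing by $n$ gives $\eta+O(1/n)$, and $\eta\to0$ yields the claim.

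For (ii), the required input is precisely Theorem~\ref{thm bowen formula}, which realizes $\dimension$ both as a supremum of dimensions of compactly supported invariant measures and as $\inf\{t\in\mathbb{R}:P(-t\log|f'|)\leq 0\}$; nothing further is needed. With (i) and (ii) in hand, \cite[Main Theorem]{MixedJT} applies and delivers the stated formula for every $\alpha\in[\alpha_{\inf},\alpha_{\sup}]$, the fact that this is exactly the range with $\Lambda_\alpha\neq\emptyset$ being \cite[Main theorem (a)]{MixedJT} quoted in the introduction. The one point I would most want to pin down against \cite{MixedJT}---and the only place any difficulty could hide---is whether the regularity hypothesis used there is exactly this mild-distortion pair rather than a stronger local-H\"older or summability condition on $\phi$; I expect it is the former, in which case the proof is complete.
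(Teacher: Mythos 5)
Your proposal matches the paper's treatment: the paper simply records, in the paragraph immediately preceding the theorem, that acceptability of $\phi$ and $\log|f'|$ gives the mild-distortion estimates and that Theorem~\ref{thm bowen formula} supplies the Bowen characterization of $\dimension$, and then says ``This distortion property and Theorem~\ref{thm bowen formula} enable us to apply the main theorem of \cite{MixedJT}.'' You reach the same conclusion by the same route, and your spelled-out derivation of the $o(n)$ distortion bound from uniform continuity of $\psi\circ\oricodingmap$ is exactly the (unwritten) argument the paper is relying on.
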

We define the function $\tilde b:(\alpha_{\inf},\alpha_{\sup})\rightarrow [0,1]$ by 
\[
\tilde b(\alpha)=\sup\left\{\dim_H(\nu):\nu\in M(f),\ \lambda(\nu)<\infty,\  \nu(\phi)=\alpha\right\}.
\]
We denote by $\underline{i}$ the index in $\p$ satisfying $\alpha_{\underline{i}}=\min_{i\in\p}\{\alpha_i\}$ and by $\overline{i}$ the index in $\p$ satisfying $\alpha_{\overline{i}}=\max_{i\in\p}\{\alpha_i\}$.
\begin{lemma}\label{lemma conti tilde b}
    $\tilde b$ is continuous on $(\alpha_{\inf},\alpha_{\underline{i}})$. Moreover, if $\ratio<\infty$ then $\tilde b$ is also continuous on $(\alpha_{\overline{i}},\alpha_{\sup})$.
\end{lemma}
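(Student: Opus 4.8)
The plan is to combine an elementary convex‑combination (``transport'') argument with the thermodynamic formalism of Sections~\ref{sec thermodynamic}--\ref{sec convergence} and the form of the conditional variational principle. First I would record the transport principle: if $\nu\in M(f)$ with $0<\lambda(\nu)<\infty$, then for $p\in(0,1)$ the mixture $p\nu+(1-p)\delta_{x_{\underline{i}}}$ lies in $M(f)$, has $\phi$-average $p\,\nu(\phi)+(1-p)\alpha_{\underline{i}}$, and — because $x_{\underline{i}}$ is a parabolic fixed point, so $\delta_{x_{\underline{i}}}$ has zero entropy and $\lambda(\delta_{x_{\underline{i}}})=\log|f'(x_{\underline{i}})|=0$ — has Hausdorff dimension exactly $h(\nu)/\lambda(\nu)=\dim_H(\nu)$; as $p$ varies the $\phi$-average ranges over $(\nu(\phi),\alpha_{\underline{i}})$ with the dimension unchanged. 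Since $\alpha_{\underline{i}}=\min_{i\in\p}\alpha_i$ this gives at once that $\tilde b$ is non-decreasing on $(\alpha_{\inf},\alpha_{\underline{i}})$; the analogous statement using $\delta_{x_{\overline{i}}}$ gives, when $\ratio<\infty$, that $\tilde b$ is non-increasing on $(\alpha_{\overline{i}},\alpha_{\sup})$. Monotonicity of $\tilde b$ together with the $\lim_{\epsilon\to0}\sup_{|\beta-\alpha|<\epsilon}$ form of Theorem~\ref{thm coditional variational principle} then yields the identity $b(\alpha)=\tilde b(\alpha^{+})$ (right-hand limit of $\tilde b$) for every $\alpha\in(\alpha_{\inf},\alpha_{\underline{i}})$, and symmetrically $b(\alpha)=\tilde b(\alpha^{-})$ on $(\alpha_{\overline{i}},\alpha_{\sup})$. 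At this point continuity of $\tilde b$ on the open interval is reduced to producing suitable measures.

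The main step is to construct, for each $\alpha_0$ in the open interval, measures $\nu_k\in M(f)$ with $0<\lambda(\nu_k)<\infty$, $\dim_H(\nu_k)\to b(\alpha_0)$, and $\nu_k(\phi)\uparrow\alpha_0$ (resp. $\nu_k(\phi)\downarrow\alpha_0$ on the right interval). I would take these to be equilibrium measures $\mu_{b,q}$ for $-q\phi-b\log|f'|$ along a parameter curve on which $p(b,q)+q\alpha_0=0$: by Theorem~\ref{thm regularity of non induced pressure} and Ruelle's formula one has $\dim_H(\mu_{b,q})=b$ and $\mu_{b,q}(\phi)=-\partial_q p(b,q)$, both varying real-analytically on $\mathcal{N}$, so sweeping the curve towards the parameter realizing $b(\alpha_0)$ makes the $\phi$-averages exhaust an interval abutting $\alpha_0$ from the deficient side and the dimensions tend to $b(\alpha_0)$; when that parameter lies on $\partial\mathcal{N}$, the required limit measure is extracted from a parameter sequence satisfying (T2), (T3), (T1.2), (T1.3) via Lemma~\ref{lemma tight}, Lemma~\ref{lemma uniformly integrable} and Theorem~\ref{thm convergence of the equilibrium state}, just as in Lemma~\ref{lemma equilibrium measure near Delta} and Proposition~\ref{prop derivatibe of pressure weak conditions}. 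Applying the transport principle to these $\nu_k$ (mixing with $\delta_{x_{\underline{i}}}$, resp. $\delta_{x_{\overline{i}}}$, to hit the average $\alpha_0$ exactly) gives $\tilde b(\alpha_0)\ge\lim_k\dim_H(\nu_k)=b(\alpha_0)$, hence $\tilde b=b$ on the interval; and since $\nu_k(\phi)\to\alpha_0$ from the deficient side with $b(\nu_k(\phi))\ge\tilde b(\nu_k(\phi))\ge\dim_H(\nu_k)$, monotonicity gives $b(\alpha_0^{-})=b(\alpha_0)$ on the left (resp. $b(\alpha_0^{+})=b(\alpha_0)$ on the right). Combined with the identity $b=\tilde b(\cdot^{+})$ (resp. $b=\tilde b(\cdot^{-})$) from the first paragraph and $\tilde b=b$, this forces $b$, and therefore $\tilde b$, to be continuous on the open interval. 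The hypothesis $\ratio<\infty$ is essential for the right interval: it is precisely what makes the relevant parameter sequences satisfy the summability conditions and the approximating measures have finite Lyapunov exponent, whereas for $\ratio=\infty$ the supremum defining $\tilde b$ is not attained anywhere on $(\alpha_{\overline{i}},\alpha_{\sup})$ and $\tilde b$ is discontinuous there.

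The main obstacle is exactly this construction of approximating equilibrium measures. Because $\Lambda$ is non-compact, $M(f)$ is not weak-$*$ compact, continuous potentials need not be bounded, and the entropy map is not upper semi-continuous, so one cannot naively pass to weak-$*$ limits of equilibrium measures; the substitute is the pair of criteria of Section~\ref{sec convergence} — tightness of a sequence $\{\mu_{b_n,q_n}\}$, and the property that its weak-$*$ limit is again an equilibrium measure — whose hypotheses (T1)--(T3), (T1.2), (T1.3) must be checked for the chosen parameter sequences. A secondary, more computational, point is verifying that the parameter associated with a given $\alpha_0$ in the open interval either lies in $\mathcal{N}$ or is a limit of such parameters fulfilling those conditions; this is where condition (G), the Rényi condition, and (on the right interval) $\ratio<\infty$ are used, in the same way as in the proofs of Lemma~\ref{lemma equilibrium measure near Delta} and Proposition~\ref{prop derivatibe of pressure weak conditions}.
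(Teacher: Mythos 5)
Your proposal takes a genuinely different route from the paper, and there is a serious gap in it. The paper's argument is entirely elementary: for $\alpha$ in the open interval, it fixes auxiliary measures $\mu_1,\mu_2\in M(f)$ with finite Lyapunov exponents and $\mu_1(\phi)<\alpha<\mu_2(\phi)$, and proves both the inequalities $\tilde b(\alpha)\le\liminf_n\tilde b(\beta_n)$ and $\limsup_n\tilde b(\beta_n)\le\tilde b(\alpha)$ for any $\beta_n\to\alpha$ by forming convex combinations $p_n\mu+(1-p_n)\mu_{s_n}$ (resp. $p_n\mu_n+(1-p_n)\mu_{s_n}$) with $p_n\to1$. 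No thermodynamic formalism, no equilibrium measures, no tightness/convergence theory from Section~\ref{sec convergence} is used. The decisive technical point of the paper's proof, which your sketch never addresses, is establishing that the near-optimal measures $\mu_n$ for $\tilde b(\beta_n)$ have $\liminf_n\lambda(\mu_n)>0$. This is exactly where the restriction $\alpha<\alpha_{\underline i}$ (for the left interval) and the extra hypothesis $\ratio<\infty$ (for the right interval, since $\phi$ must be dominated by $C\log|f'|$ outside a finite union of cylinders) enter the argument. Without that estimate, the convex-combination step that transfers the $\phi$-average to $\alpha$ can destroy the dimension, and the upper-semicontinuity inequality fails.

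Beyond that, the ``main step'' of your second paragraph is not available at this point in the paper: you propose to produce measures $\nu_k$ with $\dim_H(\nu_k)\to b(\alpha_0)$ by sweeping equilibrium measures $\mu_{b,q}$ along a curve $\{p(b,q)+q\alpha_0=0\}$ towards ``the parameter realizing $b(\alpha_0)$''. But the existence of a parameter $(b(\alpha_0),q(\alpha_0))\in\mathcal N$ for which the Gibbs measure realizes $b(\alpha_0)$ is precisely the content of the $\nice$-analysis (Propositions~\ref{prop relationship 0}, \ref{prop relationship infinity}, \ref{prop relation alpha}), and that analysis relies on Theorem~\ref{thm stong conditional variational principle}, which is itself a direct corollary of the present Lemma~\ref{lemma conti tilde b}. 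So as phrased the argument is circular. Even setting circularity aside, the identity $\dim_H(\mu_{b,q})=b$ you invoke on the curve $p(b,q)+q\alpha_0=0$ is only correct when additionally $\mu_{b,q}(\phi)=\alpha_0$; in general one only gets $\dim_H(\mu_{b,q})=b+q(\mu_{b,q}(\phi)-\alpha_0)/\lambda(\mu_{b,q})$. Finally, your closing remark that $\tilde b$ is discontinuous on $(\alpha_{\overline i},\alpha_{\sup})$ when $\ratio=\infty$ is actually false: the construction in Proposition~\ref{prop frat under some assumption} shows $\tilde b\equiv\delta$ there, so $\tilde b$ is continuous (constant); the hypothesis $\ratio<\infty$ is needed only because this particular argument, not the conclusion, breaks down otherwise. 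The transport-principle observation giving monotonicity of $\tilde b$ is correct and could be a useful supplement, but the proof as outlined cannot replace the paper's direct two-sided estimate.
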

\begin{proof}
    Let $\alpha\in(\alpha_{\inf},\alpha_{\underline{i}})$. Then,  there exist $\mu_1,\mu_2\in M(f)$ such that $\lambda(\mu_1)<\infty$, $\lambda(\mu_2)<\infty$ and 
    $\alpha_{\inf}<\mu_1(\phi)<\alpha<\mu_2(\phi)<\alpha_{\underline{i}}$. We notice that $\mu_1,\mu_2\notin \conv$. Thus, by Lemma \ref{lemma equivalent condition not liftable}, we obtain $\lambda(\mu_1)>0$ and $\lambda(\mu_2)>0$. Let $\{\beta_n\}_{n\in\mathbb{N}}\subset (\mu_1(\phi),\mu_2(\phi))$ be a sequence such that $\lim_{n\to\infty}\beta_n=\alpha$. We first show that $\tilde b(\alpha)\leq \liminf_{n\to\infty}\tilde b(\beta_n)$. Let $\epsilon>0$. Then, there exists $\mu\in M(f)$ such that $\lambda(\mu)<\infty$, $\mu(\phi)=\alpha$  and $\tilde b(\alpha)<\dim_H(\mu)+\epsilon$. Note that for all $\nu\in \conv$ we have $\nu(\phi)\in [\alpha_{\underline{i}},\alpha_{\overline{i}}]$. Therefore, since $\alpha_{\underline{i}}>\alpha=\mu(\phi)$, $\mu\notin\conv$ which yields that $\lambda(\mu)>0$. 
    Since $\{\beta_n\}_{n\in\mathbb{N}}\subset (\mu_1(\phi),\mu_2(\phi))$ and $\lim_{n\to\infty}\beta_n=\alpha$ there exist $\{p_n\}_{n\in\mathbb{N}}\subset[0,1]$ and $\{s_n\}_{n\in\mathbb{N}}\subset\{1,2\}$ such that for all $n\in\mathbb{N}$ we have $\beta_n=p_n\mu(\phi)+(1-p_n)\mu_{s_n}(\phi)$ and $\lim_{n\to\infty}p_n=1$. Set $\nu_n:=p_n\mu+(1-p_n)\mu_{s_n}$ for each $n\in\mathbb{N}$. Then, since $\lambda(\mu)>0$, we obtain
    \begin{align*}
\lim_{n\to\infty}\dim_H(\nu_n)=\lim_{n\to\infty}\frac{p_nh(\mu)+(1-p_n)h(\mu_{s_n})}{p_n\lambda(\mu)+(1-p_n)\lambda(\mu_{s_n})}=\dim_H(\mu).    
    \end{align*}
Hence, noting that $\nu_n(\phi)=\beta_n$ ($n\in\mathbb{N}$), we obtain  $\tilde b(\alpha)<\dim_H(\mu)+\epsilon=\lim_{n\to\infty}\dim_H(\nu_n)+\epsilon\leq\liminf_{n\to\infty}\tilde b(\beta_n)+\epsilon$. Letting $\epsilon\to0$, we obtain $\tilde b(\alpha)\leq \liminf_{n\to\infty}\tilde b(\beta_n)$.

Next, we shall show that $\limsup_{n\to\infty}\tilde b(\beta_n)\leq \tilde b(\alpha)$. Let $\epsilon>0$. Then, for each $n\in\mathbb{N}$ there exists $\mu_n\in M(f)$ such that $\lambda(\mu_n)<\infty$, 
$\mu_n(\phi)=\beta_n$ and $\tilde b(\beta_n)<\dim_H(\mu_n)+\epsilon$. By using the inequality $\alpha<\alpha_{\underline{i}}$, we will show that 
\begin{align}\label{eq proof of cont tilde b}
    \liminf_{n\to\infty}\lambda(\mu_n)>0.
\end{align}
For a contradiction, we assume that  $\liminf_{n\to\infty}\lambda(\mu_n)=0$. Then, by taking a subsequence if necessary, we may assume that $\lim_{n\to\infty}\lambda(\mu_n)=0$. Since for all $x\in \Lambda\setminus\{x_i\}_{i\in\p}$ we have $\log|f'(x)|>0$, 
\begin{align}\label{eq proof cont tilde b outside measure}
\text{
    for each closed set $Z\subset \Lambda$ with $\{x_i\}_{i\in\p}\cap Z=\emptyset$ we have $\lim_{n\to\infty}\mu_n(Z)=0$.
    }
\end{align} 
Fix $\eta>0$ with $\alpha<\alpha_{\underline{i}}-\eta$. 
Since $\phi$ is continuous on $\Lambda$, there exists a open set $O\subset \Lambda$ such that $\{x_i\}_{i\in\p}\subset O$ and for all $x\in O$ we have $\phi(x)>\alpha_{\underline{i}}-\eta$. Hence, by (P) and \eqref{eq proof cont tilde b outside measure}, we obtain
\begin{align*}
    \alpha=\lim_{n\to\infty}\beta_n
    =\lim_{n\to\infty}\int\phi d\mu_n
    \geq\liminf_{n\to\infty}\int _O\phi d\mu_n
    >(\alpha_{\underline{i}}-\eta)\liminf_{n\to\infty}\mu_n(O)>\alpha.
\end{align*}
This is a contradiction. Thus, we obtain \eqref{eq proof of cont tilde b}. 
Since $\{\beta_n\}_{n\in\mathbb{N}}\subset (\mu_1(\phi),\mu_2(\phi))$, $\lim_{n\to\infty}\beta_n=\alpha$ and $\mu_n(\phi)=\beta_n$ ($n\in\mathbb{N}$), there exist $\{p_n\}_{n\in\mathbb{N}}\subset[0,1]$ and $\{s_n\}_{n\in\mathbb{N}}\subset\{1,2\}$ such that for all $n\in\mathbb{N}$ we have $\alpha=p_n\mu_n(\phi)+(1-p_n)\mu_{s_n}(\phi)$ and $\lim_{n\to\infty}p_n=1$. For each $n\in\mathbb{N}$ we set $\nu_n:=p_n\mu_n+(1-p_n)\mu_{s_n}$. Then, for each $n\in\mathbb{N}$ we have $\nu_n(\phi)=\alpha$ and, by \eqref{eq proof of cont tilde b}, 
\[
\limsup_{n\to\infty}\dim_H(\nu_n)=\limsup_{n\to\infty}\frac{p_nh(\mu_n)+(1-p_n)h(\mu_{s_n})}{p_n\lambda(\mu_n)+(1-p_n)\lambda(\mu_{s_n})}=\limsup_{n\to\infty}\dim_H(\mu_n).
\]
This implies that 
\begin{align*}
        \limsup_{n\to\infty}\tilde b(\beta_n)\leq \limsup_{n\to\infty}\dim_H(\mu_n)+\epsilon=\limsup_{n\to\infty}\dim_H(\nu_n)+\epsilon \leq \tilde b(\alpha)+\epsilon.
\end{align*}
Letting $\epsilon\to 0$, we obtain $\limsup_{n\to\infty}\tilde b(\beta_n)\leq\tilde b(\alpha)$. Hence, we conclude that $\lim_{n\to\infty}\tilde b(\beta_n)=\tilde b(\alpha)$ and thus, $\tilde b$ is continuous at $\alpha\in (\alpha_{\inf},\alpha_{\underline{i}})$.

Next, we consider the case $\alpha\in (\alpha_{\overline{i}},\alpha_{\sup})$. Again, there exist $\mu_1,\mu_2\in M(f)$ such that $\lambda(\mu_1)<\infty$, $\lambda(\mu_2)<\infty$ and 
    $\alpha_{\overline{i}}<\mu_1(\phi)<\alpha<\mu_2(\phi)<\alpha_{\sup}$. Let $\{\beta_n\}_{n\in\mathbb{N}}\subset (\mu_1(\phi),\mu_2(\phi))$ be a sequence such that $\lim_{n\to\infty}\beta_n=\alpha$. By the similar argument used in the proof of $\tilde b(\alpha)\leq \liminf_{n\to\infty}\tilde b(\beta_n)$ for $\alpha\in (\alpha_{\inf},\alpha_{\underline{i}})$, we can show that $\tilde b(\alpha)\leq \liminf_{n\to\infty}\tilde b(\beta_n)$.

    We assume that $\ratio<\infty$. Let $\epsilon>0$. Then, for each $n\in\mathbb{N}$ there exists $\mu_n\in M(f)$ such that $\lambda(\mu_n)<\infty$, 
$\mu_n(\phi)=\beta_n$ and $\tilde b(\beta_n)<\dim_H(\mu_n)+\epsilon$. 
If we can show 
\begin{align}\label{eq positive lyapunov}
    \liminf_{n\to\infty} \lambda(\mu_n)>0
\end{align}
then by repeating the argument used in the proof of $\limsup_{n\to\infty}\tilde b(\beta_n)\leq \tilde b(\alpha)$ for $\alpha\in (\alpha_{\inf},\alpha_{\underline{i}})$, we obtain $\limsup_{n\to\infty}\tilde b(\beta_n)\leq \tilde b(\alpha)$. Hence, the proof of the continuity of $\tilde b$ at $\alpha\in (\alpha_{\overline{i}},\alpha_{\sup})$ will be complete. 

For a contradiction, we assume that $\liminf_{n\to\infty}\lambda(\mu_n)=0$. Then, by taking a subsequence if necessary, we may assume that $\lim_{n\to\infty}\lambda(\mu_n)=0$. Then, we obtain \eqref{eq proof cont tilde b outside measure}.
Fix $\eta>0$ with $\alpha_{\overline{i}}+\eta<\alpha$. 
Since $\phi$ is continuous on $\Lambda$, there exists a open set $O\subset \Lambda$ such that $\{x_i\}_{i\in\p}\subset O$ and for all $x\in O$ we have $\phi(x)<\alpha_{\overline{i}}+\eta$.
On the other hand, since  $\ratio<\infty$, there exist $N\in \mathbb{N}$ and $C>0$ such that for all $n\geq N$ and $x\in I_n$ we have $\phi(x)\leq C\log |f'(x)|$. By using (P) and \eqref{eq proof cont tilde b outside measure} and noting that $D:=\sup_{1\leq i\leq N }\sup_{x\in I_i}\phi(x)<\infty$, we obtain
\begin{align*}
0\leq\lim_{n\to\infty}\int_{\Lambda\setminus O}\phi d\mu_n
\leq \lim_{n\to\infty}\left( D\mu_n\left(\bigcup_{i=1}^{N}I_i\setminus O\right)+C\lambda(\mu_n)\right)=0.
\end{align*}
This implies that 
\begin{align*}
    \alpha=\lim_{n\to\infty}\beta_n
    =\lim_{n\to\infty}\int\phi d\mu_n
    =\limsup_{n\to\infty}\int _O\phi d\mu_n
    <\alpha_{\overline{i}}+\eta<\alpha.
\end{align*}
This is a contradiction. Thus, we obtain \eqref{eq positive lyapunov} and the proof is complete.
\end{proof}

The following theorem follows easily from Lemma \ref{lemma conti tilde b} (see the proof of \cite[Proposition 3.4]{A.}):
\begin{thm}\label{thm stong conditional variational principle}
    For all $\alpha\in (\alpha_{\inf},\alpha_{\underline{i}})$ we have
    $b(\alpha)=\tilde b(\alpha)$. Moreover, if we have  $\ratio<\infty$ then for all $\alpha\in (\alpha_{\overline{i}},\alpha_{\sup})$ we have  $b(\alpha)=\tilde b(\alpha)$.
\end{thm}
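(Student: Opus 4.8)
The plan is to deduce this from the conditional variational principle (Theorem \ref{thm coditional variational principle}) together with the continuity of $\tilde b$ established in Lemma \ref{lemma conti tilde b}, following the scheme of \cite[Proposition 3.4]{A.}. Fix $\alpha\in(\alpha_{\inf},\alpha_{\underline i})$ (the case $\alpha\in(\alpha_{\overline i},\alpha_{\sup})$ under the extra hypothesis $\ratio<\infty$ is identical, using the second half of Lemma \ref{lemma conti tilde b}). The inequality $\tilde b(\alpha)\le b(\alpha)$ is immediate from Theorem \ref{thm coditional variational principle}: every $\nu\in M(f)$ with $\lambda(\nu)<\infty$ and $\nu(\phi)=\alpha$ is in particular admissible for every $\epsilon>0$ in the sup defining $b(\alpha)$, so taking the supremum over such $\nu$ and then the limit in $\epsilon$ gives $\tilde b(\alpha)\le b(\alpha)$.

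For the reverse inequality $b(\alpha)\le\tilde b(\alpha)$, I would argue as follows. By Theorem \ref{thm coditional variational principle}, for each $\epsilon>0$ there is a measure $\nu_\epsilon\in M(f)$ with $\lambda(\nu_\epsilon)<\infty$ and $|\nu_\epsilon(\phi)-\alpha|<\epsilon$ such that $\dim_H(\nu_\epsilon)$ is within $\epsilon$ of the sup over that $\epsilon$-window; letting $\epsilon\to0$ and writing $\alpha_\epsilon:=\nu_\epsilon(\phi)\to\alpha$, one gets $b(\alpha)=\lim_{\epsilon\to0}\dim_H(\nu_\epsilon)\le\limsup_{\epsilon\to0}\tilde b(\alpha_\epsilon)$, since $\dim_H(\nu_\epsilon)\le\tilde b(\alpha_\epsilon)$ by the very definition of $\tilde b$. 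Here I need $\alpha_\epsilon$ to eventually lie in $(\alpha_{\inf},\alpha_{\underline i})$ so that $\tilde b(\alpha_\epsilon)$ is defined; since $\alpha$ is interior to this interval this holds for $\epsilon$ small. Now Lemma \ref{lemma conti tilde b} gives continuity of $\tilde b$ at $\alpha$, hence $\limsup_{\epsilon\to0}\tilde b(\alpha_\epsilon)=\tilde b(\alpha)$, and therefore $b(\alpha)\le\tilde b(\alpha)$. Combining the two inequalities yields $b(\alpha)=\tilde b(\alpha)$.

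One subtlety to be careful about is that Theorem \ref{thm coditional variational principle} is phrased with a $\liminf$/limit over shrinking windows rather than with a single witnessing sequence, so I would first rewrite it as: there is a sequence $\nu_n\in M(f)$ with $\lambda(\nu_n)<\infty$, $\nu_n(\phi)\to\alpha$, and $\dim_H(\nu_n)\to b(\alpha)$. This is a routine diagonal extraction from the definition of the limit in $\epsilon$ (pick $\epsilon=1/n$, then pick $\nu_n$ achieving the sup for that window up to $1/n$). Having done that, the argument above applies verbatim with $\alpha_n:=\nu_n(\phi)$.

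The only real content is the continuity of $\tilde b$, which is Lemma \ref{lemma conti tilde b}; granted that, this theorem is a short deduction and I anticipate no serious obstacle. The mild point to watch is the endpoint/domain bookkeeping: the statement is restricted to the open intervals $(\alpha_{\inf},\alpha_{\underline i})$ and $(\alpha_{\overline i},\alpha_{\sup})$ precisely because Lemma \ref{lemma conti tilde b} only gives continuity there, and in the second interval continuity (hence the conclusion) requires the hypothesis $\ratio<\infty$; I would flag this explicitly so that the two cases of the conclusion match the two cases of Lemma \ref{lemma conti tilde b}.
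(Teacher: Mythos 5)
Your proposal is correct and is essentially the paper's intended proof: the paper gives no details and simply states that the theorem "follows easily from Lemma \ref{lemma conti tilde b} (see the proof of [A., Proposition 3.4])," which is exactly the route you take — extract a witnessing sequence $\nu_n$ from the conditional variational principle with $\nu_n(\phi)\to\alpha$ and $\dim_H(\nu_n)\to b(\alpha)$, bound $\dim_H(\nu_n)\le\tilde b(\nu_n(\phi))$, and invoke the continuity of $\tilde b$ from Lemma \ref{lemma conti tilde b}, together with the trivial reverse inequality. Your bookkeeping on the domains and on the extra hypothesis $\ratio<\infty$ for the second interval also matches the structure of Lemma \ref{lemma conti tilde b}.
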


\subsection{The flat part and lower bound of $b(\alpha)$}
\begin{prop}\label{prop frat}
    For all $\alpha\in \frat$ we have $b(\alpha)=\delta$. 
\end{prop}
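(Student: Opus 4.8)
The plan is to prove the two inequalities $b(\alpha)\ge\delta$ and $b(\alpha)\le\delta$ separately; since $\delta=\dim_H(\Lambda)\ge\dim_H(\Lambda_\alpha)=b(\alpha)$ trivially (as $\Lambda_\alpha\subset\Lambda$), only the lower bound $b(\alpha)\ge\delta$ requires work. Fix $\alpha\in\frat=[\underline\alpha_\p,\overline\alpha_\p]$. The key idea is that for every $\alpha$ in this range there is a convex combination of the Dirac masses $\delta_{x_i}$ ($i\in\p$) that realizes the value $\alpha$ — indeed, writing $\alpha=\sum_{i\in\p}p_i\alpha_i$ with $\sum p_i=1$, $p_i\ge0$, such a combination exists because $\alpha$ lies between $\min_i\alpha_i$ and $\max_i\alpha_i$ — and, by Proposition \ref{prop equilibrium state Lambda}, these are exactly the elements of $M_\delta$, the set of equilibrium measures for $-\delta\log|f'|$, each of which has dimension $\delta$ in the sense that $h(\nu)/\lambda(\nu)=\delta$ (when $\lambda(\nu)>0$). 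The obstruction is that the measures in $M_\delta$ are themselves supported on the parabolic fixed points, so $\lambda=0$ and they carry no entropy; they cannot be used directly. Instead I would use them as limits and invoke the conditional variational principle.

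\textbf{Step 1: Approximate within $M(f)$.} Using Lemma \ref{lemma equilibrium measure near Delta} (more precisely, its proof mechanism via Lemma \ref{lemma tight}, Lemma \ref{lemma uniformly integrable} and Theorem \ref{thm convergence of the equilibrium state}), one obtains a sequence $\{b_n\}\subset(s_\infty,\delta)$ with $b_n\to\delta$ such that the equilibrium measures $\mu_{b_n}$ (for $-b_n\log|f'|$) satisfy $\lambda(\mu_{b_n})>0$, $h(\mu_{b_n})/\lambda(\mu_{b_n})=b_n\to\delta$, and $\mu_{b_n}(\phi)\to\beta$ for some $\beta\in\frat$. If $\ratio=\infty$ the argument of Lemma \ref{lemma equilibrium measure near Delta} breaks, but in that case one can instead perturb in the $q$-direction (as in Proposition \ref{prop derivatibe of pressure weak conditions}) or argue directly; I would treat $\ratio<\infty$ first and handle $\ratio=\infty$ by a separate limiting argument using that $\phi/\log|f'|\to\infty$ forces the relevant level sets to contain large subsystems. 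In any case one produces, for a dense set of target values $\beta\in\operatorname{Int}\frat$ (or at least one value, then move it by convex combinations), measures $\nu\in M(f)$ with $\lambda(\nu)<\infty$, $\nu(\phi)$ arbitrarily close to $\beta$, and $\dim_H(\nu)$ arbitrarily close to $\delta$.

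\textbf{Step 2: Move the barycenter across all of $\frat$.} Given $\alpha\in\frat$, combine the measure $\nu$ from Step 1 with a suitable element of $M_\delta$ — i.e., with $p\,\delta_{x_{\underline i}}+(1-p)\,\delta_{x_{\overline i}}$ or a single $\delta_{x_j}$ — to form $\nu_t=(1-t)\nu+t\,\eta$ with $\eta\in M_\delta$ chosen so that $\nu_t(\phi)=\alpha$ is solvable for some $t\in[0,1)$; this is possible because $\nu(\phi)$ can be taken on either side of $\alpha$ (shrink to $t\to0$ when $\alpha$ is interior, and let $\nu(\phi)$ approach the endpoint when $\alpha=\underline\alpha_\p$ or $\overline\alpha_\p$). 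Since $\lambda(\eta)=0$ and $h(\eta)=0$, we get
\[
\dim_H(\nu_t)=\frac{(1-t)h(\nu)}{(1-t)\lambda(\nu)}=\dim_H(\nu),
\]
so $\dim_H(\nu_t)$ is still close to $\delta$ while $\nu_t(\phi)=\alpha$ exactly. Feeding this family into Theorem \ref{thm coditional variational principle} (the conditional variational principle of Jaerisch--Takahasi) gives $b(\alpha)\ge\dim_H(\nu_t)$ for each such choice, hence $b(\alpha)\ge\delta$ by letting the approximation parameters tend to their limits. Combined with the trivial upper bound, $b(\alpha)=\delta$.

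\textbf{Main obstacle.} The delicate point is Step 1 when $\ratio$ may be infinite (the case excluded from Lemma \ref{lemma equilibrium measure near Delta}): producing, for such $\phi$, expanding invariant measures with $\nu(\phi)$ landing in $\frat$ and dimension near $\delta$. Here one must argue that even though $\phi$ grows faster than $\log|f'|$ near $1$, the parabolic behaviour pins $\alpha$-values near $\alpha_i$ for measures concentrating near $x_i$, so a tightness/convergence argument still delivers a limit in $M_\delta$ with the barycentre controlled; alternatively one can sidestep it by noting that $\frat$-values are attained by measures supported on a fixed finite subsystem containing all parabolic words $i^n$, for which the geometry is uniformly hyperbolic away from the fixed points and standard Markov-shift thermodynamic formalism applies. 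Verifying that such finite subsystems have Hausdorff dimension approaching $\delta$ is exactly Lemma \ref{lemma indu and nonindu limit} combined with the full-branch structure, so this route is available regardless of $\ratio$.
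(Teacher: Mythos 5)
Your Step 2 is exactly the paper's mechanism: interpolate a measure of dimension near $\delta$ with a Dirac mass at a parabolic fixed point, observe that the latter contributes nothing to entropy or Lyapunov exponent so the ratio $h/\lambda$ is unchanged while the barycenter $\nu(\phi)$ can be steered to the prescribed $\alpha$, then apply Theorem~\ref{thm coditional variational principle}. However, your Step 1 is a substantial and unnecessary detour. The paper needs no information about where $\nu(\phi)$ lands, no tightness, no convergence of equilibrium measures, and no case distinction on $\ratio$: Theorem~\ref{thm bowen formula} (Bowen's formula) directly supplies, for any $\epsilon>0$, a \emph{compactly supported} $\nu\in M(f)$ with $0<\lambda(\nu)<\infty$ and $h(\nu)/\lambda(\nu)>\delta-\epsilon$; compact support forces $\nu(\phi)<\infty$, and that is all the interpolation needs. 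Whatever $\nu(\phi)$ turns out to be, you interpolate toward $\alpha_{\overline{i}}$ if $\nu(\phi)\le\alpha$ and toward $\alpha_{\underline{i}}$ if $\nu(\phi)\ge\alpha$; when $\frat$ is a singleton the same move works with the single Dirac mass. You in fact describe this cleaner route yourself in the ``Main obstacle'' paragraph as a ``sidestep,'' but present it as a fallback rather than the main proof; in the paper it \emph{is} the proof, and it also removes the logical dependence of Proposition~\ref{prop frat} on the heavier Section~3 convergence results, which matters because Proposition~\ref{prop frat} is invoked before some of those (e.g.\ inside Lemma~\ref{lemma weak lower bound}) and one wants to avoid circularity in how the pieces are assembled.
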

\begin{proof}
    We first show that for all $\alpha\in (\alpha_{\underline{i}},\alpha_{\overline{i}})$ we have $b(\alpha)=\delta$. Let $\epsilon>0$ and let $\alpha\in (\alpha_{\underline{i}},\alpha_{\overline{i}})$. By Theorem \ref{thm bowen formula}, there exists  $\nu\in M(f)$ such that $0<\lambda(\nu)<\infty$ and  $\delta<h(\nu)/\lambda(\nu)+\epsilon$. Then, there exists $i'\in\{\underline{i},\overline{i}\}$ and $p\in (0,1]$ such that we have $\alpha=p\nu(\phi)+(1-p)\alpha_{i'}$. We set $\mu=p\nu+(1-p)\delta_{x_{i'}}$. Then, since $p>0$, we have $\mu(\phi)=\alpha$, $\lambda(\mu)=p\lambda(\nu)>0$ and $\dim_H(\mu)=h(\mu)/\lambda(\mu)=h(\nu)/\lambda(\nu)>\delta-\epsilon$. Therefore, by Theorem \ref{thm coditional variational principle}, we obtain $b(\alpha)\geq\dim_H(\mu)>\delta-\epsilon$. Letting $\epsilon \to0$, we obtain $b(\alpha)=\delta$. Moreover, if $(\alpha_{\underline{i}},\alpha_{\overline{i}})\neq \emptyset$ then Theorem \ref{thm coditional variational principle} yields that $b(\alpha)=\delta$ for $\alpha\in \{\alpha_{\underline{i}},\alpha_{\overline{i}}\}$.
    In the case where $(\alpha_{\underline{i}},\alpha_{\overline{i}})= \emptyset$, by slightly modifying the above argument, one can show that $b(\alpha)=\delta$ for  $\alpha=\alpha_{\underline{i}}=\alpha_{\overline{i}}$. 
\end{proof}

\begin{prop}\label{prop frat under some assumption}
    Assume that $\ratio=\infty$. Then, for all $\alpha\in (\alpha_{\overline{i}},\infty)$ we have $b(\alpha)=\delta$.
\end{prop}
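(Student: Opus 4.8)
The goal is to show that when $\ratio=\infty$, we have $b(\alpha)=\delta$ for all $\alpha\in(\alpha_{\overline{i}},\infty)$, extending Proposition~\ref{prop frat} past $\overline{\alpha}_\p$. The plan is as follows. First I would fix $\alpha\in(\alpha_{\overline{i}},\infty)$ and $\epsilon>0$; by Theorem~\ref{thm bowen formula} choose $\nu\in M(f)$ with $0<\lambda(\nu)<\infty$, $\nu$ supported on a compact set, and $h(\nu)/\lambda(\nu)>\delta-\epsilon$. The obstacle is that, unlike in the proof of Proposition~\ref{prop frat}, we cannot simply take a convex combination $p\nu+(1-p)\delta_{x_{\overline{i}}}$ to hit the value $\alpha$: since $\alpha_{\overline{i}}=\max_{i\in\p}\alpha_i$, all such convex combinations have $\phi$-integral in $[\min\{\nu(\phi),\alpha_{\overline{i}}\},\max\{\nu(\phi),\alpha_{\overline{i}}\}]$, which need not reach large $\alpha$. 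The key idea is to exploit $\ratio=\infty$: because $\phi(x)/\log|f'(x)|\to\infty$ as $x\to1$, for branches with large index $i$ the potential $\phi$ takes arbitrarily large values while $\log|f'|$ grows only polynomially (by (G)), so one can find invariant measures (supported on a single periodic orbit, say the fixed point of $f_i$ or a short cycle in branch $i$) with $\phi$-integral arbitrarily large and $\lambda$ finite.

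Concretely, I would proceed as follows. For each large $N$, let $y_N$ be the fixed point of $f_N$ in $\overline{\Delta_N}$ (which exists since $f_N$ is an expanding $C^2$ diffeomorphism onto its image containing $(0,1)$, hence has a fixed point for $N$ large, or one uses a period-two orbit if needed) and let $\tau_N:=\frac{1}{\#\text{(orbit)}}\sum\delta$ be the invariant probability measure on its orbit. Then $\tau_N(\phi)\to\infty$ as $N\to\infty$ (using $\ratio=\infty$ together with (G): $\phi$ on $\Delta_N$ is at least roughly $\ratio$-times $\log|f_N'|\asymp \growth\log N\to\infty$, and $\ratio=\infty$ forces the ratio up), while $\lambda(\tau_N)=\log|f_N'(y_N)|\asymp \growth\log N<\infty$. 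Now pick $N$ with $\tau_N(\phi)>\alpha$. Since $\nu(\phi)$ can be taken close to $\alpha_{\overline{i}}<\alpha$ (replace $\nu$ if necessary by a convex combination with $\delta_{x_{\overline{i}}}$ to ensure $\nu(\phi)<\alpha$, which does not decrease $h(\nu)/\lambda(\nu)$), there is $p\in(0,1)$ with $p\nu(\phi)+(1-p)\tau_N(\phi)=\alpha$. Set $\mu:=p\nu+(1-p)\tau_N\in M(f)$. Then $\mu(\phi)=\alpha$, $\lambda(\mu)=p\lambda(\nu)+(1-p)\lambda(\tau_N)\in(0,\infty)$, and
\[
\dim_H(\mu)=\frac{p h(\nu)+(1-p)h(\tau_N)}{p\lambda(\nu)+(1-p)\lambda(\tau_N)}.
\]

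The remaining subtlety is that $\dim_H(\mu)$ could be smaller than $\dim_H(\nu)$ because $h(\tau_N)=0$ drags the numerator down while $\lambda(\tau_N)$ is positive in the denominator. To handle this I would not fix $N$ but rather let $N\to\infty$ along with a suitable choice of $p=p_N$: since $\tau_N(\phi)\to\infty$, the weight $1-p_N$ needed to reach the fixed target $\alpha$ satisfies $1-p_N=O(1/\tau_N(\phi))\to 0$, whereas $\lambda(\tau_N)=O(\log N)$ grows much slower than $\tau_N(\phi)$ grows — more precisely $\lambda(\tau_N)/\tau_N(\phi)\to 0$ because $\ratio=\infty$ means $\tau_N(\phi)/\lambda(\tau_N)\to\infty$. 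Hence $(1-p_N)\lambda(\tau_N)\le (1-p_N)\tau_N(\phi)\cdot\frac{\lambda(\tau_N)}{\tau_N(\phi)}\to 0$ and $(1-p_N)h(\tau_N)=0$, so $\dim_H(\mu_N)\to h(\nu)/\lambda(\nu)>\delta-\epsilon$. Choosing $N$ large enough gives $\mu_N\in M(f)$ with $\lambda(\mu_N)<\infty$, $\mu_N(\phi)=\alpha$ and $\dim_H(\mu_N)>\delta-\epsilon$; Theorem~\ref{thm coditional variational principle} then yields $b(\alpha)\ge\dim_H(\mu_N)>\delta-\epsilon$, and letting $\epsilon\to0$ gives $b(\alpha)\ge\delta$. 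The reverse inequality $b(\alpha)\le\delta$ is automatic since $\Lambda_\alpha\subset\Lambda$ and $\delta=\dim_H(\Lambda)$. The main obstacle, as indicated, is verifying that the periodic measures $\tau_N$ genuinely have $\tau_N(\phi)\to\infty$ with $\lambda(\tau_N)/\tau_N(\phi)\to0$; this is exactly where the hypothesis $\ratio=\infty$ (combined with (G) to control $\lambda$) is used, and one should double-check the existence of the periodic orbit in branch $N$ and that $\phi$ evaluated there is comparable to $\phi\circ\oricodingmap([N])$.
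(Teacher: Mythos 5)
Your proposal is correct and takes essentially the same route as the paper: both mix a measure $\nu$ with $\dim_H(\nu)>\delta-\epsilon$ coming from Theorem~\ref{thm bowen formula} with a point mass at a hyperbolic fixed point $x_{k}$ in a far branch $k\in\h$ (and, when needed, with $\delta_{x_{\overline{i}}}$) to hit $\nu(\phi)=\alpha$, using $\ratio=\infty$ to ensure $\phi(x_{k})/\log|f'(x_{k})|\to\infty$ so that the Lyapunov contribution of the point mass is negligible and the dimension of the mixture tends to $\dim_H(\nu)$. The paper organizes this in two stages (first pushing $\mu_n(\phi)\to\infty$ with an explicit weight $p_n=(n\log|f'(x_{k_n})|)^{-1}$, then re-mixing with $\delta_{x_{\overline{i}}}$), whereas you choose the mixing weight to hit $\alpha$ directly, but the two arguments are interchangeable.
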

\begin{proof}
    We first show that for all $\epsilon>0$ there exists $\{\mu_n\}_{n\in\mathbb{N}}\subset M(f)$ such that for all $n\in\mathbb{N}$ we have $0<\lambda(\mu_n)<\infty$, 
    $\mu_n(\phi)<\infty$,
    $\lim_{n\to\infty}\dim_H(\mu_n)>\delta-\epsilon$ and $\lim_{n\to\infty}\mu_n(\phi)=\infty$.
    Let $\epsilon>0$.
    By Theorem \ref{thm bowen formula}, there exists $\nu$ such that $\dim_H(\nu)>\delta-\epsilon$, $0<\lambda(\nu)<\infty$ and $\nu(\phi)<\infty$.
    On the other hand, since we assume that $\ratio=\infty$, for all $n\in\mathbb{N}$ there exists $k_n\in\mathbb{N}$ such that $k_n\notin\p$ and 
    \begin{align}\label{eq frat}
    \frac{\phi(x_{k_n})}{\log |f'(x_{k_n})|}\geq n^2,    
    \end{align}
    where $x_{k_n}$ denotes the unique fixed point of $f_{k_n}$ in $\overline{\Delta_{k_n}}$. We set, for all $n\in\mathbb{N}$, 
    $p_n:=(n\log |f'(x_{k_n})|)^{-1} \text{ and }\mu_n:=(1-p_n)\nu+p_n\delta_{x_{k_n}}.
    $
Then, for all $n\in\mathbb{N}$ we have $\lambda(\mu_n)>0$ and 
$
\liminf_{n\to\infty}\mu_n(\phi)\geq\liminf_{n\to\infty}p_n\phi(x_{k_n})=\infty 
$
by (P) and \eqref{eq frat}. 
We also have, for all $n\in\mathbb{N}$, $\lambda(\mu_n)<\infty$ and $\mu_n(\phi)<\infty$. 
Moreover,  since $\lim_{n\to\infty} p_n\log|f'(x_{k_n})|=0$, we have $\lim_{n\to\infty}\dim_H(\mu_n)=\dim_H(\nu)>\delta-\epsilon$. Thus, $\{\mu_n\}_{n\in\mathbb{N}}$ is a sequence satisfying desired conditions.

Let $\alpha\in(\alpha_{\overline{i}},\infty)$ and let $\epsilon>0$. Then, for all sufficiently large $n\in\mathbb{N}$ there exists $p_n'\in (0,1]$ such that $\alpha=p_n'\alpha_{\overline{i}}+(1-p_n')\mu_n(\phi)$ and set $\xi_n:=p_n'\delta_{x_{\overline{i}}}+(1-p_n')\mu_n$. Then, for all sufficiently large $n\in\mathbb{N}$ we have $\xi_n(\phi)=\alpha$ and $\lim_{n\to\infty}\dim_H(\xi_n)=\lim_{n\to\infty}\dim_H(\mu_n)>\delta-\epsilon$. By Theorem \ref{thm coditional variational principle}, this implies that $b(\alpha)>\delta-\epsilon$. Letting $\epsilon\to0$, we are done. 
    \end{proof}

As in \cite{IommiJordanBirkhoff}, we define 
\begin{align*}
    \delta^*:=\inf\{b\in [0,\delta]:p(b,q)<\infty \text{ for some } q\in\mathbb{R}\}.
\end{align*}
\begin{lemma}\label{lemma lower bound}
    For all $\alpha\in (\alpha_{\inf},\alpha_{\sup})$ we have $b(\alpha)\geq \delta^*$. 
\end{lemma}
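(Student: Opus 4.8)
The plan is to apply the conditional variational principle (Theorem~\ref{thm coditional variational principle}): it suffices to produce, for each $\alpha\in(\alpha_{\inf},\alpha_{\sup})$, a sequence $\{\nu_N\}_{N\in\mathbb{N}}\subset M(f)$ with $0<\lambda(\nu_N)<\infty$, $\lim_{N}\nu_N(\phi)=\alpha$ and $\liminf_N\dim_H(\nu_N)\ge\delta^*$. I would first dispose of the easy cases. Since $p(\delta,0)=0<\infty$ by \eqref{eq bowen}, we have $\delta^*\le\delta$, so Proposition~\ref{prop frat} already gives $b(\alpha)=\delta\ge\delta^*$ for $\alpha\in\frat$; thus I may assume $\alpha\in(\alpha_{\inf},\alpha_{\underline{i}})\cup(\alpha_{\overline{i}},\alpha_{\sup})$. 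Next, using (G) together with \eqref{eq finite pressure geometric pote}--\eqref{eq finite pressure} one checks $\delta^*\le s_\infty$ (take $b=s_\infty+\eta$, $q=0$, with $\eta>0$ small, which is admissible because $s_\infty<\delta$), and one checks that $\ratio>0$ (including $\ratio=\infty$) forces $\delta^*=0$: by (R), (G) and acceptability of $\phi$ there is $c>0$ with $\phi\circ\oricodingmap([j])\ge c\log j$ for all large $j$, whence $\sum_{j\in\edge}\exp((-q\phi)\circ\oricodingmap([j]))<\infty$ for $q$ large, i.e.\ $(0,q)\in\fin$ and $\delta^*=0$. Hence, after discarding the trivial case $\delta^*=0$, I may assume $\ratio=0$, and it is enough to prove $b(\alpha)\ge s_\infty$.

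The construction uses an \emph{anchor} and an \emph{escaping tail}. For the anchor, exactly as in the proof of Lemma~\ref{lemma conti tilde b} there exist $\mu_-,\mu_+\in M(f)$ with $\lambda(\mu_\pm)<\infty$ and $\mu_-(\phi)<\alpha<\mu_+(\phi)$; a suitable convex combination gives $\mu_0\in M(f)$ with $\lambda(\mu_0)<\infty$, $h(\mu_0)<\infty$ and $\mu_0(\phi)=\alpha$. For the tail, fix $t>s_\infty$ and for $N\in\mathbb{N}$ let $\beta_N\in M(f)$ be the push-forward under $\oricodingmap$ (Remark~\ref{rem measurable bijection oricoding}) of the Bernoulli measure on the full subshift over $\{N,N+1,\dots\}$ with weights $p_j$ proportional to $\exp\big((-t\log|f'|)\circ\oricodingmap([j])\big)$. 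Writing $Z_N$ for the normalising constant, a direct computation with (G) gives $h(\beta_N)=t\,\lambda(\beta_N)+\log Z_N+O(1)$ and, as $N\to\infty$,
\[
\log Z_N=(1-t\growth+o(1))\log N,\qquad \lambda(\beta_N)=\growth\,(\log N)\,(1+o(1)),
\]
so that $\lambda(\beta_N)\to\infty$ and
\[
\dim_H(\beta_N)=\frac{h(\beta_N)}{\lambda(\beta_N)}=t+\frac{\log Z_N+O(1)}{\lambda(\beta_N)}\ \longrightarrow\ t+\frac{1-t\growth}{\growth}=s_\infty .
\]
Finally, because $\ratio=0$, for every $\eta_0>0$ one has $\phi\circ\oricodingmap([j])\le\eta_0\,\log|f'|\circ\oricodingmap([j])$ for all large $j$ (by (R), (G), and the fact that the cylinders $\oricodingmap([j])$ shrink to $\{1\}$, cf.\ (NERI1) and \eqref{eq uniformly decay of sylinders}); combined with mild distortion this gives $\beta_N(\phi)\le\eta_0\lambda(\beta_N)+O(1)$ for $N$ large, i.e.\ $\beta_N(\phi)/\lambda(\beta_N)\to0$, and in particular $\lambda(\beta_N)/\beta_N(\phi)\to\infty$.

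Now I would set $\nu_N:=(1-\epsilon_N)\mu_0+\epsilon_N\beta_N$ with $\epsilon_N:=\big(\lambda(\beta_N)\,\beta_N(\phi)\big)^{-1/2}$, so that $\epsilon_N\to0$, $\epsilon_N\lambda(\beta_N)=\big(\lambda(\beta_N)/\beta_N(\phi)\big)^{1/2}\to\infty$ and $\epsilon_N\beta_N(\phi)=\big(\beta_N(\phi)/\lambda(\beta_N)\big)^{1/2}\to0$. By affinity of $h$, $\lambda$ and $\mu\mapsto\mu(\phi)$ on $M(f)$, $\nu_N\in M(f)$ has $0<\lambda(\nu_N)<\infty$, $\nu_N(\phi)=(1-\epsilon_N)\alpha+\epsilon_N\beta_N(\phi)\to\alpha$, and, since $h(\beta_N)=\dim_H(\beta_N)\lambda(\beta_N)\asymp\lambda(\beta_N)$ (as $\dim_H(\beta_N)\to s_\infty>0$) and $\epsilon_N\lambda(\beta_N)\to\infty$ overwhelms the fixed quantities $h(\mu_0),\lambda(\mu_0)$,
\[
\dim_H(\nu_N)=\frac{(1-\epsilon_N)h(\mu_0)+\epsilon_N h(\beta_N)}{(1-\epsilon_N)\lambda(\mu_0)+\epsilon_N\lambda(\beta_N)}\ \longrightarrow\ \lim_{N\to\infty}\dim_H(\beta_N)=s_\infty .
\]
Feeding $\{\nu_N\}$ into Theorem~\ref{thm coditional variational principle} yields $b(\alpha)\ge s_\infty\ge\delta^*$, completing the proof.

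The main obstacle is the balancing act in the last step: dimension can only be ``harvested'' from the escaping tail $\beta_N$, whose Lyapunov exponent blows up, but mixing $\beta_N$ in with too large a weight would drag the $\phi$-average off $\alpha$ (unboundedly so when $\phi$ is unbounded). The structural input that resolves the tension — and the only genuine use of the hypothesis $\ratio=0$ — is that $\lambda(\beta_N)$ grows strictly faster than $\beta_N(\phi)$, which leaves room for a weight $\epsilon_N$ that is at once negligible for the average and dominant in the ratio $h/\lambda$. The computational core is the verification of $h(\beta_N)=t\lambda(\beta_N)+\log Z_N+O(1)$ and of the stated asymptotics of $\log Z_N$ and $\lambda(\beta_N)$; these are routine consequences of (G) but need a little care, since $Z_N\to0$ and one is comparing a divergent logarithm against a divergent Lyapunov exponent.
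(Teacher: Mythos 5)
Your argument is correct, and it takes a route that is in the same spirit as the paper's but is genuinely more explicit and self-contained. The paper disposes of the cases $\alpha\in\frat$ and ($\ratio=\infty$, $\alpha\in(\alpha_{\overline{i}},\infty)$) using Propositions~\ref{prop frat} and~\ref{prop frat under some assumption}, and then handles the remaining range by asserting that ``$b(\alpha)\geq\delta^*$ follows from essentially the same argument as in \cite[Lemma 4.2]{IommiJordanBirkhoff},'' i.e.\ it outsources the core escaping-measure argument to a reference. You instead make two moves the paper does not. First, you observe that under (R) the only interesting case is $\ratio=0$: by (G) together with \eqref{eq finite pressure geometric pote}--\eqref{eq finite pressure} one always has $\delta^*\le s_\infty$, and whenever $\ratio>0$ one gets $\phi\circ\oricodingmap([j])\gg\log j$ for large $j$, forcing $(0,q)\in\fin$ for $q$ large and hence $\delta^*=0$, which makes the lemma vacuous. (The paper establishes $\delta^*=0$ only in the special cases of Lemmas~\ref{lemma infinite delta*} and~\ref{lemma alpha delta*}; your observation is a clean strengthening.) This reduction also lets you avoid the case split the paper needs around $\ratio=\infty$. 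Second, you build the escaping sequence explicitly as Bernoulli measures $\beta_N$ on full subshifts over $\{N,N+1,\dots\}$ with $t$-geometric weights, verify $\dim_H(\beta_N)\to s_\infty$ via the exact identity $h(\beta_N)=t\lambda(\beta_N)+\log Z_N+O(1)$ and the asymptotics $\log Z_N\sim(1-t\growth)\log N$, $\lambda(\beta_N)\sim\growth\log N$, and then exploit $\beta_N(\phi)=o(\lambda(\beta_N))$ (the only real use of $\ratio=0$) to choose $\epsilon_N=(\lambda(\beta_N)\beta_N(\phi))^{-1/2}$ so that the mixture $\nu_N=(1-\epsilon_N)\mu_0+\epsilon_N\beta_N$ keeps $\nu_N(\phi)\to\alpha$ while $\dim_H(\nu_N)\to s_\infty$. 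Feeding this into Theorem~\ref{thm coditional variational principle} gives $b(\alpha)\ge s_\infty=\delta^*$. The bookkeeping is sound; two small points you should make explicit in a polished version are (i) $h(\mu_0)<\infty$, which follows from $h(\mu_0)\le\delta\lambda(\mu_0)<\infty$ via the variational principle and \eqref{eq bowen}, so that the fixed anchor terms really are dominated, and (ii) the entropy identity for the infinite-alphabet Bernoulli measure requires $-\sum p_j\log p_j<\infty$, which you do have since $t\growth>1$. Neither is a gap, just a place where a reader would want the half-line.
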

\begin{proof}
If $\delta^*=0$ then there is nothing to prove. Hence, we assume that $\delta^*>0$.
    If $\alpha\in \frat$ then by Proposition \ref{prop frat}, we have $b(\alpha)=\delta\geq \delta^*$. Moreover, if $\alpha\in (\alpha_{\overline{i}},\infty)$ and $\ratio=\infty$ then by Proposition \ref{prop frat under some assumption}, we have $b(\alpha)=\delta\geq\delta^*$. 

    Let $\alpha\in (\alpha_{\inf},\alpha_{\max})\setminus \frat$. If $\alpha\in (\alpha_{\overline{i}},\alpha_{\sup})$, we assume that $\ratio<\infty$. Then, $b(\alpha)\geq \delta^*$ follows from essentially the same argument as in \cite[Lemma 4.2]{IommiJordanBirkhoff}, which is based on the variational principle for the topological pressure and the conditional variational principle.
\end{proof}

\begin{lemma}\label{lemma weak lower bound}
    For all $\alpha\in (\alpha_{\inf},\alpha_{\sup})$ we have $b(\alpha)>0$.
\end{lemma}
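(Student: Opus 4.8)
The plan is to manufacture, for a given $\alpha\in(\alpha_{\inf},\alpha_{\sup})$, a measure $\nu\in M(f)$ with $\lambda(\nu)<\infty$, $\nu(\phi)=\alpha$ and $\dim_H(\nu)>0$; then the conditional variational principle (Theorem~\ref{thm coditional variational principle}) immediately yields $b(\alpha)\geq\dim_H(\nu)>0$. I would start from the fact that (G) forces $\delta>1/\growth>0$, so Theorem~\ref{thm bowen formula} supplies a compactly supported $\mu_0\in M(f)$ with $h(\mu_0)>0$ and $0<\lambda(\mu_0)<\infty$; since $\mu_0$ is compactly supported, $\gamma_0:=\mu_0(\phi)\in(0,\infty)$. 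If $\alpha=\gamma_0$ we are done with $\nu=\mu_0$. Otherwise it suffices to produce a compactly supported $\mu_\ast\in M(f)$ with $\lambda(\mu_\ast)<\infty$ whose value $\mu_\ast(\phi)$ lies strictly on the side of $\alpha$ opposite to $\gamma_0$, i.e.\ $\mu_\ast(\phi)<\alpha$ when $\alpha<\gamma_0$ and $\mu_\ast(\phi)>\alpha$ when $\alpha>\gamma_0$. Indeed, given such $\mu_\ast$, the path $t\mapsto\nu_t:=(1-t)\mu_0+t\mu_\ast$ has $\nu_t(\phi)=(1-t)\gamma_0+t\mu_\ast(\phi)$, so $\nu_{t^\ast}(\phi)=\alpha$ for some $t^\ast\in(0,1)$; by affinity of the entropy and linearity of $\mu\mapsto\lambda(\mu)$ one gets $\lambda(\nu_{t^\ast})=(1-t^\ast)\lambda(\mu_0)+t^\ast\lambda(\mu_\ast)\in(0,\infty)$ and $h(\nu_{t^\ast})\geq(1-t^\ast)h(\mu_0)>0$, whence $\dim_H(\nu_{t^\ast})=h(\nu_{t^\ast})/\lambda(\nu_{t^\ast})>0$; take $\nu=\nu_{t^\ast}$.

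It then remains to construct $\mu_\ast$, and I would treat the case $\alpha<\gamma_0$ (the case $\alpha>\gamma_0$ being symmetric, with $\alpha_{\inf}$, $\underline{\alpha}_\p$ replaced by $\alpha_{\sup}$, $\overline{\alpha}_\p$). If $\alpha>\underline{\alpha}_\p=\alpha_{\underline{i}}$, the parabolic Dirac mass $\mu_\ast:=\delta_{x_{\underline{i}}}$ works: it is compactly supported with $\lambda(\mu_\ast)=0$ and $\mu_\ast(\phi)=\alpha_{\underline{i}}<\alpha$. If $\alpha\leq\underline{\alpha}_\p$, then since $\alpha>\alpha_{\inf}$ there is, by ergodic decomposition, an ergodic $\mu\in M(f)$ with $\mu(\phi)<\alpha$; such $\mu$ is not any $\delta_{x_i}$ because $\delta_{x_i}(\phi)=\alpha_i\geq\underline{\alpha}_\p\geq\alpha$. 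If $\mu$ is atomic it is the uniform measure on a periodic orbit avoiding $\{x_i\}_{i\in\p}$, hence already compactly supported with $0<\lambda(\mu)<\infty$, and we take $\mu_\ast=\mu$. If $\mu$ is non‑atomic, I would lift it through Remark~\ref{rem measurable bijection oricoding} to an ergodic $\mu'\in M(\sigma)$ on the full shift $\oricodingsp=\edge^{\mathbb N}$ with $\mu'\circ\oricodingmap^{-1}=\mu$, take a $\mu'$‑generic $y\in\oricodingsp$, and close up initial blocks to the periodic points $z_N:=(y_1\cdots y_N)^{\infty}$, with associated compactly supported $\mu'_N:=\tfrac1N\sum_{j=0}^{N-1}\delta_{\sigma^j z_N}$. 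Because $\sigma^j z_N$ and $\sigma^j y$ agree on their first $N-j$ symbols, the difference between $\mu'_N(\phi\circ\oricodingmap)$ and the Birkhoff average $\tfrac1N\sum_{j<N}\phi(\oricodingmap(\sigma^j y))$ is bounded by $\tfrac1N\sum_{m=1}^{N}\mathrm{osc}_m(\phi\circ\oricodingmap)$, where $\mathrm{osc}_m$ is the supremum of $|\phi\circ\oricodingmap(\omega)-\phi\circ\oricodingmap(\omega')|$ over pairs agreeing on the first $m$ coordinates; acceptability of $\phi\circ\oricodingmap$ from (H) gives $\mathrm{osc}_1<\infty$ and $\mathrm{osc}_m\to0$, so this Cesàro mean tends to $0$, while the Birkhoff average tends to $\mu(\phi)$ (to $+\infty$, by truncating $\phi$, in the analogous $\alpha>\gamma_0$ case when $\mu(\phi)=\infty$). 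Projecting back via Remark~\ref{rem measurable bijection oricoding}, $\nu_N:=\mu'_N\circ\oricodingmap^{-1}\in M(f)$ is compactly supported, $\lambda(\nu_N)=\mu'_N(\log|f'|\circ\oricodingmap)<\infty$, and $\nu_N(\phi)=\mu'_N(\phi\circ\oricodingmap)\to\mu(\phi)<\alpha$, so $\mu_\ast:=\nu_N$ works for $N$ large.

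The only genuine difficulty is the approximation step in the second paragraph: since $\Lambda$ is not compact one cannot simply restrict to a finite subsystem, so the ``far‑side'' auxiliary measure has to be built by a closing‑lemma argument in the coding space, and the point that makes it go through even when $\phi$ is unbounded near $1$ is that acceptability of $\phi\circ\oricodingmap$ bounds the one‑step oscillation and forces the higher‑step oscillations to $0$, which kills the correction term in Cesàro mean. The interpolation and the application of Theorem~\ref{thm coditional variational principle} are routine.
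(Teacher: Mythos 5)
Your proof is correct, but it takes a genuinely different route from the one in the paper. The paper's argument for $\alpha\in(\alpha_{\inf},\min\frat)$ invokes Proposition~\ref{prop frat} (so that $b(\alpha')=\delta>0$ for $\alpha'\in\frat$) together with the conditional variational formula Theorem~\ref{thm coditional variational principle}: the latter is applied once at some $\alpha'\in\frat$ to extract $\nu\in M(f)$ with $\lambda(\nu)<\infty$, $\dim_H(\nu)>0$ and $\alpha<\nu(\phi)<\infty$, and once at some $\alpha''\in(\alpha_{\inf},\alpha)$ to extract $\mu\in M(f)$ with $\lambda(\mu)<\infty$ and $\mu(\phi)<\alpha$; then one interpolates $\xi=p\mu+(1-p)\nu$ exactly as you do and concludes $b(\alpha)\geq\dim_H(\xi)>0$. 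Your approach instead obtains the positive-dimension base measure $\mu_0$ directly from Bowen's formula (Theorem~\ref{thm bowen formula}) rather than from Proposition~\ref{prop frat}, and replaces the second appeal to Theorem~\ref{thm coditional variational principle} by a hands-on construction of the far-side measure $\mu_\ast$: either a parabolic Dirac mass $\delta_{x_i}$ when $\alpha$ is between $\min\frat$ (resp.\ $\max\frat$) and $\gamma_0$, or, when $\alpha$ is beyond $\frat$, a Katok-style closing of a $\mu$-generic orbit in the full shift, with the new and genuinely non-trivial observation that acceptability of $\phi\circ\oricodingmap$ forces $\mathrm{osc}_m\to0$ and hence kills the Ces\`aro correction term, so periodic-orbit measures approximate the $\phi$-integral of any ergodic measure even though $\phi$ is unbounded. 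This buys you a more self-contained argument that does not lean on Proposition~\ref{prop frat}, at the price of a longer proof and of having to check a few side conditions (that $(y_1\cdots y_N)^\infty$ is eventually not constant at a parabolic symbol so that $\lambda(\nu_N)>0$, and that an ergodic atomic measure is a periodic-orbit measure). The paper's route is shorter because it can black-box the existence of measures with finite Lyapunov exponent and $\phi$-integral near any point of the spectrum through Theorem~\ref{thm coditional variational principle}. Both arguments are sound; they simply apportion the work between quoted machinery and explicit construction differently.
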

\begin{proof}
If $\alpha\in \frat$ then by Proposition \ref{prop frat}, we have $b(\alpha)=\delta>0$. Let $\alpha\in (\alpha_{\inf},\min\frat)$. Then, there exist $\mu\in M(f)$ such that $\lambda(\mu)<\infty$ and 
    $\alpha_{\inf}<\mu(\phi)<\alpha$. By Proposition \ref{prop frat} and Theorem \ref{thm coditional variational principle}, there exists $\nu\in M(f)$ such that $\lambda(\nu)<\infty$, $\dim_H(\nu)>0$ and $\alpha<\nu(\phi)<\infty$. Notice that since $\dim_H(\nu)>0$, we have $\lambda(\nu)>0$ and $h(\nu)>0$. Since $\mu(\phi)<\alpha< \nu(\phi)$, there exists $p\in(0,1)$ such that $\alpha=p\mu+(1-p)\nu$. Set $\xi:=p\mu+(1-p)\nu$. Then, we obtain $\xi(\phi)=\alpha$ and thus, by Theorem \ref{thm coditional variational principle}, 
    $b(\alpha)\geq \dim_H(\xi)>0.$ 
    By a similar argument, we can show that for all $\alpha\in (\max\frat,\alpha_{\sup})$ we have $b(\alpha)>0$.
\end{proof}



    

\subsection{Properties of the function $(b,q)\mapsto p(b,q)+q\alpha$}

For $\alpha\in (\alpha_{\inf},\alpha_{\sup})$ we define the function $\palpha:\mathbb{R}^2\rightarrow\mathbb{R}$ by 
\[
\palpha(b,q):=p(b,q)+q\alpha=P(q(-\phi+\alpha)-b\log|f'|).
\]

\begin{lemma}\label{lemma positivity p alpha}
    For all $\alpha\in (\alpha_{\inf},\alpha_{\sup})$ with $b(\alpha)>\delta^*$ and $q\in \mathbb{R}$ we have $\palpha(b(\alpha),q)\geq 0$. Moreover, for all $b\in \mathbb{R}$ we have 
    \begin{align}\label{eq asymptotic behavior of alpha pressure}
        \lim_{|q|\to\infty}\palpha(b,q)=\infty.
    \end{align}
\end{lemma}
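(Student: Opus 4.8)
The plan is to establish the two assertions separately, both via the conditional variational principle (Theorem \ref{thm coditional variational principle}) and the variational characterization of the topological pressure (Theorem \ref{thm variational principle induce}, transported to $(f,\Lambda)$ as in \eqref{eq def non-induced pressure}).

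For the first assertion, fix $\alpha\in(\alpha_{\inf},\alpha_{\sup})$ with $b(\alpha)>\delta^*$ and $q\in\mathbb{R}$. The idea is to feed near-optimal measures for $b(\alpha)$ into the definition of $\palpha(b(\alpha),q)$. By Theorem \ref{thm coditional variational principle}, for every $\epsilon>0$ there is $\nu\in M(f)$ with $\lambda(\nu)<\infty$, $|\nu(\phi)-\alpha|<\epsilon$ and $\dim_H(\nu)>b(\alpha)-\epsilon$. If $\lambda(\nu)>0$ this gives $h(\nu)>(b(\alpha)-\epsilon)\lambda(\nu)$, hence, using \eqref{eq def non-induced pressure} with the potential $q(-\phi+\alpha)-b(\alpha)\log|f'|$,
\[
\palpha(b(\alpha),q)\ge h(\nu)+\nu(q(-\phi+\alpha)-b(\alpha)\log|f'|)
> -\epsilon\lambda(\nu) - |q|\epsilon,
\]
where we used $h(\nu)-b(\alpha)\lambda(\nu)>-\epsilon\lambda(\nu)$ and $|\nu(-\phi+\alpha)|<\epsilon$. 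One must control $\lambda(\nu)$ uniformly in $\epsilon$: this is where $b(\alpha)>\delta^*$ enters. Indeed $\delta^*$ is the infimum of $b$ for which $p(b,q)<\infty$ for some $q$; choosing $b'\in(\delta^*,b(\alpha))$ and $q'$ with $p(b',q')<\infty$, the variational principle forces $h(\nu)+\nu(-q'\phi-b'\log|f'|)\le p(b',q')$, which together with $h(\nu)\ge (b(\alpha)-\epsilon)\lambda(\nu)$ and $\phi\ge 0$ (condition (P)) yields a bound $\lambda(\nu)\le K$ for a constant $K$ independent of the small $\epsilon$. (If instead $\lambda(\nu)=0$ then $\nu\in\conv$ by Lemma \ref{lemma equivalent condition not liftable}, so $\nu(\phi)\in\frat$; but then along a sequence $\epsilon\to0$ we would get $\alpha\in\frat$, whence $b(\alpha)=\delta$ by Proposition \ref{prop frat} and the inequality $\palpha(b(\alpha),q)\ge0$ follows directly from \eqref{eq bowen} and monotonicity, or one simply perturbs $\nu$ by a small amount of a positive-Lyapunov measure supplied by Theorem \ref{thm bowen formula} to restore $\lambda(\nu)>0$ while changing $\dim_H$ and $\nu(\phi)$ negligibly.) With $\lambda(\nu)\le K$ in hand, letting $\epsilon\to0$ gives $\palpha(b(\alpha),q)\ge0$.

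For the second assertion, fix $b\in\mathbb{R}$. By (P) we have $\inf_\Lambda\phi>0$; pick $\mu\in M(f)$ with $\lambda(\mu)<\infty$ and $h(\mu)>0$ (for instance a compactly supported measure from Theorem \ref{thm bowen formula}, possibly after fixing $b$ and using that $\Lambda$ carries many such measures). For $q>0$,
\[
\palpha(b,q)\ge h(\mu)+q(\alpha-\mu(\phi))-b\,\lambda(\mu);
\]
if $\alpha-\mu(\phi)>0$ this tends to $\infty$ as $q\to+\infty$. To guarantee such a $\mu$ exists one uses that $\alpha>\alpha_{\inf}$, so there is some $\mu\in M(f)$ with $\mu(\phi)<\alpha$ and $\lambda(\mu)<\infty$; if this $\mu$ happens to have $h(\mu)=0$ one convex-combines it with a positive-entropy measure as in the proof of Lemma \ref{lemma weak lower bound} to retain $\mu(\phi)<\alpha$ while gaining $h(\mu)>0$. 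Symmetrically, since $\alpha<\alpha_{\sup}$ there is $\mu'\in M(f)$ with $\lambda(\mu')<\infty$, $h(\mu')>0$ and $\mu'(\phi)>\alpha$, and then $\palpha(b,q)\ge h(\mu')+q(\alpha-\mu'(\phi))-b\lambda(\mu')\to\infty$ as $q\to-\infty$. Combining the two one-sided limits gives \eqref{eq asymptotic behavior of alpha pressure}.

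The main obstacle is the uniform control of $\lambda(\nu)$ in the first part: because $\Lambda$ is non-compact, near-optimizing measures for $b(\alpha)$ could a priori have Lyapunov exponents escaping to infinity, and the bound genuinely uses the hypothesis $b(\alpha)>\delta^*$ through the finiteness of $p(b',q')$ for some $(b',q')$ with $b'<b(\alpha)$, combined with $\phi\ge0$. The remaining steps are routine applications of the variational principle and convex combinations of invariant measures.
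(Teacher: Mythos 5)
Your proposal is correct and follows essentially the same route as the paper: part (1) fills in the Iommi--Jordan-type argument (bounded Lyapunov exponents via finiteness of $p(b',q')$ for some $b'\in(\delta^*,b(\alpha))$ together with (P), then pass to the limit in the conditional variational principle), which is exactly what the paper cites from \cite[Lemma 4.3]{IommiJordanBirkhoff}, and part (2) is the same two-sided variational lower bound the paper writes explicitly. Two small remarks: in part (1) the digression on $\lambda(\nu)=0$ is superfluous, since $b(\alpha)>\delta^*\ge 0$ forces $\dim_H(\nu)>b(\alpha)-\epsilon>0$ and hence $\lambda(\nu)>0$ once $\epsilon<b(\alpha)$; and in part (2) the requirement $h(\mu)>0$ is unnecessary, as $h(\mu)\ge 0$ already suffices.
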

\begin{proof}
Let $\alpha\in (\alpha_{\inf},\alpha_{\sup})$ with $b(\alpha)>\delta^*$ and let $q\in \mathbb{R}$.
$\palpha(b(\alpha),q)\geq 0$ follows from the same argument as in \cite[Lemma 4.3]{IommiJordanBirkhoff}, which is based on the variational principle for the topological pressure and the conditional variational principle.
Next, we shall show \eqref{eq asymptotic behavior of alpha pressure}. Let $\alpha\in (\alpha_{\inf},\alpha_{\sup})$ and let $b\in\mathbb{R}$. Since $\alpha \in (\alpha_{\inf},\alpha_{\sup})$, there exists $\underline{\nu},\overline{\nu}\in M(f)$ such that $\underline{\nu}(\phi)<\alpha<\overline{\nu}(\phi)$, $\overline{\nu}(\phi+\log|f'|)<\infty$ and $\underline{\nu}(\phi+\log|f'|)<\infty$. Hence, we obtain
     $   \lim_{q\to\infty}\palpha(b,q)
        \geq 
         h(\overline{\nu})+\lim_{q\to\infty}q(\overline{\nu}(\phi)-\alpha)-b\lambda(\overline{\nu})=\infty
        $\text{ and }
        $
        \lim_{q\to-\infty}\palpha(b,q)
        \geq 
         h(\underline{\nu})+\lim_{q\to-\infty}q(\underline{\nu}
         (\phi)-\alpha)-b\lambda(\underline{\nu})=\infty.
         $
\end{proof}

\begin{rem}\label{rem cheaking}
Let $\alpha\in (\alpha_{\inf},\alpha_{\sup})$ satisfy $b(\alpha)>\delta^*$.
By Lemma \ref {lemma positivity p alpha}, we have  $p(b(\alpha),q)\geq -q\alpha>\LB$ if $\alpha\in (\alpha_{\inf},\min \frat)$ and $q\in (0,\infty)$, and $p(b(\alpha),q)\geq -q\alpha>\LB$ if $\alpha\in (\max\frat, \alpha_{\sup})$ and $q\in (-\infty,0)$.    
\end{rem}

For each $\alpha\in (\alpha_{\inf},\alpha_{\sup})$ 
we denote by $\mathcal{Q}$ the set of all $q\in \mathbb{R}$ such that $(b(\alpha),q)\in \mathcal{N}$, 
 \begin{align}\label{eq relation in the statement}
\palpha(b(\alpha),q)=0 \text{ and } \frac{\partial}{\partial q}\palpha(b(\alpha),q)=0.        
    \end{align}

\begin{lemma}\label{lemma sufficent condition for nice}
    Let $\alpha\in (\alpha_{\inf},\alpha_{\sup})$ satisfy $b(\alpha)>\delta^*$. If there exists $q_0\in \mathbb{R}$ such that $(b(\alpha),q_0)\in\mathcal{N}$ and $(\partial/\partial q)\palpha(b(\alpha),q_0)=0$ then we have $\palpha(b(\alpha),q_0)=0$
\end{lemma}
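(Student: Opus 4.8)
The plan is to identify $\palpha(b(\alpha),q_0)$ with a dimension defect of the equilibrium measure attached to the parameter $(b(\alpha),q_0)$, and then to trap this quantity between the lower bound supplied by Lemma~\ref{lemma positivity p alpha} and an upper bound coming from the conditional variational principle.

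First I would unpack the two hypotheses. Since $(b(\alpha),q_0)\in\mathcal{N}$, Theorem~\ref{thm uniquness and existence of the equilibrium state} gives that $\mu:=\mu_{b(\alpha),q_0}$ is the unique equilibrium measure for $-q_0\phi-b(\alpha)\log|f'|$, while Lemma~\ref{lemma finite return time} (via the Gibbs property of $\codingmeasure_{b(\alpha),q_0}$, Abramov--Kac's formula~\eqref{eq Abramov-Kac's formula}, and the uniform expansion of $\indumap$) shows $0<\lambda(\mu)<\infty$ and $\mu(\phi)<\infty$; consequently $h(\mu)<\infty$ and $\dim_H(\mu)=h(\mu)/\lambda(\mu)$ is a genuine finite number. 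Next, because $p$ is real-analytic on $\mathcal{N}$ with $\tfrac{\partial}{\partial q}p(b(\alpha),q_0)=-\mu(\phi)$ (Theorem~\ref{thm regularity of non induced pressure}) and $\palpha(b,q)=p(b,q)+q\alpha$, the hypothesis $\tfrac{\partial}{\partial q}\palpha(b(\alpha),q_0)=0$ is precisely the identity $\mu(\phi)=\alpha$.

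Then I would compute directly. From the equilibrium equation $p(b(\alpha),q_0)=h(\mu)-q_0\mu(\phi)-b(\alpha)\lambda(\mu)$ together with $\mu(\phi)=\alpha$ one obtains
\[
\palpha(b(\alpha),q_0)=p(b(\alpha),q_0)+q_0\alpha=h(\mu)-b(\alpha)\lambda(\mu)=\lambda(\mu)\bigl(\dim_H(\mu)-b(\alpha)\bigr).
\]
Since $\mu(\phi)=\alpha$ and $\lambda(\mu)<\infty$, Theorem~\ref{thm coditional variational principle} applied at the exact level $\alpha$ yields $\dim_H(\mu)\le b(\alpha)$, so the displayed identity together with $\lambda(\mu)>0$ forces $\palpha(b(\alpha),q_0)\le 0$. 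On the other hand, $b(\alpha)>\delta^*$ places us in the scope of Lemma~\ref{lemma positivity p alpha}, which gives $\palpha(b(\alpha),q_0)\ge 0$. Combining the two bounds yields $\palpha(b(\alpha),q_0)=0$, as required.

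I do not expect a serious obstacle; the only delicate point is making sure the quoted results genuinely apply to $\mu$, i.e.\ that $\lambda(\mu)$ is both strictly positive (so that one may divide by it and write $\dim_H(\mu)=h(\mu)/\lambda(\mu)$) and finite (so that Theorem~\ref{thm coditional variational principle} can be invoked with $\nu=\mu$ at the exact level $\alpha$ rather than only in the $\epsilon$-neighbourhood sense). Both facts follow from Lemma~\ref{lemma finite return time} and the uniform expansion of the induced map $\indumap$, so once these are recorded the argument reduces to the short chain of (in)equalities above.
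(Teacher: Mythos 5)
Your proof is correct and follows exactly the same strategy as the paper: translate the hypothesis $(\partial/\partial q)\palpha(b(\alpha),q_0)=0$ into $\mu_{b(\alpha),q_0}(\phi)=\alpha$ via Ruelle's formula, rewrite $\palpha(b(\alpha),q_0)$ as $h(\mu_{b(\alpha),q_0})-b(\alpha)\lambda(\mu_{b(\alpha),q_0})$ using the equilibrium equation, and then sandwich this quantity between the lower bound from Lemma~\ref{lemma positivity p alpha} and the upper bound from Theorem~\ref{thm coditional variational principle}. The extra checks you record (that $\lambda(\mu_{b(\alpha),q_0})$ is strictly positive and finite so that $\dim_H(\mu_{b(\alpha),q_0})$ makes sense and the conditional variational principle applies) are implicit in the paper's terse argument, so you are simply filling in detail rather than deviating from it.
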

\begin{proof}
    Let $\alpha\in (\alpha_{\inf},\alpha_{\sup})$ satisfy $b(\alpha)>\delta^*$ and let $q_0\in \mathbb{R}$ satisfy $(b(\alpha),q_0)\in\mathcal{N}$ and $(\partial/\partial q)\palpha(b(\alpha),q_0)=0$.
    Note that by Theorem \ref{thm regularity of non induced pressure}, $(\partial/\partial q)\palpha(b(\alpha),q_0)=0$ implies that $\mu_{b(\alpha),q_0}(\phi)=\alpha$. Thus,
    by Lemma \ref{lemma positivity p alpha} and Theorem \ref{thm coditional variational principle}, we obtain
    $0\leq \palpha(b(\alpha),q_0)=h(\mu_{b(\alpha),q_0})-b(\alpha)\lambda(\mu_{b(\alpha),q_0})\leq 0$
\end{proof}

Let $\nice$ denote the set of  all $\alpha\in (\alpha_{\inf},\alpha_{\sup})$ for which there exists a unique number $q(\alpha)$ such that $\mathcal{Q}=\{q(\alpha)\}$. We define the function $\alpha\in \nice\mapsto q(\alpha)\in \mathbb{R}$.

\begin{prop}\label{prop equilibrium measure conditional}
   For all $\alpha\in \nice$, the equilibrium measure $\mu_{b(\alpha),q(\alpha)}$ for $-q(\alpha)\phi-b(\alpha)\log |f'|$ is a unique measure $\nu\in M(f)$ satisfying $\lambda(\nu)>0$, $\nu(\phi)=\alpha$ and $b(\alpha)=h(\nu)/\lambda(\nu)$. 
\end{prop}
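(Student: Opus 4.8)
The plan is to deduce from the very definition of $\nice$ that $\mu_{b(\alpha),q(\alpha)}$ has the three asserted properties, and then to identify any competing measure with it by invoking the uniqueness of the equilibrium measure (Theorem \ref{thm uniquness and existence of the equilibrium state}). Throughout, write $b:=b(\alpha)$, $q:=q(\alpha)$ and $\mu:=\mu_{b,q}$. Since $\alpha\in\nice$, the relations \eqref{eq relation in the statement} give $(b,q)\in\mathcal{N}$, $\palpha(b,q)=0$ and $\frac{\partial}{\partial q}\palpha(b,q)=0$; in particular $(b,q)\in\fin$ with $p(b,q)>\LB$, so $\mu$ is well defined, and by Theorem \ref{thm uniquness and existence of the equilibrium state} $\mu$ is the \emph{unique} equilibrium measure for $-q\phi-b\log|f'|$, i.e.\ $\mu(-q\phi-b\log|f'|)>-\infty$ and $p(b,q)=h(\mu)+\mu(-q\phi-b\log|f'|)$.

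For the existence half, apply Ruelle's formula (Theorem \ref{thm regularity of non induced pressure}): $\frac{\partial}{\partial q}\palpha(b,q)=\frac{\partial}{\partial q}p(b,q)+\alpha=-\mu(\phi)+\alpha$, so $\frac{\partial}{\partial q}\palpha(b,q)=0$ forces $\mu(\phi)=\alpha$ (a finite number, since the derivative of the real-analytic pressure is finite), while $\palpha(b,q)=0$ gives $p(b,q)=-q\alpha$. Because $\mu$ is an equilibrium measure we have $-\infty<\mu(-q\phi-b\log|f'|)=-q\alpha-b\,\lambda(\mu)$, and since $b=b(\alpha)>0$ by Lemma \ref{lemma weak lower bound} this forces $\lambda(\mu)<\infty$; moreover $\mu=\mu_{b,q}$ is a lift of a measure on $\indulimit$, hence $\mu\notin\conv$, so $\lambda(\mu)>0$ by Lemma \ref{lemma equivalent condition not liftable}. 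Plugging $\mu(\phi)=\alpha$ and $p(b,q)=-q\alpha$ into the equilibrium identity gives $-q\alpha=h(\mu)-q\alpha-b\,\lambda(\mu)$, i.e.\ $h(\mu)=b\,\lambda(\mu)$, so $0<\lambda(\mu)<\infty$, $\mu(\phi)=\alpha$ and $b(\alpha)=h(\mu)/\lambda(\mu)$: $\mu$ is a measure of the required type.

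For uniqueness, let $\nu\in M(f)$ satisfy $\lambda(\nu)>0$, $\nu(\phi)=\alpha$ and $b(\alpha)=h(\nu)/\lambda(\nu)$. First one checks $\lambda(\nu)<\infty$: if $\lambda(\nu)=\infty$ then $h(\nu)/\lambda(\nu)$ is $0$ when $h(\nu)<\infty$ and otherwise not a well-defined positive real, either way contradicting $b(\alpha)=h(\nu)/\lambda(\nu)>0$ (Lemma \ref{lemma weak lower bound}). Hence $\lambda(\nu)<\infty$ and $h(\nu)=b(\alpha)\lambda(\nu)<\infty$, and $\nu(-q\phi-b\log|f'|)=-q\alpha-b\,\lambda(\nu)$ is finite, in particular $>-\infty$. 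Using $h(\nu)=b\,\lambda(\nu)$ and $p(b,q)=-q\alpha$ we obtain $h(\nu)+\nu(-q\phi-b\log|f'|)=b\,\lambda(\nu)-q\alpha-b\,\lambda(\nu)=-q\alpha=p(b,q)=P(-q\phi-b\log|f'|)$, so $\nu$ is an equilibrium measure for $-q\phi-b\log|f'|$; since $(b,q)\in\mathcal{N}$, Theorem \ref{thm uniquness and existence of the equilibrium state} gives that the equilibrium measure for this potential is unique and equals $\mu=\mu_{b,q}$, whence $\nu=\mu$. The argument is essentially a substitution once the defining relations of $q(\alpha)$ and Ruelle's formula are at hand; the only genuinely delicate point — and the one place where the positivity $b(\alpha)>0$ is really used — is verifying that a competitor $\nu$ is a legitimate equilibrium-measure candidate, i.e.\ that $\nu(-q\phi-b\log|f'|)>-\infty$ (equivalently $\lambda(\nu)<\infty$).
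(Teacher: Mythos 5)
Your proof is correct and takes essentially the same approach as the paper: use the defining relations \eqref{eq relation in the statement} together with Ruelle's formula (Theorem \ref{thm regularity of non induced pressure}) to verify that $\mu_{b(\alpha),q(\alpha)}$ has the three stated properties, then show any competitor $\nu$ is an equilibrium measure for $-q(\alpha)\phi-b(\alpha)\log|f'|$ and invoke uniqueness (Theorem \ref{thm uniquness and existence of the equilibrium state}). The only difference is that you spell out the finiteness of $\lambda(\mu)$, $\lambda(\nu)$ and the positivity of $\lambda(\mu_{b(\alpha),q(\alpha)})$, which the paper's proof leaves implicit (these follow from Lemma \ref{lemma finite return time} for $\mu_{b(\alpha),q(\alpha)}$, and from $b(\alpha)>0$ for the competitor, exactly as you observe).
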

\begin{proof}
Let $\alpha\in \nice$. Then, by \eqref{eq relation in the statement}, we obtain $\mu_{b(\alpha),q(\alpha)}(\phi)=\alpha$ and thus,
\begin{align*}
&0=\palpha(b(\alpha),q(\alpha))=h(\mu_{b(\alpha),q(\alpha)})+q(\alpha)(\alpha-\mu_{b(\alpha),q(\alpha)}(\phi)) -b(\alpha)\lambda(\mu_{b(\alpha),q(\alpha)})
\\&=h(\mu_{b(\alpha),q(\alpha)})-b(\alpha)\lambda(\mu_{b(\alpha),q(\alpha)}).    
\end{align*}
and thus,  $b(\alpha)=h(\mu_{b(\alpha),q(\alpha)})/\lambda(\mu_{b(\alpha),q(\alpha)})$. 
Next, we shall show that the uniqueness. Let $\nu$ be in $M(f)$ such that $\lambda(\nu)>0$, $\nu(\phi)=\alpha$ and $b(\alpha)=h(\nu)/\lambda(\nu)$. Then, we have
\[
h(\nu)+q(\alpha)(-\nu(\phi)+\alpha)-b(\alpha)\lambda(\nu)=0=p(b(\alpha),q(\alpha))+q(\alpha)\alpha.
\]
Thus, $\nu$ is an equilibrium measure for $-q(\alpha)\phi-b(\alpha)\log|f'|$. Therefore, since $(b(\alpha),q(\alpha))\in \mathcal{N}$, the uniqueness of an equilibrium measure for $-q(\alpha)\phi-b(\alpha)\log|f'|$ (Theorem \ref{thm uniquness and existence of the equilibrium state}) yields that $\nu=\mu_{b(\alpha),q(\alpha)}$.    
\end{proof}

\begin{prop}\label{prop real analytic}
    The functions $\alpha\mapsto b(\alpha)$ and $\alpha\mapsto q(\alpha)$ are real-analytic on $\text{Int}(\nice)$.
\end{prop}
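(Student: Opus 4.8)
The plan is to apply the analytic implicit function theorem to the two equations in \eqref{eq relation in the statement}, regarding $b$ and $q$ as unknowns and $\alpha$ as a parameter. For $(b,q)\in\mathcal{N}$ and $\alpha\in(\alpha_{\inf},\alpha_{\sup})$ I set
\[
F(b,q,\alpha):=\bigl(\,p(b,q)+q\alpha,\ \tfrac{\partial}{\partial q}p(b,q)+\alpha\,\bigr).
\]
By Theorem \ref{thm regularity of non induced pressure} the pressure $p$ is real-analytic on the open set $\mathcal{N}$, and $F$ depends polynomially on $\alpha$, so $F$ is real-analytic on $\mathcal{N}\times\mathbb{R}$. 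Fix $\alpha_0\in\text{Int}(\nice)$ and write $b_0:=b(\alpha_0)$, $q_0:=q(\alpha_0)$; by the definition of $\nice$ together with \eqref{eq relation in the statement} we have $(b_0,q_0)\in\mathcal{N}$ and $F(b_0,q_0,\alpha_0)=(0,0)$.

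First I would compute the Jacobian of $F$ in the variables $(b,q)$ at $(b_0,q_0,\alpha_0)$. Since the second component of $F$ vanishes there, this $2\times2$ matrix is lower triangular and its determinant equals $\tfrac{\partial}{\partial b}p(b_0,q_0)\cdot\tfrac{\partial^2}{\partial q^2}p(b_0,q_0)$. By Ruelle's formula (Theorem \ref{thm regularity of non induced pressure}) the first factor is $-\lambda(\mu_{b_0,q_0})$; since $\mu_{b_0,q_0}$ is the lift of the Gibbs measure $\codingmeasure_{b_0,q_0}$, whose return time is integrable by Lemma \ref{lemma finite return time}, the Abramov--Kac formula \eqref{eq Abramov-Kac's formula} and the uniform expansion of $\indumap$ give $0<\lambda(\mu_{b_0,q_0})<\infty$. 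As $\alpha_{\inf}<\alpha_0<\alpha_{\sup}$ forces $\alpha_{\inf}\neq\alpha_{\sup}$, Theorem \ref{thm regularity of non induced pressure} combined with the convexity of $q\mapsto p(b_0,q)$ gives $\tfrac{\partial^2}{\partial q^2}p(b_0,q_0)>0$. Hence the determinant is strictly negative, so $D_{(b,q)}F(b_0,q_0,\alpha_0)$ is invertible.

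Then the analytic implicit function theorem provides open neighbourhoods $U\ni\alpha_0$ and $V\ni(b_0,q_0)$ with $V\subset\mathcal{N}$, together with real-analytic functions $\beta,\kappa\colon U\to\mathbb{R}$ satisfying $(\beta(\alpha_0),\kappa(\alpha_0))=(b_0,q_0)$, such that for $(b,q,\alpha)\in V\times U$ one has $F(b,q,\alpha)=(0,0)$ if and only if $(b,q)=(\beta(\alpha),\kappa(\alpha))$. To finish I would show that $(b(\alpha),q(\alpha))\in V$ for all $\alpha$ in a neighbourhood of $\alpha_0$ inside $\text{Int}(\nice)$: for such $\alpha$ the relations \eqref{eq relation in the statement} then force $(b(\alpha),q(\alpha))=(\beta(\alpha),\kappa(\alpha))$, and since $\text{Int}(\nice)$ is open this shows $b$ and $q$ are real-analytic at $\alpha_0$. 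This reduces everything to the continuity of $\alpha\mapsto(b(\alpha),q(\alpha))$ at $\alpha_0$. Continuity of $b$ on $\text{Int}(\nice)$ is available: on the subintervals of $(\alpha_{\inf},\alpha_{\sup})$ where $b=\tilde b$ it follows from Theorem \ref{thm stong conditional variational principle} and Lemma \ref{lemma conti tilde b}, and on $\frat$ (and on $(\max\frat,\infty)$ when $\ratio=\infty$) $b$ is the constant $\delta$ by Proposition \ref{prop frat} and Proposition \ref{prop frat under some assumption}. For $q$, I would use that by \eqref{eq relation in the statement} the number $q(\alpha)$ is a critical point of the convex function $q\mapsto p(b(\alpha),q)+q\alpha$ at which that function takes the value $0$, hence its global minimum; the coercivity \eqref{eq asymptotic behavior of alpha pressure} from Lemma \ref{lemma positivity p alpha} then keeps $\{q(\alpha)\}$ bounded as $\alpha\to\alpha_0$, and any subsequential limit $q_*$ satisfies the same two relations with $\alpha=\alpha_0$ and $(b_0,q_*)\in\mathcal{N}$ (using Remark \ref{rem cheaking} to place $(b_0,q_*)$ in the open region defining $\mathcal{N}$), so $q_*=q_0$ by the uniqueness built into $\nice$.

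The main obstacle I anticipate is exactly this last identification: obtaining the a priori control on $(b(\alpha),q(\alpha))$ that places it inside the neighbourhood $V$ on which the implicit function branch is unique. The Jacobian computation is routine once Ruelle's formula and the nondegeneracy $\tfrac{\partial^2}{\partial q^2}p>0$ are in hand; the subtle part is ruling out that a subsequential limit of $q(\alpha)$ escapes to the boundary of $\mathcal{N}$, where $p$ can fail to be analytic or even finite, and this is where the coercivity of $\palpha$ and the location information from Remark \ref{rem cheaking} do the real work.
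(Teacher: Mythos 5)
Your argument applies the real-analytic implicit function theorem to the same system $G(\alpha,b,q)=(\palpha(b,q),\tfrac{\partial}{\partial q}\palpha(b,q))$ and checks nondegeneracy of the same Jacobian as the paper, so the core of the proof is identical. The extra paragraph in which you argue that the local implicit-function branch actually agrees with $(b(\alpha),q(\alpha))$ near $\alpha_0$ --- via continuity of $b$ from Theorem \ref{thm stong conditional variational principle} and Lemma \ref{lemma conti tilde b}, coercivity of $\palpha$ from Lemma \ref{lemma positivity p alpha}, strict convexity in $q$ from Theorem \ref{thm regularity of non induced pressure}, and the uniqueness built into $\nice$ --- addresses an identification step that the paper treats as standard by deferring to \cite[Lemma 9.2.4]{Barreirabook}; the outline is sound, though you should also verify that any subsequential limit $q_*$ of $q(\alpha_n)$ lies in the region where $p(b_0,\cdot)$ is finite before invoking continuity of $p$ there, which Remark \ref{rem cheaking} together with strict convexity supplies. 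One small bookkeeping note: the paper writes $\tfrac{\partial}{\partial b}\palpha(b(\alpha),q(\alpha))=\lambda(\mu_{b(\alpha),q(\alpha)})>0$, but Ruelle's formula gives $-\lambda(\mu_{b(\alpha),q(\alpha)})<0$; your $-\lambda(\mu_{b_0,q_0})$ is the correct value, and only nonvanishing of the determinant is used.
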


\begin{proof}
If $\alpha_{\inf}=\alpha_{\sup}$, there is nothing to prove. Thus, we assume that $\alpha_{\inf}<\alpha_{\sup}$.
We proceed with this proof as in \cite[Lemma 9.2.4]{Barreirabook}.
We define the function $G:\mathbb{R}\times\mathcal{N}\rightarrow \mathbb{R}^2$ by $G(\alpha,b,q):= (\palpha(b,q), \frac{\partial}{\partial q}\palpha(b,q))=(p(b,q)+q\alpha, \frac{\partial}{\partial q}p(b,q)+\alpha)$. By definition of $\nice$, for all $\alpha\in\nice$ we have $G(\alpha,b(\alpha),q(\alpha))=0$. 
We want to apply the implicit function theorem in order to show the regularity of the functions $b$ and $q$. To do this, it is sufficient to show that 
\begin{align*}
    \text{det}\begin{pmatrix}
   \frac{\partial}{\partial b}\palpha(b(\alpha),q(\alpha)) & \frac{\partial}{\partial q}\palpha(b(\alpha),q(\alpha)) \\
   \frac{\partial^2}{\partial b\partial q}\palpha(b(\alpha),q(\alpha)) & 
   \frac{\partial^2}{\partial q^2}\palpha(b(\alpha),q(\alpha))
\end{pmatrix}
\neq 0.
\end{align*}
Note that, by Theorem \ref{thm regularity of non induced pressure}, we have $\frac{\partial}{\partial q}\palpha(b(\alpha),q(\alpha))=0$ and $\frac{\partial}{\partial b}\palpha(b(\alpha),q(\alpha))=\lambda(\mu_{b(\alpha),q(\alpha)})>0$.
By $\alpha_{\inf}<\alpha_{\sup}$, Theorem \ref{thm regularity of non induced pressure} implies that $\frac{\partial^2}{\partial q^2}\palpha(b(\alpha),q(\alpha))\neq 0$. Therefore, by the implicit function theorem and Theorem \ref{thm regularity of non induced pressure}, we are done. 
\end{proof}

\subsection{Monotonicity of the Birkhoff spectrum}

\begin{prop}\label{prop monotone}
We assume that for all $\alpha\in (\alpha_{\inf},\alpha_{\sup})\setminus\frat$ we have $b(\alpha)<\delta$. Then, 
     $b$ is monotone increasing on $(\alpha_{\inf},\min \frat)$ and  monotone decreasing on $(\max \frat, \alpha_{\sup})$. Moreover, it is strictly increasing on $(\alpha_{\inf},\min \frat)\cap\nice$ and strictly decreasing on $(\max \frat, \alpha_{\sup})\cap\nice$.
\end{prop}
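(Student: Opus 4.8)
The plan is to treat the non-strict and the strict statements by separate mechanisms: non-strict monotonicity from convex combinations with the Dirac masses $\delta_{x_i}$ at the parabolic points, and strict monotonicity from the real-analyticity of $b$ together with the standing hypothesis $b<\delta$ off $\frat$.

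\textbf{Non-strict monotonicity} (which should not need the hypothesis $b<\delta$). I would fix $\alpha_1<\alpha_2$ in $(\alpha_{\inf},\min\frat)$ and, given small $\eta>0$ with $\eta<b(\alpha_1)$ (Lemma \ref{lemma weak lower bound}), use Theorem \ref{thm coditional variational principle} to choose $\nu\in M(f)$ with $\lambda(\nu)<\infty$, $\dim_H(\nu)>b(\alpha_1)-\eta$ and $\nu(\phi)<\alpha_2$; since $\dim_H(\nu)>0$ this forces $0<\lambda(\nu)<\infty$. As $x_{\underline i}$ is parabolic, $\delta_{x_{\underline i}}\in M(f)$ satisfies $h(\delta_{x_{\underline i}})=\lambda(\delta_{x_{\underline i}})=0$ and $\delta_{x_{\underline i}}(\phi)=\alpha_{\underline i}=\min\frat>\alpha_2$, so there is $t\in(0,1)$ with $\nu':=t\nu+(1-t)\delta_{x_{\underline i}}$ satisfying $\nu'(\phi)=\alpha_2$, $0<\lambda(\nu')=t\lambda(\nu)<\infty$ and $\dim_H(\nu')=\dim_H(\nu)$, exactly as in the proof of Proposition \ref{prop frat}. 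Then $b(\alpha_2)\ge\dim_H(\nu')>b(\alpha_1)-\eta$ by Theorem \ref{thm coditional variational principle}, and letting $\eta\to0$ gives $b(\alpha_1)\le b(\alpha_2)$. The same computation with $x_{\overline i}$ in place of $x_{\underline i}$ (and the two points interchanged) shows $b$ is non-increasing on $(\max\frat,\alpha_{\sup})$; when $\ratio=\infty$ the hypothesis forces $(\max\frat,\alpha_{\sup})=\emptyset$ via Proposition \ref{prop frat under some assumption}, so nothing is claimed there.

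\textbf{Strict monotonicity.} I would first note that $b(\alpha)>\delta^*$ for every $\alpha\in\nice$ (by openness of $\mathcal{N}$: $(b(\alpha)-\varepsilon,q(\alpha))\in\mathcal{N}\subset\fin$ for small $\varepsilon$), and that $\nice$ is relatively open in $(\alpha_{\inf},\alpha_{\sup})$, so $\nice=\mathrm{Int}(\nice)$ and $b,q$ are real-analytic on $\nice$ by Proposition \ref{prop real analytic}. Differentiating the two relations in \eqref{eq relation in the statement} in $\alpha$, using $\tfrac{\partial}{\partial q}\palpha(b(\alpha),q(\alpha))=0$ and Ruelle's formula (Theorem \ref{thm regularity of non induced pressure}), gives $b'(\alpha)=q(\alpha)/\lambda(\mu_{b(\alpha),q(\alpha)})$ for $\alpha\in\nice$. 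On a connected component $J$ of $\nice\cap(\alpha_{\inf},\min\frat)$ the non-strict monotonicity yields $b'\ge0$, hence $q\ge0$; and $b$ cannot be constant on $J$, because then $q\equiv0$ on $J$ would give $p(b(\alpha),0)=\palpha(b(\alpha),q(\alpha))=0$, while $(b(\alpha),0)\in\mathcal{N}\subset\fin$ forces $b(\alpha)>s_\infty$ (by \eqref{eq finite pressure} and (G)), so Bowen's formula \eqref{eq bowen} together with $p(b,0)>0$ on $(s_\infty,\delta)$ (Theorem \ref{thm bowen formula}) would force $b(\alpha)=\delta$, contradicting $b(\alpha)<\delta$ on $J\subset(\alpha_{\inf},\alpha_{\sup})\setminus\frat$. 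Being real-analytic, non-decreasing and non-constant on $J$, $b$ is strictly increasing there; and for $\alpha_1<\alpha_2$ in $\nice\cap(\alpha_{\inf},\min\frat)$ lying in distinct components $J_1\ni\alpha_1$, $J_2\ni\alpha_2$, choosing $\alpha'\in(\alpha_1,\sup J_1)$ gives $b(\alpha_1)<b(\alpha')\le b(\alpha_2)$ by strict increase on $J_1$ and non-strict monotonicity ($\alpha'<\sup J_1\le\alpha_2$), while the same-component case is immediate. Hence $b$ is strictly increasing on $\nice\cap(\alpha_{\inf},\min\frat)$; the decreasing statement on $(\max\frat,\alpha_{\sup})\cap\nice$ is symmetric, with $q\le0$ there, the identical exclusion of $q\equiv0$, and the continuity of $b$ on $(\max\frat,\alpha_{\sup})$ supplied by Theorem \ref{thm stong conditional variational principle} when $\ratio<\infty$.

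\textbf{Main obstacle.} The hard part is the relative openness of $\nice$, which is precisely what permits real-analyticity to be invoked at every point of $\nice$. Here I would fix $\alpha_0\in\nice$, use $\tfrac{\partial^2}{\partial q^2}p(b(\alpha_0),q(\alpha_0))>0$ (Theorem \ref{thm regularity of non induced pressure}, valid since $\alpha_{\inf}<\alpha_{\sup}$) to solve $\tfrac{\partial}{\partial q}p(b,q)=-\alpha$ for $q=Q(b,\alpha)$ real-analytically near $(b(\alpha_0),\alpha_0)$ with values in $\mathcal{N}$, and then — using the continuity of $b$ (Lemma \ref{lemma conti tilde b}, Theorem \ref{thm stong conditional variational principle}) and $b(\alpha_0)>\delta^*$ — invoke Lemma \ref{lemma sufficent condition for nice} to see that $Q(b(\alpha),\alpha)$ satisfies \eqref{eq relation in the statement} for all $\alpha$ near $\alpha_0$. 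The delicate point is the \emph{uniqueness} clause in the definition of $\nice$: one must rule out a second critical point of $q\mapsto\palpha(b(\alpha),q)$, which I expect to follow from the strict convexity of $p$ in $q$ together with the fact that the slice $\{q:(b(\alpha),q)\in\mathcal{N}\}$ is an interval on the relevant parameter range, a consequence of the summability criterion \eqref{eq finite pressure} and condition (G). Establishing this last connectedness cleanly is the step I expect to require the most care.
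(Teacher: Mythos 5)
Your argument for non-strict monotonicity is correct and in fact slightly simpler than the paper's: you mix $\nu$ (chosen via Theorem \ref{thm coditional variational principle} with $\nu(\phi)$ close to $\alpha_1$ and $\dim_H(\nu)>b(\alpha_1)-\eta$) directly with $\delta_{x_{\underline{i}}}$ to land at $\alpha_2$ without changing $\dim_H$, whereas the paper forms an intermediate $\mu=p\delta_{x_{\underline{i}}}+(1-p)\nu$ near $\min\frat$ and mixes that with a measure near $\alpha_1$; both routes give $b(\alpha_2)\ge b(\alpha_1)$. One aside: your remark that $\ratio=\infty$ forces $(\max\frat,\alpha_{\sup})=\emptyset$ is wrong, since $\alpha_{\sup}=\infty$ there; rather, under $\ratio=\infty$ the blanket hypothesis $b<\delta$ off $\frat$ fails, but the $(\alpha_{\inf},\min\frat)$ half of the conclusion still holds because the argument only uses $b(\alpha_1)<\delta$ for $\alpha_1$ in that interval.

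For strict monotonicity, your route via $b'(\alpha)=q(\alpha)/\lambda(\mu_{b(\alpha),q(\alpha)})$ has a genuine gap which you yourself flag: it needs $\nice$ to be relatively open so that Proposition \ref{prop real analytic} applies at every point of $\nice\cap(\alpha_{\inf},\min\frat)$, and you do not establish this. In the paper's own ordering, openness of $\nice$ on the relevant intervals is proved only afterwards (Propositions \ref{prop relationship 0}, \ref{prop relationship infinity}, \ref{prop relation alpha}); invoking it here would invert the dependency chain, and the implicit-function sketch you outline is precisely the delicate content those later propositions handle case by case. The paper avoids this entirely with a direct measure-theoretic argument: if $\alpha_1\in\nice$, Proposition \ref{prop equilibrium measure conditional} furnishes $\xi:=\mu_{b(\alpha_1),q(\alpha_1)}$ with $\xi(\phi)=\alpha_1$ exactly and $h(\xi)=b(\alpha_1)\lambda(\xi)$ exactly; fix $\epsilon>0$ with $b(\alpha_1)<\delta-\epsilon$, form $\mu$ as in your non-strict step but anchored near $\delta$ via Theorem \ref{thm bowen formula} so that $\mu(\phi)>\alpha_2$ and $\dim_H(\mu)>\delta-\epsilon$, and set $\bar\nu=\bar p\xi+(1-\bar p)\mu$ with $\bar\nu(\phi)=\alpha_2$, $\bar p\in(0,1)$. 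Then
\[
h(\bar\nu)=\bar p\,b(\alpha_1)\lambda(\xi)+(1-\bar p)h(\mu)>b(\alpha_1)\bigl(\bar p\lambda(\xi)+(1-\bar p)\lambda(\mu)\bigr)=b(\alpha_1)\lambda(\bar\nu)
\]
since $h(\mu)>(\delta-\epsilon)\lambda(\mu)>b(\alpha_1)\lambda(\mu)$ and $1-\bar p>0$, giving $b(\alpha_2)\ge\dim_H(\bar\nu)>b(\alpha_1)$ from Theorem \ref{thm coditional variational principle}. This needs no openness of $\nice$, no differentiability of $b$, and no component-by-component analysis; you should replace your strict argument with it.
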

\begin{proof}
We assume that for all $\alpha\in (\alpha_{\inf},\alpha_{\sup})\setminus\frat$ we have $b(\alpha)<\delta$.
    Let $\alpha_1,\alpha_2\in (\alpha_{\inf},\min \frat)$.
    We assume that $\alpha_1<\alpha_2$.  
    We take a small $\epsilon>0$ such that $\alpha_1+\epsilon<\alpha_2<\min\frat-\epsilon$ and $b(\alpha_1)<\delta-\epsilon$. We first show that there exists $\mu\in M(f)$ such that  
    \begin{align}\label{eq proof of monotonicity saturated}
        \lambda(\mu)>0,\ \min\frat-\epsilon<\mu(\phi) \text{ and } \delta-\epsilon<\frac{h(\mu)}{\lambda(\mu)}.
    \end{align}
    By Theorem \ref{thm bowen formula}, there exists $\nu\in M(f)$ such that $0<\lambda(\nu)<\infty$, $\nu(\phi)<\infty$ and $\delta-\epsilon<h(\nu)/\lambda(\nu)$. Moreover, there exists $p\in [0,1)$ such that $\min \frat-\epsilon<p\alpha_{\underline{i}}+(1-p)\nu(\phi)$. We set $\mu:=p\delta_{x_{\underline{i}}}+(1-p)\nu$. Then, we obtain $h(\mu)/\lambda(\mu)=h(\nu)/\lambda(\nu)$ and thus, $\mu$ satisfies \eqref{eq proof of monotonicity saturated}. Moreover, by Theorem \ref{thm coditional variational principle}, there exists $\xi\in M(f)$ such that $\lambda(\xi)<\infty$, $\xi(\phi)<\alpha_1+\epsilon$ and $\dim_H(\xi)>b(\alpha_1)-\epsilon$. 
    
    Since $\xi(\phi)<\alpha_2<\mu(\phi)$, there exists $\bar p\in (0,1)$ such that $\alpha_2=\bar p\xi(\phi)+(1-\bar p)\mu(\phi)$.  
We set $\bar\nu=\bar p\xi+(1-\bar p)\mu$. Then, we obtain $\bar\nu(\phi)=\alpha_2$ and 
\begin{align*}
&h(\bar \nu)=\bar ph(\xi)+(1-\bar p)h(\mu)
>(b(\alpha_1)-\epsilon)(\bar p\lambda(\xi)+(1-\bar p)\lambda(\mu))    
=(b(\alpha_1)-\epsilon)\lambda(\bar \nu).
\end{align*}
Hence, by Theorem \ref{thm coditional variational principle}, we obtain 
\begin{align}\label{eq monotone}
b(\alpha_2)>b(\alpha_1)-\epsilon.    
\end{align}
Letting $\epsilon\to0$, we obtain $b(\alpha_2)\geq b(\alpha_1)$ and thus, $b$ is monotone increasing on $(\alpha_{\inf},\min \frat)$. If $\alpha_1\in \nice$ then by Lemma \ref{prop equilibrium measure conditional}, we have $h(\mu_{b(\alpha_1),q(\alpha_1)})=b(\alpha_1)\lambda(\mu_{b(\alpha_1),q(\alpha_1)})$. Thus, by slightly modifying the above argument, we can remove $\epsilon$ in \eqref{eq monotone} namely, we obtain $b(\alpha_2)>b(\alpha_1)$. This implies that $b$ is strictly increasing on $(\alpha_{\inf},\min \frat)\cap\nice$.    
By a similar argument, one can show the second half.
\end{proof}

\subsection{Analysis of the set $\nice$ under the condition $\ratio=0$}
In this section, we always assume that $\ratio=0$. 
\begin{lemma}\label{lemma 0 delta*} Assume that $\ratio=0$.
 Then, we have $(s_\infty,\infty)\times \mathbb{R}\subset\fin$.  Moreover, we have $\delta^*=s_\infty$. 
\end{lemma}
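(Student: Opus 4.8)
The plan is to prove the two assertions in turn, with the first (finiteness of the pressure on the half-space $(s_\infty,\infty)\times\mathbb{R}$) doing the real work. Fix $(b,q)\in\mathbb{R}^2$ with $b>s_\infty$. By \eqref{eq finite pressure}, membership in $\fin$ is equivalent to the summability of $\sum_{i\in\edge}e^{(-q\phi-b\log|f'|)\circ\oricodingmap([i])}$. By (G) we have $\log|f'|\circ\oricodingmap([i])\asymp\growth\log i$, so the geometric part contributes a factor of order $i^{-b\growth}$, which is summable precisely because $b>s_\infty=1/\delta$... but wait, $s_\infty$ as used here is $1/\delta$ only after we know $b\growth>1$, i.e. $b>1/\growth=s_\infty$. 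The obstacle is the factor $e^{-q\phi\circ\oricodingmap([i])}$: since $\phi>0$ and, a priori, $\phi$ could grow, for $q<0$ this factor could blow up. Here is where $\ratio=0$ enters: since $\lim_{x\to1}\phi(x)/\log|f'(x)|=0$, for every $\epsilon>0$ there is $N$ such that $\phi\circ\oricodingmap([i])\le\epsilon\log|f'|\circ\oricodingmap([i])\asymp\epsilon\growth\log i$ for all $i\ge N$ (using that the oscillation of $\phi$ and of $\log|f'|$ on each $[i]$ is bounded, by acceptability). Hence for $i\ge N$,
\[
e^{(-q\phi-b\log|f'|)\circ\oricodingmap([i])}\ll i^{-\growth(b-|q|\epsilon)}.
\]
Choosing $\epsilon=\epsilon(q)$ small enough that $b-|q|\epsilon>s_\infty=1/\growth$ — possible since $b>s_\infty$ and $q$ is fixed — gives a convergent series, so $(b,q)\in\fin$. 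Letting $(b,q)$ range over $(s_\infty,\infty)\times\mathbb{R}$ proves $(s_\infty,\infty)\times\mathbb{R}\subset\fin$.

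For the second assertion, $\delta^*=s_\infty$, recall $\delta^*:=\inf\{b\in[0,\delta]:p(b,q)<\infty\text{ for some }q\}$. The inclusion just proved shows that for every $b\in(s_\infty,\delta]$ we have $(b,0)\in\fin$ — in fact $(b,q)\in\fin$ for all $q$ — hence $p(b,0)<\infty$, so $\delta^*\le s_\infty$. For the reverse inequality $\delta^*\ge s_\infty$, I would argue that if $b<s_\infty=1/\growth$ then for \emph{every} $q\in\mathbb{R}$ the series $\sum_i e^{(-q\phi-b\log|f'|)\circ\oricodingmap([i])}$ diverges, so $p(b,q)=\infty$ by \eqref{eq finite pressure}. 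Indeed, using the lower bound in (G) and $\phi>0$, we have for $q\ge0$ the estimate $e^{(-q\phi-b\log|f'|)\circ\oricodingmap([i])}\gg i^{-b\growth}$, which diverges since $b\growth<1$; and for $q<0$ the factor $e^{-q\phi}\ge1$ only makes the terms larger, so the series diverges there too. Hence no $b<s_\infty$ lies in $\{b:p(b,q)<\infty\text{ for some }q\}$, giving $\delta^*\ge s_\infty$, and combined with the previous bound, $\delta^*=s_\infty$.

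The only delicate point — and the one I would write out carefully — is the passage from the pointwise asymptotic $\phi(x)/\log|f'(x)|\to0$ as $x\to1$ to a uniform bound $\phi\circ\oricodingmap([i])\le\epsilon\log|f'|\circ\oricodingmap([i])$ for large $i$: one must use that, by (NERI1) and \eqref{eq uniformly decay of sylinders}, the cylinders $\bar\Delta_i$ accumulate only at $1$ and shrink, together with acceptability of $\phi$ and of $\log|f'|$ (Proposition \ref{prop acceptable}) to control the supremum over each $[i]$ by the value at a single point plus a universal constant. This bookkeeping with the $\asymp$ relation and the $o(n)$/bounded-oscillation estimates is routine given the tools already assembled in Sections \ref{sec preliminary} and \ref{sec thermodynamic}, but it is the step where a careless argument could go wrong, so it deserves the most attention. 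Everything else is an application of \eqref{eq finite pressure}, (G), and elementary convergence of $\sum i^{-s}$.
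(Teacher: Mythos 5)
Your proof of the inclusion $(s_\infty,\infty)\times\mathbb{R}\subset\fin$ is essentially the same as the paper's, and it is correct: the heart of it is using $\ratio=0$ to bound $\phi\circ\oricodingmap([i])\le\epsilon\log|f'|\circ\oricodingmap([i])$ for large $i$, with $\epsilon$ chosen after $(b,q)$ so that $b-|q|\epsilon>s_\infty$. You are also right that the uniform-over-cylinder estimate needs acceptability (Proposition \ref{prop acceptable}), and the paper makes the same move.

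For $\delta^*\ge s_\infty$, you take the contrapositive route (show that $b<s_\infty$ forces $p(b,q)=\infty$ for every $q$), whereas the paper argues from $b>\delta^*$ toward $b\ge s_\infty$. These are logically equivalent, but your handling of the case $q>0$ has a genuine gap. You claim that ``using the lower bound in (G) and $\phi>0$, we have for $q\ge0$ the estimate $e^{(-q\phi-b\log|f'|)\circ\oricodingmap([i])}\gg i^{-b\growth}$.'' This is false as stated: when $q>0$ and $\phi>0$, the factor $e^{-q\phi\circ\oricodingmap([i])}$ is \emph{at most} $1$, and since $\phi$ is not bounded above a priori this factor can decay arbitrarily fast in $i$, so it can destroy the lower bound you want. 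The positivity of $\phi$ only helps when $q<0$, where it gives $e^{-q\phi}\ge1$ (and that case you did handle correctly). To repair the $q>0$ case you must invoke $\ratio=0$ again, exactly as in the first half: choose $\epsilon>0$ with $q\epsilon+b<s_\infty$, then for $i$ large $\phi\circ\oricodingmap([i])\le\epsilon\log|f'|\circ\oricodingmap([i])$ gives
\[
e^{(-q\phi-b\log|f'|)\circ\oricodingmap([i])}\gg e^{-(q\epsilon+b)\log|f'|\circ\oricodingmap([i])}\asymp i^{-\growth(q\epsilon+b)},
\]
and $\growth(q\epsilon+b)<1$ forces divergence. Once you insert this, your version of the reverse inequality is sound; the paper instead picks $b>\delta^*$, uses (P) and monotonicity in $q$ to produce some $q>0$ with $p(b,q)<\infty$, and then uses $\ratio=0$ to deduce $\sum_i i^{-(b+\epsilon)\growth}<\infty$, hence $b+\epsilon>s_\infty$. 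Both work; the point to fix in yours is that the condition $\ratio=0$ is indispensable in the $q>0$ direction too, not only in the first inclusion.
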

\begin{proof}
    Let $b>s_\infty$. By (P), if for all $q<0$ we have $p(b,q)<\infty$  then for all $q\in\mathbb{R}$ we have $p(b,q)<\infty$. Let $q<0$ and let $\epsilon$ be a strictly positive number such that $b-\epsilon>s_\infty$. Since $\ratio=0$, there exists $N\in\mathbb{N}$ such that for all $i\geq N$ and $x\in \oricodingmap([i])$ we have $-q\phi(x)<\epsilon \log|f'(x)|$. Thus, by \eqref{eq finite pressure geometric pote} and acceptability of the function $\log|f'|$ we obtain 
    \begin{align*}
        \sum_{i=N}^\infty e^{\multipote_{b,q}([i])}\ll \sum_{i=N}^\infty e^{(-(b-\epsilon)\log |f'|)\circ\oricodingmap([i])}<\infty.
    \end{align*}
    By \eqref{eq finite pressure}, we obtain $p(b,q)<\infty$. This also implies $\delta^*\leq s_\infty$. 
    
    We shall show that $\delta^*\geq s_\infty$. Let $\delta^*<b$ and let $\epsilon >0$. Then, by (P), there exists $q>0$ such that we have $p(b,q)<\infty$ and thus, $\sum_{i\in \edge}e^{\multipote_{b,q}([i])}<\infty$. Since $\ratio=0$, there exists $N\in\mathbb{N}$ such that for all $i\geq N$ and $x\in \oricodingmap([i])$ we have $-\epsilon \log|f'|<-q\phi(x)$. Hence, we obtain
       \begin{align*}
        \sum_{i=N}^\infty e^{(-(b+\epsilon)\log |f'|)\circ\oricodingmap([i])}\leq 
        \sum_{i=N}^\infty e^{\multipote_{b,q}([i])}<\infty.
    \end{align*}
    By \eqref{eq finite pressure geometric pote}, this implies that $b+\epsilon>s_\infty$. Letting $\epsilon\to0$, we obtain $b\geq s_\infty$. Since $b$ is an arbitrary number with $\delta^*<b$, we obtain $\delta^*\geq s_\infty$.
\end{proof}

By Remark \ref{rem cheaking} and Lemma \ref{lemma 0 delta*}, we have
\begin{align}\label{eq good 0}
   &\{b(\alpha)\}\times(0,\infty)\subset \mathcal{N} \text{ if $\alpha\in(\alpha_{\inf},\min\frat)$ with $b(\alpha)>\delta^*$ and}
   \\&\{b(\alpha)\}\times(-\infty,0)\subset \mathcal{N} \text{ if $\alpha\in(\max\frat,\alpha_{\sup})$ with $b(\alpha)>\delta^*$.}\nonumber
\end{align}

\begin{prop}\label{prop only frat}
     For all $\alpha\in (\alpha_{\inf},\alpha_{\sup})\setminus \frat$ we have $b(\alpha)<\delta$.
\end{prop}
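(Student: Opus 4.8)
The plan is to show that for $\alpha\in(\alpha_{\inf},\alpha_{\sup})\setminus\frat$ the value $b(\alpha)$ is attained by an expanding equilibrium measure living in $\mathcal N$, and hence is strictly below $\delta$ because the only equilibrium measures for $-\delta\log|f'|$ are the convex combinations of Dirac measures at parabolic fixed points (Proposition \ref{prop equilibrium state Lambda}), which have $\phi$-integral in $\frat$. I would argue by contradiction: suppose there exists $\alpha\in(\alpha_{\inf},\alpha_{\sup})\setminus\frat$ with $b(\alpha)=\delta$. First I would record, using Lemma \ref{lemma lower bound} together with $\delta^*=s_\infty<\delta$ (Lemma \ref{lemma 0 delta*}), that $b(\alpha)=\delta>\delta^*$, so Remark \ref{rem cheaking} and \eqref{eq good 0} apply; in particular, taking $\alpha\in(\alpha_{\inf},\min\frat)$ (the case $\alpha\in(\max\frat,\alpha_{\sup})$ being symmetric), we get $\{\delta\}\times(0,\infty)\subset\mathcal N$ and $p_\alpha(\delta,q)\ge 0$ for all $q$ by Lemma \ref{lemma positivity p alpha}.

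Next I would analyze the function $q\mapsto p_\alpha(\delta,q)=p(\delta,q)+q\alpha$ on $(0,\infty)$. By Theorem \ref{thm regularity of non induced pressure} it is real-analytic and convex on $\mathcal N\cap(\{\delta\}\times(0,\infty))$, with derivative $\alpha-\mu_{\delta,q}(\phi)$. The key point is to compute the right-hand derivative at $q=0$: by Proposition \ref{prop derivatibe of pressure weak conditions} (whose hypotheses — $\ratio=0$, so $\ratio|q_0|<\delta-s_\infty$ is automatic, and $p(\delta,q)>\text{LB}(q)$ for small $q>0$, which holds since $p(\delta,q)\ge p_\alpha(\delta,q)-q\alpha\ge -q\alpha>\text{LB}(q)=-q\underline\alpha_\p$ as $\alpha<\min\frat=\underline\alpha_\p$ — are satisfied) we obtain
\[
p^+_q\bigl(p_\alpha(\delta,\cdot)\bigr)(0)=\alpha+p^+_q(\delta,0)=\alpha-\sup_{\nu\in M_\delta}\nu(\phi).
\]
Since $M_\delta=\conv$ by Proposition \ref{prop equilibrium state Lambda}, we have $\sup_{\nu\in M_\delta}\nu(\phi)=\overline\alpha_\p=\max\frat$, and therefore this one-sided derivative equals $\alpha-\max\frat\le\alpha-\min\frat<0$. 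Combined with $p_\alpha(\delta,0)=p(\delta,0)=P(-\delta\log|f'|)=0$ by \eqref{eq bowen}, convexity forces $p_\alpha(\delta,q)<0$ for all small $q>0$, contradicting $p_\alpha(\delta,q)\ge 0$.

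The main obstacle I anticipate is making the boundary/derivative argument at $q=0$ rigorous, since $q=0$ may sit on the edge of the region $\mathcal N$ where the pressure is real-analytic: one must be careful that the one-sided derivative $p^+_q(\delta,0)$ is the appropriate limit of $\partial_q p(\delta,q)$ as $q\downarrow 0$, which is exactly what Proposition \ref{prop derivatibe of pressure weak conditions} and the convexity of the pressure provide, and that the convexity of $p_\alpha(\delta,\cdot)$ is available on a genuine one-sided neighborhood of $0$. A secondary point is checking the hypothesis $p(\delta,q)>\text{LB}(q)$ uniformly for $q$ in a small interval $(0,q_0]$ rather than just pointwise near $0$; this follows from the bound $p(\delta,q)\ge -q\alpha$ above, valid for all $q>0$, together with $\alpha<\min\frat$. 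Once these are in place the argument closes, and the symmetric treatment on $(\max\frat,\alpha_{\sup})$ — where one uses $p^-_q(\delta,0)=-\inf_{\nu\in M_\delta}\nu(\phi)=-\min\frat$ and the left-hand half of Proposition \ref{prop derivatibe of pressure weak conditions} — completes the proof.
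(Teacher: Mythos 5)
Your proof follows the same route as the paper's: assume $b(\alpha)=\delta$ for $\alpha\in(\alpha_{\inf},\min\frat)$, observe $\palpha(\delta,0)=0$ and $\palpha(\delta,q)\ge 0$ for $q>0$ via Lemma~\ref{lemma positivity p alpha}, and derive a contradiction with the one-sided derivative at $q=0$ computed via Proposition~\ref{prop derivatibe of pressure weak conditions} and Proposition~\ref{prop equilibrium state Lambda}. One small slip: you write $p^+_q(\delta,0)=-\sup_{\nu\in M_\delta}\nu(\phi)=-\max\frat$, but Proposition~\ref{prop derivatibe of pressure weak conditions} gives $p^+_q(\delta,0)=\sup_{\nu\in M_\delta}\{-\nu(\phi)\}=-\inf_{\nu\in M_\delta}\nu(\phi)=-\min\frat$ (and dually $p^-_q(\delta,0)=-\max\frat$, not $-\min\frat$); you have swapped $\sup$ and $\inf$, which also makes your claimed one-sided derivatives violate $p^-_q\le p^+_q$. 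The conclusion survives regardless, since $\alpha<\min\frat\le\max\frat$ makes both $\alpha-\min\frat$ and $\alpha-\max\frat$ negative (and symmetrically for the other side), but the correct identification is $-\min\frat$ for the right derivative as the paper states.
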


\begin{proof}
    We first consider the case $\alpha\in (\alpha_{\inf},\min \frat)$. For a contradiction, we assume that there exists  $\alpha\in (\alpha_{\inf},\min \frat)$ such that $b(\alpha)=\delta$. Then, by Lemma \ref{lemma 0 delta*}, we have $b(\alpha)=\delta>\delta^*$. Hence,
    by Theorem \ref{thm bowen formula} and Lemma \ref{lemma positivity p alpha}, we have $\palpha(\delta,0)=0$ and $\palpha(\delta,q)\geq 0$ for all $q>0$. By the convexity of the function $q\mapsto\palpha(\delta,q)$, we have $(\palpha)_q^+(\delta,0)\geq0$. 
    On the other hand, by Proposition \ref{prop derivatibe of pressure weak conditions} and Proposition \ref{prop equilibrium state Lambda}, we have
     $   (\palpha)_q^+(\delta,0)=\sup_{\nu\in M_\delta}\{-\nu(\phi)\}+\alpha= -\min \frat+\alpha<0.
    $
    This is a contradiction. Therefore, for all $\alpha\in (\alpha_{\inf},\min \frat)$ we have $b(\alpha)<\delta$. By a similar argument, one can show that  for all $\alpha\in (\max \frat,\alpha_{\sup})$ we have $b(\alpha)<\delta$. 
    \end{proof}

\begin{prop}\label{prop relationship 0}
    For each $\alpha\in (\alpha_{\inf},\alpha_{\sup})\setminus\frat$ with $b(\alpha)>s_\infty$ we have $\alpha\in \nice$. Moreover, if $\alpha\in (\alpha_{\inf},\min \frat)$ then $q(\alpha)\in (0,\infty)$ and if $\alpha\in (\max\frat,\alpha_{\sup})$ then $q(\alpha)\in (-\infty,0)$.
\end{prop}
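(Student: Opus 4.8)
The plan is to analyse the convex function $q\mapsto g(q):=\palpha(b(\alpha),q)=p(b(\alpha),q)+q\alpha$ and to produce a point $q_0$ at which it is tangent to $0$, i.e. $g(q_0)=0=g'(q_0)$, with $(b(\alpha),q_0)\in\mathcal{N}$; recall that $q\in\mathcal{Q}$ means precisely that these three conditions hold, and $\alpha\in\nice$ means $\mathcal{Q}$ is a singleton, so existence and uniqueness of such $q_0$ is exactly what we must establish. I treat $\alpha\in(\alpha_{\inf},\min\frat)$; the case $\alpha\in(\max\frat,\alpha_{\sup})$ is symmetric, replacing $(0,\infty)$ by $(-\infty,0)$, "increasing" by "decreasing", and using the second halves of \eqref{eq good 0} and Proposition~\ref{prop monotone}. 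We may assume $\alpha_{\inf}<\alpha_{\sup}$, as otherwise $(\alpha_{\inf},\alpha_{\sup})=\emptyset$. By Proposition~\ref{prop only frat}, $b(\alpha)\in(s_\infty,\delta)$; by Lemma~\ref{lemma 0 delta*} (using $\ratio=0$), $(b(\alpha),q)\in\fin$ for all $q$ and $\delta^*=s_\infty$; and $p(b(\alpha),0)=P(-b(\alpha)\log|f'|)>0$ by Theorem~\ref{thm bowen formula}. Hence $g$ is finite and convex on $\mathbb{R}$, with $g(0)>0$, $g\ge 0$, and $g(q)\to\infty$ as $|q|\to\infty$ (Lemma~\ref{lemma positivity p alpha}). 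Since $p(b(\alpha),0)>0=\text{LB}(0)$, \eqref{eq good 0} gives $\{b(\alpha)\}\times[0,\infty)\subset\mathcal{N}$, so by Theorem~\ref{thm regularity of non induced pressure} $g$ is real-analytic on $[0,\infty)$, $g'(q)=\alpha-\mu_{b(\alpha),q}(\phi)$, and $g''(q)=\tfrac{\partial^2}{\partial q^2}p(b(\alpha),q)>0$ there because $\alpha_{\inf}<\alpha_{\sup}$.

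The heart of the proof is to show $g'(0)<0$, i.e. $\mu_{b(\alpha),0}(\phi)>\alpha$. First note $\beta_0:=\mu_{b(\alpha),0}(\phi)\in(\alpha_{\inf},\alpha_{\sup})$: since $(b(\alpha),0)$ is interior to $\mathcal{N}$, the map $q\mapsto\mu_{b(\alpha),q}(\phi)$ is real-analytic and, by $g''>0$, strictly decreasing on a neighbourhood of $0$, while each $\mu_{b(\alpha),q}$ there lies in $M(f)$ (Lemma~\ref{lemma finite return time} gives $\measure_{b(\alpha),q}(\return)<\infty$), whence $\alpha_{\inf}\le\mu_{b(\alpha),q}(\phi)\le\alpha_{\sup}$; strict monotonicity then forces $\beta_0$ strictly between. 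Next, since $p(b(\alpha),0)>0$ and $\lambda(\mu_{b(\alpha),0})<\infty$, the equilibrium measure $\mu_{b(\alpha),0}$ for $-b(\alpha)\log|f'|$ (Theorem~\ref{thm uniquness and existence of the equilibrium state}) satisfies $\dim_H(\mu_{b(\alpha),0})=h(\mu_{b(\alpha),0})/\lambda(\mu_{b(\alpha),0})=b(\alpha)+p(b(\alpha),0)/\lambda(\mu_{b(\alpha),0})>b(\alpha)$, so $b(\beta_0)\ge\dim_H(\mu_{b(\alpha),0})>b(\alpha)$ by Theorem~\ref{thm coditional variational principle}. On the other hand, Proposition~\ref{prop only frat} makes Proposition~\ref{prop monotone} applicable, so $b$ is monotone increasing on $(\alpha_{\inf},\min\frat)$; thus $\beta_0\le\alpha$ would give $\beta_0\in(\alpha_{\inf},\alpha]$ and $b(\beta_0)\le b(\alpha)$, a contradiction. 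Therefore $\beta_0>\alpha$, i.e. $g'(0)<0$.

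Now $g$ is convex on $[0,\infty)$ with $g'(0)<0$ and $g(q)\to\infty$, so it attains its minimum over $[0,\infty)$ at a unique $q_0>0$, where $g'(q_0)=0$. Since $(b(\alpha),q_0)\in\mathcal{N}$ and $\tfrac{\partial}{\partial q}\palpha(b(\alpha),q_0)=g'(q_0)=0$, Lemma~\ref{lemma sufficent condition for nice} yields $\palpha(b(\alpha),q_0)=g(q_0)=0$; hence $q_0\in\mathcal{Q}$. For uniqueness, let $q\in\mathcal{Q}$. The value $q=0$ is impossible since $g(0)>0$; and $q<0$ is impossible since $(b(\alpha),q)\in\mathcal{N}$ forces $p(b(\alpha),q)>\text{LB}(q)=-q\overline{\alpha}_\p$, whereas $\palpha(b(\alpha),q)=0$ forces $p(b(\alpha),q)=-q\alpha<-q\overline{\alpha}_\p$ (as $q<0$ and $\alpha<\min\frat\le\overline{\alpha}_\p$). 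So $\mathcal{Q}\subset(0,\infty)$, where strict convexity of $g$ together with $q\in\mathcal{Q}\Rightarrow g'(q)=0$ give $q=q_0$. Hence $\mathcal{Q}=\{q_0\}$, so $\alpha\in\nice$ and $q(\alpha)=q_0\in(0,\infty)$.

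The main obstacle is the second paragraph, and in particular showing that the tangency occurs at a \emph{positive} $q$, i.e. $\mu_{b(\alpha),0}(\phi)>\alpha$: this is exactly where one must combine the dimension estimate for the equilibrium measure $\mu_{b(\alpha),0}$, the conditional variational principle, and the monotonicity of $b$ off $\frat$ (Propositions~\ref{prop only frat} and \ref{prop monotone}). Everything else is a routine consequence of the convexity and analyticity of the pressure established in Section~\ref{sec thermodynamic}, together with Lemmas~\ref{lemma positivity p alpha} and \ref{lemma sufficent condition for nice}.
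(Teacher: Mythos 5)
Your proof is correct, and for the crucial step — establishing $\frac{\partial}{\partial q}\palpha(b(\alpha),0)<0$, i.e.\ $\mu_{b(\alpha),0}(\phi)>\alpha$ — it follows a genuinely different route from the paper's. The paper argues by contradiction: assuming the derivative is $\geq 0$, it invokes Lemma~\ref{lemma equilibrium measure near Delta} (itself resting on the tightness and convergence machinery of Section~\ref{sec convergence}) to produce $b_0\in(b(\alpha),\delta)$ with $\mu_{b_0}(\phi)\geq\min\frat-\epsilon$, then uses continuity of $b\mapsto\frac{\partial}{\partial q}\palpha(b,0)$ and an intermediate value argument to locate $b'\in[b(\alpha),b_0)$ where the derivative vanishes, and finally applies the reasoning behind Lemma~\ref{lemma sufficent condition for nice} to conclude $\palpha(b',0)=0$, contradicting $b'<\delta$. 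You instead compare dimensions directly: writing $\beta_0=\mu_{b(\alpha),0}(\phi)$, the identity $p(b(\alpha),0)=h(\mu_{b(\alpha),0})-b(\alpha)\lambda(\mu_{b(\alpha),0})>0$ gives $\dim_H(\mu_{b(\alpha),0})>b(\alpha)$, Theorem~\ref{thm coditional variational principle} then gives $b(\beta_0)\geq\dim_H(\mu_{b(\alpha),0})>b(\alpha)$, and monotonicity of $b$ on $(\alpha_{\inf},\min\frat)$ (Proposition~\ref{prop monotone}, made applicable by Proposition~\ref{prop only frat}) forces $\beta_0>\alpha$. Your route bypasses Lemma~\ref{lemma equilibrium measure near Delta} and the intermediate value step entirely; it does lean on Proposition~\ref{prop monotone}, but since the paper's version also needs Proposition~\ref{prop only frat} (to know $b(\alpha)<\delta$), the overall logical dependencies are comparable, and your argument is shorter and more transparent. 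A small additional merit: you explicitly exclude $q\leq 0$ from $\mathcal{Q}$ in the final paragraph (via $-q\alpha<\text{LB}(q)$ for $q<0$, and $\palpha(b(\alpha),0)>0$), whereas the paper only establishes uniqueness of the critical point within $(0,\infty)$ and leaves the exclusion of $q\leq0$ implicit (it does follow from convexity of $q\mapsto\palpha(b(\alpha),q)$ together with $\palpha(b(\alpha),0)>0$, but your version spells it out).
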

\begin{proof}
If $\alpha_{\inf}=\alpha_{\sup}$, there is nothing to prove. Thus, we assume that $\alpha_{\inf}<\alpha_{\sup}$.
Let $\alpha\in(\alpha_{\inf},\min \frat)$ with $b(\alpha)>s_\infty$. 
By Theorem \ref{thm bowen formula}, for all $b\in (s_\infty,\delta)$ we have $\palpha(b,0)>0$ and thus, $(s_\infty,\delta)\times\{0\}\in \mathcal{N}$. 
Hence, by \eqref{eq good 0} and Theorem \ref{thm regularity of non induced pressure}, the function $\palpha$ is real-analytic on a open set $\mathcal{O}$ containing $\{b(\alpha)\}\times [0,\infty)\cup (s_\infty,\delta)\times \{0\}$. We will show that 
\begin{align}\label{eq proof of relation}
\frac{\partial}{\partial q}\palpha(b(\alpha),0)<0.    
\end{align}
For a contradiction, we assume that 
\begin{align}\label{eq proof of relation contradiction}
\frac{\partial}{\partial q}\palpha(b(\alpha),0)\geq 0.    
\end{align}
We take a small number $\epsilon>0$ such that  $\alpha<\min \frat-\epsilon$. By Lemma \ref{lemma equilibrium measure near Delta}, there exists $b_0\in(b(\alpha),\delta)$ such that  $\min\frat-\epsilon\leq \mu_{b_0}(\phi)$. Then, by Theorem \ref{thm regularity of non induced pressure}, we have 
\begin{align*}
    \frac{\partial}{\partial q}\palpha(b_0,0)=-\mu_{b_0}(\phi)+\alpha<-\min \frat+\epsilon+\alpha<0.
\end{align*}
Combining this with \eqref{eq proof of relation contradiction} and using the continuity of the function $b\in (s_\infty,\delta)\mapsto ({\partial}/{\partial q})\palpha(b,0)$, there exists $b'\in [b(\alpha),b_0)$ such that $({\partial}/{\partial q})\palpha(b',0)=0$. By Lemma \ref{lemma sufficent condition for nice}, this yields that $\palpha(b',0)=0$.
Since $b'<b_0<\delta$, this contradicts $0<\palpha(b',0)$ and we obtain \eqref{eq proof of relation}. 
Since we assume that $\alpha_{\inf}<\alpha_{\sup}$, Theorem \ref{thm regularity of non induced pressure} yields that the function $q\mapsto\palpha(b(\alpha),q)$ is strictly convex on $[0,\infty)$. Thus, by \eqref{eq proof of relation} and Lemma \ref{lemma positivity p alpha},  there exists a unique number $q(\alpha)\in (0,\infty)$ such that $({\partial}/{\partial q})\palpha(b(\alpha),q(\alpha))=0$. Therefore, by Lemma \ref{lemma sufficent condition for nice}, we obtain $\alpha\in\nice$. By a similar argument, we can show that for all $\alpha\in (\max A, \alpha_{\sup})$ with $b(\alpha)>s_\infty$ we have $\alpha\in\nice$.
\end{proof}

\subsection{Analysis of the set $\nice$ under the condition $\ratio=\infty$}

\begin{lemma}\label{lemma infinite delta*}
    We assume that $\ratio=\infty$. Then, we have $ \mathbb{R}\times(0,\infty)\cup(s_\infty,\infty)\times\{0\}=\fin$.
    In particular, $\delta^*=0$.
\end{lemma}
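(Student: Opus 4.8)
The plan is to work directly with the criterion \eqref{eq finite pressure}: $(b,q)\in\fin$ if and only if $\sum_{i\in\edge}\exp\big((-q\phi-b\log|f'|)\circ\oricodingmap([i])\big)<\infty$. Since $\phi$ and $\log|f'|$ are acceptable, for each fixed $(b,q)$ the oscillation of $-q\phi-b\log|f'|$ on every $I_i$ is bounded by a constant depending only on $(b,q)$; hence, choosing any $y_i\in I_i$, the series above is comparable to the ``test series'' $\sum_{i\in\edge}\exp\big(-q\phi(y_i)-b\log|f'(y_i)|\big)$, which I would use to decide convergence. By (NERI1) we have $y_i\to1$, and by (G) that $\log|f'(y_i)|=\growth\log i+O(1)$, so $\log|f'(y_i)|\to\infty$; combined with $\ratio=\infty$ this gives $\phi(y_i)/\log|f'(y_i)|\to\infty$.

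I would then split into three cases. If $q>0$, fix $b\in\mathbb{R}$ and choose $M>0$ with $qM+b>\growth^{-1}=s_\infty$. Since $\ratio=\infty$, there is $N$ (which we may take larger than $\#\p$, so that $i\ge N$ forces $i\in\h$) with $\phi(x)\ge M\log|f'(x)|$ for all $i\ge N$, $x\in\oricodingmap([i])$, whence $-q\phi(y_i)-b\log|f'(y_i)|\le-(qM+b)\log|f'(y_i)|$ for $i\ge N$; by (G) the tail of the test series is dominated by $\sum_{i\ge N}i^{-(qM+b)\growth}<\infty$, so $(b,q)\in\fin$. Thus $\mathbb{R}\times(0,\infty)\subset\fin$. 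If $q=0$, \eqref{eq finite pressure geometric pote} gives $\sum_{i\in\edge}e^{(-b\log|f'|)\circ\oricodingmap([i])}\asymp\sum_{i=1}^\infty i^{-b\growth}$, which converges precisely when $b>\growth^{-1}=s_\infty$, so $(b,0)\in\fin$ if and only if $b>s_\infty$. If $q<0$, writing $|q|=-q$,
\[
-q\phi(y_i)-b\log|f'(y_i)|=\log|f'(y_i)|\Big(|q|\,\frac{\phi(y_i)}{\log|f'(y_i)|}-b\Big)\longrightarrow\infty
\]
as $i\to\infty$, so the general term of the test series does not tend to $0$, the series diverges, and $(b,q)\notin\fin$. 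Combining the three cases yields $\fin=\mathbb{R}\times(0,\infty)\cup(s_\infty,\infty)\times\{0\}$.

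Finally I would identify $\delta^*$. Since $\growth>1/\delta$ we have $\delta>\growth^{-1}=s_\infty\ge0$, so $[0,\delta]$ is nondegenerate; and for every $b\in[0,\delta]$ we have $(b,1)\in\mathbb{R}\times(0,\infty)\subset\fin$, i.e.\ $p(b,1)<\infty$. Hence $\{b\in[0,\delta]:p(b,q)<\infty\text{ for some }q\in\mathbb{R}\}=[0,\delta]$, so $\delta^*=\inf[0,\delta]=0$. I expect the case $q<0$ to be the main (in fact the only real) obstacle: it is where $\ratio=\infty$ is used in its ``growth'' direction, and it depends crucially on (NERI1) to ensure the cylinders $I_i$ accumulate only at $1$, so that $\ratio=\infty$ forces the summands to blow up. The cases $q>0$ and $q=0$ are routine consequences of (G) and \eqref{eq finite pressure geometric pote}.
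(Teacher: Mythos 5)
Your proposal is correct and takes essentially the same route as the paper: both reduce to the summability criterion \eqref{eq finite pressure}, handle $q=0$ via \eqref{eq finite pressure geometric pote}, and handle $q>0$ by choosing $M$ with $qM+b>s_\infty$ and using $\phi\geq M\log|f'|$ near $1$ together with (G). You simply spell out the $q<0$ case (which the paper dispatches with ``by a similar argument'') and the $\delta^*=0$ deduction, both of which match the paper's intent.
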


\begin{proof}
    We assume that $\ratio=\infty$. By \eqref{eq finite pressure geometric pote}, we have $(s_\infty,\infty)\times\{0\}\subset\fin$ and for all $b\leq s_\infty$ we have $(b,0)\notin\fin$. Let $(b,q)\in \mathbb{R}\times (0,\infty)$. We take a large number $M>0$ with $qM+b>s_\infty$. Since $\ratio=\infty$, there exists $N\in\mathbb{N}$ such that for all $i\geq N$ and $x\in \oricodingmap([i])$ we have $\phi(x)\geq M\log|f'(x)|$. Thus, by (G) we obtain 
    \begin{align*}
        \sum_{i=N}^{\infty}e^{\multipote_{b,q}([i])}\leq \sum_{i=N}^{\infty}e^{(-qM\log|f'|)\circ\oricodingmap([i])}e^{(-b\log|f'|)\circ\oricodingmap([i])}\asymp\sum_{i=N}^\infty\frac{1}{i^{\growth(qM+b)}}.
    \end{align*}
    Since $qM+b>s_\infty=\growth^{-1}$, we obtain $p(b,q)<\infty$. By a similar argument, one can show that for all $(b,q)\in \mathbb{R}\times(-\infty,0)$ we have $(b,q)\notin \fin$.
\end{proof}

Combining Lemma \ref{lemma infinite delta*} with Lemma \ref{lemma weak lower bound}, we can see that if $\ratio=\infty$ then for all $\alpha\in (\alpha_{\inf},\alpha_{\sup})$ we have 
$b(\alpha)>\delta^*.$
 By Remark \ref{rem cheaking} and Lemma \ref{lemma infinite delta*}, if $\ratio=\infty$ and $\alpha\in(\alpha_{\inf},\min \frat)$ then we have
\begin{align}\label{eq nice infinity}
\{b(\alpha)\}\times(0,\infty)\subset\mathcal{N}.    
\end{align}

\begin{prop}\label{prop only frat infinity}
    Assume that $\ratio=\infty$. Then, for all $\alpha\in (\alpha_{\inf},\min\frat)$ we have $b(\alpha)<\delta$.
\end{prop}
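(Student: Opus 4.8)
The plan is to argue by contradiction, in the spirit of the proof of Proposition \ref{prop only frat}; but since Proposition \ref{prop derivatibe of pressure weak conditions} is now unavailable (its hypothesis $\ratio|q|<\dimension-s_\infty$ fails for every $q\neq 0$ when $\ratio=\infty$), the computation of the one-sided derivative of the pressure at $q=0$ will be replaced by a direct analysis of the induced equilibrium measures $\codingmeasure_{\dimension,q}$ as $q\to 0^+$, into which we feed the fact $\codingmeasure_{\dimension,0}(\return\circ\codingmap)=\infty$ from Theorem \ref{thm maximal measure and liftable problem}.

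Suppose $b(\alpha)=\dimension$ for some $\alpha\in(\alpha_{\inf},\min\frat)$. Since $b(\alpha)=\dimension>0=\delta^*$ (Lemma \ref{lemma infinite delta*} and the remark following it), \eqref{eq nice infinity} gives $\{\dimension\}\times(0,\infty)\subset\mathcal{N}$, so $p$ is real-analytic there and $\palpha(\dimension,q)=p(\dimension,q)+q\alpha$ is convex in $q$. Moreover $\palpha(\dimension,\cdot)\geq 0$ by Lemma \ref{lemma positivity p alpha}, and $\palpha(\dimension,q)\to 0$ as $q\to 0^+$: by (P) we have $-q\phi-\dimension\log|f'|\leq-\dimension\log|f'|$, so $p(\dimension,q)\leq p(\dimension,0)=0$ (see \eqref{eq bowen}), while $\palpha(\dimension,q)\geq 0$ forces $p(\dimension,q)\geq-q\alpha\to 0$. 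Convexity together with $\palpha(\dimension,\cdot)\geq 0$ and $\palpha(\dimension,q)\to 0$ then yields $\lim_{q\to 0^+}\partial_q\palpha(\dimension,q)\geq 0$, and by Ruelle's formula (Theorem \ref{thm regularity of non induced pressure}) $\partial_q\palpha(\dimension,q)=-\mu_{\dimension,q}(\phi)+\alpha$; since $q\mapsto\mu_{\dimension,q}(\phi)$ is then monotone, the limit $m_*:=\lim_{q\to 0^+}\mu_{\dimension,q}(\phi)$ exists and satisfies $m_*\leq\alpha$.

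The contradiction comes from the reverse estimate $m_*\geq\min\frat$. First I would show that for every $\epsilon\in(0,\min\frat)$ there is $D_\epsilon>0$ with $\indupote\geq(\min\frat-\epsilon)\return-D_\epsilon$ on $\indulimit$: choose $L$ so that $|\phi(y)-\alpha_i|<\epsilon$ for all $i\in\p$ and $y\in I_{i^\ell}$ with $\ell\geq L$ (possible since $\phi$ is continuous at $x_i$ and $I_{i^\ell}\downarrow\{x_i\}$); on a cylinder $I_{ji^n\edge_i}$, on which $\return=n$, the relevant orbit passes through $I_{i^\ell}$ for $\ell=n,n-1,\dots,2$, so $\indupote\geq(n+1-L)(\min\frat-\epsilon)$ there, while $\indupote\geq 0$ by (P) (and $\indupote=\phi\geq 0$ on the cylinders $I_{j\h}$, where $\return=1$); take $D_\epsilon=L\cdot\min\frat$. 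Since $\codingmeasure_{\dimension,q}$ is ergodic, the Abramov--Kac formula \eqref{eq Abramov-Kac's formula} applied to $\measure_{\dimension,q}=\codingmeasure_{\dimension,q}\circ\codingmap^{-1}$ gives
\[
\mu_{\dimension,q}(\phi)=\frac{\measure_{\dimension,q}(\indupote)}{\measure_{\dimension,q}(\return)}\geq(\min\frat-\epsilon)-\frac{D_\epsilon}{\codingmeasure_{\dimension,q}(\return\circ\codingmap)}.
\]
Finally, as $q\to 0^+$ we have $p(\dimension,q)\to 0$, so the Gibbs measure $\codingmeasure_{\dimension,q}$ for $(-q\indupote-\dimension\log|\indumap'|-p(\dimension,q)\return)\circ\codingmap$ converges on cylinders to the Gibbs measure $\codingmeasure_{\dimension,0}$ for $-\dimension\log|\indumap'\circ\codingmap|$ (standard for the finitely primitive shift $\inducoding$, cf.\ \cite[Section 2]{mauldin2003graph}), whence by Fatou $\liminf_{q\to 0^+}\codingmeasure_{\dimension,q}(\return\circ\codingmap)\geq\codingmeasure_{\dimension,0}(\return\circ\codingmap)=\infty$ by Theorem \ref{thm maximal measure and liftable problem}. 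Therefore $m_*\geq\min\frat-\epsilon$ for every $\epsilon$, i.e.\ $m_*\geq\min\frat>\alpha$, contradicting $m_*\leq\alpha$.

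I expect the main obstacle to be this last step: coupling the uniform lower bound $\indupote\gtrsim(\min\frat)\return$ near the parabolic points with the divergence $\codingmeasure_{\dimension,q}(\return\circ\codingmap)\to\infty$ is precisely the mechanism by which the infinite return time of the measure of maximal dimension (Theorem \ref{thm maximal measure and liftable problem}) is converted into the strict inequality $b(\alpha)<\dimension$. The two points requiring the most care are the cylinder-wise convergence $\codingmeasure_{\dimension,q}\to\codingmeasure_{\dimension,0}$ as $q\to 0^+$ and the legitimacy of passing the limit $q\to 0^+$ through Ruelle's formula on the boundary of $\mathcal{N}$.
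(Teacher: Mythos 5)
Your proposal is correct, and it takes a genuinely different route from the paper's. The paper's proof splits into two cases according to whether $\limsup_{q\to 0^+}\mu_{\dimension,q}(\phi)$ is finite: in the infinite case the contradiction is immediate from $(\palpha)_q^+(\dimension,0)=-\infty$, while in the finite case it invokes the full convergence machinery of Section~\ref{sec convergence} — verifying (T1.1), (T1.3), (T2), (T3) for the family $\{(\dimension,q_n)\}$, extracting a weak$^*$-convergent subsequence via Lemma~\ref{lemma tight}, proving the limit lies in $M_{\dimension}$ via Lemma~\ref{lemma entropy} and Lemma~\ref{lemma uniformly integrable}, and finally using the classification $M_{\dimension}=\conv$ from Proposition~\ref{prop equilibrium state Lambda} to pin $\mu_\infty^*(\phi)\in\frat$. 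Your argument bypasses both the two-case split and the tightness/entropy-semicontinuity apparatus: the convexity observation already forces $m_*\leq\alpha<\infty$ (which quietly subsumes the paper's case~(1)), and the reverse bound $m_*\geq\min\frat$ is obtained directly from the Abramov--Kac formula together with the elementary uniform estimate $\indupote\geq(\min\frat-\epsilon)\return-D_\epsilon$ and the divergence $\codingmeasure_{\dimension,q}(\return\circ\codingmap)\to\infty$. What the paper's route buys is that it reuses infrastructure already built for Theorem~\ref{thm convergence of the equilibrium state} and avoids any discussion of Gibbs-constant uniformity; what your route buys is a shorter, self-contained argument that converts the mechanism behind the result (the maximal-dimension measure has infinite expected return time, Theorem~\ref{thm maximal measure and liftable problem}) into the conclusion $b(\alpha)<\dimension$ in one stroke via Fatou. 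The one step that deserves to be fleshed out is the uniformity of the Gibbs constant for $\indumultipote_{\dimension,q}$ as $q\to 0^+$: it holds because the locally H\"older constant of $\indumultipote_{\dimension,q}$ is bounded by $|q|$ times that of $\indupote\circ\codingmap$ plus $\dimension$ times that of $\log|\indumap'\circ\codingmap|$ (the $\return$ term contributes nothing, being constant on one-cylinders), together with the fact from Theorem~\ref{thm uniquness and existence of the equilibrium state} that $\indupressure(\dimension,q,p(\dimension,q))=0$ throughout; with that uniform $Q$, only the lower bound $\liminf_{q\to 0^+}\codingmeasure_{\dimension,q}([\omega])\geq Q^{-2}\codingmeasure_{\dimension,0}([\omega])$ on each cylinder is needed, not full convergence, and Fatou does the rest.
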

\begin{proof}
    For a contradiction, we assume that there exists $\alpha\in (\alpha_{\inf},\min\frat)$ such that $b(\alpha)=\delta$. Then, by \eqref{eq nice infinity}, there are two possible cases: (1) $\limsup_{q\to+0}\mu_{\delta,q}(\phi)=\infty$. (2) $\limsup_{q\to+0}\mu_{\delta,q}(\phi)<\infty$. 

We first assume that we are in the case (1). Then, there exists a sequence $\{q_n\}_{n\in\mathbb{N}}\subset(0,\infty)$ such that $\lim_{n\to\infty}q_n=0$ and $\lim_{n\to\infty}\mu_{\delta,q_n}(\phi)=\infty$. 
By Theorem \ref{thm regularity of non induced pressure} and the convexity of the function $q\in [0,\infty)\mapsto p(\delta,q)$, we obtain 
\[
(\palpha)^+_q(\delta,0)=\lim_{n\to\infty}\frac{\partial}{\partial q}p(\delta,q_n)+\alpha=-\lim_{n\to\infty}\mu_{\delta,q_n}(\phi)+\alpha=-\infty.
\]
Since $\palpha(\delta,0)=p(\delta,0)+0=0$ this implies that there exists $q_0\in(0,\infty)$ such that $\palpha(\delta,q_0)<0$. This contradicts Lemma \ref{lemma positivity p alpha}. 

Next, we assume that we are in the case (2). Let $\{q_n\}_{n\in\mathbb{N}}$ be a sequence of $(0,\infty)$ such that for all $n\in\mathbb{N}$ we have $q_n>q_{n+1}$ and $\lim_{n\to\infty}q_n=0$. We will show that $\{(\delta,q_n)\}_{n\in\mathbb{N}}$ satisfies (T1.1), (T1,3), (T2) and (T3) in Section \ref{sec thermodynamic}. 
Since 
$\lim_{n\to\infty}\indupressure(\delta,q_n,p(\delta,q_n))=\indupressure(\delta,0,0)=0$
and $(\delta,q_n)\in \mathcal{N}$ for all $n\in\mathbb{N}$
, $\{(\delta,q_n)\}_{n\in\mathbb{N}}$ satisfies (T2) and (T3). 
Since for all $n\in\mathbb{N}$ we have $q_n>0$, \eqref{eq finite pressure geometric pote} yields that $\{(\delta,q_n)\}_{n\in\mathbb{N}}$ satisfies (T1,3). 
Since $\lim_{y\to\infty}e^{-y}y=0$ and $\lim_{y\to0}e^{-y}y=0$, we have $\sup_{y\in [0,\infty)}e^{-y}y<\infty$. Therefore, we obtain
\[
\sup_{i\in\edge}\sup_{n\in\mathbb{N}}e^{-q_n\phi\circ\oricodingmap([i])}q_n \phi\circ \oricodingmap([i])<\infty.
\]
Therefore, by the acceptability of $\phi$ and $\log|f'|$, for all $n\in\mathbb{N}$ we have
\begin{align*}
    &\sum_{i\in\edge}e^{\multipote_{\delta,q_n}([i])}|\multipote_{\delta,q_n}|([i])
    \ll\sum_{i\in\edge}e^{(-\delta\log|f'|\circ\oricodingmap)([i])}\log|f'|\circ\oricodingmap([i])
    \\&+
    \sum_{i\in\edge}e^{(-\delta\log|f'|\circ\oricodingmap)([i])}
    e^{-q_n\phi\circ\oricodingmap([i])}q_n \phi\circ \oricodingmap([i])
    \ll\sum_{i\in\edge}e^{(-\delta\log|f'|\circ\oricodingmap)([i])}\log|f'|\circ\oricodingmap([i]).
\end{align*}
Hence, by \eqref{eq finite pressure geometric pote}, $\{q_n\}_{n\in\mathbb{N}}$ satisfies (T1.1). Thus, by Lemma \ref{lemma tight} and Lemma \ref{lemma uniformly integrable}, there exist a subsequence $\{q_{n_{k}}\}_{k\in\mathbb{N}}$ of $\{q_n\}_{n\in\mathbb{N}}$ and $\mu_\infty^*\in M(f)$ such that we have $\lim_{k\to\infty}\mu_{\delta,q_{n_k}}=\mu_\infty^*$ and $\lim_{k\to\infty}\lambda(\mu_{\delta,q_{n_k}})=\lambda(\mu_\infty^*)$. 
Then, $\mu_\delta^*\in M_\delta$. Indeed, 
since $\limsup_{q\to+0}\mu_{\delta,q}(\phi)<\infty$, we have $\lim_{k\to\infty}q_{n_k}\mu_{\delta,q_{n_k}}(\phi)=0$. Therefore, 
by Lemma \ref{lemma entropy}, we have 
    \begin{align*}
        &p(\delta,0)=
        \limsup_{k\to\infty}p(\delta,q_{n_k})=
        \limsup_{k\to\infty}(h(\mu_{\delta,q_{n_k}})-q_{n_k}\mu_{\delta,q_{n_k}}(\phi)-\delta\lambda(\mu_{\delta,q_{n_k}}))
        \\&\leq h(\mu_\infty^*)-\delta\lambda(\mu_\infty^*). 
    \end{align*}
    Hence, by Proposition \ref{prop equilibrium state Lambda}, $\mu_\infty^*(\phi)\in\frat$.
    By the convexity of $q\mapsto p(\delta,q)$ on $[0,\infty)$, we obtain
\begin{align*}
    p^+_q(\delta,0)=\lim_{k\to\infty}\frac{\partial}{\partial q}p(\delta,q_{n_k})=-\lim_{k\to\infty}\mu_{\delta,q_{n_k}}(\phi)=-\mu_{\infty}^*(\phi).
\end{align*}
Therefore, since $\alpha<\min\frat$,  we obtain
\begin{align}\label{eq proof only frat infinity}
    (\palpha)_{q}^{+}(\delta,0)=-\mu_{\infty}^*(\phi)+\alpha\leq-\min\frat+\alpha<0.
\end{align}
Since $\palpha(\delta,0)=p(\delta,0)+0=0$, this implies that there exists $q_0\in(0,\infty)$ such that $\palpha(\delta,q_0)<0$. This contradicts Lemma \ref{lemma positivity p alpha}. Therefore, we are done.
\end{proof}

Let $F$ be a finite subset of $\edge$ and let $\Lambda_F:=\oricodingmap(F^\mathbb{N})$. 
Since $\Lambda_F$ is $f$-invariant set, we can consider the dynamical system $f_F:=f|_{\Lambda_F}$.
We denote by $M(f,F)$ the set of $f_F$-invariant Borel probability measures supported by $\Lambda_F$. We define the topological pressure of $-q\phi-b\log|f'|$ ($(b,q)\in\mathbb{R}^2$) with respect to the dynamical system $(f_F,\Lambda_F)$ by 
\[
p_F(b,q):=\sup\{h(\mu)+\mu(-q\phi-b\log|f'|):\mu\in M(f,F)\}.
\]
Then, by Remark \ref{rem measurable bijection oricoding}, for all $(b,q)\in\mathbb{R}^2$ we have $p_F(b,q)=(P_\sigma)_F(\multipote_{b,q})$. Therefore, by Theorem \ref{thm compact approximation}, for all $(b,q)\in\mathbb{R}^2$ we obtain
\[
p(b,q)=\sup\{p_F(b,q):F\subset \edge,\ \#F<\infty\}.
\]
Define
$\mathcal{N}_F:=\{(b,q)\in \mathbb{R}^2:p_F(b,q)>\LB\}.$

Let $F$ be a finite subset of $\edge$ such that $\p\subset F$ and $F\cap \h\neq \emptyset$.
Let $\h_F:=\h\cap F$ and let $F_j:=F\setminus\{j\}$ ($j\in\p$). We set 
\begin{align*}
&\edgepart_{1,F}:=
\bigcup_{i\in \edge}\{i\h_F\}\cup\bigcup_{i\in \p}\bigcup_{j\in F_i}\{jiF_i\},\ \edgepart_{n,F}:=\bigcup_{i\in \p}\bigcup_{j\in F_{i}}\{ji^nF_i\}\ \text{for $n\geq2$}
\\&\text{ and }
\edgesindu_F:=\bigcup_{n\in\mathbb{N}}\edgepart_{n,F}.
\end{align*}
We define the Markov shift $\tilde{\Sigma}_{B,F}$ with the finite alphabet $\edgesindu_F$ and the coding map $\codingmap_F:\tilde \Sigma_{B,F}\rightarrow \indulimit_F:=\codingmap(\tilde \Sigma_{B,F})$ in the same way as the countable Markov shift $\inducoding$ with the alphabet $\edgesindu$ and the coding map $\codingmap:\inducoding\rightarrow \indulimit$. We denote by $\shift_F$ the left-shift map on $\tilde \Sigma_{B,F}$. 
\begin{rem}\label{rem compact thermodynamic}
Let $F$ be a finite subset of $\edge$  such that $\p\subset F$ and $F\cap \h\neq \emptyset$. 
      By \cite[Theorem 3.3 and Theorem 3.4]{arimanonuniformly}, for all $(b,q)\in \mathcal{N}_F$ there exists a unique measure $\mu_{b,q,F}\in M(f,F)$ such that $\mu_{b,q,F}(\indulimit_F)>0$, 
  $p_F(b,q)=h(\mu_{b,q,F})+\mu_{b,q,F}(-q\phi-b\log|f'|)$ and the function $(b,q)\mapsto p(b,q)$ is real-analytic on $\mathcal{N}_F$.
   By the Remark \ref{rem measurable bijection} for all $(b,q)\in \mathcal{N}_F$ and the measure $\measure_{b,q,F}:=(\mu_{b,q,F}(\indulimit_F))^{-1} \mu_{b,q,F}|_{\indulimit_F}$ there exists a $\shift_F$-invariant Borel probability measure $\codingmeasure_{b,q,F}$ supported on $\tilde \Sigma_{B,F}$ such that $\measure_{b,q,F}=\codingmeasure_{b,q,F}\circ\codingmap_F^{-1}$ and $h(\measure_{b,q,F})=h(\codingmeasure_{b,q,F})$. \cite[Theorem 3.3]{arimanonuniformly} also yields that for all $(b,q)\in \mathcal{N}_F$ the measure $\codingmeasure_{b,q,F}$ is the unique ergodic $\shift_F$-invariant Gibbs measure for $(-q\indupote-b\log|\indumap'|-p_F(b,q)\return)\circ\codingmap_F$ with respect to $(\tilde \Sigma_{B,F},\shift_F)$. 
\end{rem}
For all $\alpha\in (\alpha_{\inf},\alpha_{\sup})$ and $(b,q)\in\mathbb{R}^2$ we set $p_{\alpha,F}(b,q):=p_F(b,q)+q\alpha$.
The following lemma follows from exactly same arguments in the proof of \cite[Lemma 3.2]{transience} (see also \cite[Lemma 5.2]{A.}) involving the definition of the topological pressure (the variational principle) and the compact approximation property of the topological pressure (Theorem \ref{thm compact approximation}).

\begin{lemma}\label{lemma compact approximation}
    If $\alpha\in (\alpha_{\inf},\alpha_{\sup})$, $b>0$ and $\inf\{\palpha(b,q):q\in\mathbb{R}\}>0$ then there exists a finite set $F\subset \edge$ with $\p \subset F$ and $F\cap\h\neq \emptyset$ satisfying the following properties:  
    \begin{itemize}
        \item[(C1)] For all $q\in\mathbb{R}$ we have $p_{\alpha,F}(b,q)>0$.
        \item[(C2)] We have $\lim_{|q|\to\infty} p_{\alpha,F}(b,q)=\infty$.
    \end{itemize} 
\end{lemma}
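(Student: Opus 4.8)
The plan is to exploit the compact approximation property for the topological pressure (Theorem \ref{thm compact approximation}), which asserts $p(b,q)=\sup\{p_F(b,q):F\subset\edge,\ \#F<\infty\}$, together with the fact that this approximation is monotone in $F$. Fix $\alpha\in(\alpha_{\inf},\alpha_{\sup})$, $b>0$, and write $m:=\inf\{\palpha(b,q):q\in\mathbb{R}\}>0$. First I would record that $p_{\alpha,F}(b,q)=p_F(b,q)+q\alpha$ increases to $\palpha(b,q)=p(b,q)+q\alpha$ as $F$ increases along an exhaustion $F_1\subset F_2\subset\cdots$ of $\edge$ with $\p\subset F_1$ and $F_1\cap\h\neq\emptyset$; this is immediate from Theorem \ref{thm compact approximation} and the obvious monotonicity $p_F\leq p_{F'}$ for $F\subset F'$ (every measure in $M(f,F)$ lies in $M(f,F')$, by the variational characterisation of $p_F$). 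Since each $p_{\alpha,F}(b,\cdot)$ is convex (as a sup of affine functions of $q$, being a topological pressure), and since $\palpha(b,\cdot)$ is convex with $\lim_{|q|\to\infty}\palpha(b,q)=\infty$ by \eqref{eq asymptotic behavior of alpha pressure} in Lemma \ref{lemma positivity p alpha}, I would pick three "test" values $q_-<q_0<q_+$ of $q$ such that $\palpha(b,q_\pm)$ is large — say both exceeding $2m+1$ — while $\palpha(b,q_0)\geq m$.

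The key step is then a pointwise-to-uniform argument using convexity. By the monotone convergence $p_{\alpha,F}(b,q)\uparrow\palpha(b,q)$ at the three points $q_-,q_0,q_+$, there is a finite $F$ (large enough in the exhaustion) with $p_{\alpha,F}(b,q_0)>m/2>0$ and $p_{\alpha,F}(b,q_\pm)>2m$. Now I would use convexity of $q\mapsto p_{\alpha,F}(b,q)$ twice. For (C1): on the interval $[q_-,q_+]$ convexity gives $p_{\alpha,F}(b,q)\geq\min\{p_{\alpha,F}(b,q_-),p_{\alpha,F}(b,q_+),\ldots\}$ — more carefully, a convex function on $[q_-,q_+]$ with values $>m/2$ at an interior point $q_0$ and values $>0$ at both endpoints is bounded below on $[q_-,q_+]$; combined with the behaviour outside $[q_-,q_+]$ this needs a little more. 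The cleaner route for (C1) is: pick $q_-<q_0<q_+$ so that the secant lines of $\palpha(b,\cdot)$ through $(q_-,\palpha(b,q_-))$–$(q_0,\palpha(b,q_0))$ and through $(q_0,\palpha(b,q_0))$–$(q_+,\palpha(b,q_+))$ both lie strictly above the $q$-axis on all of $\mathbb{R}$ (possible since $\palpha(b,q_0)\geq m>0$ and the outer values are as large as we like, so the slopes are steep); then choose $F$ so that $p_{\alpha,F}(b,\cdot)$ is within $m/2$ of $\palpha(b,\cdot)$ at the three nodes. Convexity of $p_{\alpha,F}(b,\cdot)$ forces its graph to lie below the polygonal line through these three nodal values, but a lower bound is what we need; so instead I use that a convex function dominates each of its supporting secants extended to the whole line, hence $p_{\alpha,F}(b,q)$ is $\geq$ the lower envelope of these two extended secants, which is $>0$ everywhere provided the nodal values of $p_{\alpha,F}(b,\cdot)$ are close enough to those of $\palpha(b,\cdot)$ — this gives (C1). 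For (C2): $\lim_{|q|\to\infty}p_{\alpha,F}(b,q)=\infty$ follows once $F\cap\h\neq\emptyset$, because then $\Lambda_F$ carries a measure $\nu$ with $\nu(\phi)\neq\alpha$ (in fact two such, straddling $\alpha$, by choosing $F$ to contain indices realising values of $\phi$ on both sides of $\alpha$ — which is possible since $\alpha\in(\alpha_{\inf},\alpha_{\sup})$ and $\alpha_{\inf},\alpha_{\sup}$ are approximated by finitely supported measures), so $p_{\alpha,F}(b,q)\geq h(\nu)+q(\nu(\phi)-\alpha)-b\lambda(\nu)\to\infty$ as $q\to+\infty$, and symmetrically as $q\to-\infty$.

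I would then assemble: enlarge $F$ once more, if necessary, to simultaneously contain indices witnessing $\nu(\phi)<\alpha$ and $\nu'(\phi)>\alpha$ for the (C2) argument and to satisfy the three-node closeness for (C1); the union of finitely many finite sets is finite, and enlarging $F$ only increases $p_{\alpha,F}$, so both properties persist. The main obstacle I anticipate is the (C1) estimate: turning the strict positivity of the \emph{infimum} of the limiting convex function $\palpha(b,\cdot)$ into strict positivity of the whole family member $p_{\alpha,F}(b,\cdot)$ requires care, because convexity controls $p_{\alpha,F}$ from \emph{above} by polygonal interpolation, not from below. The resolution, as indicated, is to work with supporting lines (secants extended to $\mathbb R$) of the convex limit at well-chosen nodes and use that any convex function lying close to $\palpha(b,\cdot)$ at those nodes must dominate slightly-perturbed versions of those lines; choosing the outer nodes far enough out (using \eqref{eq asymptotic behavior of alpha pressure}) makes the slopes steep enough that the lower envelope stays uniformly positive. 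This is exactly the mechanism in \cite[Lemma 3.2]{transience}, so I would follow that argument, substituting $p_{\alpha,F}$ for $p$ and invoking Theorem \ref{thm compact approximation} and Lemma \ref{lemma positivity p alpha} where needed.
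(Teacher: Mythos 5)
Your proposal points to the same references the paper cites (the argument of \cite[Lemma 3.2]{transience} via the variational principle and compact approximation, Theorem \ref{thm compact approximation}), and your overall scaffolding — monotone exhaustion $F_1\subset F_2\subset\cdots$, monotonicity $p_{F}\leq p_{F'}$, convexity, and affine lower bounds from two measures with $\nu_1(\phi)<\alpha<\nu_2(\phi)$ for (C2) — is the right one. The paper gives no details beyond citing those references, so on that level you have matched it.

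However, the middle of your sketch of (C1) contains a genuine error. You assert that one can pick nodes so that the two secant lines of $\palpha(b,\cdot)$ ``both lie strictly above the $q$-axis on all of $\mathbb{R}$.'' A non-horizontal affine function cannot do that, and making the outer values large makes the slopes \emph{steeper}, so each secant dips below the axis \emph{faster}, not slower. You then write that ``a convex function dominates each of its supporting secants extended to the whole line''; this confuses secants with supporting (subgradient) lines: a convex function lies \emph{below} the secant between two nodes and only above it outside that interval. Concretely, with $q_-<q_0<q_+$, on $(q_-,q_0)$ you only have $p_{\alpha,F}\geq L_2$ (the secant through $q_0,q_+$), and if $p_{\alpha,F}(b,q_+)$ is large then $L_2$ has a large positive slope and $L_2(q)$ is very negative for $q$ slightly less than $q_0$; so the secant bound does \emph{not} give positivity on $(q_-,q_0)$. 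The same problem occurs on $(q_0,q_+)$ with $L_1$.

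A corrected version of the mechanism you have in mind: pick $\mu_1,\mu_2$ compactly supported with $\mu_1(\phi)<\alpha<\mu_2(\phi)$, giving two affine lower bounds $L_\pm(q)$ for $p_{\alpha,F}$ once $F$ contains the supports; choose $a_1<a_2$ so that $L_-(q)>0$ for $q\leq a_1$ and $L_+(q)>0$ for $q\geq a_2$; on the compact interval $[a_1,a_2]$ the monotone pointwise convergence $p_{\alpha,F_n}(b,\cdot)\uparrow\palpha(b,\cdot)$ of continuous (convex) functions to a continuous limit is uniform by Dini's theorem, so a large enough $F_n$ achieves $p_{\alpha,F_n}(b,\cdot)>m/2$ on $[a_1,a_2]$; taking $F$ to be the union of that $F_n$ with the supports of $\mu_1,\mu_2$ gives (C1) and (C2). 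This is what ``the variational principle plus compact approximation'' buys you; the secant step you proposed would not close the gap. (There is also a minor sign slip in your (C2) display: it should read $q(\alpha-\nu(\phi))$, matching the sign convention $p_{\alpha,F}(b,q)=p_F(b,q)+q\alpha$.)
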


\begin{prop}\label{prop relationship infinity}
    Assume that $\ratio=\infty$ and $\phi$ satisfies (H1). Then, we have $(\alpha_{\inf},\min\frat)\subset \nice$ and  $q(\alpha)\in (0,\infty)$. 
\end{prop}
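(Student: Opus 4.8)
## Proof proposal for Proposition \ref{prop relationship infinity}

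The plan is to show that, for each fixed $\alpha\in(\alpha_{\inf},\min\frat)$, the convex function $q\mapsto\palpha(b(\alpha),q)$ attains the value $0$ at exactly one point, which lies in $(0,\infty)$ and is the required $q(\alpha)$. We may assume $\alpha_{\inf}<\alpha_{\sup}$, since otherwise $(\alpha_{\inf},\alpha_{\sup})=\emptyset$. Recall from Lemma \ref{lemma weak lower bound} and Lemma \ref{lemma infinite delta*} that $b(\alpha)>\delta^*=0$, from Proposition \ref{prop only frat infinity} that $b(\alpha)<\delta$, and from \eqref{eq nice infinity} that $\{b(\alpha)\}\times(0,\infty)\subset\mathcal{N}$; moreover $\mathcal{N}\subset\mathbb{R}\times(0,\infty)$, because by Lemma \ref{lemma infinite delta*} we have $(b,q)\notin\fin$ for $q<0$, so $\text{Int}(\fin)=\mathbb{R}\times(0,\infty)$. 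Hence every element of $\mathcal{Q}$ is positive, and by Theorem \ref{thm regularity of non induced pressure} (using $\alpha_{\inf}<\alpha_{\sup}$) the map $q\mapsto\palpha(b(\alpha),q)$ is real-analytic and strictly convex on $(0,\infty)$, so $\frac{\partial}{\partial q}\palpha(b(\alpha),\cdot)$ is strictly increasing there and $\mathcal{Q}$ has at most one element. By Lemma \ref{lemma sufficent condition for nice} (applicable since $b(\alpha)>\delta^*$) it therefore suffices to produce one $q_0\in(0,\infty)$ with $\frac{\partial}{\partial q}\palpha(b(\alpha),q_0)=0$: then $\palpha(b(\alpha),q_0)=0$, hence $\mathcal{Q}=\{q_0\}$, giving $\alpha\in\nice$ and $q(\alpha)=q_0>0$.

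By strict convexity it is enough to check that $\frac{\partial}{\partial q}\palpha(b(\alpha),q)>0$ for $q$ large and $\frac{\partial}{\partial q}\palpha(b(\alpha),q)<0$ for $q$ near $0$. The first follows from $\lim_{q\to\infty}\palpha(b(\alpha),q)=\infty$ (Lemma \ref{lemma positivity p alpha}). For the second, Ruelle's formula (Theorem \ref{thm regularity of non induced pressure}) gives $\frac{\partial}{\partial q}\palpha(b(\alpha),q)=-\mu_{b(\alpha),q}(\phi)+\alpha$, so we must show $\lim_{q\to0^+}\mu_{b(\alpha),q}(\phi)>\alpha$, the limit existing in $(-\infty,\infty]$ by monotonicity. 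If $b(\alpha)\le s_\infty$, then $(b(\alpha),0)\notin\fin$ by \eqref{eq finite pressure geometric pote} and \eqref{eq finite pressure}, so $\lim_{q\to0^+}p(b(\alpha),q)=\infty$; since $q\mapsto p(b(\alpha),q)$ is convex and finite on $(0,\infty)$, its derivative tends to $-\infty$ as $q\to0^+$, hence $\mu_{b(\alpha),q}(\phi)\to\infty>\alpha$, and we are done. It remains to treat the case $b(\alpha)>s_\infty$.

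Assume $b(\alpha)>s_\infty$. If $\limsup_{q\to0^+}\mu_{b(\alpha),q}(\phi)=\infty$ then, by monotonicity, $\mu_{b(\alpha),q}(\phi)\to\infty>\alpha$ and we conclude as before. Otherwise choose $q_n\to0^+$ with $L:=\lim_{n\to\infty}\mu_{b(\alpha),q_n}(\phi)<\infty$. I claim the bounded sequence $\{(b(\alpha),q_n)\}_{n\in\mathbb{N}}$ satisfies (T1.2), (T1.3), (T2) and (T3): condition (T2) is automatic because $\indupressure(b(\alpha),q_n,p(b(\alpha),q_n))=0$ by Theorem \ref{thm uniquness and existence of the equilibrium state}; (T3) holds since $\palpha(b(\alpha),q_n)\ge0$ (Lemma \ref{lemma positivity p alpha}) forces $p(b(\alpha),q_n)\ge-q_n\alpha>-q_n\alpha_i=\text{LB}(q_n)$ for all $i\in\p$, using $\alpha<\min_{i\in\p}\alpha_i$; and (T1.2), (T1.3) follow from (G), (H1), $\ratio=\infty$ and $b(\alpha)\growth>1$ by tail estimates of the kind used in the proof of Proposition \ref{prop only frat infinity}, now retaining the contributions of the parabolic words $[ji^s\edge_i]$ through the constant $C(\phi)$ of (H1) and the bound $C(k,i,s)\ll s$. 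Then Lemma \ref{lemma tight}, Lemma \ref{lemma uniformly integrable} and Theorem \ref{thm convergence of the equilibrium state} furnish a subsequence along which $\mu_{b(\alpha),q_{n_k}}$ converges weak$^*$ to an equilibrium measure $\mu_\infty^*$ for $-b(\alpha)\log|f'|$ with $\mu_\infty^*(\phi)=L$ and $\lambda(\mu_\infty^*)<\infty$. Since $h(\mu_\infty^*)-b(\alpha)\lambda(\mu_\infty^*)=p(b(\alpha),0)>0$ (Bowen's formula \eqref{eq bowen} together with $b(\alpha)<\delta$), we have $h(\mu_\infty^*)>0$, so $\mu_\infty^*\notin\conv$ by Lemma \ref{lemma equivalent condition not liftable}, whence $\lambda(\mu_\infty^*)>0$ and $\dim_H(\mu_\infty^*)=h(\mu_\infty^*)/\lambda(\mu_\infty^*)>b(\alpha)$.

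Finally, suppose for contradiction that $L\le\alpha$. Since $\alpha<\min\frat<\alpha_{\sup}$ and $b(\alpha)<\delta$, combining a measure supplied by Theorem \ref{thm bowen formula} of dimension $>b(\alpha)$ with $\delta_{x_{\underline i}}$ yields $\nu_1\in M(f)$ with $\lambda(\nu_1)<\infty$, $\alpha<\nu_1(\phi)<\infty$ and $\dim_H(\nu_1)>b(\alpha)$; choosing $p\in(0,1]$ so that the convex combination $\xi:=p\mu_\infty^*+(1-p)\nu_1$ satisfies $\xi(\phi)=\alpha$, we get $\lambda(\xi)<\infty$ and $\dim_H(\xi)>b(\alpha)$, contradicting $b(\alpha)\ge\dim_H(\xi)$ from Theorem \ref{thm coditional variational principle}. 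Hence $L>\alpha$, so $\lim_{q\to0^+}\frac{\partial}{\partial q}\palpha(b(\alpha),q)=-L+\alpha<0$, which completes the argument. The main obstacle is the tightness and uniform-integrability step for $\{(b(\alpha),q_n)\}_{n\in\mathbb{N}}$ when $b(\alpha)$ is only slightly larger than $s_\infty$: there the margin in the geometric tails is thin, and one must control the parabolic blocks carefully, which is precisely where hypothesis (H1) enters.
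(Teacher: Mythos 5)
Your approach is genuinely different from the paper's. You work directly with the sequence of equilibrium measures $\{\mu_{b(\alpha),q_n}\}$ with $q_n\to 0^+$ and the tightness machinery of Section~\ref{sec convergence}, whereas the paper first argues by contradiction to obtain $\inf_{q}\palpha(b(\alpha),q)>0$ and then passes to a \emph{finite} subsystem via Lemma~\ref{lemma compact approximation}, which is where~(H1) enters through the constant $C(\phi)$ to show that $\measure_{b(\alpha),q_n,F}(\return)\to\infty$, forcing the compact limit measure $\mu_F$ into $\conv$. Your route avoids the compact approximation entirely by exploiting $b(\alpha)<\delta$ to ensure the limit measure $\mu_\infty^*$ has positive entropy, hence cannot lie in $\conv$.

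There is, however, a genuine gap in your verification of the hypotheses of the convergence machinery: condition~(T1.2) does \emph{not} hold in general under $\ratio=\infty$. You would need $\sum_{m\geq M}e^{\multipote_{b(\alpha),q_n}([m])}\,\phi\circ\oricodingmap([m])$ to be small uniformly in $n$, and since $q_n\to 0^+$ the saving factor $e^{-q_n\phi\circ\oricodingmap([m])}$ degenerates; if for instance $\phi\circ\oricodingmap([m])\asymp m^{b(\alpha)\growth-1}$ (which is compatible with $\ratio=\infty$), the sum $\sum_m m^{-b(\alpha)\growth}\phi\circ\oricodingmap([m])$ diverges and the peak of $y\mapsto ye^{-q_ny}$ grows like $1/q_n$, so no tail can be chosen uniformly. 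Consequently your invocation of Theorem~\ref{thm convergence of the equilibrium state}, and the conclusion $\mu_\infty^*(\phi)=L$, are unjustified as stated. The fix is to use (T1.1) instead, which does hold because $q_n\phi\,e^{-q_n\phi}\leq e^{-1}$ is uniformly bounded; then Lemmas~\ref{lemma tight},~\ref{lemma} and~\ref{lemma uniformly integrable} (applied with (T1.1) and (T1.3), exactly as in the paper's proof of Proposition~\ref{prop only frat infinity}) still yield that $\mu_\infty^*$ is an equilibrium measure for $-b(\alpha)\log|f'|$, while lower semicontinuity of $\nu\mapsto\nu(\phi)$ (with $\phi$ continuous and bounded below) gives $\mu_\infty^*(\phi)\leq L\leq\alpha$, which is all you need for the contradiction. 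Note also that your parenthetical claim that~(H1) and the bounds on parabolic blocks $[ji^s\edge_i]$ enter the verification of (T1.2) and (T1.3) is confused: those conditions involve only single-letter cylinders $[m]$, $m\in\h$, so (H1) plays no role there. After the fix your argument appears to avoid (H1) altogether, which would be slightly stronger than the stated proposition; it is worth double-checking carefully whether that hypothesis is truly dispensable here, since the paper's author evidently believed it necessary and relied on it in the finite-subsystem step.
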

\begin{proof}
We assume that $\ratio=\infty$ and $\phi$ satisfies (H1).
If $\alpha_{\inf}=\alpha_{\sup}$, there is nothing to prove. Thus, we assume that $\alpha_{\inf}<\alpha_{\sup}$.
Let $\alpha\in (\alpha_{\inf},\min\frat)$.
By Lemma \ref{lemma sufficent condition for nice} and Theorem \ref{thm regularity of non induced pressure}, it is enough to show that there exists $q_0\in(0,\infty)$ such that 
\begin{align}\label{eq proof relation deribative 0}
    \frac{\partial}{\partial q}\palpha(b(\alpha),q_0)=0.
\end{align}
To this end, for a contradiction, we assume that there is no $q_0\in (0,\infty)$ satisfying \eqref{eq proof relation deribative 0}.  Then, by Proposition \ref{prop only frat infinity} and Theorem \ref{thm bowen formula}, there are two possible cases: 
\begin{itemize}
    \item[(1)] $\palpha(b(\alpha),0)=\infty$.
    \item[(2)] $0<\palpha(b(\alpha),0)<\infty$ and there is no $q_0\in (0,\infty)$ satisfying \eqref{eq proof relation deribative 0}.
\end{itemize}
 However, if we are in the case (1) then by Lemma \ref{lemma positivity p alpha}, we can find $q_0\in (0,\infty)$ satisfying \eqref{eq proof relation deribative 0}. Hence, we assume that we are in the case (2). 
In this case, by Lemma \ref{lemma positivity p alpha},  for all $\tilde q\in (0,\infty)$ we have $(\partial/\partial q)\palpha(b(\alpha),\tilde q)\geq 0$. 
Hence, by Lemma \ref{lemma infinite delta*}, for all $q\in \mathbb{R}$ we have $\palpha(b(\alpha),q)>0$. Thus,
by Lemma \ref{lemma weak lower bound} and Lemma \ref{lemma compact approximation}, there exists a finite set $F\subset \edge$ with $\p\subset F$  and $F\cap\h\neq \emptyset$ satisfying the conditions (C1) and (C2) in Lemma \ref{lemma compact approximation}. 

We shall show that 
\begin{align}\label{eq compact good q}
\text{there exists $\tilde q\in \mathbb{R}$ such that   $(b(\alpha),\tilde q)\in \mathcal{N}_F$ and $\frac{\partial}{\partial q}p_{\alpha,F}(b(\alpha),\tilde q)=0$.}    
\end{align} 
For a contradiction, we assume that there is no $\tilde q$  such that $(b(\alpha),\tilde q)\in \mathcal{N}_F$ and $(\partial/\partial q)p_{\alpha,F}(b(\alpha),\tilde q)=0$. Note that, by the definition of $p_F$, for all $q\leq0$ we have $p_{\alpha,F}(b(\alpha),q)\geq q(-\alpha_{\overline{i}}+\alpha)$. Combining the assumption $\alpha < \min \frat$, which yields that $-\alpha_{\overline{i}}+ \alpha<0$, with conditions (C1) and (C2), there exists $q' \in (-\infty, 0)$ such that $\palpha(b(\alpha),q')=(-\alpha_{\overline{i}}+\alpha)q'$, or equivalently, $p(b(\alpha),q')=-q'\alpha_{\overline{i}}$ and for all $\tilde q\in (q',\infty)$ we have $(b(\alpha),q')\in \mathcal{N}_F$ and  $({\partial}/{\partial q})p_{\alpha,F}(b(\alpha),\tilde q)\geq 0.$ 
By the convexity of the function $q\mapsto p_{\alpha,F}(b(\alpha),q)$ on $\mathbb{R}$, this implies that 
\begin{align}\label{eq proof relation infinity deribative compact}
(p_{\alpha,F})^+_q(b(\alpha), q')\geq 0.
\end{align}
On the other hand, since $F$ is a finite set, the set $\Lambda_F$ is compact. Thus, $M(f,F)$ is also compact in the weak* topology. Hence, there exist a sequence $\{q_n\}_{n\in\mathbb{N}}\subset (q',\infty)$ and $\mu_F\in M(f,F)$ such that $\lim_{n\to\infty}q_n=q'$ and  $\lim_{n\to\infty}\mu_{b(\alpha),q_n,F}=\mu_F$. Then, we have $\mu_F\in \conv$. Indeed, by \eqref{eq classical Abramov-Kac's formula}, we have 
\begin{align}\label{eq proof relation indulimit 0}
\mu_F(\indulimit)=\mu_F(\indulimit_F)=\lim_{n\to\infty}\mu_{b(\alpha),q_n,F}(\indulimit_F)=\lim_{n\to\infty}(\measure_{b(\alpha),q_n,F}(\return))^{-1}.
\end{align}
Furthermore, by \eqref{eq gibbs}, (F) and finiteness of the set $F$, for all $n\in\mathbb{N}$ we have
\begin{align*}
    &\measure_{b(\alpha),q_n,F}(\return)\asymp\sum_{\ell=1}^\infty\sum_{i\in \p} \sum_{j\in F_i}\ell \measure_{b(\alpha),q_n,F}([ji^\ell F_i])
    \\&\asymp \sum_{\ell=1}^\infty\sum_{i\in \p} \sum_{j\in F_i}\ell e^{(-q_n\indupote-b(\alpha)\log|\indumap'|-p_F(b(\alpha),q_n)\return)\circ\codingmap_F([ji^\ell F_i])}
    \\&\asymp\sum_{\ell=1}^\infty \frac{1}{\ell^{b(\alpha)\exponent}} \sum_{i\in\p}\sum_{j\in F_i}{e^{\left(\sum_{k=0}^{\ell-2}(-q_n\phi-p_F(b(\alpha),q_n))\circ f^{k}\circ\oricodingmap \right)\left(\bigcup_{m\in F_j}[i^\ell m]\right)}}.
\end{align*}
By (H1), for all $M\in \mathbb{N}$ we have (for the definition of $C(\phi)$ see \eqref{eq def C phi})
\begin{align*}
&\liminf_{n\to\infty}\sum_{\ell=1}^\infty\frac{1}{\ell^{b(\alpha)\exponent}} \sum_{i\in\p}\sum_{j\in F_i}{e^{\left(\sum_{k=0}^{\ell-2}(-q_n\phi-p_F(b(\alpha),q_n))\circ f^{k}\circ\oricodingmap \right)\left(\bigcup_{m\in F_j}[i^\ell m]\right)}}
\\&\geq\sum_{\ell=1}^M\frac{1}{\ell^{b(\alpha)\exponent}} \sum_{i\in\p}\sum_{j\in F_i}{e^{\left(q'\sum_{k=0}^{\ell-2}(-\phi+\alpha_{\overline{i}})\circ f^{k}\circ\oricodingmap \right)\left(\bigcup_{m\in F_j}[i^\ell m]\right)}}
\geq \sum_{\ell=1}^M\frac{e^{-q'C(\phi)}}{\ell^{b(\alpha)\exponent}}.
\end{align*}
Since $b(\alpha)\exponent\leq \exponent\leq 1$, this yields that 
\begin{align*}
    \lim_{n\to\infty}\sum_{\ell=1}^\infty\frac{1}{\ell^{b(\alpha)\exponent}} \sum_{i\in\p}\sum_{j\in F_i}{e^{\left(\sum_{k=0}^{\ell-2}(-q_n\phi-p_F(b(\alpha),q_n))\circ f^{k}\circ\oricodingmap \right)\left(\bigcup_{m\in F_j}[i^\ell m]\right)}}=\infty
\end{align*}
and thus, $\lim_{n\to\infty}\measure_{b(\alpha),q_n,F}(\return)=\infty$. By \eqref{eq proof relation indulimit 0}, we obtain $\mu_F(\indulimit)=0$. Hence, Lemma \ref{lemma equivalent condition not liftable} yields that $\mu_F\in \conv$ and hence, $\mu_F(\phi)\in\frat$. By the convexity of the function $q\mapsto p_{\alpha,F}(b(\alpha),q)$, Ruelle's formula and the boundedness of $\phi$ on $\Lambda_F$, we obtain
\begin{align*}
&(p_{\alpha,F})_q^{+}(b(\alpha),q')
=-\lim_{n\to\infty}\mu_{b(\alpha),q_n, F}(\phi)+\alpha
=-\mu_F(\phi)+\alpha\leq -\min\frat+\alpha<0. 
\end{align*}
This contradicts \eqref{eq proof relation infinity deribative compact}.

Therefore, there exists $\tilde q\in \mathbb{R}$ such that $(b(\alpha),\tilde q)\in \mathcal{N}_F$ and $(\partial/\partial q)p_{\alpha,F}(b(\alpha),\tilde q)=0$. Then, we obtain $\mu_{b(\alpha),\tilde q,F}(\phi)=\alpha$ and, by (C1) and Theorem \ref{thm stong conditional variational principle},
\begin{align*}
&0<p_{\alpha,F}(b(\alpha),q)=h(\mu_{b(\alpha),\tilde q,F})-b(\alpha)\lambda(\mu_{b(\alpha),\tilde q,F})\leq 0.
\end{align*}
This is a contradiction.
Hence, we conclude that there exists $q_0\in(0,\infty)$ satisfying \eqref{eq proof relation deribative 0} and we are done.
\end{proof}

\subsection{Analysis of the set $\nice$ under the condition  $0<\ratio<\infty$}
We begin with the following observation:
\begin{prop}\label{lemma only frat alpha}
    We assume that $0<\ratio<\infty$. Then, for all $\alpha\in (\alpha_{\inf},\alpha_{\sup})\setminus \frat$ we have $b(\alpha)<\delta$
\end{prop}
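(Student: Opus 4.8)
The plan is to adapt the structure already used for the cases $\ratio=0$ (Proposition \ref{prop only frat}) and $\ratio=\infty$ (Proposition \ref{prop only frat infinity}), replacing the explicit finiteness domains (Lemma \ref{lemma 0 delta*}, Lemma \ref{lemma infinite delta*}) by the qualitative tools available when $0<\ratio<\infty$, namely Lemma \ref{lemma equilibrium measure near Delta} and Proposition \ref{prop derivatibe of pressure weak conditions}. Suppose, for a contradiction, that there is $\alpha\in(\alpha_{\inf},\alpha_{\sup})\setminus\frat$ with $b(\alpha)=\delta$; by symmetry (replacing $\phi$ by a reflection-type argument as in the earlier proofs) it suffices to treat $\alpha\in(\alpha_{\inf},\min\frat)$. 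Since $b(\alpha)=\delta>\delta^*$ (recall $\delta^*\le\delta$ and we may assume $\delta^*<\delta$, otherwise $\nice$-type arguments degenerate and the statement is immediate from Lemma \ref{lemma lower bound}), Lemma \ref{lemma positivity p alpha} gives $\palpha(\delta,q)\ge 0$ for all $q\in\mathbb{R}$ and in particular $\palpha(\delta,0)=P(-\delta\log|f'|)=0$ by \eqref{eq bowen}. By convexity of $q\mapsto\palpha(\delta,q)$ this forces $(\palpha)^+_q(\delta,0)\ge 0$.

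The core of the argument is then to contradict $(\palpha)^+_q(\delta,0)\ge 0$ by producing a strictly negative right-hand derivative. First I would use Lemma \ref{lemma equilibrium measure near Delta}: since $0<\ratio<\infty$, there is a sequence $\{b_n\}\subset(s_\infty,\delta)$ with $b_n\to\delta$ and $\mu_{b_n}(\phi)\to L\in\frat$, so in particular $\lim_n\mu_{b_n}(\phi)\ge\min\frat$. By Ruelle's formula (Theorem \ref{thm regularity of non induced pressure}) this means $\frac{\partial}{\partial q}\palpha(b_n,0)=-\mu_{b_n}(\phi)+\alpha\to -L+\alpha\le-\min\frat+\alpha<0$, so for some $b_0\in(b(\alpha),\delta)$ we have $\frac{\partial}{\partial q}\palpha(b_0,0)<0$. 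Combined with $(\palpha)^+_q(b(\alpha),0)\ge 0$ and the continuity of $b\mapsto\frac{\partial}{\partial q}\palpha(b,0)$ on $(s_\infty,\delta)$ (valid because $(s_\infty,\delta)\times\{0\}\subset\mathcal{N}$ by Theorem \ref{thm bowen formula}, where $p$ is real-analytic), the intermediate value theorem yields $b'\in[b(\alpha),b_0)$ with $\frac{\partial}{\partial q}\palpha(b',0)=0$. Then Lemma \ref{lemma sufficent condition for nice} gives $\palpha(b',0)=0$, i.e. $P(-b'\log|f'|)=0$, contradicting $b'<\delta$ and the strict monotonicity $P(-b\log|f'|)>0$ for $b<\delta$ coming from Theorem \ref{thm bowen formula} (together with \eqref{eq finite pressure geometric pote} to guarantee finiteness near $\delta$). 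This closes the case $\alpha\in(\alpha_{\inf},\min\frat)$.

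For $\alpha\in(\max\frat,\alpha_{\sup})$ the argument is the mirror image: I would use the second half of Proposition \ref{prop derivatibe of pressure weak conditions} (the statement about $p^+_q(\delta,0)=\sup_{\nu\in M_\delta}\{-\nu(\phi)\}=-\min\frat$) together with Proposition \ref{prop equilibrium state Lambda} to control the left/right derivatives at $q=0$ from the negative-$q$ side, exactly as in the proof of Proposition \ref{prop only frat}. The hypotheses of Proposition \ref{prop derivatibe of pressure weak conditions} — existence of $q_0$ with $\ratio|q_0|<\delta-s_\infty$ and $p(\delta,q)>\LB$ for $q$ in the relevant interval — must be checked: the first is free since $\ratio<\infty$ allows $|q_0|$ small, and the second follows from Lemma \ref{lemma positivity p alpha} / Remark \ref{rem cheaking} since $b(\alpha)=\delta$ and $p(\delta,q)\ge -q\alpha>\LB$ on the appropriate half-line.

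The main obstacle I anticipate is the bookkeeping around the finiteness domain $\fin$ when $0<\ratio<\infty$: unlike the cases $\ratio\in\{0,\infty\}$, there is no clean description of where $p(b,q)<\infty$, so I must verify that the parameters $(b',0)$, $(b_0,0)$ and $(\delta,q)$ invoked above indeed lie in $\fin$ (and in $\mathcal{N}$ where real-analyticity or Ruelle's formula is needed). This is handled by combining \eqref{eq finite pressure geometric pote} (giving $(b,0)\in\fin$ for $b>s_\infty$), Theorem \ref{thm bowen formula} (giving $p(b,0)>0>\LB=0$ for $b\in(s_\infty,\delta)$, hence membership in $\mathcal{N}$), and—for the negative-$q$ excursion—the estimate in the proof of Proposition \ref{prop derivatibe of pressure weak conditions} showing $p(\delta,q)<\infty$ for $q$ close to $0$ thanks to $\ratio<\infty$. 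Once these inclusions are in place, the chain of inequalities is routine and the contradiction is immediate.
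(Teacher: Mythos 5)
The paper's own proof of this proposition is a one-liner: it says the argument is the same as in Proposition \ref{prop only frat}, i.e.\ one directly applies Proposition \ref{prop derivatibe of pressure weak conditions} together with Proposition \ref{prop equilibrium state Lambda} to evaluate $(\palpha)^{\pm}_q(\delta,0)$. Your detailed argument for the case $\alpha\in(\alpha_{\inf},\min\frat)$ takes a different route (Lemma \ref{lemma equilibrium measure near Delta} plus an intermediate value theorem step plus Lemma \ref{lemma sufficent condition for nice}), and that route has a genuine gap.

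First, the elementary problem: you are assuming $b(\alpha)=\delta$ for contradiction, so the intervals you invoke, $b_0\in(b(\alpha),\delta)$ and $b'\in[b(\alpha),b_0)$, are empty. This is not a typo you can just fix, because you are importing the proof structure of Proposition \ref{prop relationship 0}, which is run \emph{after} $b(\alpha)<\delta$ has already been established. Second, even if one replaces $(b(\alpha),\delta)$ by $(s_\infty,\delta)$, the IVT step does not close. You know $\frac{\partial}{\partial q}\palpha(b_n,0)\to -L+\alpha<0$ for $b_n\uparrow\delta$, and you know $(\palpha)^+_q(\delta,0)\ge 0$. But $(\palpha)^+_q(\delta,0)$ is the one-sided $q$-derivative at $(\delta,0)$, which is \emph{not} $\lim_{b\to\delta^-}\frac{\partial}{\partial q}\palpha(b,0)$. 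Since $p$ is jointly convex, any limit point of $\nabla p(b_n,0)$ as $b_n\to\delta^-$ is a subgradient of $p$ at $(\delta,0)$, and for convex functions the directional derivative satisfies $p^+_q(\delta,0)=\max\{g_q:(g_b,g_q)\in\partial p(\delta,0)\}$; so the horizontal limit of gradients only produces a \emph{lower} bound $p^+_q(\delta,0)\ge -L$, i.e.\ $(\palpha)^+_q(\delta,0)\ge -L+\alpha$. That is an inequality in the useless direction: it is perfectly consistent with $(\palpha)^+_q(\delta,0)\ge 0$, so there is no contradiction and no sign change for the IVT. Relatedly, applying Lemma \ref{lemma sufficent condition for nice} at $b'<b(\alpha)$ is not justified: its proof (via the conditional variational principle) needs $b'\ge b(\alpha)$ to deduce $\palpha(b',q_0)\le 0$, and here you would have $b'<\delta=b(\alpha)$.

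The fix is exactly what Proposition \ref{prop only frat} does and what Proposition \ref{prop derivatibe of pressure weak conditions} is designed for: take the limit $q\to 0^+$ (not $b\to\delta^-$). One-dimensional convexity of $q\mapsto p(\delta,q)$ makes $\frac{\partial}{\partial q}p(\delta,q)$ nondecreasing, so $p^+_q(\delta,0)=\lim_{q\to0^+}\frac{\partial}{\partial q}p(\delta,q)$, and Proposition \ref{prop derivatibe of pressure weak conditions} turns this into the exact identity $p^+_q(\delta,0)=\sup_{\nu\in M_\delta}\{-\nu(\phi)\}$; combined with Proposition \ref{prop equilibrium state Lambda} (which gives $M_\delta=\conv$) this equals $-\min\frat$, so $(\palpha)^+_q(\delta,0)=-\min\frat+\alpha<0$, contradicting convexity. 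The hypotheses of Proposition \ref{prop derivatibe of pressure weak conditions} are checkable exactly the way you did for the mirror case: $0<\ratio<\infty$ lets you choose $q_0>0$ with $\ratio q_0<\delta-s_\infty$, and Lemma \ref{lemma positivity p alpha} / Remark \ref{rem cheaking} (using $b(\alpha)=\delta>s_\infty\geq\delta^*$, which holds automatically from (P) and (G): for $q>0$ and $b>s_\infty$, $\sum_i e^{\multipote_{b,q}([i])}\le \sum_i e^{(-b\log|f'|)\circ\oricodingmap([i])}<\infty$) give $p(\delta,q)>\LB$ for $q\in(0,q_0]$. One last remark: in your mirror case $\alpha\in(\max\frat,\alpha_{\sup})$ you cite the second half of Proposition \ref{prop derivatibe of pressure weak conditions} (the $p^+_q=\sup$ statement), but what is actually needed there is the first half ($p^-_q=\inf_{\nu\in M_\delta}\{-\nu(\phi)\}=-\max\frat$), paired with the bound $(\palpha)^-_q(\delta,0)\le 0$ from convexity and Lemma \ref{lemma positivity p alpha} with $q<0$.
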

\begin{proof}
    This follows by the same argument as in the proof of Proposition \ref{prop only frat}.
\end{proof}
Recall that if $\phi$ satisfies (L) then we have $\ratio=\theta$.
\begin{lemma}\label{lemma alpha delta*}
    We assume that $\phi$ satisfies (L). Then, for each $b\in \mathbb{R}$ we have $p(b,q)=\infty$ if $q\leq (s_\infty-b)/\theta$ and $p(b,q)<\infty$ if $q>(s_\infty-b)/\theta$. 
    In particular, we have $\delta^*=0$. Moreover, for all $b\in\mathbb{R}$ we have $\lim_{q\to (s_\infty-b)/\theta}p(b,q)=\infty$. 
\end{lemma}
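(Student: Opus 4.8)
The plan is to exploit the condition (L), which sandwiches $\phi$ between $-\theta\log|f'| + \eta$ and $-\theta\log|f'| + \xi$, and reduce the finiteness question for $p(b,q) = P_f(-q\phi - b\log|f'|)$ to the already-understood finiteness question for the geometric potential via \eqref{eq finite pressure} and \eqref{eq finite pressure geometric pote}. First I would recall from \eqref{eq finite pressure} that $(b,q)\in\fin$ if and only if $\sum_{i\in\edge}\exp((-q\phi-b\log|f'|)\circ\oricodingmap([i]))<\infty$, and since $\phi$ and $\log|f'|$ are acceptable this sum is comparable (up to a bounded multiplicative constant coming from the bounded oscillation on each $[i]$) to $\sum_{i\in\edge}\exp((-q\phi-b\log|f'|)(x_i))$ for any choice of points $x_i\in I_i$.

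Next, using (L) I would bound the summand: for $q\geq 0$, $-q\phi(x)\leq q\theta\log|f'(x)| - q\eta$, and for $q<0$, $-q\phi(x)\leq q\theta\log|f'(x)| - q\xi$; in either case $-q\phi - b\log|f'|$ is, up to an additive constant bounded uniformly in $i$, equal to $-(b+q\theta)\log|f'|$. The analogous lower bound holds with the roles of $\eta,\xi$ swapped. Combining these with \eqref{eq finite pressure geometric pote}, which gives $\sum_{i\in\edge}\exp((-t\log|f'\circ\oricodingmap|)([i]))\asymp\sum_{i=1}^\infty i^{-t\cdot\growth}$, I get
\[
\sum_{i\in\edge}e^{(-q\phi-b\log|f'|)\circ\oricodingmap([i])}\asymp\sum_{i=1}^\infty \frac{1}{i^{\growth(b+q\theta)}},
\]
where the constants depend only on $b,q$ (locally bounded, but for a single $(b,q)$ this is fine). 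Recalling $s_\infty = \growth^{-1}$, this last series converges precisely when $\growth(b+q\theta)>1$, i.e. $b+q\theta > s_\infty$, i.e. $q > (s_\infty-b)/\theta$, and diverges when $b+q\theta\leq s_\infty$. By \eqref{eq finite pressure}, $p(b,q)<\infty$ iff $q > (s_\infty-b)/\theta$ and $p(b,q)=\infty$ iff $q\leq (s_\infty-b)/\theta$, which is the first assertion.

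For $\delta^* = 0$: recall $\delta^* = \inf\{b\in[0,\delta] : p(b,q)<\infty \text{ for some }q\in\mathbb{R}\}$. For any $b>0$, choosing $q$ large enough that $q > (s_\infty-b)/\theta$ gives $p(b,q)<\infty$, so every $b\in(0,\delta]$ is in the set, hence $\delta^*=0$. Finally, for the limit statement $\lim_{q\to(s_\infty-b)/\theta}p(b,q)=\infty$: I would argue as follows. As $q$ decreases toward the threshold $q_b := (s_\infty-b)/\theta$ from above, the pressure $q\mapsto p(b,q)$ is convex and nonincreasing (Ruelle's formula or \eqref{eq def non-induced pressure} directly, since $\phi>0$), so $\lim_{q\to q_b^+}p(b,q)$ exists in $(0,\infty]$. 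If this limit were finite, say equal to $L$, then one shows a contradiction with $p(b,q_b)=\infty$: by the variational principle \eqref{eq def non-induced pressure} for $p(b,q_b)=\infty$ there exist measures $\mu_n\in M(f)$ with $h(\mu_n)+\mu_n(-q_b\phi - b\log|f'|)\to\infty$ and each term finite; then for $q$ slightly larger than $q_b$, $h(\mu_n)+\mu_n(-q\phi-b\log|f'|) = [h(\mu_n)+\mu_n(-q_b\phi-b\log|f'|)] - (q-q_b)\mu_n(\phi)$, and since $\phi\geq 1$ by (P) one must control $\mu_n(\phi)$ — the cleaner route is instead to use the series estimate directly: by monotone convergence, $\sum_i i^{-\growth(b+q\theta)}\to\sum_i i^{-\growth(b+q_b\theta)} = \sum_i i^{-1} = \infty$ as $q\to q_b^+$, and combined with the comparability above (which, crucially, has constants bounded uniformly for $q$ in a bounded neighborhood of $q_b$ — this uses that (L) gives a \emph{uniform} additive constant $\max\{|\eta|,|\xi|\}$ independent of $q$) the finite-sum side also diverges, forcing $p(b,q)\to\infty$ by Proposition \ref{prop finite pressure} applied on $(\oricodingsp,\sigma)$ together with \eqref{eq finite pressure}. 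The main obstacle here is making the comparability $\sum_i e^{(-q\phi-b\log|f'|)([i])}\asymp\sum_i i^{-\growth(b+q\theta)}$ hold with constants \emph{locally uniform in $q$}, so that the divergence of the power series as $q\to q_b^+$ genuinely transfers to the pressure; this is exactly where the uniform bounds $\eta\leq\phi+\theta\log|f'|\leq\xi$ from (L) (as opposed to merely asymptotic control near $1$) do the work.
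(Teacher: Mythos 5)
Your proof is essentially the same as the paper's: both use (L) together with (G) (via \eqref{eq finite pressure geometric pote}) to get, with constants locally uniform in $(b,q)$, that $\sum_{i\in\edge}e^{\multipote_{b,q}([i])}\asymp\sum_{i=1}^\infty i^{-\growth(b+q\theta)}$, read off the finiteness threshold $q>(s_\infty-b)/\theta$ from \eqref{eq finite pressure}, and deduce $\delta^*=0$ by taking $q$ large. One imprecision in your last step: the divergence of $\sum_{i}e^{\multipote_{b,q}([i])}$ as $q\to q_b^+$ does not force $p(b,q)\to\infty$ via Proposition \ref{prop finite pressure} alone, since that proposition is only an iff characterization of finiteness and gives no quantitative comparison; what you actually need (and what the paper invokes with ``since $\oricodingsp$ is a full-shift'') is the lower bound $P(\psi)\geq\log\sum_{e}e^{\psi([e])}-C$ valid for acceptable $\psi$ on a full shift, which then transfers the divergence of the one-letter sum to the pressure. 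You correctly identified that the locally uniform constants from (L) are what make this transfer work; just replace the citation of Proposition \ref{prop finite pressure} by the full-shift lower bound on pressure.
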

\begin{proof}
We assume that $\phi$ satisfies (L). Then, by (G), for all compact set $C\subset \mathbb{R}^2$ and $(b,q)\in C$ we have
\begin{align}\label{eq proof alpha delta*}
 \sum_{i\in \edge}e^{\multipote_{b,q}([i])}\asymp \sum_{i\in\edge}e^{((-q\theta-b)\log|f'|)\circ\oricodingmap([i])}\asymp\sum_{i=1}^{\infty}\frac{1}{i^{\growth(q\theta+b)}}.   
\end{align}
Hence, for all $b\in \mathbb{R}$ we have $p(b,q)=\infty$ if $q\leq (s_\infty-b)/\theta$ and $p(b,q)<\infty$ if $q>(s_\infty-b)/\theta$. Moreover, for all $b\in\mathbb{R}$ we obtain 
$\lim_{q\to (s_\infty-b)/\theta}\sum_{i\in\edge}e^{\multipote_{b,q}([i])}=\infty.$
Since, $\oricodingsp$ is a full-shift, this implies that $\lim_{q\to (s_\infty-b)/\theta}p(b,q)=\infty$.
\end{proof}

Combining Lemma \ref{lemma alpha delta*} with Lemma \ref{lemma weak lower bound}, we can see that if $\phi$ satisfies (L) then for all $\alpha\in (\alpha_{\inf},\alpha_{\sup})$ we have 
\[b(\alpha)>\delta^*.\]
 Moreover, by Remark \ref{rem cheaking} and Lemma \ref{lemma alpha delta*}, if $\phi$ satisfies (L) then we have
\begin{align}\label{eq nice alpha}
&\{b(\alpha)\}\times\left(
\max\left\{({s_\infty-b(\alpha)})/{\theta},0\right\},\infty\right)\subset \mathcal{N} \text{ if }\alpha\in(\alpha_{\inf},\min\frat) \text{ and }
\\&\{b(\alpha)\}\times\left(({s_\infty-b(\alpha)})/{\theta},0\right)\subset \mathcal{N} \text{ if }\alpha\in(\max\frat,\alpha_{\sup}).   \nonumber 
\end{align}

\begin{prop}\label{prop relation alpha}
    We assume that $\phi$ satisfies (H1) and (L). Then, $ (\alpha_{\inf},\alpha_{\sup})\setminus \frat\subset \nice$ and for all $\alpha\in (\max\frat,\alpha_{\sup})$ we have $b(\alpha)>s_\infty$. Moreover, if $\alpha\in (\alpha_{\inf},\min\frat)$ then $q(\alpha)\in (0,\infty)$ and if $\alpha\in (\max\frat,\alpha_{\sup})$ then $q(\alpha)\in ((s_\infty-b(\alpha))/\theta,0)$. 
\end{prop}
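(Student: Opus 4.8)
The plan is to treat the two halves $(\alpha_{\inf},\min\frat)$ and $(\max\frat,\alpha_{\sup})$ of $(\alpha_{\inf},\alpha_{\sup})\setminus\frat$ separately, reducing in each case the assertion $\alpha\in\nice$ to the production of a single interior critical point of $q\mapsto\palpha(b(\alpha),q)$. Throughout we may assume $\alpha_{\inf}<\alpha_{\sup}$, the other case being trivial. Since (L) forces $\ratio=\theta\in(0,\infty)$, Lemmas \ref{lemma alpha delta*} and \ref{lemma weak lower bound} and Proposition \ref{lemma only frat alpha} give $\delta^*=0$, $b(\alpha)\in(0,\delta)$ for $\alpha\notin\frat$, and — through \eqref{eq nice alpha} — the inclusion of $(\max\{(s_\infty-b(\alpha))/\theta,0\},\infty)$ (resp. $((s_\infty-b(\alpha))/\theta,0)$) in the slice $\{q:(b(\alpha),q)\in\mathcal{N}\}$ when $\alpha<\min\frat$ (resp. $\alpha>\max\frat$). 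By Lemma \ref{lemma sufficent condition for nice}, Theorem \ref{thm regularity of non induced pressure} and the strict convexity of $q\mapsto\palpha(b(\alpha),q)$ (which holds since $\alpha_{\inf}<\alpha_{\sup}$), once such a $q_0$ is found we obtain $\alpha\in\nice$ with $q(\alpha)=q_0$; and because $\fin$ along $\{b(\alpha)\}$ is exactly $((s_\infty-b(\alpha))/\theta,\infty)$ by Lemma \ref{lemma alpha delta*}, a negative $q(\alpha)$ automatically entails $b(\alpha)>s_\infty$.

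For $\alpha\in(\alpha_{\inf},\min\frat)$ I would run the proof of Proposition \ref{prop relationship infinity} verbatim, with Lemma \ref{lemma infinite delta*}, Proposition \ref{prop only frat infinity} and \eqref{eq nice infinity} replaced by Lemma \ref{lemma alpha delta*}, Proposition \ref{lemma only frat alpha} and \eqref{eq nice alpha}, splitting according to whether $\palpha(b(\alpha),0)=p(b(\alpha),0)$ is infinite (i.e. $b(\alpha)\le s_\infty$) or finite (i.e. $b(\alpha)>s_\infty$). In the first case $q\mapsto\palpha(b(\alpha),q)$ is finite and convex on $((s_\infty-b(\alpha))/\theta,\infty)$ and tends to $+\infty$ at both ends (Lemma \ref{lemma alpha delta*} on the left, \eqref{eq asymptotic behavior of alpha pressure} on the right), so it has a minimum at some $q_0>(s_\infty-b(\alpha))/\theta\ge 0$, and $(b(\alpha),q_0)\in\mathcal{N}$ by \eqref{eq nice alpha}. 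In the second case $b(\alpha)\in(s_\infty,\delta)$, and I would imitate Proposition \ref{prop relationship 0}: using Lemma \ref{lemma equilibrium measure near Delta} (available because $\ratio<\infty$) pick $b_0\in(b(\alpha),\delta)$ with $\mu_{b_0}(\phi)>\min\frat-\epsilon$, so that $(\partial/\partial q)\palpha(b_0,0)=-\mu_{b_0}(\phi)+\alpha<0$; were $(\partial/\partial q)\palpha(b(\alpha),0)\ge 0$, continuity of $b\mapsto(\partial/\partial q)\palpha(b,0)$ on $(s_\infty,\delta)$ would yield $b'\in[b(\alpha),b_0)$ with $(\partial/\partial q)\palpha(b',0)=0$, and then the argument behind Lemma \ref{lemma sufficent condition for nice} (Theorem \ref{thm coditional variational principle} and Bowen's formula) forces either $b'=b(\alpha)$ with $\palpha(b(\alpha),0)=0$, or $\dim_H(\mu_{b',0})>b'>b(\alpha)$, each contradicting a known fact. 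Hence $(\partial/\partial q)\palpha(b(\alpha),0)<0$, and strict convexity with \eqref{eq asymptotic behavior of alpha pressure} produce a unique critical point $q_0\in(0,\infty)$. Either way $q(\alpha)\in(0,\infty)$.

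For $\alpha\in(\max\frat,\alpha_{\sup})$ I would first prove $b(\alpha)>s_\infty$. Under (L), $\mu_{b,0}(\phi)$ differs from $\theta\lambda(\mu_{b,0})$ by a bounded amount with $\theta>0$, while $\lambda(\mu_{b,0})=-\tfrac{\partial}{\partial b}p(b,0)\to\infty$ as $b\downarrow s_\infty$ because $p(b,0)=P(-b\log|f'|)$ is lower semicontinuous and, by \eqref{eq finite pressure geometric pote}, infinite at $b=s_\infty$, hence blows up as $b\downarrow s_\infty$; thus $\mu_{b,0}(\phi)\to\infty$ as $b\downarrow s_\infty$, whereas $\liminf_{b\uparrow\delta}\mu_{b,0}(\phi)\le\max\frat<\alpha$ by Lemma \ref{lemma equilibrium measure near Delta}. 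The intermediate value theorem then yields $b^*\in(s_\infty,\delta)$ with $\mu_{b^*,0}(\phi)=\alpha$; since $\mu_{b^*,0}$ is the equilibrium measure for $-b^*\log|f'|$, lies outside $\conv$ (its $\phi$-mean is outside $\frat$) so $\lambda(\mu_{b^*,0})>0$, and $p(b^*,0)>0$ as $b^*<\delta$, Theorem \ref{thm coditional variational principle} gives
\[
b(\alpha)\ \ge\ \dim_H(\mu_{b^*,0})\ =\ b^*+\frac{p(b^*,0)}{\lambda(\mu_{b^*,0})}\ >\ b^*\ >\ s_\infty .
\]
(Alternatively $b(\alpha)>s_\infty$ follows, in the style of Proposition \ref{prop relationship infinity}, from $\inf_q\palpha(b(\alpha),q)>0$ via Lemma \ref{lemma compact approximation} and the divergence of $\measure_{b(\alpha),q_n,F}(\return)$ forced by (F) and (H1) on a finite subsystem.) Given $b(\alpha)\in(s_\infty,\delta)$ we have $\palpha(b(\alpha),0)=p(b(\alpha),0)>0$, and the mirror of the Proposition \ref{prop relationship 0} step — taking $b_0\in(b(\alpha),\delta)$ with $\mu_{b_0}(\phi)<\max\frat+\epsilon<\alpha$, so $(\partial/\partial q)\palpha(b_0,0)>0$ — gives $(\partial/\partial q)\palpha(b(\alpha),0)>0$. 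Strict convexity of $q\mapsto\palpha(b(\alpha),q)$ on $((s_\infty-b(\alpha))/\theta,\infty)$, positivity of its derivative at $0$, and its blow-up as $q\downarrow(s_\infty-b(\alpha))/\theta$ (Lemma \ref{lemma alpha delta*}) then give a unique critical point $q(\alpha)\in((s_\infty-b(\alpha))/\theta,0)$, which lies in $\mathcal{N}$ by \eqref{eq nice alpha}.

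The step I expect to be the main obstacle is the lower bound $b(\alpha)>s_\infty$ on $(\max\frat,\alpha_{\sup})$, the only assertion genuinely new with respect to Propositions \ref{prop relationship 0} and \ref{prop relationship infinity}: one must either control the Lyapunov exponents of the geometric equilibrium measures $\mu_{b,0}$ near the critical exponent $s_\infty$ (the route above, which hinges on $P(-b\log|f'|)$ being infinite at $b=s_\infty$), or re-run in mirrored form the heaviest part of the proof of Proposition \ref{prop relationship infinity}, in which (F) and (H1) enter through the escape of the finite-subsystem equilibrium measures to $\conv$ near the boundary of $\mathcal{N}_F$. The rest is bookkeeping on Lemmas \ref{lemma alpha delta*}, \ref{lemma sufficent condition for nice}, \ref{lemma equilibrium measure near Delta} and \ref{lemma compact approximation}, Theorems \ref{thm regularity of non induced pressure}, \ref{thm stong conditional variational principle} and \ref{thm coditional variational principle}, and Bowen's formula (Theorem \ref{thm bowen formula}).
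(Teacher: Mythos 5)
Your proposal is correct and, for the most part, follows the same route as the paper: the preliminary observations via Lemmas \ref{lemma alpha delta*} and \ref{lemma weak lower bound} and Proposition \ref{lemma only frat alpha}, the two-case split on $(\alpha_{\inf},\min\frat)$ according to whether $b(\alpha)\le s_\infty$ (minimum of a convex function blowing up at both ends of its domain of finiteness) or $b(\alpha)>s_\infty$ (repeating the argument from Proposition \ref{prop relationship 0} to get $(\partial/\partial q)\palpha(b(\alpha),0)<0$), and the mirror argument on $(\max\frat,\alpha_{\sup})$ once $b(\alpha)>s_\infty$ is known. The one place where you genuinely diverge from the paper is the proof of $b(\alpha)>s_\infty$ for $\alpha>\max\frat$. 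The paper argues by contradiction: assuming $b(\alpha)\le s_\infty$, it shows $\inf_q\palpha(b(\alpha),q)>0$, invokes Lemma \ref{lemma compact approximation} to pass to a finite subsystem, and then reruns the heavy machinery of Proposition \ref{prop relationship infinity} (which is where (H1) enters, via the divergence of $\measure_{b(\alpha),q_n,F}(\return)$) to produce a measure in $M(f,F)$ with $\phi$-mean $\alpha$ and $h-b(\alpha)\lambda\le 0$, contradicting (C1). Your primary route instead constructs a witness directly: you show $\lambda(\mu_{b,0})\to\infty$ as $b\downarrow s_\infty$ from convexity of $b\mapsto p(b,0)$ and its blow-up at $s_\infty$, translate this into $\mu_{b,0}(\phi)\to\infty$ via (L), use Lemma \ref{lemma equilibrium measure near Delta} to get $\liminf_{b\uparrow\delta}\mu_{b,0}(\phi)\le\max\frat$, apply the intermediate value theorem to find $b^*\in(s_\infty,\delta)$ with $\mu_{b^*,0}(\phi)=\alpha$, and then observe $b(\alpha)\ge\dim_H(\mu_{b^*,0})=b^*+p(b^*,0)/\lambda(\mu_{b^*,0})>b^*>s_\infty$ by Bowen's formula and Theorem \ref{thm coditional variational principle}. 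This is shorter, more explicit, and bypasses the finite-subsystem argument entirely; the trade-off is that it leans on (L) (for the comparability $\mu(\phi)\asymp\theta\lambda(\mu)$) rather than on (H1), but both hypotheses are in force in this proposition, so this is harmless. You also correctly flag the paper's approach as the alternative route. One small point of care you implicitly handle but is worth being explicit about: the positivity $\lambda(\mu_{b^*,0})>0$ is needed for the dimension formula, and it follows because $\mu_{b^*,0}(\phi)=\alpha\notin\frat$ forces $\mu_{b^*,0}\notin\text{Conv}(\{\delta_{x_i}\}_{i\in\p})$ (Lemma \ref{lemma equivalent condition not liftable}).
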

\begin{proof}
    We assume that $\phi$ satisfies (H1) and (L). 
    If $\alpha_{\inf}=\alpha_{\sup}$, there is nothing to prove. Thus, we assume that $\alpha_{\inf}<\alpha_{\sup}$.
    We first consider the case $\alpha\in (\alpha_{\inf},\min\frat)$. Then, there are two possible cases: (1) $b(\alpha)\leq s_\infty$. (2) $b(\alpha)>s_\infty$. 
    
    We first assume that we are in the case (1). Then, we have $s_\infty-b(\alpha)\geq 0$. Thus, by Lemma \ref{lemma positivity p alpha}, Lemma \ref{lemma alpha delta*} and \eqref{eq nice alpha}, there exists $q(\alpha)\in ((s_\infty-b(\alpha))/\alpha, \infty)$ such that $(\partial/\partial q)p(b(\alpha),q(\alpha))=0$. By our assumption that $\alpha_{\inf}<\alpha_{\sup}$ and Theorem \ref{thm regularity of non induced pressure}, such a number $q(\alpha)$ is uniquely determined. 
    Hence, by Lemma \ref{lemma sufficent condition for nice}, we obtain $\alpha\in \nice$.

    Next, we assume that we are in the case (2). By the same argument in the proof of \eqref{eq proof of relation} in Proposition \ref{prop relationship 0}, one can show that $(\partial/\partial q)p(b(\alpha),0)<0$. Thus, by Lemma \ref{lemma positivity p alpha}, there exists $q(\alpha)\in (0, \infty)$ such that $(\partial/\partial q)p(b(\alpha),q(\alpha))=0$. By repeating the argument in the case (1), we obtain $\alpha\in \nice$.

    We next consider the case $\alpha\in (\max\frat,\alpha_{\sup})$. We first show that $b(\alpha)>s_\infty$. For a contradiction, we assume that $b(\alpha)\leq s_\infty$.  Then, by Lemma \ref{lemma alpha delta*}, we have $\palpha(b(\alpha),0)=\infty$. Since for all $q\in (0,\infty)$ we have $\palpha(b(\alpha),q)\geq q(-\alpha_{\underline{i}}+ \alpha)>0$, we have $\inf\{\palpha(b(\alpha),q):q\in\mathbb{R}\}>0$. Thus, by Lemma \ref{lemma compact approximation}, there exists a finite set $F\subset \edge$ with $\p\subset F$  and $F\cap\h\neq \emptyset$ satisfying the conditions (C1) and (C2) in Lemma \ref{lemma compact approximation}. By a similar argument in the proof of \eqref{eq compact good q} in Proposition \ref{prop relationship infinity}, one can show that
that there exists $\tilde q\in \mathbb{R}$ such that  $(b(\alpha),\tilde q)\in \mathcal{N}_F$ and $(\partial/\partial q)p_{\alpha,F}(b(\alpha),\tilde q)=0$.
 Then, we obtain $\mu_{b(\alpha),\tilde q,F}(\phi)=\alpha$ and, by (C1) and Theorem \ref{thm stong conditional variational principle},
$0<p_{\alpha,F}(b(\alpha),q)=h(\mu_{b(\alpha),\tilde q,F})-b(\alpha)\lambda(\mu_{b(\alpha),\tilde q,F})\leq 0.$
This is a contradiction. Hence, we obtain $b(\alpha)>s_\infty$ which yields that $(s_\infty-b)/\theta<0$.

By the same argument in the proof of \eqref{eq proof of relation} in Proposition \ref{prop relationship 0}, one can show that
\begin{align}\label{eq proof of relation alpha}
\frac{\partial}{\partial q}\palpha(b(\alpha),0)>0.    
\end{align}
By our assumption that $\alpha_{\inf}<\alpha_{\sup}$ and Theorem \ref{thm regularity of non induced pressure}, the function $q\mapsto \palpha(b(\alpha),q)$ is strictly convex on $((s_\infty-b(\alpha))/\theta,0)$. Hence,
by Lemma \ref{lemma alpha delta*} and \eqref{eq proof of relation alpha}, there exists the unique number $q(\alpha)\in ((s_\infty-b(\alpha))/\theta,0)$ such that $(\partial/\partial q)\palpha(b(\alpha),q(\alpha))=0$. Hence, by Lemma \ref{lemma sufficent condition for nice}, we obtain $\alpha\in \nice$. 
\end{proof}


\subsection{Proof of Theorem \ref{thm main}.}
By Proposition \ref{prop frat}, for all $\alpha\in \frat$ we have $b(\alpha)=\delta$.
(B1) of Theorem \ref{thm main} follows from 
Propositions \ref{prop relationship 0}, \ref{prop equilibrium measure conditional}, \ref{prop real analytic}, and \ref{prop monotone}. (B2) of Theorem \ref{thm main} follows from  Lemma \ref{lemma weak lower bound}, Propositions \ref{prop frat under some assumption}, \ref{prop relationship infinity}, \ref{prop equilibrium measure conditional}, \ref{prop real analytic} and \ref{prop monotone}. 
(B3) of Theorem \ref{thm main} follows from Lemma \ref{lemma weak lower bound}, Propositions \ref{prop relation alpha},  \ref{prop equilibrium measure conditional},  \ref{prop real analytic} and  \ref{prop monotone}.
\qed

\section{Appendix}
In this section, we prove Theorem \ref{thm uniquness and existence of the equilibrium state} and Theorem \ref{thm regularity of non induced pressure}. The details of the technical calculations in the proofs of Theorem \ref{thm uniquness and existence of the equilibrium state} and Theorem \ref{thm regularity of non induced pressure} can be found in the proofs of \cite[Theorems 3.3 and 3.4]{arimanonuniformly}.

\emph{Proof of Theorem \ref{thm uniquness and existence of the equilibrium state}.}
Let $(b,q)\in\mathcal{N}$. We first show that $\indupressure(b,q,p(b,q))=0$. Let $\epsilon>0$ be a small number with $ \LB+\epsilon<p(b,q)$. By \cite[Theorem 2.1.8]{mauldin2003graph}, there exists a ergodic measure $\mu\in M(f)$ such that $\mu (-q\phi-b\log|f'|)>-\infty$ and 
$
h(\mu)+\mu (-q\phi-b\log|f'|)>p(b,q)-\epsilon.
$
Then, $\mu\notin \conv$. Indeed, if $\mu\in \conv$ then we have 
$h(\mu)+\mu (-q\phi-b\log|f'|)+\epsilon\leq  \LB+\epsilon<p(b,q)$ which yields a contradiction. Thus, by Remark \ref{rem measurable bijection}, Lemma \ref{lemma equivalent condition not liftable} and \eqref{eq classical Abramov-Kac's formula}, we obtain 
\begin{align}\label{eq proof of uniquness existence larger than zero}
    &\indupressure(b,q,p(b,q)-\epsilon)\geq \measure( \return )\left(h(\mu)+\mu (-q\phi-b\log |f'|)-(p(b,q)-\epsilon)\right)>0, 
\end{align}
where $\measure=\mu|_{\indulimit}/\mu(\indulimit)$. 
By Lemma \ref{lem finite pressure on a special parameter}, 
the function $s\mapsto\indupressure(b,q,s)$ is continuous on $( \LB,\infty)$. Hence, by \eqref{eq proof of uniquness existence larger than zero} and \eqref{eq induced pressure is less than zero}, we obtain $\indupressure(b,q,p(b,q))=0$. Moreover, by \eqref{eq induced pressure is less than zero}, the measure $\mu_{b,q}$ is an equilibrium measure for $-q\phi-b\log|f'|$.

Next, we shall show the uniqueness of the equilibrium measure. Let $\nu$ be an equilibrium measure for $-q\phi-b\log|f'|$.
By the ergodic decomposition theorem (see \cite[Theorem 5.1.3]{viana}), we may assume that $\nu$ is ergodic.  As above, we have $\nu\notin \conv$ and thus, $\nu(\indulimit)>0$. 
Let $\tilde\nu=\nu|_{\indulimit}/\nu(\indulimit)$.
By Remark \ref{rem measurable bijection}, there exists $\tilde \nu'\in M(\shift)$ such that $\tilde \nu=\tilde \nu'\circ\codingmap^{-1}$ and $h(\tilde \nu)=h(\tilde \nu')$.
Then, $\tilde\nu'$ is an equilibrium measure for $\indumultipote_{b,q}$.
Indeed, by Theorem \ref{thm variational principle induce}, Remark \ref{rem measurable bijection}, \eqref{eq classical Abramov-Kac's formula} and \eqref{eq induced pressure is less than zero}, we have 
\begin{align*}
   &0\geq\indupressure(b,q,p(b,q))
   \geq\tilde\nu( \return)\left(h(\nu)+\nu (-q\phi-b\log |f'|)-p(b,q)\right)=0. \nonumber    
\end{align*}
Therefore, by Theorem \ref{thm equilibrium state induce}, we obtain $\tilde \nu'=\codingmeasure_{b,q}$ and thus, $\nu=\mu_{b,q}$.\qed

For two function $\psi_1,\psi_2:\indulimit\rightarrow\mathbb{R}$ and $(b,q)\in \mathcal{N}$ we define the asymptotic variance of $\psi_1$ and $\psi_2$ by
\[
\sigma^2_{b,q}(\psi_1,\psi_2):=    \lim_{n\to\infty}\frac{1}{n}
    \codingmeasure_{b,q}\left( S_n\left(\psi_1\circ\codingmap-\codingmeasure_{b,q}(\psi_1\circ\codingmap) \right)
    S_n\left(\psi_2\circ\codingmap-\codingmeasure_{b,q}(\psi_2\circ\codingmap)\right)\right)
\]
when the limit exists. If $\psi_1=\psi_2$ then we write $\sigma^2_{b,q}(\psi_1):=\sigma^2_{b,q}(\psi_1,\psi_2)$.

\emph{Proof of Theorem \ref{thm uniquness and existence of the equilibrium state}.}
Let $(b_0,q_0)\in\mathcal{N}$. By Lemma \ref{lem finite pressure on a special parameter}, there exists a open neighborhood $O\subset \mathbb{R}^3$ of $(b_0,q_0,p(b_0,q_0))$ such that for all $(b,q,s)\in O$ we have $\indupressure(b,q,s)<\infty$. Also, by Theorem \ref{thm regularity of induced pressure}, we have 
$
\left.
({\partial}/{\partial s})\indupressure(b,q,s)\right|_{(b,q,s)=(b_0,q_0,p(b_0,q_0))}=-\codingmeasure_{b_0,q_0}( \return\circ\codingmap )<0.
$
Therefore, by the implicit function theorem and Theorem \ref{thm regularity of induced pressure}, the function $p$ is real-analytic at $(b_0,q_0)$ and \eqref{eq Abramov-Kac's formula} gives \eqref{eq ruell's formula noninduced}.
Also, by \eqref{eq ruell's formula noninduced}, the implicit function theorem and Ruelle's formula for the second derivative of the pressure function \cite[Proposition 2.6.14]{mauldin2003graph}, we obtain
\begin{align}\label{eq important formula second derivative}
    \frac{\partial^2}{\partial q^2}p(b,q)
    =\frac{\sigma^2_{b,q}(\indupote-\mu_{b,q}(\phi)\return)}{\measure_{b,q}( \return ) }.
\end{align}
We shall show the last statement in Theorem \ref{thm regularity of non induced pressure}. If $\alpha_{\inf}=\alpha_{\sup}$ then by \eqref{eq ruell's formula noninduced}, $\frac{\partial^2}{\partial q^2}p(b,q)=0$. 
Conversely, we assume that  $\frac{\partial^2}{\partial q^2}p(b,q)=0$. Then, by \eqref{eq important formula second derivative}, we have $\sigma^2_{b,q}(\indupote-\mu_{b,q}(\phi)\return)=0$. Thus, by Lemma \ref{lem finite pressure on a special parameter} and \cite[Lemma 4.8.8]{mauldin2003graph}, there exists bounded continuous function $\tilde u':\inducoding\rightarrow \mathbb{R}$ such that $(\indupote-\mu_{b,q}(\phi)\return)\circ\codingmap=\tilde u'-\tilde u'\circ\shift$.
Recall that, by Remark \ref{rem measurable bijection}, $\codingmap|_{\inducoding\setminus\codingmap^{-1}(J_0)}$ is one-to-one and $\codingmap|_{\inducoding\setminus\codingmap^{-1}(J_0)}^{-1}$ is continuous.
For 
$x\in J_0\cap \indulimit$ we fix $\tau_x\in \inducoding$ with $x=\codingmap(\tau_x)$ and define $\tilde u:\indulimit\rightarrow \mathbb{R}$ by $\tilde u|_{\indulimit\setminus J_0}=\tilde u'\circ\codingmap|_{\inducoding\setminus\codingmap^{-1}(J_0)}^{-1}$ and $\tilde u(x)=\tilde u'(\tau_x)$ for $x\in J_0\cap \indulimit$. Since $J_0$ is a countable set, $\tilde u$ is a Borel measurable bounded function satisfying 
\begin{align}\label{eq proof of cohomologous indu coboundary}
    \indupote-\mu_{b,q}(\phi)\return=\tilde u-\tilde u\circ\indumap. 
\end{align}
From this, it is not difficult to see that for all $i\in\p$ we have 
\begin{align}\label{eq proof of cohomologous boundary}
\alpha_i=\mu_{b,q}(\phi).    
\end{align}
We set 
\[
N:=\bigcup_{i\in\p}\{x_i\},\ 
Z:=\bigcup_{i\in\p}\bigcup_{n\in\mathbb{N}}f^{-n}(x_i)\setminus N
\text{ and }
P:=\bigcup_{i\in\p}I_{ii}\setminus(N\cup Z\cup\indulimit).
\]
We will inductively construct a Borel measurable function $u:\Lambda\rightarrow \mathbb{R}$ such that for all $x\in \Lambda\setminus Z$ we have $\phi(x)=u(x)-u(f(x))+\mu_{b,q}(\phi)$.
Note that we have the direct decomposition
$    \Lambda=\indulimit\cup P\cup N\cup Z.$
Define 
\begin{align}\label{eq proof of cohomologous construction 1}
    u(x):=\tilde u(x)
    \text{ for all $x\in \indulimit$}
    \text{ and }u(x)=0
    \text{ for all $x\in N\cup Z$}.
\end{align}
For $i\in\p$ we define
\[
P_{i,2}:=\bigcup_{\omega\in \edge\setminus\{i\}}I_{i^2\omega}\setminus(N\cup Z\cup \indulimit)
\text{ and }
P_{i,k}:=\bigcup_{\omega\in \edge\setminus\{i\}}I_{i^k\omega}\setminus(N\cup Z\cup P_{i,k-1})
\text{ for $k\geq 3$}
.
\]
Then, we obtain the direct decomposition $P=\bigcup_{i\in\p}\bigcup_{k\in\mathbb{N}}P_{i,k}$.
Let $i\in\p$.
Since for $x\in P_{i,2}$
we have $f(x)\in \indulimit$, $u(f(x))$ is already defined by \eqref{eq proof of cohomologous construction 1}. Thus,  the following definition is well-defined:
$
u(x):=u(f(x))+\phi(x)-\mu_{b,q}(\phi)
\text{ for } x\in P_{i,2}.    $
Let $k\geq 3$.
Assume that for all $2\leq \ell\leq k$ and $x\in P_{i,\ell}$
we have already defined $u(x)$ by 
$u(x):=u(f(x))+\phi(x)-\mu_{b,q}(\phi).$
Since for $x\in P_{i,k+1}$
we have $f(x)\in P_{i,k}$, $u(f(x))$ is already defined. For $x\in P_{i,k+1}$ we define $u(x):=u(f(x))+\phi(x)-\mu_{b,q}(\phi)$. Therefore, by induction, the following definition is well-defined:
\begin{align}\label{eq proof of cohomologous def u on P}
    u(x)=u(f(x))+\phi(x)-\mu_{b,q}(\phi) \text{ for }x\in P.
\end{align}
We shall show that $u:\Lambda\rightarrow\mathbb{R}$ defined by \eqref{eq proof of cohomologous construction 1} and \eqref{eq proof of cohomologous def u on P} satisfies 
\begin{align}\label{eq proof of cohomologous coboundary}
    \phi(x)=u(x)-u(f(x))+\mu_{b,q}(\phi) \text{ for all } x\in \Lambda\setminus Z.
\end{align} 
By \eqref{eq proof of cohomologous boundary}, \eqref{eq proof of cohomologous construction 1} and \eqref{eq proof of cohomologous def u on P}, for $x\in P\cup N$ we have \eqref{eq proof of cohomologous coboundary}. By \eqref{eq proof of cohomologous indu coboundary}, for $x\in \codingmap(\bigcup_{\omega\in E_1}[\omega])$ we have \eqref{eq proof of cohomologous coboundary}.  Let $n\geq 2$ and let $x\in \codingmap(\bigcup_{\omega\in E_n}[\omega])\setminus\codingmap(\bigcup_{\omega\in E_{n-1}}[\omega])$.
Then, there exists $i\in\p$ such that for all $1\leq k\leq n-1$ we have $f^{k}(x)\in P_{i,n-(k-1)}$.
By \eqref{eq proof of cohomologous indu coboundary}, \eqref{eq proof of cohomologous construction 1} and \eqref{eq proof of cohomologous def u on P}, we have 
\begin{align*}
    &\sum_{k=0}^{n-1}(\phi-\mu_{b,q}(\phi))(f^k(x))=\tilde u (x)-\tilde u(f^n(x))
    \\&= u(x)-u(f(x))+\sum_{k=1}^{n-1}\left(u(f^k(x))-u(f^{k+1}(x))\right)
    \\&=u(x)-u(f(x))+\sum_{k=1}^{n-1}(\phi-\mu_{b,q}(\phi))(f^k(x)).
\end{align*}
Hence, we obtain $\phi(x)=u(x)-u(f(x))+\mu_{b,q}(\phi)$. This completes the proof of \eqref{eq proof of cohomologous coboundary}. 
 Since $Z$ is countable set and there is no periodic orbits in $Z$, for all $\mu\in M(f)$ we have $\mu(Z)=0$. Thus, by \eqref{eq proof of cohomologous coboundary}, for all $\mu\in M(f)$ we have $\mu(\phi)=\mu_{b,q}(\phi)$ and the proof is complete.
\qed

\subsection*{Acknowledgments}
This work was supported by the JSPS KAKENHI 25KJ1382.

\bibliographystyle{abbrv}
\bibliography{reference}
 \nocite{*}

\end{document}